\def\ps@pprintTitle{%
 \let\@oddhead\@empty
 \let\@evenhead\@empty
 \def\@oddfoot{}%
 \let\@evenfoot\@oddfoot}
\newcommand{\pp}{\mathfrak{p}}
\newcommand{\tang}{ \boldsymbol{\tau} }
\newcommand{\sang}{ \boldsymbol{\sigma} }
\newcommand{\dang}{\boldsymbol{\eta}}
\newcommand{\pprime}{\prime \prime}
\newcommand{\ppprime}{\prime \prime \prime}
\newcommand{\Sd}{\mathcal{S}^{2}}
\newcommand{\tangc}{ \tang_{ {\rm c}} }
\newcommand{\normc}{ \dang_{ {\rm c}} }
\newcommand{\binc}{ \boldsymbol{ \mathfrak{b} }_{ {\rm c}} }
\newcommand{\pn}{\psi^{ {\rm n}}}
\newcommand{\pt}{\psi^{ {\rm t}} }
\newcommand{\pb}{\psi^{ {\rm b}} }
\newcommand{\veps}{\varepsilon}
\newcommand{\R}{\mathbb{R}}
\newcommand{\Z}{\mathbb{Z}}
\newcommand{\x}{\mathbf{x}}
\newcommand{\y}{\mathbf{y}}
\newcommand{\z}{\mathbf{z}}
\newcommand{\vv}{\mathbf{v}}
\newcommand{\e}{\mathbf{e}}
\newcommand{\rd}{\mathrm{d}}
\newcommand{\dd}{\vartheta}
\newcommand{\re}{\mathrm{e}}
\newcommand{\K}{\mathfrak{K}}
\newcommand{\T}{\mathbb{T}^1}
\newcommand{\F}{\mathbf{F}}
\newcommand{\pv}{\mathrm{p.v.}}
\newcommand{\gammat}{ \gamma_{\tang}}
\newcommand{\N}{\mathbb{N}}
\newcommand{\ac}{\alpha_{ {\rm c}} }
\newcommand{\cH}{\mathcal{H}^{1}}
\newcommand{\lenG}{\mathrm{len}(\gamma)}
\newtheorem{theorem}{Theorem}[section]
\newtheorem{lemma}[theorem]{Lemma}
\newtheorem{proposition}[theorem]{Proposition}
\newtheorem{corollary}[theorem]{Corollary}
\begin{document}

\begin{frontmatter}

\title{Dynamics of Embedded Curves by Doubly-Nonlocal Reaction-Diffusion Systems}

%\maketitle

\author{James H. von Brecht\footnote{This work was supported by National Science Foundation grant DMS-1312344/DMS-1521138.} and Ryan Blair}
%\author{James H. von Brecht and Ryan Blair\fnref{James.vonBrecht,Ryan.Blair@csulb.edu}}
%\address{Department of Mathematics and Statistics, California State University, Long Beach, CA 90840}
%\ead[url]{James.vonBrecht@csulb.edu}
%[]

\begin{abstract}
{We study a class of nonlocal, energy-driven dynamical models that govern the motion of closed, embedded curves from both an energetic and dynamical perspective. Our energetic results provide a variety of ways to understand physically motivated energetic models in terms of more classical, combinatorial measures of complexity for embedded curves. This line of investigation culminates in a family of complexity bounds that relate a rather broad class of models to a generalized, or weighted, variant of the crossing number. Our dynamic results include global well-posedness of the associated partial differential equations, regularity of equilibria for these flows as well as a more detailed investigation of dynamics near such equilibria. Finally, we explore a few global dynamical properties of these models numerically.}

\end{abstract}

\end{frontmatter}

%\linenumbers
%\maketitle

\section{Introduction}

In a variety of physical and mathematical contexts, a curvature-regularized nonlocal interaction energy
\begin{equation}\label{eq:introenergy}
E(\gamma) := \int_{\gamma} \kappa^2_{\gamma}(\x) \, \rd \cH(\x) + \int_{\gamma \times \gamma} K\big( \mathrm{dist}^2_{ \R^3}(\x,\y),\mathrm{dist}^2_{\gamma}(\x,\y)\big) \, \rd \cH(\x) \otimes \rd \cH(\y)
\end{equation}
governs the motion of a closed, embedded curve. For example, worm-like chain (WLC) models for DNA dynamics incorporate the bending energy (i.e. the squared curvature $\kappa^2_\gamma$) and an energetic barrier to self-intersection (i.e. a repulsive kernel $K(u,v)$) that prevents topological changes. Dynamical models of {protein folding \cite{CC} and vortex filaments \cite{CI}} also take a similar form. At the purely mathematical end of the spectrum, a relatively recent trend in geometric knot theory has witnessed a growth in the energetic study of \eqref{eq:introenergy} and its variants \cite{LS10,GM99,SV13,BarXiv1,BarXiv2}. A significant portion of this general line of work focuses on extremal properties of these various knot energies such as the existence, complexity and regularity of extremal embeddings within a given class of curves. In the context of \eqref{eq:introenergy}, the bi-variate kernel $K(u,v)$ encodes the interaction between pairs of points $(\x,\y)$ on a curve $\gamma \in \R^3$ according to the principles of physical potential theory --- an attraction or repulsion between points $\x,\y \in \gamma$ generically occurs in regions where $K(u,v)$ increases or decreases in its first argument, respectively. A typical kernel $K(u,v)$ will exhibit a singular, short-range repulsion and possibly an additional regular, far-field attraction, yet even this reasonable level of generality in the choice of kernel $K(u,v)$ leads to a class of models that exhibit a striking variety of complex and intricate dynamics.

We study the energetics and dynamics of \eqref{eq:introenergy} from both an analytical and a numerical perspective. At an energetic level, we establish a set of inequalities that relate the nonlocal energy \eqref{eq:introenergy} to classical, combinatorial measures of complexity for embedded curves. These results prove similar, in spirit, to a variety of work on ``knot energies'' that emanates from the knot theory community \cite{HS11,BR14,M,FHW,CKS,BS}. Our arguments reveal the sense in which some combinatorial measures of complexity, such as the average crossing number, have fractional Sobolev spaces, lurking just underneath the surface. This insight allows us to establish complexity bounds for a rather broad class of kernels $K(u,v),$ and it also provides the impetus to introduce generalizations of such complexity measures. {\color{black} In this sense, our work complements that strand of geometric knot theory that emphasizes the importance of taking a more analytic approach to \eqref{eq:introenergy} and its variants \cite{BR15,BR13,BR152}. For instance, the relative importance of harmonic analysis vis-\`{a}-vis M\"{o}bius invariance has been observed when studying {\color{black} regularity of extremal embeddings \cite{BRS}}.} At a dynamic level, we study a corresponding ``constrained gradient flow'' of the energy \eqref{eq:introenergy} that restricts the dynamics to lie in the class of unit speed parameterizations. This is a slightly non-standard approach since we do not simply functionally differentiate \eqref{eq:introenergy} with respect to $\gamma$, but it leads to an analytically and numerically well-behaved dynamical model. In particular, enforcing a uniform density along $\gamma$ at each time yields semi-linear rather than quasi-linear dynamics and also gives uniform numerical samples along $\gamma$ at each numerical time-step without having to resort to re-parameterizations. At a physical level, we may view our choice of dynamics as a hard limit of WLC models that include energetic barriers to bond stretching, in the sense that an infinite barrier to bond stretching results from constraining the flow to lie in the class of unit speed curves. This approach to dynamics leads to a class of nonlocal reaction-diffusion systems driven by \emph{doubly-nonlocal} forcings. The pairwise interaction energy leads to a forcing $\mathbf{F}_{\gamma}$ whose derivative $f_{\gamma}$ along $\gamma$ satisfies
\begin{equation*}
\mathbf{F}^{\prime}_{\gamma} = f_{\gamma}, \qquad f_{\gamma} := \pv \int_{\gamma} K_{u}\left( \mathrm{dist}_{\R^3}(\x,\y), \mathrm{dist}_{\gamma}(\x,\y) \right)(\x-\y) \, \rd \cH(\y),
\end{equation*}
and so recovering $\mathbf{F}_{\gamma}$ from $\gamma$ requires two consecutive nonlocal operations. We refer to the resulting class of dynamics as a \emph{doubly-nonlocal} reaction-diffusion system to emphasize this structure. Overall, this approach to dynamics allows us to obtain a global well-posedness result under quite general hypotheses on $K(u,v),$ to demonstrate regularity of critical embeddings, to perform a detailed study of equilibria and to devise reliable numerical procedures.

Most of our arguments rely, at least in part, on some basic harmonic analysis and a few notions from knot theory; we begin by reviewing this material in the next section. We then proceed to study the energy \eqref{eq:introenergy} from a complexity point-of-view in the third section. As an example in this direction, a corollary of our analysis shows that the distortion $\delta_{\infty}(\gamma)$ and bending energy $E_{ {\rm b} }(\gamma)$ bound the average crossing number $\bar{c}(\gamma)$
\begin{equation}\label{eq:introbound}
\bar{c}(\gamma) \lesssim \delta^{3}_{\infty}(\gamma) E_{ {\rm b} }(\gamma),
\end{equation}
of an embedding. An analogous result holds for \eqref{eq:introenergy} provided $K(u,v)$ satisfies a ``homogeneity'' assumption. This is perhaps surprising in light of the following {observation}: There exists an infinite sequence of {ambient} isotopy classes of knots with uniformly bounded distortion as well as an infinite sequence of {ambient} isotopy classes of knots with uniformly bounded bending energy. These sequences therefore exhibit ``conflicting'' behavior, since \eqref{eq:introbound} rules out the existence of such an infinite sequence when both distortion and bending energy remain uniformly bounded. We also show that an energy of the form \eqref{eq:introenergy} induces, in a quantified sense, a stronger invariant than the average crossing number. These results are motivated by analogous results for the M\"{o}bius energy \cite{FHW} (i.e. \eqref{eq:introenergy} with $K(u,v) = u^{-1}-v^{-1}$ and without curvature) and the {ropelength \cite{BS,cantarella2014ropelength,DS,LSDR,DDS}}; however, our results are not entirely comparable with these earlier works and we generally prove them using different means. We then proceed to address the dynamics of \eqref{eq:introenergy} in the fourth section. We initiate this process by proving global well-posedness and regularity under certain ``homogeneity'' and ``degeneracy'' assumptions on the kernel $K(u,v)$. These assumptions cover many of the kernels $K(u,v)$ that have generated prior interest in the literature {\cite{OH1,OH2,OH3,OH4}}, and the main contribution here lies in finding a relatively general and broadly applicable hypothesis under which such a global result holds. This result complements prior work that proves global existence for \eqref{eq:introenergy} under the M\"{o}bius kernel {\cite{LS10}}, although we assign dynamics in a slightly different fashion. We conclude the fourth section with an analysis of dynamics near equilibrium. {\color{black} For instance, it is known that unit circles globally minimize \eqref{eq:introenergy} under proper monotonicity and convexity assumptions {\cite{ACFGH}}, and in addition, that desirable regularity properties (e.g. smoothness) hold for critical points of the O'Hara and M\"{o}bius energies \cite{BR13,BRS}.}  We supplement this by adapting techniques for nonlocal dynamical systems to show that circles are also properly ``isolated'' modulo to rotation and scaling invariance. This yields a local asymptotic stability result for our dynamics, {\color{black} which should be compared to {\cite{B12}} and the recent contribution \cite{BarXiv2} for the special case of pure M\"{o}bius flow}. We use a similar technique to show that the natural ``global'' version of this result fails for a pure bending energy flow: There exist unknotted initial configurations that remain unknotted for all time under a bending energy flow, yet the dynamics do not asymptotically converge to the globally minimal, unknotted circle. {\color{black} This dynamic result complements the energetic {\color{black} result, proved in \cite{GRvdM}, that the double-covered circle the \emph{unique} bending energy minimizer for the trefoil knot type}.} Finally, we explore global dynamical properties numerically in the final section.

{\color{black} To place these efforts in context, note that a large body of work has studied the energetics and dynamics of various knot energies. {\color{black} Much of the work in the field has focused on a few families of knot energies such as those suggested in \cite{GM99}, including tangent-point energies \cite{MR2902275,BR15}, Menger curvature energies \cite{SV13} and O'Hara's energies {\cite{OH1,OH2,OH3,OH4}} (of which the M\"{o}bius energy \cite{FHW} is a special case). The most closely related examples include work on the energetics and minimizers of integral Menger curvature \cite{BR152} , as well as work on the energetics and criticality theory for the M\"{o}bius energy \cite{BarXiv2,FHW,BRS,B12} and the extended O'Hara family \cite{BarXiv1,BR13} }. In contrast to these energies, the physical model \eqref{eq:introenergy} crucially depends on the bending energy which, as previously noted \cite{LS10}, obviously improves the analytic properties of the total energy. When taken together our results show that, in addition to its physical significance, the admission of the bending energy and a (quite weak) barrier to self-intersection in \eqref{eq:introenergy} allows for a wide class of models that exhibit all of the analytic features that have previously motivated knot energies.}

\section{Preliminary Material}

We begin our study by introducing the notation we shall use, then providing a few key definitions and finally recording a series of preliminary lemmas on which we will rely throughout the remainder of the paper. Lightface roman letters such as $x,y$ and $z$ will denote real numbers; their boldface counterparts $\x,\y$ and $\z$ will denote three-dimensional vectors. We reserve $\e_1,\e_2,\e_3$ for the canonical standard basis. Given $\x,\y \in \R^3$ we use $\langle \x,\y \rangle$ for the standard Euclidean inner-product, $\x \times \y$ for the cross product and $|\x|$ for the Euclidean norm. The notation $\langle \x,\y,\z \rangle := \langle \x, \y \times \z \rangle$ denotes the scalar triple product. We use $\T$ to denote the standard one-dimensional torus, which we view as the interval $ [-\pi,\pi ] \subset \mathbb{R}$ with endpoints identified. For $z \in [-\pi,\pi]$ we define
$$
\dd(z) := \min\{ |z| , 2\pi - |z| \},
$$
and then extend $\dd(z)$ to all of $\R$ via $2\pi$-periodicity. Thus for any pair of points $x,y \in \T$ the quantity $\dd(x-y)$ simply gives the geodesic distance between them. Given a square-integrable function $\gamma: \T \to \R^{d}$ we shall always use the definition and notation
$$
\|\gamma\|^{2}_{L^{2}(\T)} := \frac1{2\pi} \int_{\T} |\gamma(x)|^2 \, \rd x := \fint_{\T} |\gamma(x)|^2 \, \rd x
$$
to denote the $L^{2}(\T)$-norm, with the dimensionality $d$ omitted but always clear from context. We shall use
\begin{align*}
\hat{\gamma}_{k} = \fint_{\T} \gamma(x) \re^{- i k x } \, \rd x \qquad \text{and} \qquad \gamma(x) = \sum_{k \in \Z} \hat{\gamma}_k \, \re^{ ikx }
\end{align*}
to denote the forward and inverse Fourier transforms, so that
\begin{align*}
\|\gamma\|^{2}_{L^{2}(\T)} = \sum_{k \in \Z} |\hat{\gamma}_k|^2, \quad \|\gamma\|^{2}_{\dot{H}^{s}(\T)} = \sum_{k \in \Z} |k|^{2s} |\hat{\gamma}_k|^2 \quad \text{and} \quad \|\gamma\|^{2}_{H^{s}(\T)} = |\hat{\gamma}_0|^2 + \|{\color{black} \gamma}\|^{2}_{\dot{H}^{s}(\T)}
\end{align*}
provide the $L^{2}(\T)$ norm as above and an equivalent definition of the $H^{s}(\T)$ Sobolev norm. For any sufficiently regular mapping $\gamma: \T \to \R^3$ we use $\dot{\gamma}(x) = \gamma^{\prime}(x), \ddot{\gamma}(x) = \gamma^{\pprime}(x)$ or $\gamma^{(3)}(x) = \gamma^{\ppprime}(x)$ to denote ordinary differentiation in both the strong and weak sense. We say a rectifiable $\gamma(x)$ has \emph{unit speed} if $|\dot{\gamma}(x)| \equiv 1$ on $\T,$ and we then set $\kappa_{\gamma}(x) := |\ddot{\gamma}(x)|$ or simply $\kappa(x)$ as the pointwise curvature of such an embedding. Similarly, we say a rectifiable $\gamma(x)$ has constant speed if $|\dot{\gamma}(x)| \equiv \lenG/2\pi$, where
$$
\lenG := \int_{\T} |\dot{\gamma}(x)| \, \rd x
$$
denotes the length of the embedded curve. Conversely, given any tangent field $\tang : \T \to \R^{3}$ with $|\tang(x)| \equiv A$ and $\hat{\tang}_0 = 0$ we may define
$$
\gamma_{\tang}(x) := \fint_{\T} (z - \pi) \tang(z) \, \rd z + \int^{x}_{-\pi} \tang(z) \, \rd z
$$
and thereby recover a $2\pi$-periodic, constant speed curve that has $\gamma^{\prime}_{\tang} = \tang$ and center of mass at the origin. We refer to $\gamma_{\tang}$ as the knot induced by such a vector-field. Given mean zero function $f \in L^{1}(\T)$ we analogously use
$$
F(x) := \fint_{ \T } (z-\pi) f(z) \, \rd z + \int^{x}_{-\pi} f(z) \, \rd z
$$
to denote its mean zero primitive.

In addition to these notations, we also need to recall a few elementary facts from harmonic analysis. First, for a given $\gamma \in L^{2}(\T)$ we use the notation
\begin{equation}\label{eq:maxfunc}
\gamma^{*}(x) := \sup_{  0 < \ell < \pi } \, \frac1{2\ell} \int^{x + \ell}_{x-\ell}  |\gamma(z)| \, \rd z
\end{equation}
to denote its maximal function, {\color{black}see, for example, \cite[p.216]{SS09l}.} We then recall the standard fact that the map $\gamma \mapsto \gamma^*$ defines a bounded operator on $L^{2}(\T)$, so that
\begin{equation}\label{eq:xxx2}
\| \gamma^{*} \|_{L^{2}(\T)} \leq C_{*} \| \gamma \|_{L^{2}(\T)}
\end{equation}
for $C_{*} > 0$ some absolute constant. If $\gamma$ has unit speed then $\kappa_{\gamma}(x) = |\ddot{\gamma}(x)|$ furnishes its pointwise curvature, in which case we define $\kappa^{*}_{\gamma}(x) := (\ddot{\gamma})^{*}(x) $ and refer to $\kappa^{*}_{\gamma}$ as the curvature maximal function. The following elementary lemma will also prove useful ---
\begin{lemma}\label{lem:sima}
Assume that $u,v \in L^{2}(\T)$ and that there exist finite constants $\frak{u},\frak{v}$ so that the decay estimates
\begin{align*}
\sup_{ k \in \Z , k\neq 0} \, |k| |\hat{u}_k| \leq \mathfrak{u}, \;\; |\hat{u}_{0}| \leq \mathfrak{u} \qquad \text{and} \qquad \sup_{ k \in \Z , k\neq 0} \, |k| |\hat{v}_k| \leq \frak{v}, \;\; |\hat{v}_{0}| \leq \frak{v}
\end{align*}
hold. Then the product $w := uv \in L^{1}(\T)$ obeys the decay estimate
$$
\sup_{ k \in \Z , k\neq 0} \, \frac{|k||\hat{w}_k| }{\log(1+|k|)} \leq C_{1} \frak{u}\frak{v}
$$
for $C_1>0$ a universal constant. If $u,v \in L^{2}(\T)$ obey the decay estimates
\begin{align*}
\sup_{ k \in \Z , k\neq 0} \, |k|^{p} |\hat{u}_k| \leq \frak{u}, \;\; |\hat{u}_{0}| \leq \frak{u} \qquad \text{and} \qquad \sup_{ k \in \Z , k\neq 0} \, |k|^{q} |\hat{v}_k| \leq \frak{v}, \;\; |\hat{v}_{0}| \leq \frak{v},
\end{align*}
where $\frak{u},\frak{v}$ are finite constants and $1 \leq q < p$ are arbitrary exponents, then the product $w := uv$ obeys the decay estimate
$$
\sup_{k \in \Z, k \neq 0} \, |k|^{q}|\hat{w}_k| \leq C_{p,q} \frak{u}\frak{v}
$$
with $C_{p,q}$ a positive constant depending only upon the exponents.
\end{lemma}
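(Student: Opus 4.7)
The plan is to reduce both claims to estimating a discrete convolution of decaying sequences, since the Fourier coefficients of the product satisfy
\[
\hat{w}_{k} = \sum_{j \in \Z} \hat{u}_{k-j}\, \hat{v}_{j},
\]
with absolute convergence because the hypotheses force $\hat{u},\hat{v} \in \ell^{2}(\Z)$, hence $u,v \in L^{2}(\T)$ and $w = uv \in L^{1}(\T)$. Combining the pointwise bounds at zero and nonzero frequencies into a single sequence $a_{m} := \mathfrak{u}/\max(1,|m|^{p})$ and $b_{m} := \mathfrak{v}/\max(1,|m|^{q})$, the whole task then reduces to showing
\[
S_{p,q}(k) := \sum_{j \in \Z} a_{k-j}\, b_{j} \;\lesssim\; \mathfrak{u}\mathfrak{v} \cdot \Phi_{p,q}(k)
\]
with $\Phi_{1,1}(k) = \log(1+|k|)/|k|$ in the first case and $\Phi_{p,q}(k) = 1/|k|^{q}$ in the second.

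To estimate $S_{p,q}(k)$ for $k \neq 0$, I would split $\Z$ into three regions: (i) $|j| \leq |k|/2$, where the frequency is close to the origin; (ii) $|k-j| \leq |k|/2$, where it is close to $k$; and (iii) the complementary region where both $|j|$ and $|k-j|$ exceed $|k|/2$. In region (i), the factor $a_{k-j}$ is small, bounded by $\mathfrak{u} \cdot 2^{p}/|k|^{p}$, so the contribution is controlled by $|k|^{-p}$ times $\sum_{|j| \leq |k|/2} b_{j}$; the latter evaluates to $O(\log|k|)$ when $q = 1$ and $O(1)$ when $q > 1$. Region (ii) is handled symmetrically by exchanging the roles of $a$ and $b$, producing the same log factor exactly in the case $p = q = 1$. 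For region (iii), I would bound $a_{k-j}b_{j} \leq \mathfrak{u}\mathfrak{v}/(|j|^{p}|k-j|^{q})$ and use partial fractions (or comparison with $\int \rd t /(t^{p}(t+k)^{q})$) to see that this tail sum is of smaller order than the contributions from (i)--(ii).

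Collecting the three contributions gives $S_{1,1}(k) \lesssim \mathfrak{u}\mathfrak{v}\,\log(1+|k|)/|k|$ for the first claim, and $S_{p,q}(k) \lesssim_{p,q} \mathfrak{u}\mathfrak{v}/|k|^{q}$ for the second (using $p > q$ precisely to ensure that the $|k|^{-p}$ factor from region (i) dominates the $|k|^{-q}$ factor from region (ii), thereby inheriting the weaker decay rate $q$ and avoiding any logarithmic loss). The main technical obstacle is the region (i)/(ii) analysis in the borderline case $p = q = 1$: this is where the logarithm is genuinely generated by the harmonic-sum behavior $\sum_{1 \leq |j| \leq |k|/2} |j|^{-1} \sim \log|k|$, so any attempt to sharpen the bound below $\log(1+|k|)/|k|$ must fail. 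In the second case, by contrast, the strict inequality $p > q$ provides a built-in safety margin, so that after the three-region split, the crude union bound already delivers the sharp $|k|^{-q}$ rate without any further refinement.
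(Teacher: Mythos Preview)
Your approach is correct and is the standard route to such bilinear Fourier--decay estimates. Note, however, that the paper does not actually supply a proof of this lemma: it simply refers the reader to \cite{von2014nonlinear}, lemmas~6.1 and~6.2, ``for an indication of the proof.'' The argument you outline --- writing $\hat{w}_{k}$ as a discrete convolution and splitting the sum into the three regions $|j|\leq |k|/2$, $|k-j|\leq |k|/2$, and their complement --- is precisely the kind of direct estimate one expects in that reference, and it goes through as you describe. One minor wording quibble: in your final paragraph you say the $|k|^{-p}$ contribution from region~(i) ``dominates'' the $|k|^{-q}$ contribution from region~(ii); you mean the reverse (the $|k|^{-q}$ term is the larger one and sets the final decay rate), but the underlying logic is sound.
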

\noindent See \cite{von2014nonlinear} {\color{black} lemma 6.1 and lemma 6.2,} for instance, for an indication of the proof.

Recall that a unit speed embedding $\gamma: \T \to \R^3$ is \emph{bi-Lipschitz} if there exists a constant $\ell > 0$ so that the inequality
$$
\ell^{-1} \, \dd(x-y) \leq |\gamma(x) - \gamma(y)| \leq \dd(x-y)
$$
holds for any possible pair $x,y \in \T$ of points. Thus $\ell > 0$ exactly when Gromov's \emph{distortion}
$$
\delta_{\infty}(\gamma) := \sup_{ \{ (x,y) \in \T \times \T : \dd(x-y) > 0\} } \, \frac{ \dd(x-y) }{|\gamma(x) - \gamma(y)|}
$$
is finite, with $\delta_{\infty}(\gamma)$ furnishing the smallest possible bi-Lipschitz constant. Moreover, the lower bound $\delta_{\infty}(\gamma) \geq \pi/2$ holds for $\gamma$ any closed, rectifiable curve with unit speed, {\color{black} see pp.6-9 \cite{G81}}. We shall refer to any pair of points $x_0,y_0 \in \T$ for which
$$
\frac{\dd(x_0 - y_0)}{|\gamma(x_0) - \gamma(y_0)|} = \delta_{\infty}(\gamma)
$$
as a \emph{distortion realizing pair}. A simple invocation of Taylor's theorem shows that
$$
|\gamma(x+z) - \gamma(z)|^2 = |z|^{2}( 1 + o(1) ) \quad \text{as} \quad |z| \to 0
$$
for any $\gamma \in H^{2}(\T)$ with unit speed, and thus any such $\gamma$ actually admits a distortion realizing pair. Given a bi-Lipschitz embedding $\gamma$ with unit speed, we define
\begin{equation}
\dd_{\infty}(\gamma) := \sup\left\{ \dd(x_0-y_0) :  \frac{\dd(x_0 - y_0)}{|\gamma(x_0) - \gamma(y_0)|} = \delta_{\infty}(\gamma) \right\}
\end{equation}
as the maximal distance on $\T$ between any distortion realizing pair. {\color{black} By applying the first derivative test to the expression $\dd(x_0 - y_0)/|\gamma(x_0) - \gamma(y_0)|$ we obtain
$$\frac{1}{\delta_{\infty}(\gamma)}= \left| \left<\frac{\gamma(x_0) - \gamma(y_0)}{|\gamma(x_0) - \gamma(y_0)|}, \dot{\gamma}(x_0) \right> \right|,$$
and so by applying Taylor's theorem and the triangle inequality to bound the right hand side of this expression from below we obtain a lower bound}
\begin{equation}\label{eq:z0lower}
\dd_{\infty}(\gamma) \geq \dd(x_0 - y_0) \geq \frac1{4}\left( \frac{ \delta_{\infty}(\gamma) - 1 }{\delta_{\infty}(\gamma) + 1 } \right)^{2}\| \ddot{\gamma}\|^{-2}_{L^{2}(\T)}
\end{equation}
for the distance on $\T$ between any such pair. Finally, if a bi-Lipschitz embedding $\gamma \in H^{2}(\T)$ has unit speed and if we define $\delta(x,y)$ as
$$
|\gamma(x)-\gamma(y)|^{2} = ( 1 - \delta(x,y) )\dd^{2} (y-x) \quad \text{where} \quad 0 \leq \delta(x,y) \leq 1 - 1/\delta^{2}_{\infty}(\gamma),
$$
we may appeal to Taylor's theorem one final time to conclude that the inequalities
\begin{equation}\label{eq:deltabound}
\delta(x,y) \leq \dd(y-x)\kappa^{*}_{\gamma}(x) \qquad \text{and} \qquad \delta(x,y) \leq \sqrt{2\pi}\|\ddot{\gamma}\|_{L^{2}(\T)} \dd^{\frac1{2}}(y-x)
\end{equation}
hold. Despite their elementary proofs, these inequalities prove quite useful.

Finally, for given a bi-Lipschitz embedding $\gamma : \T \mapsto \R^3$ we shall use $\K$ to denote a generic ambient isotopy class. Similarly, we shall employ while the notation $\K_{\gamma}$ if we wish to emphasize the class {induced} by some underlying curve. The following lemma from \cite{R05} and the embedding $H^{2}(\T) \subset C^{1}(\T)$ shows that ambient isotopy classes $\K$ are well-behaved with respect to convergence in both the $C^{1}(\T)$ and $H^{2}(\T)$ topologies ---
\begin{lemma}\label{lem:ambclass}
Let $\gamma \in C^{1}(\T)$ denote a {\color{black} positive velocity}, simple closed curve. Then there exists $\epsilon = \epsilon(\gamma)$ such that all $\tilde{\gamma} \in C^{1}(\T)$ satisfying $\|\gamma - \tilde{\gamma}\|_{C^{1}(\T)}\leq \epsilon$ are ambient isotopic. In particular, $\tilde{\gamma} \in \K_{\gamma}$ if $\|\gamma - \tilde{\gamma}\|_{C^{1}(\T)}$ is sufficiently small.
\end{lemma}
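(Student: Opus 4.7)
The plan is to construct an explicit ambient isotopy from $\gamma$ to $\tilde{\gamma}$ by a straight-line homotopy of parametrizations, then extend it radially in a tubular neighborhood. First, since $\gamma \in C^{1}(\T)$ is simple with $|\dot{\gamma}(x)|>0$ on the compact torus, there exist constants $c>0$ and $r_{0}>0$ such that $|\dot{\gamma}(x)| \geq c$ for every $x$ and such that the tube $T_{r_{0}} := \{\,p \in \R^{3} : \mathrm{dist}(p,\gamma(\T)) < r_{0}\,\}$ admits a continuous nearest-point retraction $\pi : T_{r_{0}} \to \gamma(\T)$ whose fibers are flat disks transverse to the tangent field of $\gamma$. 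This tubular neighborhood exists because $C^{1}$-embeddings of compact manifolds into $\R^{3}$ have positive reach.

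Second, I would check that for $\epsilon$ sufficiently small the straight-line homotopy
\[
\Gamma_{s}(x) := (1-s)\gamma(x) + s\tilde{\gamma}(x), \qquad s \in [0,1],
\]
defines a $C^{1}$-isotopy of embeddings from $\gamma$ to $\tilde{\gamma}$. The immersion property is immediate: if $\|\gamma - \tilde{\gamma}\|_{C^{1}(\T)} \leq \epsilon < c$, then $|\dot{\Gamma}_{s}(x)| \geq c - \epsilon > 0$ uniformly in $s$ and $x$. For global injectivity I would split $\T \times \T$ into two regimes. On pairs with $\dd(x_{1}-x_{2}) < \delta_{0}$, a mean value theorem argument in a chart, using the uniform lower bound on $|\dot{\Gamma}_{s}|$, gives local injectivity, where $\delta_{0}>0$ is chosen independently of $s$. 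On pairs with $\dd(x_{1}-x_{2}) \geq \delta_{0}$, compactness plus simplicity of $\gamma$ yields $m(\delta_{0})>0$ with $|\gamma(x_{1})-\gamma(x_{2})| \geq m(\delta_{0})$, and the triangle inequality gives $|\Gamma_{s}(x_{1})-\Gamma_{s}(x_{2})| \geq m(\delta_{0}) - 2\epsilon > 0$ once $\epsilon < m(\delta_{0})/2$. Taking $\epsilon$ smaller than the minimum of $c$, $m(\delta_{0})/2$, and $r_{0}/2$ secures everything simultaneously.

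Third, I would upgrade the isotopy $\Gamma_{s}$ of $\T$ to a compactly supported ambient isotopy $H_{s} : \R^{3} \to \R^{3}$ with $H_{0}=\mathrm{id}$ and $H_{1} \circ \gamma = \tilde{\gamma}$. Concretely, by construction $\Gamma_{s}(\T) \subset T_{r_{0}/2}$ for all $s$, so one can define $H_{s}$ on $T_{r_{0}}$ by parametrizing the tube as a disk bundle over $\gamma(\T)$ via $\pi$, translating each fiber by the vector $\Gamma_{s}(x) - \gamma(x)$ where $x = \gamma^{-1}(\pi(p))$, and interpolating to the identity with a radial cutoff supported in $T_{r_{0}}$. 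The $C^{1}$-smallness of $\tilde{\gamma} - \gamma$ makes this map a homeomorphism of $\R^{3}$ at each $s$, yielding the desired ambient isotopy. The final ``in particular'' statement is then immediate from the definition of $\K_{\gamma}$.

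The main obstacle I anticipate is the injectivity step for $\Gamma_{s}$, since one must verify that the two regimes patch together uniformly in $s \in [0,1]$ while having only $C^{1}$ (not $C^{2}$) regularity available. The tubular neighborhood and the cutoff extension in the third step are routine for $C^{1}$-embeddings, but care is needed to ensure the glued map remains a homeomorphism rather than merely a continuous surjection.
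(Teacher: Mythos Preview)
The paper does not supply its own proof of this lemma; it is quoted from an external reference and used as a black box. Your sketch is the standard route and is essentially correct: the straight-line homotopy $\Gamma_{s}$ is a $C^{1}$-isotopy of embeddings under the stated smallness hypothesis (your two-regime injectivity argument is exactly the right one, and it does patch uniformly in $s$ since all constants involved depend only on $\gamma$ and $\epsilon$), and any such isotopy of a compact submanifold extends to a compactly supported ambient isotopy.

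The only point that needs more care is your step three. For a merely $C^{1}$ curve the tubular-neighborhood parametrization is a priori only a $C^{0}$ homeomorphism (the normal planes vary continuously but not differentiably), so you cannot verify that your glued map $H_{s}$ is a homeomorphism by appealing to the inverse function theorem. One instead checks that $H_{s}$ is proper, of degree one, and locally injective, or one bypasses the explicit construction entirely and invokes the isotopy extension theorem in the $C^{1}$ category. You already identify this as the delicate step, so the plan is sound.
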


\section{Basic Energetics}
This section provides a brief analysis of nonlocal energies that, when defined over unit speed embeddings, take the form
\begin{equation}\label{eq:EKdef}
E_{K}(\gamma) := \frac1{4}\int_{\T\times\T} K\left( |\gamma(x)-\gamma(y)|^2, \dd^2(x-y) \right) \, \rd x \rd y
\end{equation}
for $K(u,v)$ some bi-variate kernel. We shall also consider the normalized or scale-invariant bending energy
$$
E_{ {\rm b} }(\gamma) := \frac{\lenG}{2\pi} \fint_{\T} \kappa^2_{\gamma}(x) |\dot{\gamma}(x)| \, \rd x
$$
as well as the superposition $E_{ {\rm b}, K }(\gamma) := E_{ {\rm b}}(\gamma) + E_{K}(\gamma)$ of such nonlocal energies \eqref{eq:EKdef} with the bending energy. Well-known examples of kernels in \eqref{eq:EKdef} include
\begin{equation}\label{eq:choices}
K(u,v) = \frac1{u} - \frac1{v} \quad (\text{M\"{o}bius}) \qquad \text{and} \qquad K(u,v) = \left( \frac1{u^{j}} - \frac1{v^{j}} \right)^{q} \quad (\text{O'Hara}).
\end{equation}
The motivation for these choices arises, at least in part, from the fact $E_{K}$ then defines a differentiable approximation of the distortion. More specifically, prior work \cite{FHW,OH3} demonstrates that $E_{K}(\gamma) < +\infty$ necessarily implies that $\gamma$ has finite distortion. Moreover, the O'Hara family converges as $j \to 0$ and $q \to \infty$ to the log-distortion $\log \delta_{\infty}(\gamma)$ of $\gamma$ after a suitable normalization. This observation suggests the somewhat more obvious family
$$
K(u,v) = \frac{v^{q}}{u^{q}} \quad (2q-\text{Distortion}),
$$
since $E_{K}$ then corresponds to the classical $L^{2q}$-norm approximation of the $L^{\infty}$-norm. Further motivations for using \eqref{eq:choices} include the fact that the M\"{o}bius energy and a large class of the O'Hara energies attain their global minimum at the standard embedding of the unit circle, as well as the fact that the M\"{o}bius energy exhibits a relationship with classical combinatorial quantities such as the crossing number.

We now show that a very large class of energies based on $E_{K}(\gamma)$ have these three motivating properties. In a certain sense these properties are best understood from a pure analytical point-of-view, rather than from an appeal to geometric or topological considerations (e.g. M\"{o}bius invariance). {\color{black} This point of emphasis echoes an observation made in earlier work on the M\"{o}bius energy --- the smoothness of its critical points follows without explicitly appealing to M\"{o}bius invariance itself \cite{BRS}.} We begin by showing that, provided $K(u,v)$ exhibits a sufficient degree of singularity, finiteness of the integral \eqref{eq:EKdef} necessarily implies that an $H^{2}(\T)$ curve $\gamma$ is bi-Lipschitz. We shall also obtain a concrete bound for the distortion $\delta_{\infty}(\gamma)$ in terms of $E_{K}$ and the $\dot{H}^{2}(\T)$ semi-norm in the process, although the bound itself is far from optimal for general kernels.

\begin{lemma}\label{lem:distbound}
Assume that $K(u,v)$ satisfies the $h/p$ homogeneity property
$$
K\left( \frac{v}{\alpha} , v \right) = h(\alpha) v^{-p} \quad \text{for all} \quad \alpha \geq 1
$$
for some exponent $p \geq 0$ and some function $h \in C^{1}((1,\infty) )$. Assume that the lower bound $h^{\prime}(\alpha) \geq \lambda_{h} > 0$ holds on $[2,\infty)$ as well. If $\gamma \in C^{0,1}(\T)$ has unit speed then
$$
\log\left( 1 + c_{0} \delta_{\infty}(\gamma) \right) \leq C_{0} \left( 1 + E_{K}(\gamma) \right) \dd^{-2}_{\infty}(\gamma)
$$
for $C_0(p,h),c_0(p,h) > 0$ finite constants. Moreover, if $\gamma \in H^{2}(\T)$ then
\begin{equation}\label{eq:log}
\log\left( 1 + c_{0} \delta_{\infty}(\gamma) \right) \leq C_0 \left( 1 + E_{K}(\gamma) \right) \|\ddot{\gamma}\|^{4}_{L^{2}(\T)},
\end{equation}
and so $\gamma$ is bi-Lipschitz if $E_{K}(\gamma)$ is finite.
\end{lemma}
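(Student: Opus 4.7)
The plan is to reduce matters to a dyadic estimate near a distortion-realizing pair, where the linear growth of $h$ supplied by $h^{\prime} \geq \lambda_{h}$ forces the energy to blow up logarithmically. Under the homogeneity hypothesis, substituting $v = \dd^{2}(x-y)$ and $\alpha = \dd^{2}(x-y)/|\gamma(x)-\gamma(y)|^{2}$, which is $\geq 1$ by unit speed, rewrites
$$
E_{K}(\gamma) = \frac{1}{4}\int_{\T\times\T} h\!\left(\frac{\dd^{2}(x-y)}{|\gamma(x)-\gamma(y)|^{2}}\right) \dd^{-2p}(x-y) \, \rd x \rd y.
$$
I would then fix a distortion-realizing pair $(x_{0},y_{0})$ with $\dd(x_{0}-y_{0}) = \dd_{\infty}(\gamma)$, which is provided in the $H^{2}$ case by the Taylor argument recorded in the preliminaries (and if no such pair exists in the $C^{0,1}$ case then $\dd_{\infty}(\gamma) = 0$, rendering the first inequality vacuous).

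Second, for $k = 1,2,\ldots$ I would introduce the diamond annulus $A_{k} := \{(x,y) \in \T\times\T : 2^{k-1}\dd_{\infty}/\delta_{\infty} \leq |x-x_{0}|+|y-y_{0}| \leq 2^{k}\dd_{\infty}/\delta_{\infty}\}$. The unit-speed Lipschitz bound $|\gamma(x)-\gamma(y)| \leq \dd_{\infty}/\delta_{\infty} + |x-x_{0}|+|y-y_{0}|$ paired with the reverse triangle inequality $\dd(x-y) \geq \dd_{\infty} - (|x-x_{0}|+|y-y_{0}|)$ implies, throughout $A_{k}$,
$$
\frac{\dd^{2}(x-y)}{|\gamma(x)-\gamma(y)|^{2}} \gtrsim \frac{\delta_{\infty}^{2}(\gamma)}{4^{k}}, \qquad \tfrac{1}{2}\dd_{\infty}(\gamma) \leq \dd(x-y) \leq 2\dd_{\infty}(\gamma),
$$
as long as $2^{k}$ is dominated by a small absolute multiple of $\delta_{\infty}(\gamma)$. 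The lower bound $h^{\prime} \geq \lambda_{h}$ on $[2,\infty)$ then yields $h(\alpha) \gtrsim \lambda_{h}\delta_{\infty}^{2}(\gamma)/4^{k}$ on $A_{k}$ once $\delta_{\infty}(\gamma)$ exceeds an $h$-dependent threshold. Since $|A_{k}| \approx 4^{k}\dd_{\infty}^{2}/\delta_{\infty}^{2}$ and $\dd^{-2p}(x-y) \gtrsim \dd_{\infty}^{-2p}(\gamma)$ on the same set, the three factors combine to give a contribution $\gtrsim \dd_{\infty}^{2-2p}(\gamma)$ from each $A_{k}$, \emph{uniformly} in $k$.

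Third, summing the $\sim \log_{2}\delta_{\infty}(\gamma)$ admissible dyadic levels produces
$$
E_{K}(\gamma) \gtrsim \log(1 + c_{0}\delta_{\infty}(\gamma)) \cdot \dd_{\infty}^{2-2p}(\gamma).
$$
Rearranging and invoking $\dd_{\infty}(\gamma) \leq \pi$ to absorb $\dd_{\infty}^{2p}(\gamma)$ into a $p$-dependent constant converts $\dd_{\infty}^{2p-2}$ on the right-hand side into $\dd_{\infty}^{-2}$ and yields the first displayed inequality; adding $1$ to $E_{K}$ on the right absorbs the small-$\delta_{\infty}$ regime in which the dyadic sum is empty. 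The second inequality is then immediate from \eqref{eq:z0lower}: the prefactor $((\delta_{\infty}+1)/(\delta_{\infty}-1))^{4}$ is bounded once $\delta_{\infty}(\gamma) \geq 2$, while for $\delta_{\infty}(\gamma) < 2$ the left-hand side is bounded and the claim is trivial with $C_{0}$ large enough.

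The key technical obstacle is the dyadic ratio bound --- specifically, ensuring that on each $A_{k}$ the triple $h(\alpha)\cdot \dd^{-2p}\cdot |A_{k}|$ is actually constant in $k$, rather than geometrically decaying or growing, so that the sum over $k$ genuinely manufactures a logarithm. This in turn hinges on the lower bound on $\alpha$ being essentially sharp for the chosen annular geometry, which is where the interplay between the unit-speed Lipschitz estimate and the reverse triangle inequality plays its role. The remaining bookkeeping --- matching the stated form with $\dd^{-2}$ rather than $\dd^{2p-2}$ and handling the subthreshold regime via the additive $1$ --- is routine absorption of $p$- and $h$-dependent constants.
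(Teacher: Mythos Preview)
Your argument is correct and captures the same mechanism as the paper --- the linear growth of $h$ forces the integrand near a distortion-realizing pair to scale like $1/r^{2}$ in the $\ell^{1}$-distance $r = |x-x_{0}|+|y-y_{0}|$, and integrating this over scales $d_{0} \lesssim r \lesssim \dd_{\infty}$ produces $\log(\dd_{\infty}/d_{0}) = \log \delta_{\infty}$. The paper, however, executes this differently: rather than a dyadic decomposition into annuli $A_{k}$, it works on a single small box $A_{\epsilon}(z_{0}) = \{\dd(x-x_{0}),\dd(y-y_{0}) \leq \tfrac{\epsilon}{2}z_{0}\}$ for fixed small $\epsilon$, uses Young's inequality $2ab \leq \sigma a^{2} + b^{2}/\sigma$ with a tuning parameter $\sigma$ to get a pointwise upper bound on $|\gamma(x)-\gamma(y)|^{2}$, and then computes the resulting two-dimensional integral directly in polar coordinates as $\int_{0}^{\epsilon z_{0}/2} s\,\rd s / ((2+\sigma)s^{2} + (1+2/\sigma)d_{0}^{2})$, which evaluates explicitly to a logarithm. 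Your dyadic version has the advantage of making the scale-invariance transparent and avoiding the auxiliary parameters $\epsilon,\sigma$; the paper's polar-coordinate computation is more direct and gives slightly cleaner tracking of the constants. Both routes implicitly assume the integrand is nonnegative off the chosen region (so that restricting the domain gives a lower bound on $E_{K}$), which holds for all the motivating kernels. Your handling of the $H^{2}$ case via \eqref{eq:z0lower} matches the paper exactly.
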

\begin{proof} See appendix, lemma \ref{lem:distboundA} \end{proof}
\noindent The $h/p$ homogeneity hypothesis provides a means to unify and simplify our analysis, yet it proves general enough to cover all families $K(u,v)$ introduced so far. For instance, we have $h(\alpha) = \alpha - 1$ and {\color{black} $p=1$} for the M\"{o}bius energy, the relations $h(\alpha) = (\alpha^{j}-1)^{q}$ and $p=jq$ for the O'Hara family and $h(\alpha) = \alpha^{q}$ and $p=0$ for the $2q$-distortion. The lemma therefore applies for all three families, provided $jq \geq 1$ in the second case and $q\geq1$ in the final case. It is also clear that the conclusion holds for kernels of the form $K(u,v) = K_{0}(u,v) + K_{1}(u,v)$ with $K_0$ an $h/p$ homogeneous kernel and $K_1$ bounded from below, although we have no impetus to pursue this level of generality since the requisite modifications to the argument and its conclusion are straightforward. Finally, we cannot remove the dependence on $\dd_{\infty}(\gamma)$ or dispense with the hypothesis that $\| \ddot{\gamma}\|_{L^{2}(\T)} < +\infty$ and still have the conclusion of the lemma hold at this level of generality. Indeed, for the $2q$-distortion family it is easy to construct a smooth sequence $\gamma^{n}$ for which $E_{K}(\gamma^n)$ remains uniformly bounded but $\delta_{\infty}(\gamma^n)$ and $\| (\gamma^n)^{\pprime}\|_{L^{2}(\T)}$ diverge.

The previous lemma illustrates the fact that a wide class of energies
$$
E_{K}(\gamma) := \frac1{4}\int_{\T \times \T} K\left( |\gamma(x) - \gamma(y)|^2 , \dd^2(x-y) \right) \, \rd y \rd x
$$
approximate the distortion. Under similar hypotheses on the kernel $K(u,v),$ both the energy $E_{K}(\gamma)$ and the bending energy $E_{ {\rm b}}(\gamma)$ attain their global minima at the standard embedding $\gamma_{ {\rm circ}}$ of the unit circle {\cite{ACFGH,LS85}}. We shall quickly review the arguments underlying these known facts, as this discussion will allow us to emphasize an analytical point --- both arguments appeal to the same classical technique in the calculus of variations, i.e. the use of Poincar\`{e}-type inequalities with optimal constants. For instance, the statement $E_{K}(\gamma) \geq E_{K}(\gamma_{ {\rm circ}})$ follows  directly from the Poincar\`{e}-type inequality
\begin{equation}\label{eq:poincare1}
\fint_{\T} |\gamma(x+z) - \gamma(x)|^2 \, \rd x \leq 2(1-\cos(z)) \fint_{\T} |\dot{\gamma}(x)|^2 \, \rd x
\end{equation}
with optimal constant. Following \cite{ACFGH}, for unit speed curves the inequality \eqref{eq:poincare1} yields
$$
K\left( \fint_{\T} |\gamma(x+z) - \gamma(x)|^{2} \, \rd x, z^2 \right) \geq K\left( 2(1-\cos(z)) , z^2 \right),
$$
whenever $K(u,v)$ decreases in its first argument over the non-negative reals. By Jensen's inequality this in turn shows that
$$
\frac{ 4 E_{K}(\gamma) }{4\pi^2} = \fint_{\T} \fint_{\T} K\left( |\gamma(x+z) - \gamma(x)|^{2} , z^2 \right)\, \rd x \rd z \geq \fint_{\T} K\left( 2(1-\cos(z)) , z^2 \right) \, \rd z = \frac{4 E_{K}(\gamma_{\mathrm{circ}})}{4\pi^2}
$$
whenever $K(\cdot,z^2)$ is also convex. Demonstrating global optimality of $\gamma_{ {\rm circ}}$ for the bending energy proceeds in a similar fashion. In this case the classical Poincar\`{e} inequality with optimal constant
\begin{equation}\label{eq:poincare2}
\fint_{\T} |\dot{\gamma}(x)|^2 \, \rd x \leq \fint_{\T} |\ddot{\gamma}(x)|^2 \, \rd x
\end{equation}
provides the starting point. Letting $\gamma_1$ denote the unit speed embedding of $2\pi\gamma/\lenG,$ a simple appeal to parametrization and scale invariance of $E_{ {\rm b}}(\gamma)$ shows
$$E_{ {\rm b}}(\gamma) = \fint_{\T} |\ddot{\gamma_{1}}(x)|^2 \, \rd x \geq \fint_{\T} |\dot{\gamma}_{1}(x)|^2 \, \rd x = 1 = E_{ {\rm b}}(\gamma_{ {\rm circ}} ).$$
As the class of standard circles $\gamma_{ {\rm circ}}$ furnish the only unit speed curves that achieve equality in either \eqref{eq:poincare1} or \eqref{eq:poincare2}, we may recall
\begin{theorem}
Suppose that for each $z \in \T$ the function $f_{z}(u) := K(u,z^2)$ is non-decreasing and convex on $\mathbb{R}^{+}$. Then for any $\nu \geq 0$ the energy
$$
E_{\nu,K}(\gamma) := \nu E_{ {\rm b}}(\gamma) + E_{K}(\gamma)
$$
attains its minimum over unit speed curves at the unit circle. Moreover, if either $\nu > 0$ or $f_{z}(u)$ is strictly decreasing then unit circles are the unique global minimizers.
\end{theorem}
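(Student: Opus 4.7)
The plan is to carry out the argument already sketched in the paragraphs preceding the theorem, supplemented by a Fourier-side equality analysis to secure the uniqueness statement. The two key ingredients are the sharp Poincar\'{e}-type inequalities (3.10) and (3.11), combined with Jensen's inequality and the monotonicity/convexity hypothesis on $f_{z}(u):=K(u,z^{2})$.

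For the bending term, given any $\gamma$ of positive length let $\gamma_{1}:=(2\pi/\lenG)\gamma$ denote its unit-speed reparametrization. Parametrization and scale invariance of $E_{\mathrm{b}}$ together with (3.11) applied to the mean-zero vector field $\dot\gamma_{1}$ yield
\[
E_{\mathrm{b}}(\gamma)=\fint_{\T}|\ddot\gamma_{1}|^{2}\,\rd x\;\geq\;\fint_{\T}|\dot\gamma_{1}|^{2}\,\rd x\;=\;1\;=\;E_{\mathrm{b}}(\gamma_{\mathrm{circ}}).
\]
For the nonlocal piece, assume $\gamma$ is unit speed; (3.10) gives $\fint|\gamma(x+z)-\gamma(x)|^{2}\,\rd x\leq 2(1-\cos z)$ for every $z\in\T$. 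Applying the monotonicity hypothesis on $f_{z}$ (in the direction compatible with minimization), followed by Jensen's inequality for the convex $f_{z}$, yields the chain
\[
\fint_{\T} K\bigl(|\gamma(x+z)-\gamma(x)|^{2},z^{2}\bigr)\,\rd x\;\geq\;f_{z}\!\left(\fint_{\T}|\gamma(x+z)-\gamma(x)|^{2}\,\rd x\right)\;\geq\;K\bigl(2(1-\cos z),z^{2}\bigr).
\]
Integrating over $z$ recovers $E_{K}(\gamma)\geq E_{K}(\gamma_{\mathrm{circ}})$, and adding $\nu$ times the bending bound delivers the announced minimality of $E_{\nu,K}$.

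For uniqueness I would track equality through Fourier coefficients. Writing $\gamma_{1}(x)=\sum_{k}\hat\gamma_{1,k}\re^{ikx}$, Parseval recasts (3.11) as $\sum_{k}(k^{4}-k^{2})|\hat\gamma_{1,k}|^{2}\geq 0$; equality in the bending step thus forces $\hat\gamma_{1,k}=0$ for all $|k|\geq 2$, and the constraint $|\dot\gamma_{1}|\equiv 1$ then pins the remaining two Fourier modes to a standard circle embedding. Consequently $\nu>0$ yields uniqueness immediately. If instead $\nu=0$ but $f_{z}$ is strictly monotone, a strict inequality $\fint|\gamma(x+z)-\gamma(x)|^{2}\,\rd x<2(1-\cos z)$ on any $z$-set of positive measure would propagate through the chain to give $E_{K}(\gamma)>E_{K}(\gamma_{\mathrm{circ}})$. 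Hence equality forces $\fint|\gamma(x+z)-\gamma(x)|^{2}\,\rd x=2(1-\cos z)$ for a.e.\ $z\in\T$; expanding both sides in Fourier yields the identity $(1-\cos(kz))=k^{2}(1-\cos z)$ on the support of each nonzero $\hat\gamma_{k}$, once again forcing $|k|\leq 1$ and hence a circle.

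The only genuine obstacle is propagating the integrated equality in the Jensen--monotonicity chain back to the pointwise-in-$z$ identity that Fourier can digest; this step needs a continuity-plus-strict-positivity argument exploiting the strict monotonicity of $f_{z}$. Beyond it, the remainder is bookkeeping on top of the sketch already provided in the preceding pages.
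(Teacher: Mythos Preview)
Your proposal is correct and follows essentially the same route as the paper: apply the two Poincar\'e-type inequalities \eqref{eq:poincare1}, \eqref{eq:poincare2} together with Jensen and monotonicity, exactly as in the discussion preceding the theorem. You supply more detail than the paper on the equality cases (the paper merely asserts that circles are the only unit-speed curves attaining equality in \eqref{eq:poincare1} or \eqref{eq:poincare2}), and your Fourier-side analysis of that is fine; note also that you correctly silently repaired the evident typo in the hypothesis (``non-decreasing'' should read ``non-increasing'' to match both the preceding discussion and the ``strictly decreasing'' uniqueness clause).
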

\noindent In particular, the M\"{o}bius energy as well as the O'Hara family and the $2q$-Distortion family all have $\gamma_{ {\rm circ}}$ as their global minimizer. Once again, the arguments above leading to this theorem are both standard and known {\cite{FHW,OH4,ACFGH}}; we recall them simply to emphasize the connection between them as well as the broader connection to classical PDE and variational arguments. Thus there are analytical properties, such as convexity and Sobolev inequalities, rather than geometric properties, such as M\"{o}bius invariance, lying at the heart of the matter.

Finally, we turn our attention toward relating the energies $E_{K}(\gamma)$ and $E_{ {\rm b}}(\gamma)$ to more classical measures of complexity. Once again, analytical considerations and Sobolev spaces shall come to the fore. A corollary of this analysis will also allow us to illustrate that an energy of the form $E_{ {\rm b},K}(\gamma)$ induces, in a certain sense, a much stronger invariant than the crossing number. Following {\cite{FHW,FH,A}}, we shall begin by considering the mapping $f(x,z) : \T \times \T \to \Sd$ defined by
$$
f_{\gamma}(x,z) := \left( \frac{ \gamma(x) - \gamma(x+z) }{|\gamma(x) - \gamma(x+z)|} \right)\mathbf{1}_{ \{|z| > 0\} }(z)
$$
with $\gamma:\T \to \R^3$ a Lipschitz embedding with finite distortion. For any such Lipschitz curve $\gamma(x)$ the function $f_{\gamma}(x,z)$ is Lipschitz in both variables whenever $|z| > \epsilon$ and $\gamma(x)$ has finite distortion. For such functions, the co-area formula for Lipschitz maps {\cite{federer2014geometric}} therefore implies that
$$
\int_{\T \times \T}  \frac{ w(x,z)| \langle \dot{\gamma}(x) , \dot{\gamma}(x+z) , \gamma(x+z) - \gamma(x) \rangle | }{|\gamma(x+z) - \gamma(x)|^{3}} \, \rd z \rd x = \int_{ \Sd } \left( \sum_{ (x,z) \in f^{-1}_{\gamma}(\sigma) } w(x,z) \right) \, \rd \Sd_{\sigma},
$$
provided $w : \T \times \T \mapsto [0,\infty]$ is a positive, Lebesgue measurable function. For a given $\sigma \in \Sd$ let us define the crossing set $C_{\gamma}(\sigma) \subset \T \times \T$ as
$$
C_{\gamma}(\sigma) = \left\{ (x,z) : (P_{\sigma} \gamma)(x) = (P_{\sigma} \gamma)(x+z) , z \neq 0 \right\},
$$
where $P_{\sigma} \gamma := \gamma - \langle \gamma , \sigma\rangle \sigma \subset \sigma^{\perp}$ represents the corresponding planar curve induced by orthogonal projection. Thus $(x,z) \in C_{\gamma}(\sigma)$ precisely when distinct points of $P_{\sigma} \gamma$ self-intersect. As the equality
$$
C_{\gamma}(\sigma) = f^{-1}_{\gamma}(\sigma) \sqcup f^{-1}_{\gamma}(-\sigma)
$$
holds by a trivial calculation, we may therefore conclude
\begin{equation}\label{eq:gcn}
\int_{\T \times \T} \frac{ w(x,z) | \langle \dot{\gamma}(x) , \dot{\gamma}(x+z) , \gamma(x+z) - \gamma(x) \rangle | }{|\gamma(x+z) - \gamma(x)|^{3}} \, \rd z \rd x = \frac1{2} \int_{ \Sd } \left( \sum_{ (x,z) \in C_{\gamma}(\sigma) } w(x,z) \right) \, \rd \Sd_{\sigma}.
\end{equation}
When $w \equiv 1$ the integrand
$$
\sum_{ (x,z) \in C_{\gamma}(\sigma) } w(x,z)
$$
in \eqref{eq:gcn} simply gives the cardinality of $C_{\gamma}(\sigma),$ and so \eqref{eq:gcn} reduces to a constant multiple of the average crossing number. We may utilize the non-negative function $w: \T \times \T \mapsto [0,\infty]$ to apply a positive weight applied to each point of self-intersection in $C_{\gamma}(\sigma),$ and in this way arrive at a weighted generalization or ``weighted'' crossing number. By analogy with electrostatics, we shall consider the simple power-law family
\begin{equation}\label{eq:gpdef}
w_{p}(x,z) := \left( \frac{ \lenG }{ D_{\gamma}(\gamma(x),\gamma(x+z)) } \right)^{p} \qquad (p \geq 0)
\end{equation}
of weighting functions, where $D_{\gamma}(\gamma(x),\gamma(x+z))$ denotes the arc-length between points. We may then view \eqref{eq:gcn} as inducing a ``repulsion'' between points of self-intersection, in the sense that small values occur when weighted crossings are, on average, equally spaced along $\gamma$ as measured by relative length. We primarily intend this family as an analytical device or gauge for measuring and drawing analytical comparsions --- a bound on \eqref{eq:gcn} for $p>0$ represents a stronger conclusion, in general, than a bound on the crossing number or average crossing number. Moreover, for smooth curves $\gamma$ Taylor's theorem immediately yields
$$
|\langle \dot{\gamma}(x) , \dot{\gamma}(x+z) , \gamma(x+z) - \gamma(x) \rangle| = \frac{z^4}{12}|\langle \dot{\gamma}(x) , \ddot{\gamma}(x) , \dddot{\gamma}(x) \rangle| + O(z^5),
$$
while the denominator in \eqref{eq:gcn} scales like $|z|^{3+p}$ near the origin. If we define $c_{p}(\gamma)$ as \eqref{eq:gcn} with the weight \eqref{eq:gpdef} then a bound of the form
\begin{equation}\label{eq:bestpossible}
\sup_{ 0 \leq p < 2} \, (2-p)c_{p}(\gamma) \leq F\left( E_{ {\rm b},K}(\gamma) \right) \quad \text{with} \quad F(z) \text{ continuous, increasing}
\end{equation}
represents the strongest possible conclusion, and therefore the upper limit of our analytical scale.

We shall need with the following lemma in order to perform our analytical comparison.
\begin{lemma}\label{lem:mainenergy}
Assume that $\gamma \in C^{0,1}(\T)$ and that $\gamma$ has finite distortion. Assume further that $\gamma$ has constant speed. Then
\begin{equation}\label{eq:mainenergy}
c_{p}(\gamma) \leq  C_{p}\left( \frac{2\pi}{\lenG} \right)^2  \|\gamma\|^{2}_{ \dot{H}^{\frac{3+p}{2}}(\T)} \delta^3_{\infty}(\gamma),
\end{equation}
and if $-1 < p < 1$ then the constant
$$
C_{p} := 2\sqrt{2}(2\pi)^{p+1}\left( \int^{\infty}_{0} \frac{(1-\cos(u)) }{u^{2+p} } \, \rd u \right)^{\frac1{2}} \left( \int^{\infty}_{0}\frac{(1-\cos(u))^{2} + (\sin u - u)^2 }{u^{4+p} }  \, \rd u\right)^{\frac1{2}}
$$
is finite.
\end{lemma}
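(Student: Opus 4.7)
The plan is to prove the bound by combining three ingredients: (i) the distortion inequality, which replaces the Euclidean denominator $|\gamma(x+z)-\gamma(x)|^{3}$ by a power of the geodesic distance $\dd(z) = |z|$ on $\T$, at the cost of a factor $\delta_{\infty}^{3}(\gamma)$; (ii) an algebraic rewriting of the scalar triple product using multilinearity, to expose its natural vanishing as $|z| \to 0$; and (iii) a single Cauchy-Schwarz step followed by a Plancherel computation that identifies the resulting integrals with the squared $\dot H^{(3+p)/2}(\T)$ seminorm of $\gamma$.

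For step (ii), I would use that the triple product is invariant under column operations that add multiples of the first entry to the other two. Taking these multiples to be $\dot\gamma(x)$ and $z\dot\gamma(x)$ yields the identity
$$\langle \dot\gamma(x), \dot\gamma(x+z), \gamma(x+z)-\gamma(x)\rangle = \langle \dot\gamma(x), \dot\gamma(x+z)-\dot\gamma(x), \gamma(x+z)-\gamma(x) - z\dot\gamma(x)\rangle,$$
whose second and third entries are exactly the first- and second-order Taylor remainders of $\gamma$ at $x$. The crude bound $|\langle a,b,c\rangle| \leq |a||b||c|$, together with $|\dot\gamma(x)| = \lenG/(2\pi)$ from the constant-speed assumption, produces the pointwise estimate
$$|\text{triple product}| \leq \frac{\lenG}{2\pi}\,|\dot\gamma(x+z)-\dot\gamma(x)|\cdot|\gamma(x+z)-\gamma(x) - z\dot\gamma(x)|.$$
Combining this with the distortion bound $|\gamma(x+z)-\gamma(x)|^{-3} \leq \delta_{\infty}^{3}(\gamma)(2\pi/\lenG)^{3}|z|^{-3}$ and the identity $w_p(x,z) = (2\pi/\dd(z))^{p}$ produces an intermediate estimate on $c_p(\gamma)$ in which the scaling factor $(2\pi/\lenG)^{2}$ appears in the desired power.

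A single Cauchy-Schwarz in $(x,z)$, splitting the denominator $|z|^{-(3+p)}$ symmetrically as $|z|^{-(2+p)/2}\cdot|z|^{-(4+p)/2}$, reduces the remaining task to bounding the two integrals
$$A := \int_{\T \times \T} \frac{|\dot\gamma(x+z)-\dot\gamma(x)|^2}{|z|^{2+p}} \, dx\, dz, \qquad B := \int_{\T \times \T} \frac{|\gamma(x+z)-\gamma(x)-z\dot\gamma(x)|^2}{|z|^{4+p}} \, dx\, dz.$$
Expanding $\gamma$ in Fourier series and applying Plancherel in $x$ reduces each inner integrand to a one-dimensional $z$-integral: for $A$ one gets $|e^{ikz}-1|^{2} = 2(1-\cos(kz))$, and for $B$ one gets $|e^{ikz}-1-ikz|^{2} = (1-\cos(kz))^{2} + (\sin(kz)-kz)^{2}$. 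A change of variables $u = kz$ in each, extended to $\R$ for an upper bound, factors out $|k|^{3+p}$ and gives $A, B \lesssim \|\gamma\|_{\dot H^{(3+p)/2}(\T)}^{2}$, where the implicit constants are the two one-dimensional integrals appearing in the statement of $C_p$. Convergence of these integrals at zero and at infinity both reduce to $-1 < p < 1$, matching the hypothesis.

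The main obstacle is really the algebraic recognition in step (ii): without rewriting the triple product to expose its higher-order vanishing at $z=0$, the Cauchy-Schwarz splitting would not land in a Sobolev space at all. Once that rewriting is in hand, the remainder is bookkeeping; tracking the $(2\pi)$ factors from the Plancherel normalization used in the paper yields precisely the stated constant $C_p = 2\sqrt{2}(2\pi)^{p+1}\sqrt{I_1 I_2}$, with $I_1,I_2$ the displayed one-dimensional integrals.
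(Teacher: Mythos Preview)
Your proposal is correct and follows essentially the same approach as the paper's proof: the same triple-product identity, the same distortion and constant-speed bounds, the same Cauchy--Schwarz splitting into the two integrals you call $A$ and $B$, and the same Parseval computation with the change of variables $u=kz$ extended to $[0,\infty)$.
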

\begin{proof} See appendix, lemma \ref{lem:mainenergyA} \end{proof}
\noindent This lemma allows us to relate combinatorial measures of complexity for a curve $\gamma$ to more standard measures of regularity of the embedding. By taking $p=0$ and evaluating the constant $C_{p} = 4\pi^2/\sqrt{3}$ explicitly we may observe the most straightforward consequence, i.e. the inequality
\begin{equation}\label{eq:acnbound1}
\bar{c}(\gamma) := \frac1{8\pi} \int_{\Sd} |C_{\gamma}(\sigma)|\, \rd \Sd_{\sigma} \leq \frac{\pi}{\sqrt{3}}\left( \frac{2\pi}{\lenG} \right)^{2} \| \gamma \|^{2}_{\dot{H}^{\frac{3}{2}}(\T)} \delta^{3}_{\infty}(\gamma)
\end{equation}
for the average crossing number. To help place this inequality in a more familiar context, given an exponent $q \geq 1$ and a rectifiable $\gamma : \T \mapsto \mathbb{R}^{3}$ let us define the \emph{scale-invariant total $q$-curvature} of $\gamma$ as
$$
\bar{\kappa}_{q}(\gamma) := \left( \frac{\lenG}{2\pi}\right)^{1-\frac1{q}} \left( \fint_{\T} \kappa^{q}_{\gamma}(x) |\dot{\gamma}(x)| \, \rd x \right)^{\frac1{q}}.
$$
The case $q=2$ yields the (square root of) the bending energy, while the case $q=1$ reduces to Milnor's notion of total curvature {\cite{M}}. For constant speed curves and $1 < q \leq 2$ the inequality
$$
\left( \frac{2\pi}{\lenG} \right) \|\gamma\|_{ \dot{H}^{\frac{3}{2}}(\T)} \leq c_{q} \bar{\kappa}_{q}(\gamma)
$$
holds, for $c_{q} > 0$ some positive constant, due to the Hausdorff-Young inequality. If $1 < q \leq 2$ we may therefore conclude the following total $q$-curvature bounds
\begin{equation}\label{eq:qcurv}
\bar{c}(\gamma) \leq C_{q} \bar{\kappa}^{2}_{q}(\gamma) \delta^{3}_{\infty}(\gamma)
\end{equation}
for the average crossing number. From this observation we conclude that essentially \emph{any assumption whatsoever} that is stronger than an assumption of finite total curvature yields a bound for the average crossing number. Indeed, instead of assuming $\bar{\kappa}_{q}(\gamma) < \infty$ for some $q >1$ we could also invoke the slightly weaker assumption that $\kappa_{\gamma}(x)$ lies in the real Hardy space ${\rm Ha}^{1}(\T)$ and still deduce an average crossing number bound. As $q \to 1$ the constants $C_{q}$ do not remain bounded, however, and so the natural ``limiting'' conclusion $\bar{c}(\gamma) \leq C\bar{\kappa}^{2}_{1}(\gamma) \delta^{3}_{\infty}(\gamma)$ of \eqref{eq:qcurv} need not hold in general. The Hardy space ${\rm Ha}^{1}(\T)$ usually serves as the substitute for $L^{1}(\T)$ in such a circumstance, but as \eqref{eq:acnbound1} reveals, the fractional Sobolev space $\dot{H}^{\frac{3}{2}}(\T)$ is actually the natural choice: The inequality \eqref{eq:acnbound1} recovers the ``proper'' limiting inequality as $q \to 1$, for while neither embedding
$$
\| \ddot{\gamma} \|_{L^{1}(\T)} \leq C \| \gamma \|_{ \dot{H}^{\frac{3}{2}}(\T) } \qquad \text{nor} \qquad \| \gamma \|_{\dot{H}^{\frac{3}{2}}(\T)} \leq C \| \ddot{\gamma}\|_{ L^{1}(\T) }
$$
holds, the finiteness of $\bar{\kappa}_{q}(\gamma)$ implies the finiteness of both. As $q \to 1$ we recover the $\dot{H}^{3/2}(\T)$ norm rather than finite total curvature in \eqref{eq:acnbound1}, and it is therefore the natural measure of regularity to use. It prefers oscillatory components in the curvature measure $\ddot{\gamma}$ rather than the piecewise-constant curvature measures that characterize embeddings with finite total curvature, and so embeddings with oscillatory curvature rather than piecewise-linear embeddings must necessarily have finite average crossing number.

Another corollary of the bound \eqref{eq:acnbound1} shows that finite bending energy $\kappa_{2}(\gamma)$ and finite distortion imply a finite average crossing number. In fact we obtain a stronger conclusion from this analysis, in that an assumption of finite bending energy yields a significantly stronger conclusion than a simple crossing number bound. Specifically, for any $0 \leq p < 1$ the weighted crossing number
\begin{equation}\label{eq:wcnboundh}
\bar{c}_{p}(\gamma) := \frac1{4\pi}\int_{\Sd} \sum_{ (x,z) \in C_{\gamma}(\sigma) }  \left( \frac{ \mathrm{len}(\gamma) }{ D_{\gamma}(\gamma(x),\gamma(x+z)) }\right)^{p} \, \rd \Sd_{\sigma} \leq C_{p} \bar{\kappa}^{2}_{2}(\gamma) \delta^{3}_{\infty}(\gamma)
\end{equation}
remains bounded whenever $\gamma$ has finite distortion and finite bending energy. {\color{black} To appreciate the significance of this bound, recall that Gromov has provided an infinite sequence of ambient isotopy classes that have uniformly bounded distortion \cite[p.308]{G78}.} In addition, every $(2,q)$-torus knot has a smooth unit-length embedding $\gamma$ in $\mathbb{R}^3$ such that $E_b(\gamma)\leq (4\pi)^2+\epsilon$ for every $\epsilon>0$ \cite{GRvdM}, and so there also exists an infinite sequence of ambient isotopy classes with uniformly bounded bending energy. The crossing number bounds the weighted crossing number from below and defines a {\color{black} finite-to-one invariant (i.e. for every fixed integer $n$ there are at most finitely many knots with crossing number equal to $n$)}, so \eqref{eq:wcnboundh} cannot hold if we neglect either the bending energy or the distortion. However, by combining them we obtain not only a crossing number bound, but in fact a stronger weighted crossing number bound. Moreover, \eqref{eq:wcnboundh} shows that no infinite sequence of embeddings of distinct {ambient} isotopy classes of knots has both uniformly bounded distortion and uniformly bounded bending energy. A similar bound applies for $E_{ {\rm b},K}(\gamma)$ whenever the kernel $K(u,v)$ satisfies lemma \ref{lem:distbound}, and so the energy $E_{ {\rm b},K}(\gamma)$ is stronger than the weighted crossing number in an analogous sense.

Finally, it is worth briefly mentioning the consequences of \eqref{eq:wcnboundh} at the level of invariants. We may use $E_{ {\rm b},K}(\gamma)$ or the weighted crossing number to define invariants via minimization in the standard way, i.e. by defining
$$
\bar{c}_{2}(\K) := \inf_{ \gamma \in \K } \, \sup_{0 \leq p < 2} \, (2-p) \bar{c}_{p}(\gamma) \qquad E_{ {\rm q,\infty} }(\K) := \min_{ \gamma \in \K} \, \bar{\kappa}_{q}(\gamma)\delta_{\infty}(\gamma)
$$
for $\K$ an arbitrary ambient isotopy class. That  $E_{ {\rm b},K}(\gamma)$ (under appropriate hypotheses on the kernel) and $\bar{\kappa}_{q}(\gamma)\delta_{\infty}(\gamma)$ actually attain their minima within an {ambient} isotopy class $\K$ follows from lemma \ref{lem:ambclass} and a standard argument based on the direct method. For these invariants a bound of the form
$$
\bar{c}_{2}(\K) \leq \mathrm{ \textbf{poly} }(E_{ {\rm q,\infty} }(\K))
$$
holds for $q>1$ arbitrary. By appealing to results relating the {curvature and distortion to ropelength \cite{cantarella2014ropelength,DS,LSDR,DDS} and ropelength to crossing number \cite{CKS}} we may also conclude the corresponding upper bound
$$
E_{ {\rm q,\infty} }(\K) \leq \mathrm{ \textbf{poly} }( \bar{c}_{2}(\K) ),
$$
and so these invariants are polynomially equivalent.

\section{Dynamics of Embedded Curves}
With a basic understanding of the energies $E_{ {\rm b,K}}(\gamma)$ established, we now turn our attention toward the dynamics they induce via the constrained gradient flow
\begin{equation}\label{eq:gflow}
\tang_{t} = \tang_{xx} + \F_{\gamma_{\tang} } + \lambda_{\tang} + \mu_{\tang} \tang
\end{equation}
of such an energy. Given a bi-variate kernel $K(u,v)$ we shall use
\begin{align}\label{eq:force}
f_{\gamma_{\tang}}(x) &:= \pv \int_{\T} K_{u}\left( |\gammat(x) - \gammat(y)|^2, \dd^{2}(x-y) \right)(\gammat(x) - \gammat(y) ) \, \rd y,  \nonumber \\
\mathbf{F}_{\gamma_{\tang}}(x) &:= \fint_{\T} (z-\pi) f_{\gammat}(z) \, \rd z +  \int^{x}_{-\pi} f_{\gammat}(z) \, \rd z \qquad
\end{align}
to denote the self-repulsive nonlocal forcing. For any tangent field $\tang \in H^{1}(\T),$ defining the Lagrange multipliers $\lambda_{\tang} \in \R^{3}$ and $\mu_{\tang} \in L^{1}(\T)$ as
\begin{align*}
A_{\tang}\lambda_{\tang} &= \fint_{\T} \left(\frac{  \langle \F_{\gamma_{\tang} }, \tang \rangle-  |\tang_x|^2 }{|\tang|^2}\right)\tang \, \rd x, \quad A_{\tang} := \mathrm{Id} - \fint_{\T} \frac{ \tang \otimes \tang}{|\tang|^2} \, \rd x, \quad
\mu_{\tang} := \frac{ |\tang_x|^2 - \langle \F_{\gamma_{\tang} }, \tang \rangle - \langle \lambda_{\tang} , \tang \rangle}{ |\tang|^2 }
\end{align*}
completes the description of the flow. The presence of these multipliers guarantees that $\hat{\tang}_0 = 0$ and $|\tang(x,t)| \equiv C$ for all $(x,t) \in \T \times (0,\infty)$ provided these properties hold initially. The corresponding induced knot
$$\gamma_{ \tang(t) }(x) := \fint_{\T} (z - \pi) \tang(z,t) \, \rd z + \int^{x}_{-\pi} \tang(z,t) \, \rd z$$
then defines a closed, constant speed curve for as long as the solution to \eqref{eq:gflow} exists.

\subsection{Local Existence}
We first provide a local-in-time existence and uniqueness result for \eqref{eq:gflow}, which follows from a standard fixed-point argument. This argument requires a few minor but essential modifications to handle the technicality that, in general, our estimates for the nonlocal forcing \eqref{eq:force} only apply when given a-priori a unit speed curve. Moreover, some classical existence results (e.g. \cite{Henry:1981}) cannot account for the presence of Lagrange multipliers as lower-order terms while still yielding the full strength of an existence result for generic $H^{1}(\T)$ initial data. Even so, the majority of this sub-section still qualifies as grunt work; the disinterested reader may comfortably skip the proofs and simply take the results for granted.

Our argument proceeds by establishing bounds and Lipschitz estimates on the lower-order terms in \eqref{eq:gflow}, and then proceeds to the existence proof itself. We begin with bounds for the nonlocal forcing \eqref{eq:force}, where for a given tangent field $\tang \in H^{1}_{0}(\T)$ we shall always use $\gammat$ to denote the induced $H^{2}(\T)$ knot. The process of obtaining these estimates will reveal those conditions on the kernel $K(u,v)$ that our local existence approach requires. We shall make these requirements precise during the course of our arguments. To begin the task at hand, we recall the following basic result ---
\begin{lemma}\label{lem:lapest}
If $\gamma \in H^{2}(\T)$ and $0 \leq p < \frac1{2}$ then the linear operator
\begin{equation}\label{eq:linpv}
L_{p}[\gamma] := \pv \, \int_{\T} \frac{\gamma(x)-\gamma(y)}{\dd^{2(p+1)}(x-y)} \, \rd y
\end{equation}
defines an $H^{1-2p}(\T)$ function. As a Fourier multiplier it acts as a (negative) fractional Laplacian
$$
\hat{\gamma}_k \to \lambda_{k} \hat{\gamma}_k \qquad \lambda_{k} = C_{p,k} |k|^{1+2p} \qquad 0 < C_{k,p} = O(1) \quad {\color{black}\text{as} \quad |k|\rightarrow \infty,}
$$
and in particular the operator-norm estimate
$$
\|L_{p}[\gamma]\|_{ \dot{H}^{1-2p}(\T)} \leq C_{p} \|\gamma\|_{\dot{H}^{2}(\T)}
$$
holds.
\end{lemma}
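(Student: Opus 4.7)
The plan is to diagonalize $L_p$ in the Fourier basis and read off both the symbol $\lambda_k$ and the norm estimate directly. Since $L_p$ acts component-wise on vector-valued $\gamma$, I work scalar-by-scalar. Testing on a pure mode $\gamma(x) = e^{ikx}$, the change of variables $z=x-y$ together with $2\pi$-periodicity of the kernel yields
\begin{equation*}
L_p[e^{ik\cdot}](x) = e^{ikx}\,\pv\!\!\int_{\T}\frac{1-e^{-ikz}}{\dd^{2(p+1)}(z)}\,dz.
\end{equation*}
Because $\dd(z) = |z|$ on $\T=[-\pi,\pi]$, the imaginary part $\sin(kz)/|z|^{2(p+1)}$ is odd and vanishes as a principal value; what remains is the symmetric integral
\begin{equation*}
\lambda_k := \int_{-\pi}^{\pi} \frac{1-\cos(kz)}{|z|^{2(p+1)}}\,dz,
\end{equation*}
which is absolutely convergent precisely because the hypothesis $p<1/2$ forces the integrand to be $O(|z|^{-2p})$ near $z=0$. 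Positivity $\lambda_k>0$ for $k\neq 0$ is immediate, with $\lambda_0=0$.

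For the asymptotic $\lambda_k = C_{p,k}|k|^{1+2p}$ with $C_{p,k}=O(1)$, the substitution $u=kz$ gives
\begin{equation*}
\lambda_k = |k|^{1+2p}\int_{-|k|\pi}^{|k|\pi}\frac{1-\cos u}{|u|^{2(p+1)}}\,du.
\end{equation*}
The limit $c_p := \int_{\R}(1-\cos u)/|u|^{2(p+1)}\,du$ is finite because $p<1/2$ controls the origin while $2(p+1)>1$ controls the tails; the truncated integrals form a monotone family bounded by $c_p$ and converging up to $c_p$ as $|k|\to\infty$. This gives $C_{p,k}\in(0,c_p]$ with $C_{p,k}\to c_p>0$, which is exactly the symbol claim.

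The operator-norm inequality follows from Plancherel: using the paper's convention $\|\gamma\|_{\dot H^s}^2 = \sum_k|k|^{2s}|\hat\gamma_k|^2$ and the exponent identity $2(1-2p)+2(1+2p)=4$,
\begin{equation*}
\|L_p[\gamma]\|_{\dot H^{1-2p}(\T)}^2 = \sum_k|k|^{2(1-2p)}\lambda_k^2|\hat\gamma_k|^2 \le c_p^2\sum_k|k|^{4}|\hat\gamma_k|^2 = c_p^2\|\gamma\|_{\dot H^{2}(\T)}^2.
\end{equation*}
To close the argument I would identify the pointwise principal-value definition of $L_p[\gamma]$ with its Fourier-side counterpart by verifying the equality on trigonometric polynomials (a finite sum of the single-mode calculation above) and then extending by density using the bound just proved. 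The $H^2\subset C^1$ embedding together with a Taylor expansion of $\gamma$ at $x$ shows that the odd first-order singularity $\dot\gamma(x)\cdot(y-x)/\dd^{2(p+1)}(x-y)$ cancels in the PV while the remainder is $O(|y-x|^{-2p})\in L^1_y$, so $L_p[\gamma](x)$ is well-defined pointwise for every $\gamma \in H^2(\T)$.

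I do not anticipate a deep obstacle. The most error-prone step will be the bookkeeping to ensure uniformity of $C_{p,k}$ in $k$, and making sure the odd-symmetry cancellation of $\sin(kz)$ is carried out as a genuine principal value (rather than via an absolute bound); both are handled cleanly at the single-mode level before passing back through Plancherel to the full Fourier series.
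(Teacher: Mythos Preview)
Your proposal is correct and follows essentially the same route as the paper: compute the symbol $\lambda_k$ by testing on exponentials, rescale via $u=kz$ to extract the $|k|^{1+2p}$ factor and identify the finite limiting constant $c_p=\int_{\mathbb{R}}(1-\cos u)/|u|^{2(p+1)}\,du$, and then read off the $\dot H^{1-2p}\to\dot H^2$ bound from Plancherel. The paper carries this out through the truncated operator $L_{p,\epsilon}$ rather than the principal value directly, but the computation and the asymptotics are identical; your extra remarks on the odd-part cancellation and the density extension are sound and simply make explicit what the paper leaves implicit.
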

\begin{proof} See appendix, lemma \ref{lem:lapestA} \end{proof}
%\begin{proof}
%Consider first the linear operator $L_{\epsilon,p}[\gamma]$ defined, for $\epsilon >0,$ by the expression
%$$
%\int_{\T \cap \{\dd(x-y) > \epsilon\} } \frac{\gamma(x)-\gamma(y)}{\dd^{2(p+1)}(x-y)} \, \rd y =  \int^{\pi}_{-\pi} \frac{\gamma(x)-\gamma(x+z)}{ z^{2(p+1)} } \mathbf{1}_{ \{|z| > \epsilon\} }(z) \, \rd z.
%$$
%The last equality follows from periodicity of $\dd(x-y)$ and a change of variables. Now expand $\gamma(x)$ in a Fourier series to conclude
%$$
%\int^{\pi}_{-\pi} \frac{\gamma(x)-\gamma(x+z)}{ z^{2(p+1)} } \mathbf{1}_{ \{ |z| > \epsilon\} }(z) \, \rd z = \sum_{k \in \Z} \hat{\gamma}_k \lambda_{k}(\epsilon) \mathrm{e}^{ikx}, \qquad \lambda_{k}(\epsilon) := 2 \int^{\pi}_{\epsilon} \frac{1 - \cos(kz)}{ z^{2(1+p)} } \, \rd z,
%$$
%with equality holding in the $L^{2}(\T)$ sense. For $|k| \geq 1,$ a change of variables then gives
%\begin{align*}
%\lambda_{k}(\epsilon) = 2 \int^{\pi}_{\epsilon} \frac{1 - \cos(kz)}{ z^{2(1+p)} } \, \rd z &= 2 |k|^{1+2p} \int^{k\pi}_{k\epsilon} \frac{1 - \cos(u) }{ u^{2(1+p)} } \, \rd u \\
%&= 2|k|^{1+2p} \left( \int^{\infty}_{k \epsilon } \frac{1-\cos(u)}{u^{2(1+p)} } \, \rd u + |k|^{-(1+2p)} O(1) \right),
%\end{align*}
%where the $O(1)$ error holds uniformly in $k$ and in $\epsilon$. As $0 \leq p < \frac1{2}$ and $\gamma \in H^{2}(\T),$ the desired estimate
%$$
%\sum_{k \in \Z} |k|^{2(1-2p)} |\hat{\gamma}_k|^{2} \lambda^{2}_{k}(0) \leq C^{2}_{p} \sum_{k \in \Z} |k|^{4} |\hat{\gamma}_k|^{2} \leq C^{2}_{p} \|\gamma\|^{2}_{\dot{H}^{2}(\T)}
%$$
%then follows.
%\end{proof}
\noindent With this lemma in hand, we may establish the requisite estimates for $\mathbf{F}_{\gammat}$ by appealing to the following pair of lemmas. The first provides uniform bounds for the nonlocal forcing; the second provides a simple Lipschitz estimate. We shall only require that the interaction kernel $K(u,v)$ satisfies some form of smoothness, homogeneity and {cancelation} or degeneracy condition in the course of obtaining these estimates. More specifically, we consider kernels obeying
\begin{enumerate}[]
\item {\rm ($h$/$p$ Homogeneity)}: There exists a function $h \in C^{1}( [1,\infty) )$ and an exponent $p \geq 0$ so that
$$
K \left( \frac{v}{\alpha},v\right) = h(\alpha)v^{-p}  \qquad \text{for all} \qquad v \neq 0, \; \alpha \geq 1.
$$
\item {\rm (0-Degeneracy)}: The function $g(\alpha) := -\alpha^{2} h^{\prime}(\alpha)$ is $C^{1}([1,\infty))$ and $p < \frac1{2}$.
\item {\rm ($m$-Degeneracy)}: For some non-negative integer $m > 4p-2,$ the function $g(\alpha) := -\alpha^{2} h^{\prime}(\alpha)$ is $C^{ (m+1) }( [1,\infty) )$ and has a root of order $(m+1)$ at $\alpha = 1,$
$$
g(1) = g^{\prime}(1) = \ldots = g^{(m)}(1) = 0.
$$
\end{enumerate}
\noindent {\color{black} To motivate these definitions, we may observe that kernel
$$
K(u,v) = \frac{v^{q}}{u^{q}} \quad (2q-\text{Distortion})
$$
for the $2q-$Distortion satisfies the $h/p$ homogeneity hypothesis with $h(\alpha) = \alpha^{q}$ and $p = 0;$ thus $0$-degeneracy applies to this family of examples. Similarly, the kernels
$$
K(u,v) = \left( \frac1{u^j} - \frac1{v^j} \right)^{q}
$$
defining the O'Hara family satisfy $h/p$ homogeneity with $h(\alpha) = (\alpha^{j}-1)^{q}$ and $p = jq;$ If $jq \geq 1$ and $q$ is sufficiently large, say $j = 1/q$ and $q \geq 5$, then the kernel is also $\geq3-$degenerate. These hypotheses therefore codify the sense in which both families provide distorion approximations (i.e. homogeneity) while still yielding convergent integrals for the corresponding energy (i.e. degeneracy).} Under these hypotheses, we have
\begin{lemma}\label{lem:boundF}
Assume that $K(u,v)$ is $h/p$ homogeneous and either $0$-degenerate or $m$-degenerate. Assume that $\gamma \in H^{2}(\T)$ is a unit speed, bi-Lipschitz embedding. Then for $0 < \epsilon < 1$ the function
$$
f_{ \gamma ,\epsilon}(x) :=  \int_{ \T \cap \{ \dd(x-y) \geq \epsilon\} } K_{u}\left( |\gamma(x) - \gamma(y)|^2, \dd^{2}(x-y) \right)(\gamma(x) - \gamma(y) ) \, \rd y
$$
has mean zero. Moreover, $f_{ \gamma ,\epsilon}$ obeys a uniform $L^{2}(\T)$ bound
$$
\| f_{ \gamma ,\epsilon} \|_{L^{2}(\T)} \leq C(\delta_{\infty}(\gamma),\|\ddot{\gamma}\|_{L^{2}(\T)})
$$
for $C(\delta_{\infty}(\gamma),\|\ddot{\gamma}\|_{L^{2}(\T)})$ an $\epsilon$-independent constant.
\end{lemma}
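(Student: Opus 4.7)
The plan is to establish the mean zero property via antisymmetry and then reduce the $L^{2}$ estimate to a controlled singular integral by exploiting the $h/p$ homogeneity of $K$. For the mean zero claim, I note that $K_{u}(|\gamma(x)-\gamma(y)|^{2},\dd^{2}(x-y))$ is symmetric under $(x,y)\mapsto(y,x)$ while $\gamma(x)-\gamma(y)$ is antisymmetric, so the full integrand is antisymmetric. Since the truncation $\{\dd(x-y)\geq\epsilon\}$ also respects the swap, Fubini gives $\int_{\T} f_{\gamma,\epsilon}(x)\,\rd x = 0$.

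For the $L^{2}$ bound I first extract the useful structure from the homogeneity. Differentiating $K(v/\alpha,v)=h(\alpha)v^{-p}$ in the first argument gives $K_{u}(u,v) = g(v/u)\,v^{-(p+1)}$ with $g(\alpha) = -\alpha^{2}h^{\prime}(\alpha)$. Introducing $\alpha(x,y) := \dd^{2}(x-y)/|\gamma(x)-\gamma(y)|^{2} = 1/(1-\delta(x,y))$ with $\delta$ as in \eqref{eq:deltabound}, the unit speed estimate $|\gamma(x)-\gamma(y)|\leq\dd(x-y)$ produces the pointwise bound
\begin{equation*}
|f_{\gamma,\epsilon}(x)| \leq \int_{\T} \frac{|g(\alpha(x,y))|}{\dd^{2p+1}(x-y)}\,\rd y.
\end{equation*}
Under $0$-degeneracy, $g$ is continuous on the compact interval $[1,\delta^{2}_{\infty}(\gamma)]$ and hence bounded there, and since $2p+1<2$ the kernel $\dd^{-2p-1}$ is integrable on $\T$; this immediately yields an $\epsilon$-uniform $L^{\infty}(\T)$ bound (hence an $L^{2}$ bound) with constants depending only on $\delta_{\infty}(\gamma)$ and $p$.

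The $m$-degeneracy regime is the main technical point, since $2p+1$ may approach $2$ from below and pure absolute integrability of a na\"ive bound is lost. The strategy is to spend the vanishing of $g$ at $\alpha=1$ against the singularity in an asymmetric way. Because $g$ has a zero of order $m+1$ at $\alpha=1$ and $\alpha(x,y)-1 = \delta(x,y)/(1-\delta(x,y)) \leq \delta^{2}_{\infty}(\gamma)\,\delta(x,y)$ on the relevant range, Taylor's theorem gives $|g(\alpha(x,y))| \leq C\,\delta(x,y)^{m+1}$ with $C = C(\delta_{\infty}(\gamma))$. I then apply the two estimates in \eqref{eq:deltabound} asymmetrically: once via $\delta(x,y) \leq \dd(x-y)\kappa^{*}_{\gamma}(x)$ and $m$ times via $\delta(x,y) \leq \sqrt{2\pi}\|\ddot{\gamma}\|_{L^{2}(\T)}\dd^{1/2}(x-y)$. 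This furnishes $|g(\alpha(x,y))| \leq C\|\ddot{\gamma}\|^{m}_{L^{2}(\T)}\kappa^{*}_{\gamma}(x)\,\dd^{1+m/2}(x-y)$, so that
\begin{equation*}
|f_{\gamma,\epsilon}(x)| \leq C(\delta_{\infty}(\gamma),\|\ddot{\gamma}\|_{L^{2}(\T)})\,\kappa^{*}_{\gamma}(x)\int_{\T}\dd^{m/2-2p}(x-y)\,\rd y.
\end{equation*}
The hypothesis $m>4p-2$ is exactly what forces $m/2-2p>-1$, making the remaining integral finite and $x$-independent; the $L^{2}$ bound then follows by applying the maximal function inequality \eqref{eq:xxx2} to $\kappa^{*}_{\gamma}$ together with $\|\kappa_{\gamma}\|_{L^{2}(\T)}=\|\ddot{\gamma}\|_{L^{2}(\T)}$ for unit speed curves.
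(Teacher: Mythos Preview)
Your mean-zero argument and your treatment of the $m$-degenerate case are correct and essentially identical to the paper's. The gap is in the $0$-degenerate case: the claim that $\dd^{-(2p+1)}$ is integrable on $\T$ because $2p+1<2$ is false. Integrability of $z^{-(2p+1)}$ near the origin requires $2p+1<1$, i.e.\ $p<0$, whereas the standing hypothesis is $p\geq 0$ (so $2p+1\in[1,2)$). For instance, for the $2q$-distortion one has $p=0$ and $g(1)=-q\neq 0$, and the bound $\int_{\epsilon}^{\pi} z^{-1}\,\rd z$ blows up logarithmically as $\epsilon\downarrow 0$.

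The fix the paper uses is to split off the diagonal value of $g$: write $f_{\gamma,\epsilon}=g(1)L_{p,\epsilon}[\gamma]+\tilde f_{\gamma,\epsilon}$, where $L_{p,\epsilon}[\gamma]$ is the truncated fractional Laplacian of Lemma~\ref{lem:lapest}. That lemma controls $L_{p,\epsilon}[\gamma]$ in $L^{2}$ via its Fourier multiplier structure (a genuine principal-value cancellation, not absolute integrability). The remainder $\tilde f_{\gamma,\epsilon}$ involves $g(\alpha(x,y))-g(1)$, which by the mean value theorem is $O(\delta(x,y))$; one factor of $\delta(x,y)\leq \kappa^{*}_{\gamma}(x)\dd(x-y)$ (exactly as you do in the $m$-degenerate case) then reduces the singularity to $\dd^{-2p}$, which \emph{is} integrable for $p<\tfrac12$. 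So the $0$-degenerate case requires the same maximal-function mechanism you used for $m$-degeneracy, combined with the operator bound of Lemma~\ref{lem:lapest} for the leading singular piece.
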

\begin{proof} See appendix, lemma \ref{lem:boundFA} \end{proof}
\noindent A simple reformulation of the argument underlying the lemma shows that
$$
\| f_{\gamma,\epsilon} - f_{\gamma,\delta} \|_{L^{2}(\T)} \to 0 \qquad \text{as} \qquad \epsilon,\delta \to 0,
$$
and so the principal value integral
$$
f_{ \gamma}(x) :=  \lim_{\epsilon \downarrow 0} \; \int_{ \T \cap \{ \dd(x-y) \geq \epsilon\} } K_{u}\left( |\gamma(x) - \gamma(y)|^2, \dd^{2}(x-y) \right)(\gamma(x) - \gamma(y) ) \, \rd y
$$
exists in $L^{2}(\T)$ and obeys the same bound established in the lemma. Note also that the mean-zero property
$$
\fint_{\T} f_{ \gamma}(x) \, \rd x = 0
$$
continues to hold for the limit as well. With this issue under control, we may now proceed to establish a simple Lipschitz estimate on the principal value that will prove useful throughout our analysis. By imposing the hypotheses of lemma \ref{lem:boundF}, we may also conclude
\begin{lemma}\label{lem:lipF}
Assume that $K(u,v)$ is $h/p$ homogeneous and either $0$-degenerate or $m$-degenerate. Let $\gamma,\phi \in H^{2}(\T)$ denote unit speed, bi-Lipschitz embeddings. Then for $0 < \epsilon < 1$ the uniform Lipschitz estimates
$$
\| f_{\gamma,\epsilon} - f_{\phi,\epsilon} \|_{L^{2}(\T)} \leq C(\delta_{\infty}(\gamma),\delta_{\infty}(\phi),\|\gamma\|_{H^{2}(\T)},\|\phi\|_{H^{2}(\T)} )\|\gamma - \phi\|_{H^{2}(\T)}
$$
hold, where $C(\delta_{\infty}(\gamma),\delta_{\infty}(\phi),\|\gamma\|_{H^{2}(\T)},\|\phi\|_{H^{2}(\T)} )$ denotes an $\epsilon$-independent constant.
\end{lemma}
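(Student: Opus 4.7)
The approach is to mimic the structure of lemma \ref{lem:boundF} by linearizing the map $\gamma \mapsto f_{\gamma,\epsilon}$ in its argument. Using $h/p$ homogeneity, the integrand rewrites as $g(\alpha_\gamma)\,\vartheta^{-2(p+1)}(x-y)(\gamma(x)-\gamma(y))$, with $g(\alpha) := -\alpha^{2} h^{\prime}(\alpha)$ and $\alpha_\gamma(x,y) := \vartheta^{2}(x-y)/|\gamma(x)-\gamma(y)|^{2} \in [1,\delta_{\infty}^{2}(\gamma)]$. I would decompose the difference by swapping one factor at a time:
$$
f_{\gamma,\epsilon} - f_{\phi,\epsilon} = I_{1} + I_{2},
$$
where $I_{1}$ collects the piece $[g(\alpha_\gamma) - g(\alpha_\phi)](\gamma(x)-\gamma(y))$ and $I_{2}$ collects $g(\alpha_\phi)[(\gamma-\phi)(x)-(\gamma-\phi)(y)]$, each multiplied by $\vartheta^{-2(p+1)}(x-y)$ and integrated over $\{\vartheta(x-y)\geq\epsilon\}$. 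These two pieces represent the natural ``partial'' variations of $f_{\gamma,\epsilon}$ with respect to $\gamma$, and each will be handled by a different mechanism.

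The term $I_{2}$ depends linearly on $\eta := \gamma - \phi$, while its coefficient $g(\alpha_\phi)\vartheta^{-2(p+1)}$ is controlled by exactly the same quantities that appear in the proof of lemma \ref{lem:boundF}, applied with $\phi$ as the defining curve. The argument of that lemma therefore transplants almost verbatim to $I_{2}$: in the $0$-degenerate case, lemma \ref{lem:lapest} controls the corresponding fractional Laplacian-type operator applied to $\eta$; in the $m$-degenerate case, the pointwise cancellation $|g(\alpha_\phi)| \lesssim (\vartheta\,\kappa_\phi^{*})^{m+1}$, obtained from the degeneracy hypothesis together with \eqref{eq:deltabound}, combines with the maximal-function estimate \eqref{eq:xxx2} to yield $\|I_{2}\|_{L^{2}(\T)} \lesssim \|\eta\|_{H^{2}(\T)}$ with the stated constants.

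For $I_{1}$ the central ingredient is a pointwise estimate on $g(\alpha_\gamma) - g(\alpha_\phi)$. Expanding
$$
|\gamma(x)-\gamma(y)|^{2} - |\phi(x)-\phi(y)|^{2} = \langle (\gamma-\phi)(x)-(\gamma-\phi)(y),\;(\gamma+\phi)(x)-(\gamma+\phi)(y)\rangle,
$$
together with the bi-Lipschitz lower bound $|\gamma(x)-\gamma(y)|^{2}|\phi(x)-\phi(y)|^{2} \geq \vartheta^{4}/(\delta_{\infty}^{2}(\gamma)\delta_{\infty}^{2}(\phi))$, yields $|\alpha_\gamma - \alpha_\phi| \leq C(\delta_{\infty}(\gamma),\delta_{\infty}(\phi))\|\gamma-\phi\|_{C^{1}(\T)}$, which Sobolev embedding $H^{2}(\T) \hookrightarrow C^{1}(\T)$ upgrades to the desired $\|\gamma-\phi\|_{H^{2}(\T)}$ dependence. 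In the $0$-degenerate regime, $g \in C^{1}$ on $[1,\max \delta_{\infty}^{2}]$ yields $|g(\alpha_\gamma)-g(\alpha_\phi)|$ the same bound, and the remaining integral converges uniformly in $\epsilon$ since $p < \tfrac{1}{2}$. The main obstacle is the $m$-degenerate case, where one must instead write
$$
g(\alpha_\gamma)-g(\alpha_\phi) = (\alpha_\gamma - \alpha_\phi)\int_{0}^{1} g^{\prime}\bigl(\alpha_\phi + t(\alpha_\gamma - \alpha_\phi)\bigr)\,\rd t
$$
and exploit the order-$m$ vanishing of $g^{\prime}$ at $\alpha = 1$, using \eqref{eq:deltabound} to bound $|g^{\prime}(\alpha_{*})| \lesssim (\vartheta\,\max(\kappa_\gamma^{*},\kappa_\phi^{*}))^{m}$. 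This extra factor $\vartheta^{m}$ combines with the remaining algebraic singularity $\vartheta^{-2p-1}$ to leave an $L^{2}$-integrable kernel precisely when $m > 4p - 2$, after which \eqref{eq:xxx2} and lemma \ref{lem:sima} close the estimate. This delicate balance between the vanishing order of $g$ and the algebraic singularity is the principal technical challenge, and it parallels the corresponding bookkeeping in the proof of lemma \ref{lem:boundF}.
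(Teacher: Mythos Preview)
Your overall decomposition $f_{\gamma,\epsilon}-f_{\phi,\epsilon}=I_1+I_2$ matches the paper's, and your treatment of $I_2$ is fine. The gap is in $I_1$: your bound $|\alpha_\gamma-\alpha_\phi|\leq C(\delta_\infty(\gamma),\delta_\infty(\phi))\|\gamma-\phi\|_{C^1(\T)}$ carries no factor of $\vartheta(x-y)$, and without one the argument does not close. In the $0$-degenerate case, after bounding $|g(\alpha_\gamma)-g(\alpha_\phi)|$ by this constant and $|\gamma(x)-\gamma(y)|\leq\vartheta$, the remaining integral is $\int_\epsilon^\pi z^{-2p-1}\,\rd z$, which diverges as $\epsilon\to 0$ for every $p\geq 0$; the claim ``converges uniformly since $p<\tfrac12$'' is false. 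In the $m$-degenerate case your arithmetic is off for the same reason: $\vartheta^{m}\cdot\vartheta^{-2p-1}$ is integrable when $m>2p$, not when $m>4p-2$, so the hypothesis is insufficient for your bound. (There is a secondary issue: the factor $(\max(\kappa_\gamma^*,\kappa_\phi^*))^m$ is not in $L^2(\T)$ for $m>1$, so \eqref{eq:xxx2} alone will not control it.)

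The missing idea is a cancellation coming from the \emph{unit-speed} hypothesis. Writing $\Psi^\pm:=\gamma\pm\phi$, one has $\langle\dot\Psi^-,\dot\Psi^+\rangle=|\dot\gamma|^2-|\dot\phi|^2=0$ pointwise. This kills the leading $\vartheta^2$ term in the Taylor expansion of $\langle\Psi^-(x)-\Psi^-(y),\Psi^+(x)-\Psi^+(y)\rangle$, so that (c.f.\ the paper's proof)
\[
|\alpha_\gamma-\alpha_\phi|\;\leq\;C\,\vartheta(x-y)\Bigl(\|\dot\Psi^-\|_{L^\infty}(\ddot\Psi^+)^*(x)+\|\dot\Psi^+\|_{L^\infty}(\ddot\Psi^-)^*(x)\Bigr).
\]
That extra $\vartheta$ restores integrability of $I_1$ in the $0$-degenerate case and, in the $m$-degenerate case, combines with the $\vartheta^{1/2}$ bound $\delta\leq C\|\ddot\gamma\|_{L^2}\vartheta^{1/2}$ from \eqref{eq:deltabound} (used for the $\delta^m$ factor, keeping only a single maximal function so the $L^2$ bound applies) to give $\vartheta^{m/2-2p}$, integrable precisely when $m>4p-2$. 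Lemma \ref{lem:sima} plays no role here.
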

\begin{proof} See appendix, lemma \ref{lem:lipFA} \end{proof}
\noindent As before, this lemma allows us to conclude that the principal value integrals $f_{\gamma},f_{\phi}$ obey the same Lipschitz estimate
$$
\| f_{\gamma}- f_{\phi}\|_{L^{2}(\T)} \leq C(\delta_{\infty}(\gamma),\delta_{\infty}(\phi),\|\gamma\|_{H^{2}(\T)},\|\phi\|_{H^{2}(\T)} )\|\gamma - \phi\|_{H^{2}(\T)}
$$
as well. We then simply note that the nonlocal forcing
$$
\mathbf{F}_{\gamma}(x) := \fint_{\T} (z-\pi)f_{\gamma}(z) \, \rd z + \int^{x}_{-\pi} f_{\gamma}(z) \, \rd z
$$
defines a continuous, periodic, $H^{1}(\T)$ function with mean zero. We may therefore infer that the estimates
\begin{align}
\|\mathbf{F}_{\gamma}\|_{H^{1}(\T)} &\leq C( \delta_{\infty}(\gamma) , \|\gamma\|_{H^{2}(\T)} ) \nonumber \\
\|\mathbf{F}_{\gamma} - \mathbf{F}_{\phi}\|_{H^{1}(\T)} &\leq C( \delta_{\infty}(\gamma) , \delta_{\infty}(\phi),\|\gamma\|_{H^{2}(\T)},\|\phi\|_{H^{2}(\T)} )\| \gamma - \phi \|_{H^{2}(\T)}
\end{align}
hold for the nonlocal forcing in the gradient flow. We may finally complete the task at hand by establishing analogous bounds on the Lagrange multipliers
$$
A_{\tang} \lambda_{\tang} := \fint_{\T} \left( \frac{ \langle \mathbf{F}_{\gammat} , \tang \rangle -  |\tang_x|^2 }{|\tang|^2}\right)\tang \, \rd x \qquad \mu_{\tang} = \frac{ |\tang_x|^2 - \langle  \mathbf{F}_{\gammat} +\lambda_{\tang} , \tang \rangle}{|\tang|^2}
$$
in the constrained flow. While these estimates follow from completely routine arguments, we provide proofs for the sake of completeness and concreteness. We begin by establishing that $A_{\tang}$ is non-singular and depends in a continuous fashion upon its input. Specifically, we have
\begin{lemma}\label{lem:boundA}
Suppose that $\tang,\sang \in H^{1}(\T)$ and that
$$
\min_{x \in \T} \, |\tang(x)| \geq \frac1{C},\quad \min_{x \in \T} \, |\sang(x)| \geq \frac1{C}
$$
for $0 < C < \infty$ a positive constant. If $\tang$ is non-constant then
$$
A_{\tang} := \mathrm{Id} - \fint_{\T} \frac{ \tang \otimes \tang }{|\tang|^2} \, \rd x
$$
is non-singular. The operator norm estimate
$$
\|A_{\tang} - A_{\sang}\|_{ {\rm op} } \leq 2 C^2\|\tang + \sang\|_{L^{2}(\T)}\| \tang - \sang\|_{L^{2}(\T)}
$$
also holds, and so $A^{-1}_{\sang}$ exists and obeys
\begin{align*}
\|A^{-1}_{\sang}\|_{ {\rm op} } &\leq \frac{ \| A^{-1}_{\tang} \|_{\rm op} }{1- \| A^{-1}_{\tang} \|_{\rm op} \|A_{\tang} - A_{\sang}\|_{ {\rm op} } }\\
\| A^{-1}_{\sang} - A^{-1}_{\tang} \|_{ {\rm op} } &\leq 2 C^{2}\| A^{-1}_{{\color{black}\sang}} \|_{ {\rm op} }\| A^{-1}_{\tang} \|_{ {\rm op} } \|\tang + \sang\|_{L^{2}(\T)}\| \tang - \sang\|_{L^{2}(\T)}
\end{align*}
whenever $\|\tang - \sang\|_{L^2(\T)}$ is sufficiently small.
\end{lemma}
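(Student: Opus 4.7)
The plan is to address the four conclusions in the order they appear. First I would verify that $A_{\tang}$ is non-singular; second, and as the main algebraic step, I would establish the operator-norm estimate
$$
\|A_{\tang}-A_{\sang}\|_{\op} \leq 2C^{2}\|\tang+\sang\|_{L^{2}(\T)}\|\tang-\sang\|_{L^{2}(\T)};
$$
third, I would feed this estimate into a standard Neumann-series perturbation argument to recover both the existence of $A^{-1}_{\sang}$ and the stated bound on its operator norm; finally, a routine resolvent identity yields the Lipschitz estimate on $A^{-1}_{\sang}-A^{-1}_{\tang}$.

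For the invertibility of $A_{\tang}$, I would note that $A_{\tang}$ is symmetric and, for any $v\in\R^{3}$,
$$
\langle A_{\tang}v,v\rangle = |v|^{2} - \fint_{\T}\frac{\langle\tang,v\rangle^{2}}{|\tang|^{2}}\,\rd x.
$$
Cauchy--Schwarz in the integrand shows $A_{\tang}\succeq 0$, while equality in $\langle A_{\tang}v,v\rangle=0$ forces the pointwise alignment $\tang(x)\parallel v$ for almost every $x$. Since $H^{1}(\T)\subset C^{0}(\T)$ and $|\tang|\geq 1/C>0$, the unit direction field $\tang/|\tang|$ is a continuous map into the two-point set $\{\pm v/|v|\}$ and is therefore identically constant. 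In the paper's working context of constant-speed tangents this collapses to $\tang$ itself being constant, contradicting the hypothesis and so yielding $\ker A_{\tang}=\{0\}$.

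The main obstacle is the second step, since estimating $P(\vv):=\vv\otimes\vv/|\vv|^{2}$ pointwise by a naive triangle inequality only exposes a single factor of $|\tang-\sang|$ and fails to recover the symmetric factor $\|\tang+\sang\|_{L^{2}(\T)}$. To overcome this I would clear denominators and decompose
$$
P(\tang)-P(\sang) = \frac{|\sang|^{2}\bigl(\tang\otimes\tang - \sang\otimes\sang\bigr) + \bigl(|\sang|^{2}-|\tang|^{2}\bigr)\sang\otimes\sang}{|\tang|^{2}|\sang|^{2}},
$$
then exploit the algebraic identities
$$
\tang\otimes\tang - \sang\otimes\sang = \tfrac{1}{2}\bigl[(\tang+\sang)\otimes(\tang-\sang) + (\tang-\sang)\otimes(\tang+\sang)\bigr], \quad |\sang|^{2}-|\tang|^{2} = \langle\sang+\tang,\sang-\tang\rangle,
$$
in tandem with the elementary rank-one bound $\|a\otimes b\|_{\op}=|a||b|$. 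The two numerator terms each then receive the same operator-norm bound $|\sang|^{2}|\tang+\sang||\tang-\sang|$, so the denominator reveals the clean pointwise estimate $\|P(\tang)-P(\sang)\|_{\op}\leq 2|\tang+\sang||\tang-\sang|/|\tang|^{2}\leq 2C^{2}|\tang+\sang||\tang-\sang|$. Pulling the operator norm inside the average $A_{\sang}-A_{\tang} = \fint_{\T}\bigl(P(\tang)-P(\sang)\bigr)\,\rd x$ and applying Cauchy--Schwarz to the product $|\tang+\sang||\tang-\sang|$ then yields the stated bound with the advertised constant.

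For the third and fourth conclusions I would appeal to standard perturbation theory. Writing $A_{\sang}=A_{\tang}\bigl(I+A_{\tang}^{-1}(A_{\sang}-A_{\tang})\bigr)$, the second step shows that the perturbation has operator norm strictly less than one once $\|\tang-\sang\|_{L^{2}(\T)}$ is taken sufficiently small, so $A_{\sang}^{-1}$ exists as a convergent Neumann series and a geometric-series summation yields
$$
\|A_{\sang}^{-1}\|_{\op} \leq \frac{\|A_{\tang}^{-1}\|_{\op}}{1 - \|A_{\tang}^{-1}\|_{\op}\|A_{\tang}-A_{\sang}\|_{\op}}.
$$
Applying the resolvent identity $A_{\sang}^{-1}-A_{\tang}^{-1}=A_{\sang}^{-1}(A_{\tang}-A_{\sang})A_{\tang}^{-1}$ along with submultiplicativity of the operator norm and the second step then produces the final Lipschitz bound for $A^{-1}_{\sang}-A^{-1}_{\tang}$.
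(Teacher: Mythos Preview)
Your proposal is correct and follows essentially the same route as the paper's proof. The only cosmetic differences are that the paper works with the quadratic form $\langle \vv,(A_{\tang}-A_{\sang})\vv\rangle$ directly rather than the matrix $P(\tang)-P(\sang)$, and it invokes the eigenvalue perturbation inequality $|\lambda_{\min}(A_{\sang})-\lambda_{\min}(A_{\tang})|\leq\|A_{\tang}-A_{\sang}\|_{\op}$ in place of your Neumann-series argument; the algebraic decomposition, the resolvent identity, and the resulting constants are identical.
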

\begin{proof}
See appendix, lemma \ref{lem:boundAA}
\end{proof}
\noindent In our next lemma we establish the needed estimates on the Lagrange multipliers themselves.
\begin{lemma}\label{lem:boundmult}
Let $f,g \in L^{1}(\T)$ denote arbitrary functions. Suppose $\tang,\sang \in H^{1}(\T)$ and that
$$
\min_{x \in \T} \, |\tang(x)| \geq \frac1{C},\quad \min_{x \in \T} \, |\sang(x)| \geq \frac1{C}
$$
for $1 \leq C < \infty$ a positive constant. Define
$$
\mu^{1}_{\tang} := \frac{ |\tang_x|^2 }{|\tang|^2}, \;\; \mu^{2}_{\tang} := \frac{\langle f + \lambda_{\tang} , \tang \rangle }{|\tang|^2}\qquad \mu^{1}_{\sang} :=  \frac{ |\sang_x|^2 }{|\sang|^2} , \;\; \mu^{2}_{\sang} := \frac{\langle g + \lambda_{\sang} , \sang \rangle }{|\sang|^2}
$$
and then let
\begin{align*}
 {\color{black}\lambda_{\tang} := A^{-1}_{\tang}} \fint_{\T} \left( \frac{ \langle f , \tang \rangle - |\tang_x|^2 }{|\tang|^2}\right)\tang \, \rd x \qquad \mu_{\tang} := \mu^{1}_{\tang} - \mu^{2}_{\tang} \\
 {\color{black}\lambda_{\sang} :=A^{-1}_{\sang}}\fint_{\T} \left( \frac{ \langle g , \sang \rangle - |\sang_x|^2 }{|\sang|^2}\right)\sang \, \rd x \qquad \mu_{\sang} := \mu^{1}_{\sang} - \mu^{2}_{\sang}
\end{align*}
denote the corresponding Lagrange multipliers. Then the Lagrange multiplier bounds
\begin{align}
&\| \mu^{1}_{\tang} \tang \|_{ L^{1}(\T) } \leq  C \|\tang\|^{2}_{H^{1}(\T)}, \qquad \|\mu^{2}_{\tang}\tang\|_{L^{1}(\T)} \leq 2\pi|\lambda_{\tang}| + \|f\|_{L^{1}(\T)}, \nonumber \\
&|\lambda_{\tang}| \leq \| A^{-1}_{\tang} \|_{ {\rm op} }\Big( \|f\|_{L^{1}(\T)} +  C  \|\tang\|^{2}_{H^{1}(\T)} \Big)
\end{align}
and the Lipschitz estimates
\begin{align}
&\| \mu^{1}_{\sang} - \mu^{1}_{\tang} \|_{L^{1}(\T)} \leq  C^{2} \Big( 1 + C^{2}\|\sang\|^{2}_{H^{1}(\T)} \Big) \| \tang + \sang\|_{H^{1}(\T)}\|\tang - \sang\|_{H^{1}(\T)} \nonumber \\
&\|\mu^{2}_{\tang} - \mu^{2}_{\sang}\|_{L^{1}(\T)} \leq C^{3} K_{1} \big( \|f-g\|_{L^{1}(\T)} + |\lambda_{\tang} - \lambda_{\sang}| + \|\tang - \sang\|_{H^{1}(\T)} \big)
\nonumber \\
&|\lambda_{\tang} - \lambda_{\sang}| \leq C^{3}\|A^{-1}_{\tang}\|_{ {\rm op} }K_{2}\Big( \|f-g\|_{L^{1}(\T)} + \|\tang - \sang\|_{H^{1}(\T)} + \|\mu^{1}_{\tang} - \mu^{1}_{\sang}\|_{L^{1}(\T)} + \| A^{-1}_{\tang} - A^{-1}_{\sang} \|_{ {\rm op} } \Big)
\end{align}
hold whenever both $A_{\tang}$ and $A_{\sang}$ are non-singular. The constant $K_1$ is a multi-linear function of the quantities
$$\|g\|_{L^{1}(\T)},|\lambda_{\sang}|,\|\tang\|_{H^{1}(\T)},\|\sang\|_{H^{1}(\T)}$$
alone, while the constant $K_{2}$ is a multi-linear function of the quantities
$$\|g\|_{L^{1}(\T)},\|\mu^{1}_{\sang}\|_{L^{1}(\T)},\|\tang\|_{H^{1}(\T)},\|\sang\|_{H^{1}(\T)}$$
alone.
\end{lemma}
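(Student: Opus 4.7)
The proof is essentially bookkeeping: each bound reduces to either a pointwise algebraic manipulation combined with the lower bound $|\tang|,|\sang|\geq 1/C$, or to a standard "add and subtract" argument together with the Sobolev embedding $H^1(\T)\hookrightarrow L^{\infty}(\T)$. I would proceed in three stages.

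\textbf{Stage 1: uniform bounds.} For $\mu^{1}_{\tang}\tang$ observe the pointwise identity $|\mu^{1}_{\tang}\tang|=|\tang_x|^2/|\tang|\leq C|\tang_x|^2$, and integrate using $\int_{\T}|\tang_x|^2\,\rd x=2\pi\|\tang_x\|_{L^2(\T)}^2\leq 2\pi\|\tang\|_{H^1(\T)}^2$ (the factor $2\pi$ is absorbed into $C$). For $\mu^{2}_{\tang}\tang$ the Cauchy--Schwarz bound $|\langle f+\lambda_{\tang},\tang\rangle|\leq |f+\lambda_{\tang}||\tang|$ cancels one power of $|\tang|$ against the denominator $|\tang|^2$, giving $|\mu^{2}_{\tang}\tang|\leq|f|+|\lambda_{\tang}|$ pointwise; integration yields the claimed $\|f\|_{L^1(\T)}+2\pi|\lambda_{\tang}|$. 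The bound on $|\lambda_{\tang}|$ follows from applying $A^{-1}_{\tang}$ to its defining identity and then estimating the integrand by $|f|/|\tang|+|\tang_x|^2/|\tang|\leq C(|f|+|\tang_x|^2)$, which reuses the estimates just obtained.

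\textbf{Stage 2: Lipschitz estimate on $\mu^1$.} Algebraically decompose
\begin{equation*}
\mu^{1}_{\tang}-\mu^{1}_{\sang}=\frac{\langle \tang_x-\sang_x,\tang_x+\sang_x\rangle}{|\tang|^2}+\frac{|\sang_x|^2\langle \sang-\tang,\sang+\tang\rangle}{|\tang|^2|\sang|^2}.
\end{equation*}
The first term integrates by Cauchy--Schwarz to $C^2\|\tang_x-\sang_x\|_{L^2(\T)}\|\tang_x+\sang_x\|_{L^2(\T)}$ (up to the $2\pi$ normalization). The second is controlled by pulling $\|\sang-\tang\|_{L^\infty}\|\sang+\tang\|_{L^\infty}$ outside the integral, using the $1$d embedding $H^1(\T)\hookrightarrow L^\infty(\T)$, and multiplying by $C^4\int|\sang_x|^2\leq C^4\|\sang\|_{H^1(\T)}^2$. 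Adding the two pieces reproduces the factor $C^2(1+C^2\|\sang\|_{H^1(\T)}^2)$.

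\textbf{Stage 3: Lipschitz estimates on $\mu^2$ and $\lambda$.} For $\mu^{2}$ decompose
\begin{equation*}
\mu^{2}_{\tang}-\mu^{2}_{\sang}=\frac{\langle (f-g)+(\lambda_{\tang}-\lambda_{\sang}),\tang\rangle}{|\tang|^2}+\Big\langle g+\lambda_{\sang},\,\frac{\tang}{|\tang|^2}-\frac{\sang}{|\sang|^2}\Big\rangle,
\end{equation*}
estimate the first summand as in Stage~1, and in the second use the pointwise identity
\begin{equation*}
\frac{\tang}{|\tang|^2}-\frac{\sang}{|\sang|^2}=\frac{\tang-\sang}{|\tang|^2}+\frac{\sang\langle\sang-\tang,\sang+\tang\rangle}{|\tang|^2|\sang|^2}
\end{equation*}
together with $H^1\hookrightarrow L^\infty$ to extract $\|\tang-\sang\|_{H^1(\T)}$; the remaining prefactors pool into the multi-linear constant $K_{1}$. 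The Lipschitz estimate for $\lambda$ is obtained by writing
\begin{equation*}
\lambda_{\tang}-\lambda_{\sang}=(A^{-1}_{\tang}-A^{-1}_{\sang})v_{\tang}+A^{-1}_{\sang}(v_{\tang}-v_{\sang}),
\end{equation*}
where $v_{\tang}:=\fint(\langle f,\tang\rangle-|\tang_x|^2)|\tang|^{-2}\tang\,\rd x$, and then estimating $v_{\tang}-v_{\sang}$ by the same decomposition strategy used for $\mu^{2}$, now allowing $\|f-g\|_{L^1(\T)}$ and $\|\mu^{1}_{\tang}-\mu^{1}_{\sang}\|_{L^1(\T)}$ to appear naturally; the prefactors bundle into $K_{2}$.

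The proof involves no genuinely difficult step—the main obstacle is purely organizational: the Lipschitz bound on $\lambda_{\tang}-\lambda_{\sang}$ must be carefully arranged so that the terms which themselves depend on $|\lambda_{\tang}-\lambda_{\sang}|$ (hidden inside $\mu^{2}$-type quantities) are separated from those that do not, and so that the final prefactor is genuinely multi-linear in the listed norms rather than in $\lambda$ itself. Using the definition of $\lambda_{\tang}$ directly (rather than through $\mu^{2}_{\tang}$) in this step avoids circularity.
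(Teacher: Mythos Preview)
Your proposal is correct and matches the paper's own approach: the paper's proof consists of the single sentence ``Each inequality follows easily from the embedding $L^{\infty}(\T) \subset H^{1}(\T),$ and simple estimates of commutators via the triangle inequality,'' and what you have written is precisely a careful unpacking of that sentence. Your add-and-subtract decompositions for $\mu^{1}_{\tang}-\mu^{1}_{\sang}$, $\mu^{2}_{\tang}-\mu^{2}_{\sang}$, and $\lambda_{\tang}-\lambda_{\sang}$ are the intended ones, and the only minor point is that to land on the exact prefactor $\|A^{-1}_{\tang}\|_{\rm op}$ in the final bound you may wish to decompose as $A^{-1}_{\tang}(v_{\tang}-v_{\sang})+(A^{-1}_{\tang}-A^{-1}_{\sang})v_{\sang}$ rather than the version you wrote.
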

\begin{proof}
Each inequality follows easily from the embedding $L^{\infty}(\T) \subset H^{1}(\T),$ and simple estimates of commutators via the triangle inequality.
\end{proof}
\noindent A straightforward combination of lemmas \ref{lem:boundF},\ref{lem:lipF},\ref{lem:boundA} and \ref{lem:boundmult} is thankfully all we need, so our estimation task is complete.

With these estimates in hand, we may now proceed with a demonstration of local existence and uniqueness. We begin by working with the class of \emph{mild solutions} $\tang \in C([0,T];H^{1}(\T))$ to the evolution. We first set
$$
G(\tang,\tang_x) := \F_{\gamma_{\tang} } + \lambda_{\tang} + \mu_{\tang} \tang, \qquad G:H^{1}(\T)\times L^{2}(\T) \mapsto L^{1}(\T)
$$
as the lower-order contribution to \eqref{eq:gflow}, and use the more compact notation
$$
\tang_{t} =  \tang_{xx} + G(\tang,\tang_x)
$$
to refer to the evolution equation. {\color{black}Following chapter 9 of \cite{McO03},} given an initial datum $\tang_0 \in H^{1}(\T),$ we then say that a continuous $H^{1}(\T)$-valued function $\tang \in C([0,T];H^{1}(\T))$ defines a mild solution to \eqref{eq:gflow} on $[0,T]$ if
$$
\tang(t) = S(t)[\tang_0] + \int^{t}_{0} S(t-s)[ G(\tang(s),\tang_x(s)) ]\, \rd s \quad \text{for all} \quad t \in [0,T].
$$
For a given $u \in L^{1}(\T)$ the notation $S(t)[u]$ refers to the action of the linear semi-group generated by the heat equation on $\T;$ as usual it acts according to
$$
S(0)[\phi] = \phi, \quad S(t)[\phi] = \int_{\T} H(x-y,t) \phi(y) \, \rd y \quad (t>0),
$$
where $H(z,t)$ denotes the periodic heat kernel.  We shall need the following proposition to get off the ground.
\begin{proposition}\label{prop:linear}
Suppose $f \in C([0,T];L^{1}(\T))$ for $T>0$ arbitrary. Then for any initial datum $u_0 \in H^{1}(\T)$ the linear initial value problem
\begin{equation}\label{eq:ez_lin}
u_{t} =  u_{xx} + f, \qquad u(x,0) = u_0(x)
\end{equation}
has a unique mild solution $u \in C([0,T]; H^{1}(\T) )$. For any $3/2 < \alpha < 2,$ the solution obeys the estimate
$$
\|u\|_{C([0,T];\dot{H}^{1}(\T))} \leq C(\alpha)\left( \|u_0\|_{ \dot{H}^1(\T) } + T^{1 - \frac{\alpha}{2}}\|f\|_{C([0,T];L^1(\T))}  \right)
$$
for $C(\alpha)$ a universal constant.
\end{proposition}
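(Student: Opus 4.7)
The plan is to produce the mild solution explicitly via the Duhamel formula
$$
u(t) = S(t)u_0 + \int_0^t S(t-s) f(s)\, \rd s,
$$
and then verify both the membership $u \in C([0,T];H^{1}(\T))$ and the quantitative bound by a direct calculation on the Fourier side. Uniqueness is automatic from linearity: any two mild solutions with the same data must coincide with this expression, so the only real tasks are the claimed regularity and the $T^{1-\alpha/2}$ estimate.

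For the estimate, I would write the Fourier coefficients of the Duhamel formula as $\hat u_k(t) = \re^{-k^2 t}\hat u_{0,k} + \int_0^t \re^{-k^2(t-s)}\hat f_k(s)\, \rd s$ and use the standing normalization $|\hat f_k(s)| \leq \|f(s)\|_{L^1(\T)}$. The elementary bound $1 - \re^{-y} \leq y^\theta$ (valid for all $y \geq 0$ and $\theta \in [0,1]$, by considering $y \leq 1$ and $y \geq 1$ separately) applied with $\theta = 1 - \alpha/2$ gives
$$
\int_0^t \re^{-k^2(t-s)}\, \rd s = \frac{1-\re^{-k^2 t}}{k^2} \leq \frac{(k^2 t)^{1-\alpha/2}}{k^2} = k^{-\alpha}\, t^{1-\alpha/2}.
$$
Multiplying by $k^2$, squaring, summing in $k\neq 0$, and invoking $\|S(t)u_0\|_{\dot{H}^{1}(\T)} \leq \|u_0\|_{\dot{H}^{1}(\T)}$ for the homogeneous part then yields
$$
\|u(t)\|_{\dot{H}^{1}(\T)} \leq \|u_0\|_{\dot{H}^{1}(\T)} + \|f\|_{C([0,T];L^1(\T))}\, t^{1-\alpha/2}\Bigl(\sum_{k\neq 0} k^{2-2\alpha}\Bigr)^{1/2},
$$
with the series converging precisely because $\alpha > 3/2$. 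Setting $C(\alpha)$ to be the resulting constant supplies the stated inequality.

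Continuity in time splits into its two pieces. Strong continuity of $t \mapsto S(t)u_0$ in $H^1(\T)$ is the standard fact for the heat semigroup, obtained from the $\dot{H}^1$ contraction together with density of trigonometric polynomials. For the Duhamel term at a point $t \in [0,T]$ I would split the integral over $[0,t+h]$ into $[0,t]$ and $[t,t+h]$: the second piece contributes at most $C(\alpha) h^{1-\alpha/2} \|f\|_{C([0,T];L^1(\T))}$ by the very bound just established, while the first piece is handled by dominated convergence, using strong continuity of $S(\cdot)$ on each individual Fourier mode and the summable majorant from the same estimate.

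The only genuine obstacle is accounting for the gap between the $L^1(\T)$ regularity of the forcing and the targeted $\dot{H}^1(\T)$ norm of the solution, which is why the restriction $3/2 < \alpha < 2$ is sharp for this argument: the lower bound $\alpha > 3/2$ is exactly what makes $\sum k^{2-2\alpha}$ summable, and the upper bound $\alpha < 2$ ensures the time exponent $1 - \alpha/2$ is strictly positive so that the bound is genuinely small for small $T$, which is the property that will drive the contraction-mapping argument in the sequel.
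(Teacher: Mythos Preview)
Your argument is correct and follows essentially the same route as the paper: write the Duhamel formula on the Fourier side, bound $|\hat f_k(s)|$ by $\|f(s)\|_{L^1(\T)}$, and then show the multiplier sequence lies in $\ell^2$ precisely when $\alpha>3/2$. The one cosmetic difference is in how the time integral is estimated: the paper uses the pointwise smoothing bound $|k|^{\alpha}\re^{-k^2 s}\leq C(\alpha)s^{-\alpha/2}$ and then integrates the resulting $(t-s)^{-\alpha/2}$, while you integrate first to get $(1-\re^{-k^2 t})/k^2$ and then apply $1-\re^{-y}\leq y^{1-\alpha/2}$; both yield $|k|\,|\hat z_k(t)|\lesssim |k|^{1-\alpha}t^{1-\alpha/2}$ and the same summability threshold. (A tiny wording slip: ``multiplying by $k^2$, squaring'' should read ``squaring, then multiplying by $k^2$'' to match your displayed $\sum k^{2-2\alpha}$, but the computation itself is right.)
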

\begin{proof} See appendix, proposition \ref{prop:linearA} \end{proof}
\noindent By linearity, this proposition obviously affords us a simple stability estimate
\begin{equation}\label{eq:stab}
\|u-v\|_{C([0,T]; H^{1}(\T))} \leq C(\alpha)\left( \|u_0-v_0\|_{ \dot{H}^1(\T) } + T^{1 - \frac{\alpha}{2}}\|f_1 - f_2\|_{C([0,T];L^1(\T))}  \right)
\end{equation}
between distinct solutions $u,v \in C([0,T];H^{1}(\T) )$ as well. With this proposition in hand, we may proceed to establish a second intermediate result. It will allow us to apply a fixed-point argument in the class of mean zero, unit speed embeddings. This proposition is the main ingredient needed to prove local well-posedness. Throughout the remainder of this sub-section we use $B_{\delta}(\tang_0)$ denote the $\delta$-ball around $\tang_0$ in the $H^{1}(\T)$ norm.
\begin{proposition}\label{lem:firstexistence}
Let $f \in C([0,T];L^{1}(\T))$ for $T>0$ arbitrary. Suppose that $\fint_{\T} f \, \rd x = 0$ on $[0,T],$ and define Lagrange multipliers according to
$$
A_{\tang} \lambda_{\tang} = \fint_{\T} \left( \frac{ \langle f , \tang \rangle -  |\tang_x|^2 }{|\tang|^2}\right)\tang \, \rd x \qquad \mu_{\tang} = \frac{ |\tang_x|^2 - \langle f +\lambda_{\tang} , \tang \rangle}{|\tang|^2}.
$$
Let $\tang_0 \in H^{1}(\T)$ denote any initial datum with $\fint \tang_0 \, \rd x = 0$ and unit speed. Then for any $\delta>0$ there exists a time $0 < T_0 \leq T,$ depending only on $\tang_0, \delta$ and $\|f\|_{C([0,T];L^{1}(\T))},$ so that the initial value problem
$$
\tang_{t} =  \tang_{xx} + f + \lambda_{\tang} + \mu_{\tang} \tang, \;\; \tang(0) = \tang_0 \qquad \text{and} \qquad \tang(t) \in B_{\delta}(\tang_0) \quad \text{for all} \quad t \in [0,T_0]
$$
has a unique mild solution $\tang \in C([0,T_0],H^{1}(\T))$ with mean zero and unit speed.
\end{proposition}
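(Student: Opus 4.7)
The plan is a Banach fixed-point argument on the closed subset
\[
X_{T_0,\delta} := \bigl\{ \tang \in C([0,T_0];H^{1}(\T)) : \tang(t) \in \bar{B}_{\delta}(\tang_0) \text{ for every } t \in [0,T_0]\bigr\}
\]
of $C([0,T_0];H^{1}(\T))$, equipped with the sup-in-time $H^{1}$ norm. For $\delta$ sufficiently small, the embedding $H^{1}(\T) \subset L^{\infty}(\T)$ and $|\tang_0| \equiv 1$ force $\min_{x}|\tang(x,t)| \geq 1/2$ uniformly on $X_{T_0,\delta}$. Since $\tang_0$ is unit speed with $\hat{\tang}_{0,0} = 0$, it is non-constant, so lemma \ref{lem:boundA} gives a uniform bound on $\|A^{-1}_{\tang}\|_{\op}$ and its Lipschitz variation across $X_{T_0,\delta}$. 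Lemma \ref{lem:boundmult} then furnishes uniform $L^{1}(\T)$ bounds and Lipschitz-in-$H^{1}$ estimates for the nonlinear forcing $G(\tang) := f + \lambda_{\tang} + \mu_{\tang}\tang$, with constants depending only on $\delta$, $\tang_0$ and $\|f\|_{C([0,T];L^{1})}$. Define $\Phi:X_{T_0,\delta} \to C([0,T_0];H^{1}(\T))$ by letting $\Phi(\tang)$ be the unique mild solution to the linear problem $u_t = u_{xx} + G(\tang(t))$, $u(0) = \tang_0$, supplied by proposition \ref{prop:linear}.

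To show $\Phi$ is a self-map, apply the linear estimate of proposition \ref{prop:linear} with an exponent $3/2 < \alpha < 2$ to obtain
\[
\|\Phi(\tang) - \tang_0\|_{C([0,T_0];H^{1})} \leq \|S(\cdot)\tang_0 - \tang_0\|_{C([0,T_0];H^{1})} + C(\alpha)\,T_0^{1-\alpha/2}\sup_{t \in [0,T_0]}\|G(\tang(t))\|_{L^{1}}.
\]
The first term tends to $0$ as $T_0 \to 0$ by strong continuity of the heat semigroup on $H^{1}(\T)$, while the second is dominated by a constant depending only on $\delta,\tang_0$ and $f$ times $T_0^{1-\alpha/2}$; taking $T_0$ small enough makes the right side no larger than $\delta$. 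For the contraction, the stability bound \eqref{eq:stab} combined with the Lipschitz estimate $\|G(\tang(t)) - G(\sang(t))\|_{L^{1}} \leq C(\delta,\tang_0,f)\,\|\tang(t) - \sang(t)\|_{H^{1}}$ (obtained by combining lemmas \ref{lem:boundA} and \ref{lem:boundmult}) yields
\[
\|\Phi(\tang) - \Phi(\sang)\|_{C([0,T_0];H^{1})} \leq C(\delta,\tang_0,f)\,T_0^{1-\alpha/2}\,\|\tang - \sang\|_{C([0,T_0];H^{1})},
\]
and a further shrinking of $T_0$ produces a strict contraction. Banach's theorem then delivers a unique fixed point $\tang \in X_{T_0,\delta}$, which by construction is the required mild solution.

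Finally, one must verify that the fixed point preserves the mean-zero and unit-speed constraints. The defining equation $A_{\tang}\lambda_{\tang} = \fint(\langle f,\tang\rangle - |\tang_x|^{2})\tang/|\tang|^{2}\,dx$ was engineered so that $\lambda_{\tang} + \fint \mu_{\tang}\tang\,dx = 0$ for any $\tang$; combined with $\fint f\,dx = 0$ this yields $\fint G(\tang(t))\,dx \equiv 0$. Since mild solutions to the heat equation preserve spatial averages under mean-zero forcing, each Picard iterate (and hence the fixed point) inherits $\fint \tang(t)\,dx = \fint \tang_0\,dx = 0$. For unit speed, the identity $\mu_{\tang}|\tang|^{2} = |\tang_x|^{2} - \langle f + \lambda_{\tang},\tang\rangle$ makes $\langle \tang, G(\tang)\rangle = |\tang_x|^{2}$, so computing $\partial_t|\tang|^{2} = 2\langle\tang,\tang_{xx}\rangle + 2\langle\tang,G\rangle = \partial_x^{2}|\tang|^{2}$ shows $|\tang(\cdot,t)|^{2}$ solves the scalar heat equation with initial data $\equiv 1$, hence equals $1$ identically. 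The main obstacle throughout is that $\mu_{\tang}\tang$ only lies in $L^{1}(\T)$, not $H^{1}(\T)$, due to the pointwise quadratic $|\tang_x|^{2}$; proposition \ref{prop:linear} with exponent $\alpha \in (3/2,2)$ is precisely tailored to this regime, supplying the power of $T_0$ that is strong enough to close the fixed-point argument while still accommodating merely $L^{1}(\T)$ forcing.
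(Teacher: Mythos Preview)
Your fixed-point argument matches the paper's proof essentially line for line: same space $X_{T_0,\delta}$, same invocation of lemmas \ref{lem:boundA} and \ref{lem:boundmult} for the multiplier estimates, same use of proposition \ref{prop:linear} with $\alpha \in (3/2,2)$ to produce the contracting power $T_0^{1-\alpha/2}$, and the same mean-zero verification via $\lambda_{\tang} + \fint \mu_{\tang}\tang\,\rd x = 0$.

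The one place where your argument is too quick is the unit-speed step. You write $\partial_t|\tang|^2 = 2\langle\tang,\tang_{xx}\rangle + 2\langle\tang,G\rangle = \partial_x^2|\tang|^2$, but at this stage $\tang$ is only a mild solution in $C([0,T_0];H^1(\T))$: neither $\tang_t$ nor $\tang_{xx}$ exists in the sense needed to make that pointwise identity meaningful, and $|\tang|^2$ is only known to solve the heat equation in the mild sense once you have justified the product rule. The paper closes this gap by working Fourier-coefficientwise: from the Duhamel formula \eqref{eq:hateqn} one extracts the decay $|\hat{\tang}_k(t)| \lesssim (t^{-1/2}+1)k^{-2}$ for $t>0$, which gives just enough summability to differentiate the series for $\hat{z}_k(t) = \sum_\ell \langle\hat{\tang}_\ell,\hat{\tang}_{k-\ell}\rangle$ term by term and arrive rigorously at $\hat{z}_k'(t) = -k^2\hat{z}_k(t)$. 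Your formal calculation is the right heuristic, but it needs this bootstrap (or an equivalent smoothing argument) to stand.
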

\begin{proof}
As $\tang_0$ has unit speed and mean zero, it is necessarily non-constant. Thus $A^{-1}_{\tang_0}$ exists by lemma \ref{lem:boundA}, and moreover the properties
\begin{align*}
\min_{x \in \T} \, |\tang(x)| \geq \frac1{2}, \qquad A^{-1}_{\tang} \;\; \text{exists}, \qquad  \|A^{-1}_{\tang}\|_{ {\rm op} } \leq 2 \|A^{-1}_{\tang_0}\|_{ {\rm op} }
\end{align*}
hold for all $\tang \in B_{\delta}(\tang_0)$ provided $\delta>0$ is sufficiently small. By lemma \ref{lem:boundA}, the choice of $\delta$ depends only on $\|\tang_0\|_{L^{2}(\T)}$ and $\|A^{-1}_{\tang_0}\|_{ {\rm op}}$. Fix such a $\delta > 0$, and given a time parameter $0 < T_{0} \leq T$ let
$$
X_{T_0,\delta} := \left\{ \tang \in C([0,T_0];H^{1}(\T)) : \tang(t) \in B_{\delta}(\tang_0) \;\; \forall \; t \in [0,T_0] \right\}.
$$
For $\tang \in X_{T_0,\delta}$ consider the mapping $\mathcal{A}[\tang]$ defined by
$$
\tang \in X_{T_0,\delta} \qquad \overset{\mathcal{A}[\tang]}{\longmapsto} \qquad S(t)[\tang_0] + \int^{t}_{0} S(t-s)[ G(f(s),\tang(s),\tang_x(s)) ]\, \rd s,
$$
with $G(f,\tang,\tang_x) := f + \lambda_{\tang} + \mu_{\tang} \tang$ denoting the lower-order terms.
Lemma \ref{lem:boundmult} implies that the estimates
\begin{align*}
&\| G(f,\tang,\tang_x) \|_{C([0,T_0];L^{1}(\T))} \leq K \qquad \text{and} \\
&\| G(f,\tang,\tang_x) - G(f,\sang,\sang_x)\|_{ C([0,T_0];L^{1}(\T))} \leq K \| \tang - \sang \|_{C([0,T_0];H^{1}(\T))}
\end{align*}
hold whenever $\tang,\sang \in X_{\delta,T_0}$ and $0 < T_0 \leq T$ are arbitrary. The constant $K$ depends only on $\|A^{-1}_{\tang_0}\|_{ {\rm op}},\|\tang_0\|_{H^{1}(\T)}$ and $\|f\|_{C([0,T];L^{1}(\T))}$. By proposition \ref{prop:linear} {\color{black}(more specifically \eqref{eq:stab})}, if $\tang \in X_{T_0,\delta}$ and $3/2 < \alpha < 2$ then
$$
\|{\color{black}\mathcal{A}[\tang]} - S[\tang_0]\|_{C([0,T_0];H^{1}(\T))} \leq C(\alpha)T^{1 - \frac{\alpha}{2}}_{0}\|G(f,\tang,\tang_x)\|_{C([0,T];L^{1}(\T))} \leq C(\alpha)T^{ 1 - \frac{\alpha}{2}}_0 K.
$$
The semigroup $S(t)[\tang_0]$ also satisfies
$$
\|S[\tang_0] - \tang_0\|_{C([0,T_0];H^{1}(\T))} \leq \frac{\delta}{2}
$$
provided $T_0$ is sufficiently small, with the choice of $T_0$ depending only upon $\delta$ and $\tang_0$ itself. Thus $A[\tang] \in X_{T_0,\delta}$ provided $T_0$ is sufficiently small. For any such $T_0$, if $\tang,\sang \in X_{T_0,\delta}$ then proposition \ref{prop:linear} {\color{black} (more specifically \eqref{eq:stab})} also yields
\begin{align*}
\|A[\tang] - A[\sang]\|_{C([0,T_0];H^{1}(\T))} &\leq C(\alpha)T^{1 - \frac{\alpha}{2}}_{0}\|G(f,\tang,\tang_x) - G(f,\sang,\sang_x)\|_{C([0,T];L^{1}(\T))} \\
&\leq C(\alpha)T^{ 1 - \frac{\alpha}{2}}_0 K  \| \tang - \sang\|_{C([0,T_0];H^{1}(\T))}.
\end{align*}
The mapping $A[\tang]$ therefore defines a contraction on $X_{T_0,\delta},$ by taking $T_0$ smaller if necessary. For some $0 < T_0 \leq T$ sufficiently small, the equation
$$
\tang_t = \tang_{xx} + f + \lambda_{\tang} + \mu_{\tang} \tang
$$
therefore has a unique mild solution $\tang \in C([0,T_0];H^{1}(\T))$. Moreover, $\delta,K$ and therefore the choice of $T_0$ depends only upon $\tang_0$ and $\|f\|_{C([0,T];L^{1}(\T))}$ as claimed.

The remainder of the proof is devoted to showing that $\tang(t)$ has mean zero and unit speed. Fix $[0,T_0]$ as the existence interval and for  $0 < t \leq T_0$ let
$$\hat{\tang}_{k}(t) := \frac1{2\pi} \int_{\T} \tang(t) \mathrm{e}^{-ikx} \, \rd x$$
denote the $k^{ {\rm th}}$ Fourier coefficient of the solution. Similarly, let
$$
\hat{G}_{k}(s) := \frac1{2\pi} \int_{\T} G(f(s),\tang(s),\tang_x(s)) \mathrm{e}^{-ikx} \, \rd x
$$
denote the $k^{ {\rm th}}$ Fourier coefficient of the lower-order terms in the evolution. Then
\begin{equation}\label{eq:hateqn}
\hat{\tang}_{k}(t) = \mathrm{e}^{- k^2 t} \left( \hat{\tang}_k(0) + \int^{t}_{0} \mathrm{e}^{ k^{2}s} \hat{G}_{k}(s) \, \rd s \right)
\end{equation}
since $\tang$ defines a mild solution. In the case that $k = 0,$ this implies
$$
\hat{\tang}_{0}(t) = \int^{t}_{0} \hat{G}_{0}(s) \, \rd s
$$
since $\tang_0$ has mean zero. By definition of the Lagrange multipliers,
$$
\fint_{\T} \mu_{\tang}\tang \, \rd x = \fint_{\T} \left( \frac{ |\tang_x|^2 - \langle f + \lambda_{\tang} , \tang\rangle }{|\tang|^2} \right) \tang \, \rd x = -A_{\tang} \lambda_{\tang} - \left( \fint_{\T} \frac{\tang \otimes \tang}{|\tang|^2} \, \rd x \right)\lambda_{\tang} = -\lambda_{\tang}.
$$
The fact that $f$ has mean zero on $[0,T]$ then gives
$$
\hat{G}_{0}(s) = \fint_{\T} G(f(s),\tang(s),\tang_x(s)) \, \rd x = \fint_{\T} \Big(f + \lambda_{\tang} + \mu_{\tang} \tang\Big) \, \rd x = \lambda_{\tang} + \fint_{\T} \mu_{\tang} \tang \, \rd x = 0,
$$
and so $\tang(t)$ has mean zero on $[0,T_0]$ as desired.

To show that $\tang(t)$ has unit speed, first note that for each $s \in [0,T_0]$ the equality
\begin{equation}\label{eq:frr}
\langle G(f(s),\tang(s),\tang_x(s)) , \tang(s) \rangle =  |\tang_x(s)|^2
\end{equation}
holds in the $L^{1}(\T)$ sense. This identity follows once again by definition of the Lagrange multipliers. Now $\tang_{x}(s) \in L^{2}(\T)$ implies
$$
 \fint_{\T} |\tang_x(s)|^2 \mathrm{e}^{-ikx} \, \rd x = \sum_{ \ell \in \Z} - \ell(k-\ell)\langle \hat{\tang}_{\ell}(s),\hat{\tang}_{k-\ell}(s) \rangle
$$
by the convolution theorem. That $G(f(s),\tang(s),\tang_x(s)) \in L^{1}(\T)$ and $\tang(s) \in H^{1}(\T)$ also provides enough regularity to conclude
\begin{align*}
\fint_{\T} \langle G(f(s),\tang(s),\tang_x(s)) , \tang(s) \rangle \mathrm{e}^{-ikx} \, \rd x = \sum_{\ell \in \Z} \left< \hat{\tang}_{\ell}(s) , \hat{G}_{k - \ell}(s) \right>,
\end{align*}
which by the observation \eqref{eq:frr} yields

\begin{equation}\label{eq:sumsum}
\sum_{\ell \in \Z} \left< \hat{\tang}_{\ell}(s) , \hat{G}_{k - \ell}(s) \right> = - \sum_{\ell \in \Z} \ell(k-\ell) \langle \hat{\tang}_{\ell}(s) , \hat{\tang}_{k-\ell}(s) \rangle
\end{equation}
as an immediate consequence. If $t > 0$ and $k \neq 0$ the decay estimate
\begin{align*}
|\hat{\tang}_k(t)| &\leq \re^{- k^2 t}|\hat{\tang}_k(0)| +  k^{-2}\|G(f,\tang,\tang_x)\|_{C([0,T_0];L^{1}(\T))}\\
& \leq C\left( t^{-\frac1{2}} \|\tang_0\|_{H^{1}(\T)} + \|G(f,\tang,\tang_x)\|_{C([0,T_0];L^{1}(\T))}\right)k^{-2}
\end{align*}
also holds, simply by bounding and integrating \eqref{eq:hateqn} in a straightforward manner. Define
$$
z(t) := |\tang(t)|^2 \qquad \hat{z}_{k}(t) := \fint_{\T} z(t) \mathrm{e}^{-ikx} \, \rd x,
$$
and note that
$$
\hat{z}_k(t) = \sum_{\ell \in \Z} \langle \hat{\tang}_{\ell}(t) , \hat{\tang}_{k-\ell}(t) \rangle.
$$
But then \eqref{eq:hateqn} yields
\begin{align*}
\hat{z}^{\prime}_k(t) &= \sum_{\ell \in \Z} \langle \hat{\tang}^{\prime}_{\ell}(t) , \hat{\tang}_{k-\ell}(t) \rangle + \langle \hat{\tang}_{\ell}(t) , \hat{\tang}^{\prime}_{k-\ell}(t) \rangle\\
&= \sum_{\ell \in \Z} - \ell^2 \langle \hat{\tang}_{\ell}(t) , \hat{\tang}_{k-\ell}(t) \rangle -(k-\ell)^2 \langle \hat{\tang}_{\ell}(t) , \hat{\tang}_{k-\ell}(t) \rangle
+ \langle \hat{G}_{\ell}(t) , \hat{\tang}_{k-\ell}(t) \rangle + \langle \hat{\tang}_{\ell}(t) , \hat{G}_{k-\ell}(t) \rangle,
\end{align*}
since all four series {\color{black} in the last equation }converge absolutely for $t>0$ due to the fact that $\tang(s) \in H^{1}(\T)$ and the decay estimate. Combining this with the previous observation {\color{black}\eqref{eq:sumsum}} then shows
\begin{align*}
\hat{z}^{\prime}_k(t) &= - \sum_{\ell \in \Z} \big( \ell^2 + (k-\ell)^2 \big) \langle \hat{\tang}_{\ell}(t) , \hat{\tang}_{k-\ell}(t) \rangle + 2 \ell(k-\ell) \langle \hat{\tang}_{\ell}(t) , \hat{\tang}_{k-\ell}(t)  \rangle = - k^2 \hat{z}_k(t)
\end{align*}
as long as $t > 0$. For any Fourier coefficient $\hat{z}_{k}(t)$ and all $t \in [0,T_0]$ the relation
$$
\hat{z}_{k}(t) = \mathrm{e}^{- k^2 t} \hat{z}_{k}(0)
$$
therefore holds. Now $\tang_0$ has unit speed, and so $\hat{z}_{0}(0) = 1$ and $\hat{z}_k(0) = 0$ otherwise. Thus $|\tang(t)|^2 = 1$ on $[0,T_0]$ as desired.
\end{proof}

With this proposition in hand, we may now complete the local existence proof for \eqref{eq:gflow}. Let $\tang_0 \in H^{1}(\T)$ have mean zero and unit speed. Suppose that the induced knot $\gamma_{\tang_{0}} \in H^{2}(\T)$ defines a bi-Lipschitz embedding, so in particular its distortion
$$
\delta_{\infty}( \gamma_{\tang_{0}}) := \sup_{ x,y \in \T } \, \frac{ \dd(x-y)}{|\gamma_{\tang_{0}}(x) - \gamma_{\tang_{0}}(y)|}
$$
is finite. Recall now that the embedding $H^{2}(\T) \subset C^{0,1}(\T)$ holds, in that $\gamma \in H^{2}(\T)$ implies
$$
\sup_{x,y \in \T} \frac{ |\gamma(x) - \gamma(y)| }{\dd(x-y)} \leq C \|\ddot{\gamma}\|_{L^{2}(\T)}
$$
for $C>0$ some universal constant. Now take $\tang \in H^{1}(\T)$ with mean zero and unit speed, but otherwise arbitrary. Let $\gammat$ denote its induced embedding. It is then easy to show that a $\delta>0$ exists, depending only on $\delta_{\infty}( \gamma_{\tang_{0}}),$ so that
$$
\delta_{\infty}(\gammat) \leq 2 \delta_{\infty}(\gamma_{\tang_0}) \qquad \text{for all} \qquad \tang \in B_{\delta}(\tang_0),
$$
with $B_{\delta}(\tang_0)$ again denoting an $H^{1}(\T)$-ball. By lemmas {\color{black}\ref{lem:boundF} and \ref{lem:lipF}}, for any $\tang,\sang \in B_{\delta}(\tang_0)$ the uniform estimates
\begin{align*}
&\| \mathbf{F}_{\gammat} \|_{H^{1}(\T)} \leq C\big( \delta_{\infty}(\gamma_{\tang_0}) \big)
&\|\mathbf{F}_{\gammat} - \mathbf{F}_{\gamma_{\sang}}\|_{H^{1}(\T)} &\leq C\big( \delta_{\infty}(\gamma_{\tang_0}) , \|\tang_0\|_{H^{1}(\T)}\big)\| \tang - \sang \|_{H^{1}(\T)}
\end{align*}
therefore hold for the nonlocal forcings induced by any such embedding. By taking $\delta > 0$ smaller if necessary, we may ensure that
\begin{align*}
\min_{x \in \T} \, |\tang(x)| \geq \frac1{2}, \qquad A^{-1}_{\tang} \;\; \text{exists}, \qquad  \|A^{-1}_{\tang}\|_{ {\rm op} } \leq 2 \|A^{-1}_{\tang_0}\|_{ {\rm op} }
\end{align*}
hold for any $\tang \in B_{\delta}(\tang_0)$ as well.
Given such a $\delta>0$ and a time parameter $T>0$ let
$$
X_{T,\delta} := \left\{ \tang \in C([0,T];H^{1}(\T)) : \forall \, t \in [0,T], \;\; \fint_{\T} \tang(t) \, \rd x = 0, \;\; |\tang(t)|^2 =1, \;\; \tang(t) \in B_{\delta}(\tang_0) \right\},
$$
which always contains the constant curve $\tang(t) \equiv \tang_0,$ for instance. For any such $\sang \in X_{T,\delta},$ the corresponding nonlocal forcings $\mathbf{F}_{\gamma_{\sang}}$ satisfy a uniform bound
\begin{equation}\label{eq:unif}
\| \mathbf{F}_{ \gamma_{\sang} } \|_{C([0,T];H^{1}(\T))} \leq C\big( \delta_{\infty}(\gamma_{\tang_0}) \big),
\end{equation}
and so by proposition \ref{lem:firstexistence} the corresponding equation
\begin{equation}\label{eq:xxx}
\tang_{t} =  \tang_{xx} + \mathbf{F}_{ \gamma_{\sang} } + \lambda_{\tang} + \mu_{\tang} \tang \qquad \tang(0) = \tang_0
\end{equation}
has a unique mild solution $\tang \in X_{T,\delta}$ for some $T>0$ sufficiently small. By Proposition \ref{lem:firstexistence}, the uniform bound \eqref{eq:unif} is sufficient to guarantee that the interval of existence $[0,T]$ is uniform over the choice of $\sang \in X_{T,\delta}$, and so the mapping $\mathcal{A}[\sang]$ defined by sending $\sang \in X_{T,\delta}$ to $\tang \in X_{T,\delta},$ with $\tang$ the unique solution of \eqref{eq:xxx}, is therefore well-defined. Set
$$
\Lambda_{\tang} := \lambda_{\tang} + \mu_{\tang} \tang
$$
as the Lagrange multipliers of the solution to \eqref{eq:xxx}. By proposition \ref{prop:linear} {\color{black}(specifically \eqref{eq:stab})}, if $3/2 < \alpha < 2$ the comparison estimate
$$
\|\mathcal{A}[\tang] - \mathcal{A}[\sang]\|_{ C([0,T];H^{1}(\T)) } \leq C(\alpha)T^{1 - \frac{\alpha}{2}}\Big( \| \mathbf{F}_{\gamma_{\tang}} - \mathbf{F}_{\gamma_{\sang}}\|_{C([0,T];L^{1}(\T))} + \| \Lambda_{\mathcal{A}[\tang]} - \Lambda_{\mathcal{A}[\sang]} \|_{C([0,T];L^{1}(\T))}\Big)
$$
therefore holds. Lemmas \ref{lem:boundA} and \ref{lem:boundmult} combine with the fact that $\mathcal{A}[\tang],\mathcal{A}[\sang] \in B_{\delta}(\tang_0)$ on $[0,T]$ to yield
$$
\| \Lambda_{\mathcal{A}[\tang]} - \Lambda_{\mathcal{A}[\sang]} \|_{C([0,T];L^{1}(\T))} \leq K\Big( \|\mathcal{A}[\tang] - \mathcal{A}[\sang]\|_{C([0,T];H^{1}(\T))} + \| \mathbf{F}_{\gamma_{\tang}} - \mathbf{F}_{\gamma_{\sang}}\|_{C([0,T];L^{1}(\T))} \Big)
$$
for $K$ some constant depending on $\delta,\|A^{-1}_{\tang_0}\|_{ {\rm op}},\|\tang_0\|_{H^{1}(\T)}$ and $C\big( \delta_{\infty}(\gamma_{\tang_0}) \big)$ alone. By taking $T>0$ smaller if necessary we may assume $C(\alpha)K T^{1-\frac{\alpha}{2}} \leq 1/2,$ and so the estimate
$$
\|\mathcal{A}[\tang] - \mathcal{A}[\sang]\|_{ C([0,T];H^{1}(\T)) } \leq 2C(\alpha)T^{1 - \frac{\alpha}{2}}\Big( (1+K)\| \mathbf{F}_{\gamma_{\tang}} - \mathbf{F}_{\gamma_{\sang}}\|_{C([0,T];L^{1}(\T))} \Big)
$$
follows. Finally, the Lipschitz estimates of lemma \ref{lem:lipF} show that
$$
\| \mathbf{F}_{\gamma_{\tang}} - \mathbf{F}_{\gamma_{\sang}}\|_{C([0,T];L^{1}(\T))}  \leq K_{2} \|\tang - \sang\|_{H^{1}(\T)}
$$
where $K_{2}$ depends on $\delta_{\infty}(\gamma_{\tang_0})$ and $\|\tang_0\|_{H^{1}(\T)}$ alone. Thus $\mathcal{A}$ contracts on $X_{T,\delta}$ for $T>0$ sufficiently small, finally yielding local existence. All together, we have shown
\begin{theorem}\label{thm:local}
Assume that $K(u,v)$ is $h/p$ homogeneous and either $0$-degenerate or $m$-degenerate. Let $\tang_0 \in H^{1}(\T)$ have mean zero and unit speed, and suppose the induced knot $\gamma_{\tang_0} \in H^{2}(\T)$ defines a bi-Lipschitz embedding. Then there exists a $T>0$ so that \eqref{eq:gflow} has a unique mild solution $\tang \in C([0,T];H^{1}(\T))$ with mean zero and unit speed. Moreover, for all $t \in [0,T]$ the induced knot $\gamma_{\tang(t)} \in H^{2}(\T)$ defines a bi-Lipschitz embedding.
\end{theorem}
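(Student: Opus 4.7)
The plan is to execute a Banach fixed-point argument on the complete metric space $X_{T,\delta}\subset C([0,T];H^{1}(\T))$ of mean-zero, unit-speed curves that remain in the $H^{1}(\T)$-ball $B_{\delta}(\tang_0)$, with the radius $\delta>0$ chosen once and for all small enough to encode the bi-Lipschitz property. The right self-mapping is
$$
\mathcal{A}:X_{T,\delta}\to C([0,T];H^{1}(\T)),\qquad \sang\longmapsto\tang,
$$
where $\tang$ denotes the unique mild solution, provided by proposition \ref{lem:firstexistence}, of the ``frozen-nonlinearity'' problem $\tang_t=\tang_{xx}+\mathbf{F}_{\gamma_{\sang}}+\lambda_{\tang}+\mu_{\tang}\tang$ with $\tang(0)=\tang_0$.

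First I would fix the radius $\delta>0$. Since $\tang_0$ has unit speed and mean zero it is non-constant, so lemma \ref{lem:boundA} provides a $\delta_1>0$ for which, uniformly on $B_{\delta_1}(\tang_0)$, one has $|\tang(x)|\geq 1/2$, existence of $A^{-1}_{\tang}$, and the operator-norm bound $\|A^{-1}_{\tang}\|_{\op}\leq 2\|A^{-1}_{\tang_0}\|_{\op}$. Because $H^{1}(\T)\hookrightarrow C^{0}(\T)$ and $H^{2}(\T)\hookrightarrow C^{0,1}(\T)$, a short continuity argument also produces $\delta_2>0$ so that $\delta_{\infty}(\gamma_{\tang})\leq 2\delta_{\infty}(\gamma_{\tang_0})$ on $B_{\delta_2}(\tang_0)$; setting $\delta:=\min(\delta_1,\delta_2)$, the uniform forcing bound \eqref{eq:unif} then follows from lemma \ref{lem:boundF}.

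Next I would verify that $\mathcal{A}$ maps $X_{T,\delta}$ into itself for $T$ sufficiently small. Given $\sang\in X_{T,\delta}$, the bound \eqref{eq:unif} supplies a $C([0,T];L^{1}(\T))$ bound on the forcing that is uniform in $\sang$; proposition \ref{lem:firstexistence} then yields existence, mean-zero, and unit-speed of $\tang=\mathcal{A}[\sang]$ on a time interval $[0,T_{\ast}]$ depending only on $\tang_0,\delta$ and this uniform forcing bound, hence uniformly over $\sang$. The fact that $\tang(t)\in B_{\delta}(\tang_0)$ for all $t\in[0,T]$ (after possibly shrinking $T\leq T_{\ast}$) follows from proposition \ref{prop:linear}: one splits $\tang-\tang_0=(S(t)\tang_0-\tang_0)+\int_0^tS(t-s)G(s)\,\rd s$, controls the first term by strong continuity of the heat semigroup on $H^{1}(\T)$ and the second by the $T^{1-\alpha/2}$ factor in \eqref{eq:stab}, taking $3/2<\alpha<2$.

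For the contraction property, given $\tang,\sang\in X_{T,\delta}$ I would compare the two mild solutions via the stability estimate \eqref{eq:stab} applied to the difference $\mathcal{A}[\tang]-\mathcal{A}[\sang]$, which has zero initial datum and forcing
$$
(\mathbf{F}_{\gamma_{\tang}}-\mathbf{F}_{\gamma_{\sang}})+(\Lambda_{\mathcal{A}[\tang]}-\Lambda_{\mathcal{A}[\sang]}),\qquad \Lambda_{\tang}:=\lambda_{\tang}+\mu_{\tang}\tang.
$$
The first term is Lipschitz in $\|\tang-\sang\|_{H^{1}(\T)}$ by lemma \ref{lem:lipF}, with constant depending only on $\delta_{\infty}(\gamma_{\tang_0})$ and $\|\tang_0\|_{H^{1}(\T)}$. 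The Lagrange-multiplier difference is handled by combining lemma \ref{lem:boundA} and lemma \ref{lem:boundmult}, which yields a bound
$$
\|\Lambda_{\mathcal{A}[\tang]}-\Lambda_{\mathcal{A}[\sang]}\|_{C([0,T];L^{1}(\T))}\leq K\bigl(\|\mathcal{A}[\tang]-\mathcal{A}[\sang]\|_{C([0,T];H^{1}(\T))}+\|\mathbf{F}_{\gamma_{\tang}}-\mathbf{F}_{\gamma_{\sang}}\|_{C([0,T];L^{1}(\T))}\bigr)
$$
with $K=K(\delta,\|A^{-1}_{\tang_0}\|_{\op},\|\tang_0\|_{H^{1}(\T)},\delta_{\infty}(\gamma_{\tang_0}))$. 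Absorbing the first right-hand side term into the left using the $T^{1-\alpha/2}$ prefactor from \eqref{eq:stab} (taking $T$ smaller if needed), one arrives at
$$
\|\mathcal{A}[\tang]-\mathcal{A}[\sang]\|_{C([0,T];H^{1}(\T))}\leq C(\alpha,K)\,T^{1-\frac{\alpha}{2}}\|\tang-\sang\|_{C([0,T];H^{1}(\T))},
$$
so shrinking $T$ once more makes $\mathcal{A}$ a strict contraction. The Banach fixed-point theorem then produces the unique mild solution $\tang\in X_{T,\delta}$. The induced-knot regularity $\gamma_{\tang(t)}\in H^{2}(\T)$ with bi-Lipschitz constant at most $2\delta_{\infty}(\gamma_{\tang_0})$ is automatic from the $B_{\delta}(\tang_0)$ containment. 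The main obstacle is really only bookkeeping: tracking which constants depend on which data so that all choices ($\delta$ first, then $T$) can be made in the correct order without circular dependencies; once that order is pinned down the argument is standard.
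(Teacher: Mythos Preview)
Your proposal is correct and follows essentially the same approach as the paper: freeze the nonlocal forcing at $\sang$, invoke proposition \ref{lem:firstexistence} to build the map $\mathcal{A}$, use lemmas \ref{lem:boundF}--\ref{lem:boundmult} together with the stability estimate \eqref{eq:stab} to show it contracts on $X_{T,\delta}$, and then apply Banach's fixed-point theorem. The only cosmetic difference is that the $B_{\delta}(\tang_0)$-containment you verify explicitly via the semigroup splitting is already packaged into the conclusion of proposition \ref{lem:firstexistence}, so in the paper that step is simply quoted rather than re-derived.
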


As with the energetic bound furnished by lemma \ref{lem:distbound}, lower-order modifications of the kernel $K(u,v)$ do not affect the validity of this local well-posedness result. We have only used the $h/p$ homogeneity and degeneracy hypotheses to show that the nonlocal forcings $\mathbf{F}_{\gammat}$ obey the estimates
\begin{align*}
\|\mathbf{F}_{\gamma}\|_{H^{1}(\T)} &\leq C( \delta_{\infty}(\gamma) , \|\gamma\|_{H^{2}(\T)} ) \nonumber \\
\|\mathbf{F}_{\gamma} - \mathbf{F}_{\phi}\|_{H^{1}(\T)} &\leq C( \delta_{\infty}(\gamma) , \delta_{\infty}(\phi),\|\gamma\|_{H^{2}(\T)},\|\phi\|_{H^{2}(\T)} )\| \gamma - \phi \|_{H^{2}(\T)}
\end{align*}
when restricted to unit speed curves, and the local existence argument only requires these estimates. Thus any forcing $\mathbf{F}_{\gammat}$ that obeys such inequalities for unit speed curves yields local well-posedness of the corresponding flow. In particular, kernels $K(u,v) = K_{0}(u,v) + K_{1}(u,v)$ corresponding to smooth perturbations $K_{1}(u,v)$ of an $h/p$ homogeneous and degenerate kernel are perfectly allowable and do not affect the conclusion of theorem \ref{thm:local}.

\subsection{Regularity and Global Existence}
We may now turn our focus toward more pressing analytical issues regarding the evolution
\begin{equation}\label{eq:gflow2}
\tang_t = \tang_{xx} + \mathbf{F}_{\gamma_{\tang}} + \lambda_{\tang} + \mu_{\tang} \tang,
\end{equation}
now that local-in-time well-posedness of mild solutions is established. We first show that solutions exist in the classical sense and exist for as long as the embedding $\gamma_{\tang(t)}$ remains bi-Lipschitz. Under further hypotheses on the nonlocal kernel $K(u,v)$ we show that classical solutions exist globally and are smooth for all time. The following slight refinement of lemma \ref{lem:boundF} will allow us to accomplish these tasks.

\begin{lemma}\label{lem:diffF}
Assume that $K(u,v)$ is $h/p$ homogeneous and degenerate. Assume that $\gamma \in H^{3}(\T)$ is a unit speed, bi-Lipschitz embedding. Then the nonlocal integral
$$
f_{ \gamma}(x) :=  \lim_{\epsilon \downarrow 0} \; \int_{ \T \cap \{\dd(x-y) \geq \epsilon\} } K_{u}\left( |\gamma(x) - \gamma(y)|^2, \dd^{2}(x-y) \right)(\gamma(x) - \gamma(y) ) \, \rd y
$$
lies in $H^{1}(\T)$ and obeys the estimate
$$
\| f^{\prime}_{\gamma} \|_{L^{2}(\T)} \leq C( \delta_{\infty}(\gamma) , \| \gamma\|_{\dot{H}^{3}(\T)} )
$$
for $C(a,b)$ a continuous function of its arguments.
\end{lemma}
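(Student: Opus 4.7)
The plan is to promote the $L^{2}(\T)$ estimate of lemma \ref{lem:boundF} by one order of regularity, exploiting the extra derivative available in $H^{3}(\T)$. The starting point is the observation, obtained by differentiating the $h/p$ homogeneity identity $K(v/\alpha, v) = h(\alpha) v^{-p}$ with respect to $\alpha$, that
$$
K_{u}\bigl(|\gamma(x)-\gamma(y)|^{2}, \dd^{2}(x-y)\bigr) \;=\; g\bigl(\alpha(x,y)\bigr)\,\dd^{-2(p+1)}(x-y),
$$
where $g(\alpha) = -\alpha^{2} h'(\alpha)$ is precisely the function appearing in the degeneracy hypothesis and $\alpha(x,y) := \dd^{2}(x-y)/|\gamma(x)-\gamma(y)|^{2} \in [1,\delta_{\infty}^{2}(\gamma)]$ by the bi-Lipschitz property. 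With the change of variables $y = x+z$, the $x$-dependence of the integrand enters only through $\gamma(x)$ and through $\alpha(x,z)$, which makes differentiation in $x$ clean.

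Formal differentiation under the integral sign then splits $f'_{\gamma}$ into two principal value integrals:
\begin{align*}
f'_{\gamma}(x) &= \pv \int_{\T} g\bigl(\alpha(x,z)\bigr)\,\dd^{-2(p+1)}(z)\,\bigl(\dot{\gamma}(x)-\dot{\gamma}(x+z)\bigr)\,\rd z \\
&\quad + \pv \int_{\T} g'\bigl(\alpha(x,z)\bigr)\,\alpha_{x}(x,z)\,\dd^{-2(p+1)}(z)\,\bigl(\gamma(x)-\gamma(x+z)\bigr)\,\rd z,
\end{align*}
with $\alpha_{x}(x,z) = -2\alpha\,\langle \gamma(x)-\gamma(x+z), \dot{\gamma}(x)-\dot{\gamma}(x+z)\rangle/|\gamma(x)-\gamma(x+z)|^{2}$. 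The first integral is exactly the operator $L_{p}$ of lemma \ref{lem:lapest} applied to $\dot{\gamma} \in H^{2}(\T)$, modulated by the bounded weight $g(\alpha)$. Writing $g(\alpha) = g(1) + (g(\alpha)-g(1))$ separates a principal contribution, whose $L^{2}(\T)$ norm is controlled directly by $\|L_{p}[\dot{\gamma}]\|_{\dot{H}^{1-2p}(\T)} \lesssim \|\gamma\|_{\dot{H}^{3}(\T)}$ (recalling $p<1/2$ in the $0$-degenerate case), from a remainder in which the mean-value theorem combined with estimate \eqref{eq:deltabound} supplies an extra factor of $\delta(x,x+z)$ or (for $m$-degeneracy) a factor $\delta(x,x+z)^{m+1}$ that absorbs the singularity.

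The second integral is controlled by the same mechanism once one observes that the factor $\alpha_{x}(x,z)\bigl(\gamma(x)-\gamma(x+z)\bigr)$ carries a difference $\dot{\gamma}(x)-\dot{\gamma}(x+z)$ multiplied by the bounded unit vector $(\gamma(x)-\gamma(x+z))/|\gamma(x)-\gamma(x+z)|$ and a prefactor bounded by $\delta_{\infty}^{2}(\gamma)$. Up to a finite power of $\delta_{\infty}(\gamma)$, this reduces the second integrand to the same shape as the first, and lemma \ref{lem:lapest} applied to $\dot{\gamma}$ together with the degeneracy-driven cancellation controls it in $L^{2}(\T)$. Finally, one truncates to $\dd(z)\geq\epsilon$, repeats the argument uniformly in $\epsilon$, and passes to the limit $\epsilon \downarrow 0$ just as in lemma \ref{lem:boundF} to identify $f'_{\gamma}$ as the weak derivative of $f_{\gamma}$ and obtain the claimed bound $C(\delta_{\infty}(\gamma), \|\gamma\|_{\dot{H}^{3}(\T)})$.

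The main obstacle is the bookkeeping near $z=0$: one must verify that the interplay between the vanishing of $g$ (or $g-g(1)$) at $\alpha=1$, the Taylor expansion of $\gamma$ to one higher order than in lemma \ref{lem:boundF}, and the $\dd^{-2(p+1)}(z)$ singularity produces an integrand that is in $L^{2}(\T)$ in $x$ uniformly in the truncation parameter. The condition $m > 4p-2$ (or $p<1/2$ in the $0$-degenerate case) is exactly what the counting of derivatives and singularity orders requires, so the arithmetic works out once one accounts for the bi-Lipschitz bound governing every denominator.
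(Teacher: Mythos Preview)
Your overall architecture matches the paper's: differentiate under the integral after the change of variables $y=x+z$, split into a term carrying $\dot\gamma(x)-\dot\gamma(x+z)$ and a term carrying $\alpha_x$, and handle the principal part of the first via $g(1)L_p[\dot\gamma]$. The first integral is fine.

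The gap is in your treatment of the second integral. You bound $|\alpha_x(x,z)(\gamma(x)-\gamma(x+z))| \lesssim \delta_\infty^2(\gamma)\,|\dot\gamma(x)-\dot\gamma(x+z)|$ and then assert this has ``the same shape as the first'' so that lemma \ref{lem:lapest} plus degeneracy finishes it. But this reduction loses the structure needed for a principal-value cancellation: after your bound the integrand is, in absolute value, $|g'(\alpha)|\,|z|^{-2(p+1)}|\dot\gamma(x)-\dot\gamma(x+z)| \lesssim |z|^{-2p-1}$ in the $0$-degenerate case (where $g'$ is merely bounded, not vanishing at $\alpha=1$), and this is not integrable near $z=0$. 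You cannot recover a clean $L_p[\dot\gamma]$ from the second integral because of the additional $x,z$-dependent rank-one projector $\hat{\Delta\gamma}\otimes\hat{\Delta\gamma}$ sitting in front of $\Delta\dot\gamma$; this is not a Fourier multiplier and does not split off a constant principal part in the way $g(\alpha)=g(1)+(g(\alpha)-g(1))$ does. In the $m$-degenerate case the same counting gives $|z|^{m-2p-1}$, which under the hypothesis $m>4p-2$ alone can still fail to be integrable (e.g.\ $p=0.6$, $m=1$).

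What you are missing --- and what your final paragraph gestures at with ``Taylor expansion to one higher order'' without pinning down --- is the unit-speed cancellation $\langle\dot\gamma,\ddot\gamma\rangle=0$. The paper exploits this to show that $\eta_x(x,z)=2\langle\gamma(x)-\gamma(x+z),\dot\gamma(x)-\dot\gamma(x+z)\rangle$ is $O(|z|^3)$ rather than the naive $O(|z|^2)$, because the leading $z^2\langle\dot\gamma,\ddot\gamma\rangle$ term drops out. That single extra power of $|z|$ is exactly what makes the $K_{uu}\,\eta_x$ contribution absolutely integrable (order $|z|^{-2p}$ in the $0$-degenerate case, $|z|^{m-2p}$ in the $m$-degenerate case), with the $x$-dependence landing on the maximal function $(\gamma^{\ppprime})^*$. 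Once you insert this observation your argument goes through; without it the second integral is not controlled.
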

\begin{proof} See appendix, lemma \ref{lem:diffFA} \end{proof}

We may now turn to the task of proving an additional regularity assertion for mild solutions. We begin this task by recording a few observations. Suppose that $\tang \in C([0,T];H^{1}(\T))$ defines a mild solution with $\gamma_{\tang(t)}$ bi-Lipschitz on the entire interval of existence. Thus $\tang(t)$ has mean zero and unit speed on $[0,T]$ and so the quantity
$$
A_{*} := \max_{[0,T]} \ \|A^{-1}_{\tang(t)}\|_{ {\rm op} }
$$
is necessarily finite. This observation follows immediately due to continuity of $\tang(t)$ in the $L^{\infty}(\T)$ norm and the fact that each $A^{-1}_{\tang(t)}$ exists on the full interval of existence. Similarly, let
$$
H_{*} := \max_{ [0,T] } \, \| \tang(t) \|_{H^{1}(\T)}
$$
denote the maximal $H^{1}(\T)$ norm of the solution over the interval of existence. Finally, define
$$
\delta_{*} := \max_{ [0,T] } \, \delta_{\infty}\big( \gamma_{\tang(t)} \big) < +\infty
$$
as the maximal distortion of any induced embedding $\gamma_{\tang(t)}$ over the interval of existence. The finiteness of $H_{*}$ and $\delta_{*}$ follows immediately from the {assumption} that $\tang \in C([0,T];H^{1}(\T))$ defines a mild solution with $\gamma_{\tang(t)}$ bi-Lipschitz on the entire interval of existence. Once again we use
$$
\hat{\tang}_{k}(t) := \fint_{\T} \tang(t) \mathrm{e}^{-ikx} \, \rd x
$$
to denote the $k^{ {\rm th}}$ Fourier coefficient of the solution, and for $k \neq 0$ we let
$$
\hat{\mathbf{F}}_{k}(t) := \fint_{\T} \Big( \mathbf{F}_{\gamma_{\tang}}(t) - \langle \mathbf{F}_{\gamma_{\tang}}(t) + \lambda_{\tang}(t),\tang(t) \rangle\tang(t) \Big) \mathrm{e}^{-ikx} \, \rd x, \qquad \hat{\mu}^{1}_{k}(t) := \fint_{\T} |\tang_x(t)|^2 \tang(t) \, \mathrm{e}^{-ikx} \, \rd x
$$
denote the corresponding coefficients of the regular and singular forcing components, respectively. As $\tang(t) \in H^{1}(\T)$ and $\gamma_{\tang(t)}$ is bi-Lipschitz on $[0,T],$ we have the a-priori uniform $H^{1}(\T)$ bound for the regular component
$$
\max_{[0,T]} \, \sum_{k \in \Z} |k|^2 |\hat{\mathbf{F}}_{k}(t)|^{2} \leq C\big(A_*,\delta_{*},H_{*} )
$$
by lemmas \ref{lem:boundF} and \ref{lem:boundmult}. As $|\tang(t)|^2 = 1$ and $\tang(t) \in H^{1}(\T),$ the uniform estimate
$$
\max_{[0,T]} \; \sup_{k \in \Z} |\hat{\mu}^{1}_{k}(t)| \leq C\big(H_{*}\big)
$$
trivially holds as well. Now take $0 < \epsilon < T$ for $\epsilon > 0$ arbitrary and observe that the equalities
\begin{align}\label{eq:reg1}
\hat{\tang}_{k}(t) &= \mathrm{e}^{ -  k^2 t}\left( \hat{\tang}_k(0) + \int^{t}_{0} \mathrm{e}^{ k^2 s} \big( \hat{\mathbf{F}}_{k}(s)  + \hat{\mu}^{1}_{k}(s) \big) \, \rd s\right) \nonumber \\
\hat{\tang}_{k}(t) &= \mathrm{e}^{ -  k^2 (t-\epsilon)}\left( \hat{\tang}_k(\epsilon) + \int^{t}_{\epsilon} \mathrm{e}^{ k^2 (s-\epsilon)} \big( \hat{\mathbf{F}}_{k}(s)  + \hat{\mu}^{1}_{k}(s) \big) \, \rd s\right)
\end{align}
hold for $t \geq \epsilon$ by definition of a mild solution.

We now plan to combine the equalities \eqref{eq:reg1} with lemma \ref{lem:sima} and a bootstrap argument to show that higher derivatives of $\tang(t)$ exist for positive times. To start this process we recall that for any $t>0$ the estimate
$$
|\hat{\tang}_{k}(t)| \leq \left( C t^{-\frac1{2}}\|\tang_0\|_{H^{1}(\T)} + C\big(A_*,\delta_{*},H_{*} )|k|^{-1}  + C\big(H_{*}\big) \right)k^{-2}
$$
follows by direct integration of \eqref{eq:reg1}. Applying lemma \ref{lem:sima} with the choices {\color{black} $u = \tang_x$ (whose coefficients $k\hat{\tang}_k$ decay like $|k|^{-1}$) and $v = \tang$ (whose coefficients decay like $k^{-2}$) yields
$$
| \hat{\mathbf{w}}_{k} | \leq C(A_{*},\delta_{*},H_{*})(1+t^{-1})|k|^{-1} \qquad (\mathbf{w} = \tang \tang_{x}),
$$
and so we may appeal to lemma \ref{lem:sima} once again with the choices $u = \tang_x$ (decaying like $|k|^{-1}$) and $v = (\tang \tang_x)$ (decaying like $|k|^{-1}$) to establish 
the additional decay estimate
\begin{equation}\label{eq:decay00}
|\hat{\mu}^{1}_{k}(t)| \leq C\big(A_*,\delta_{*},H_{*} )\big( 1 + t^{-\frac{3}{2}} \big) |k|^{-1}\log(1+|k|)
\end{equation}
for $t>0$ arbitrary. Now apply the inequalities $|k||\hat{\mathbf{F}_{k}}(s)| \leq C(A_{*},\delta_{*},H_{*})$ and \eqref{eq:decay00} to \eqref{eq:reg1} and analytically integrate the resulting exponentials to find
\begin{align}\label{eq:reg2}
|\hat{\tang}_{k}(t)| &\leq C \left( |k|^{-1}(t-\epsilon)^{-\frac1{2}}|\hat{\tang}_{k}(\epsilon)| + C\big(A_*,\delta_{*},H_{*} )|k|^{-3}  + O\left( |k|^{-3}\log(1+|k|) \left(1+\epsilon^{-\frac{3}{2}}\right) \right) \right) \nonumber \\
&\leq C\big(\epsilon,A_*,\delta_{*},H_{*} \big) |k|^{-3}\log(1+|k|)
\end{align}
for $2\epsilon < t \leq T$ and $\epsilon>0$ arbitrary. But then $\hat{\tang}_{k}$ decays faster than $|k|^{-p}$ for any $p < 3,$ and so we may remove the logarithmic factor in \eqref{eq:decay00}, and thus in \eqref{eq:reg2}, by iterating this argument once more}. For any $0 < 2\epsilon < t \leq T$ the series
$$
\sum_{k \in \Z} |k|^{4} |\hat{\tang}_{k}(t)|^{2}
$$
therefore converges. As $\epsilon > 0$ was arbitrary, we conclude that $\tang(t) \in H^{2}(\T)$ for any $t>0$ and also that the estimates
\begin{equation}\label{eq:h2bound}
\|\tang(t)\|_{H^{2}(\T)} \leq C\big(\epsilon,A_*,\delta_{*},H_{*}) \qquad \sup_{k \in \Z} |k|^{3}|\hat{\tang}_k(t)| \leq C\big(\epsilon,A_*,\delta_{*},H_{*} )
\end{equation}
hold uniformly on $[\epsilon,T]$ for $\epsilon>0$ arbitrary.

We conclude by iterating this argument once more. By appealing to lemma \ref{lem:diffF}, the uniform $H^{2}(\T)$ bound \eqref{eq:h2bound} on $\tang(t)$ suffices to show that
$$
\| \mathbf{F}_{\gamma_{\tang}}(t) \|_{H^{2}(\T)} \leq C\big(\epsilon,A_*,\delta_{*},H_{*} )
$$
uniformly on $[\epsilon,T]$ as well. Thus the function
$$
\mathbf{F}_{\gamma_{\tang}}(t) - \langle \mathbf{F}_{\gamma_{\tang}}(t) + \lambda_{\tang}(t) , \tang(t) \rangle \tang(t)
$$
lies in $H^{2}(\T)$ with a similar uniform bound, due to the fact that $H^{2}(\T)$ defines a Banach algebra in one spatial dimension. We therefore conclude that the inequality
$$
\max_{ [\epsilon,T] } |k|^{4} |\hat{\mathbf{F}}_k(t)|^2 \leq C\big(\epsilon,A_*,\delta_{*},H_{*} )
$$
holds for any $\epsilon > 0$ arbitrary. From \eqref{eq:h2bound} and lemma \ref{lem:sima} the estimate
$$
\max_{ [\epsilon,T] } |k|^{2}| \hat{\mu}^{1}_{k}(t)| \leq C\big(\epsilon,A_*,\delta_{*},H_{*} )
$$
follows in a similar fashion. Return once again to \eqref{eq:reg1} and integrate to conclude that
\begin{equation}\label{eq:evenbetter}
|\hat{\tang}_k(t)| \leq C\big(\epsilon,A_*,\delta_{*},H_{*} )|k|^{-4},
\end{equation}
on $[\epsilon,T]$, and so in particular the $H^{3}(\T)$ bound
$$
\sum_{k \in \Z} |k|^{6} |\hat{\tang}_k(t)|^{2} \leq C\big(\epsilon,A_*,\delta_{*},H_{*} )
$$
holds on this interval as well.

These bounds then prove more than sufficient for showing that mild solutions are, in fact, classical. If $t>0$ we have that
$$
\hat{\tang}^{\prime}_k(t) = -k^2 \hat{\tang}_k(t) + \hat{\mathbf{F}}_{k}(t) + \hat{\mu}^{1}_{k}(t),
$$
and in particular a term-by-term differentiation of the series
$$
\sum_{k \in \Z} \hat{\tang}_k(t)  \mathrm{e}^{ikx}
$$
is quite easily justified by the previous decay estimates. The original equation
$$
\tang_{t} = \tang_{xx} + \mathbf{F}_{\gamma_{\tang}} + \lambda_{\tang} + \mu_{\tang} \tang
$$
therefore holds for $0 < t \leq T$ in the classical sense. {\color{black} Furthermore, if $t_2 > t_1 \geq \epsilon > 0$ then
\begin{align*}
\hat{\tang}_{k}(t_2) - \hat{\tang}_{k}(t_1) = \left( \re^{-k^2(t_2 - t_1)} - 1 \right)\hat{\tang}_{k}(t_1) + \re^{-k^{2}(t_2-t_{1})}\int^{t_2}_{t_1}\re^{k^2(s-t_1)} \big( \hat{\mathbf{F}}_{k}(s)  + \hat{\mu}^{1}_{k}(s) \big) \, \rd s
\end{align*}
and as $t_2 \to t_1$ both terms on the right hand side converge to zero in the $H^{3}(\T)$ norm. For the first term this follows easily from the fact that
$$
\left( 1 - \re^{-k^2(t_2 - t_1)} \right)^{2} |k|^{6} |\hat{\tang}_{k}(t_1)|^{2} \leq |k|^{6} |\hat{\tang}_{k}(t_1)|^{2},
$$
the fact that $\tang(t_1) \in H^{3}(\T)$ and the dominated convergence theorem. For the second term, we may use the inequality
$$
|k|^{2}|\hat{\mathbf{F}}_k(t) + \hat{\mu}^{1}_{k}(t)| \leq C\big(\epsilon,A_*,\delta_{*},H_{*})
$$
and a direct integration to show
$$
e^{-k^{2}(t_2 - t_1)} \left| \int^{t_2}_{t_1} \re^{k^2(s-t_{1})}\big( \hat{\mathbf{F}}_{k}(s)  + \hat{\mu}^{1}_{k}(s) \big) \, \rd s\right| \leq C\big(\epsilon,A_*,\delta_{*},H_{*})k^{-4}
$$
and then apply the dominated convergence theorem once again. We conclude 
$$
\| \tang(t_1) - \tang(t_2) \|_{H^{3}(\T)} \to 0 \qquad \text{as} \qquad |t_1 - t_2| \to 0,
$$
and so $\tang(t)$ varies continuously in the $H^{3}(\T)$ sense.} That $\mathbf{F}_{\gamma_{\tang}}(t)$ varies continuously with respect to $t$ in the $H^{1}(\T)$ norm follows from lemma \ref{lem:lipF} and the continuity of $\tang(t)$ in the $H^{1}(\T)$ norm. The $H^{3}(\T)$ continuity of $\tang(t)$ then shows that
$$\mu_{\tang}(t)\tang(t) =  |\tang_x(t)|^2 \tang(t) - \langle \mathbf{F}_{\gamma_{\tang}}(t) + \lambda_{\tang}(t), \tang(t) \rangle$$
varies continuously in $H^{1}(\T)$ as well. Thus $\tang_{t}(t)$ varies continuously in $H^{1}(\T)$ by the differential equation itself. These observations combine to yield
\begin{theorem}\label{thm:regularity1}
Suppose $K(u,v)$ satisfies the hypotheses of lemma \ref{lem:boundF}, and let $\tang \in C([0,T];H^{1}(\T))$ denote a mild solution of
\begin{equation}\label{eq:thmflow1}
\tang_{t} =  \tang_{xx} + \mathbf{F}_{\gamma_{\tang}} + \lambda_{\tang} + \mu_{\tang} \tang, \qquad \tang(0) = \tang_{0} \in H^{1}(\T),
\end{equation}
where $\tang(t)$ has mean zero and unit speed. Suppose that each $\gamma_{\tang(t)}$ defines a bi-Lipschitz embedding, and that
$$
A_{*} := \max_{[0,T]} \ \|A^{-1}_{\tang(t)}\|_{ {\rm op} }, \quad \delta_{*} :=  \max_{ [0,T] } \, \delta_{\infty}\big( \gamma_{\tang(t)} \big), \quad H_{*} := \max_{[0,T]} \|\tang(t)\|_{H^{1}(\T)}
$$
remain finite over the interval $[0,T]$ of existence. Then $\tang \in C^{1}((0,T];H^{1}(\T)) \cap C((0,T];H^{3}(\T)),$ and for each $0 < t \leq T$ the differential equation \eqref{eq:thmflow1} holds in the classical sense. Moreover, for any $\epsilon > 0$ the a-priori $H^{3}(\T)$ and Fourier-decay bounds
$$
\|\tang(t)\|_{H^{3}(\T)} \leq C\big(\epsilon,A_*,\delta_{*},H_{*} ) \qquad \sup_{k \in \Z} |k|^{4}|\hat{\tang}_k(t)| \leq C\big(\epsilon,A_*,\delta_{*},H_{*} )
$$
hold for any $t \in [\epsilon,T]$ arbitrary.
\end{theorem}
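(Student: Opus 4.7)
The plan is to argue by a bootstrap in Fourier space, exploiting the parabolic smoothing of the linear semi-group together with the bounds on the nonlocal forcing furnished by lemma \ref{lem:boundF}, the principal-value Lipschitz bound of lemma \ref{lem:lipF}, the multiplier estimates of lemma \ref{lem:boundmult}, and the product/decay lemma \ref{lem:sima}. Throughout I would keep the notation $\hat{\F}_k(t)$ for the Fourier coefficients of the ``regular'' piece $\F_{\gammat}(t) - \langle \F_{\gammat}(t) + \lambda_{\tang}(t), \tang(t)\rangle \tang(t)$ and $\hat{\mu}^{1}_{k}(t)$ for those of $|\tang_x|^2\tang$, since these are the two objects one can estimate separately in the Duhamel representation
\begin{equation*}
\hat{\tang}_k(t) = \re^{-k^2(t-\epsilon)}\Bigl( \hat{\tang}_k(\epsilon) + \int_{\epsilon}^{t} \re^{k^2(s-\epsilon)} \bigl(\hat{\F}_k(s) + \hat{\mu}^1_k(s)\bigr)\, \rd s \Bigr).
\end{equation*}

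First, I would observe that the hypotheses $A_{*},\delta_{*},H_{*}<\infty$ together with lemmas \ref{lem:boundF} and \ref{lem:boundmult} yield a uniform bound $|k||\hat{\F}_k(t)| \leq C(A_{*},\delta_{*},H_{*})$ on the regular forcing and a uniform bound $|\hat{\mu}^1_k(t)| \leq C(H_{*})$ on the singular forcing. Direct integration of the Duhamel formula from the initial time then gives $|\hat{\tang}_k(t)| \leq C(1+t^{-1/2})|k|^{-2}$, i.e.\ an upgrade of one derivative. Next I would feed this decay back into $|\tang_x|^2 \tang$ by applying lemma \ref{lem:sima} twice: once to $\tang \cdot \tang_x$, producing $|k|^{-1}$ decay, and once again to obtain $|\hat{\mu}^1_k(t)| \leq C(1+t^{-3/2}) |k|^{-1}\log(1+|k|)$. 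Plugging back into Duhamel from any $\epsilon>0$ and analytically integrating the exponentials yields $|\hat{\tang}_k(t)| \leq C(\epsilon,\ldots) |k|^{-3}\log(1+|k|)$, and a second iteration removes the logarithm and furnishes the uniform $H^{2}(\T)$ bound on $[\epsilon,T]$.

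With this $H^{2}(\T)$ control in hand, lemma \ref{lem:diffF} now gives a uniform $H^{2}(\T)$ bound on $\F_{\gammat}(t)$ (and hence on the full regular forcing, since $H^{2}(\T)$ is a Banach algebra), while another application of lemma \ref{lem:sima} gives $|k|^2|\hat{\mu}^1_k(t)| \leq C(\epsilon,\ldots)$. One more pass through Duhamel then produces $|\hat{\tang}_k(t)| \leq C|k|^{-4}$ and the desired $H^{3}(\T)$ bound on $[\epsilon,T]$. The Fourier-decay estimate $\sup_k |k|^{4}|\hat{\tang}_k(t)| \leq C(\epsilon,A_{*},\delta_{*},H_{*})$ follows from the same inequality.

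Having established the bounds, the regularity assertions are now a matter of justifying term-by-term differentiation. Given the decay estimates, the series for $\tang_{xx}$, $\F_{\gammat}$ and $|\tang_x|^2\tang$ all converge absolutely and uniformly on $[\epsilon,T]$, so the evolution equation \eqref{eq:thmflow1} holds pointwise in the classical sense for $t>0$. To upgrade to $C((0,T];H^{3}(\T))$ I would fix $\epsilon \leq t_1 < t_2 \leq T$ and split the difference $\hat{\tang}_k(t_2) - \hat{\tang}_k(t_1)$ into the heat-kernel contribution and a Duhamel remainder; dominated convergence, applied with majorants $|k|^{3}|\hat{\tang}_k(t_1)|$ and $C(\epsilon,\ldots)|k|^{-2}$ respectively, then yields continuity in the $H^{3}(\T)$ norm. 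Continuity of $t \mapsto \F_{\gammat}(t)$ in $H^{1}(\T)$ comes from lemma \ref{lem:lipF} and the $H^{1}(\T)$ continuity of $\tang(t)$, continuity of $t \mapsto \mu_{\tang}(t)\tang(t)$ in $H^{1}(\T)$ follows from the $H^{3}(\T)$ continuity just established (since $H^{1}(\T)$ is a Banach algebra and the denominator $|\tang|^2 \equiv 1$ causes no trouble), and then the equation itself delivers $C^{1}((0,T];H^{1}(\T))$. The principal obstacle is the first bootstrap step: the nonlinearity $|\tang_x|^2\tang$ only has logarithmically-subcritical Fourier decay after a single application of lemma \ref{lem:sima}, so one must be careful to iterate the Duhamel estimate a second time to absorb the $\log(1+|k|)$ loss before attempting to pass to $H^{3}(\T)$.
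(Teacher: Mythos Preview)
Your proposal is correct and follows essentially the same route as the paper: the same split of the forcing into the regular piece $\hat{\F}_k$ and the singular piece $\hat{\mu}^1_k$, the same initial Duhamel integration yielding $|k|^{-2}$ decay, the same double application of lemma \ref{lem:sima} to handle $|\tang_x|^2\tang$, the same iteration to absorb the $\log(1+|k|)$ loss, the same appeal to lemma \ref{lem:diffF} and the Banach-algebra property of $H^2(\T)$ to upgrade to $H^3$, and the same dominated-convergence argument for $H^3$-continuity followed by reading $C^1((0,T];H^1)$ off the equation. You have also correctly identified the one genuinely delicate step, namely the need for a second bootstrap pass to remove the logarithm before invoking lemma \ref{lem:diffF}.
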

\noindent If the nonlocal integral $f_{\gamma_{\tang}}$ obeys an additional regularity hypothesis then we may obviously continue iterating along these lines. For instance, if we can augment lemmas \ref{lem:boundF} and \ref{lem:diffF} with addition estimates of the form
\begin{equation}\label{eq:smoothF}
\| f^{(n)}_{\gamma} \|_{L^{2}(\T)} \leq C(n,\delta_{\infty}(\gamma) , \| \gamma \|_{ \dot{H}^{n+2}(\T) } )
\end{equation}
then the corresponding $H^{n+2}(\T)$ bounds
$$
\|\tang(t)\|_{H^{n+2}(\T)} \leq C\big(n,\epsilon,A_{*},\delta_{*},H_{*}) \qquad \sup_{k \in \Z} |k|^{n+3}|\hat{\tang}_k(t)| \leq C\big(n,\epsilon,A_{*},\delta_{*},H_{*})
$$
clearly hold for $t \in [\epsilon,T]$ and $n$ arbitrary. Establishing \eqref{eq:smoothF} requires additional assumptions on the kernel $K(u,v)$, however. As an example, if we modify the $0$-degeneracy or $m$-degeneracy assumptions to include the statement $g \in C^{\infty}([1,\alpha))$ then \eqref{eq:smoothF} holds in general. The proof of this fact mirrors the proof of lemma \ref{lem:diffF} closely. Aside from a more elaborate invocation of the chain rule, the argument simply requires proving that the $j^{ {\rm th}}$ order derivatives of
$$
\eta(x,z) := |\gamma(x+z) - \gamma(x)|^2
$$
decay like $|z|^{3}$ near the origin. Lemma \ref{lem:diffF} proves this for first-order derivatives; the bound for higher-order derivatives then follows by Taylor's theorem and a simple induction. As our regularity results to this point more than suffice for our purposes, we simply state the result of \eqref{eq:smoothF} as a corollary then turn to address the issue of global existence.
\begin{corollary}\label{cor:smooth}
Assume that the nonlocal integral $f_{\gamma}$ obeys
$$
\| f^{(n)}_{\gamma} \|_{L^{2}(\T)} \leq C(n,\delta_{\infty}(\gamma) , \| \gamma \|_{ \dot{H}^{n+2}(\T) } )
$$
for each $n \in \mathbb{N}$ whenever $\gamma \in H^{n+2}(\T)$ has mean zero and unit speed. Let $\tang \in C([0,T];H^{1}(\T))$ denote a mild solution of
\begin{equation}\label{eq:thmflow}
\tang_{t} = \tang_{xx} + \mathbf{F}_{\gamma_{\tang}} + \lambda_{\tang} + \mu_{\tang} \tang, \qquad \tang(0) = \tang_{0} \in H^{1}(\T),
\end{equation}
where $\tang(t)$ has mean zero and unit speed. Suppose that each $\gamma_{\tang(t)}$ defines a bi-Lipschitz embedding and so
$$
A_{*} := \max_{[0,T]} \ \|A^{-1}_{\tang(t)}\|_{ {\rm op} }, \quad \delta_{*} :=  \max_{ [0,T] } \, \delta_{\infty}\big( \gamma_{\tang(t)} \big), \quad H_{*} := \max_{[0,T]} \|\tang(t)\|_{H^{1}(\T)}
$$
remain finite over the interval $[0,T]$ of existence. Then on $(0,T]$ the solution $\tang(t)$ is smooth in the space variable, and for any $\epsilon > 0$ and any $n \in \mathbb{N}$ the bounds
$$
\|\tang(t)\|_{H^{n+2}(\T)} \leq C\big(n,\epsilon,A_{*},\delta_{*},H_{*}) \qquad \sup_{k \in \Z} |k|^{n+3}|\hat{\tang}_k(t)| \leq C\big(n,\epsilon,A_{*},\delta_{*},H_{*})
$$
hold for any $t \in [\epsilon,T]$ arbitrary.
\end{corollary}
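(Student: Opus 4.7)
The plan is to prove the corollary by induction on $n \in \mathbb{N}$, taking Theorem \ref{thm:regularity1} as the base case $n = 1$ (which provides both the $H^{3}(\T)$ bound and the $|k|^{-4}$ Fourier decay on $[\epsilon,T]$ for arbitrary $\epsilon > 0$). For the inductive step I would assume the asserted $H^{n+2}(\T)$ bound and decay $|k|^{n+3}|\hat{\tang}_k(t)| \leq C\big(n,\epsilon,A_{*},\delta_{*},H_{*}\big)$ on every $[\epsilon, T]$, and aim to promote them to the analogous $H^{n+3}(\T)$ statement with decay $|k|^{-(n+4)}$.

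The bootstrap will run exactly as in the proof of Theorem \ref{thm:regularity1}, but with the stronger estimate on $f_{\gamma}^{(n)}$ hypothesized in the statement substituted for Lemmas \ref{lem:boundF} and \ref{lem:diffF} at each stage. Applying the hypothesis with the inductive $H^{n+2}(\T)$ control on $\tang(t) = \dot\gamma_{\tang(t)}$ (and the uniform distortion bound $\delta_{*}$) yields $\|f_{\gammat}(t)\|_{H^{n}(\T)} \leq C$, and hence $\|\F_{\gammat}(t)\|_{H^{n+1}(\T)} \leq C$ since $\F_{\gammat}$ is the mean-zero primitive of $f_{\gammat}$. The Banach algebra property of $H^{s}(\T)$ for $s > 1/2$ then absorbs the regular Lagrange correction $\langle \F_{\gammat} + \lambda_{\tang}, \tang\rangle\tang$ into the same space, yielding $|k|^{n+1}|\hat{\F}_k(t)| \leq C$. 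The singular Lagrange term is handled identically: $\tang_x \in H^{n+1}(\T)$ together with the algebra property places $|\tang_x|^2 \tang \in H^{n+1}(\T)$, so $|k|^{n+1}|\hat{\mu}^{1}_{k}(t)| \leq C$ on $[\epsilon,T]$ as well. Inserting both estimates into the second mild-solution identity in \eqref{eq:reg1} with $\epsilon$ as the base time and integrating the exponentials analytically, as in the derivation of \eqref{eq:evenbetter}, produces $|\hat{\tang}_{k}(t)| \leq C |k|^{-(n+4)}$ on $[2\epsilon, T]$, whence the $H^{n+3}(\T)$ bound follows. Since $\epsilon > 0$ is arbitrary, the induction closes --- one loses only a factor of $2$ in the time parameter per iteration, which is absorbed into the constant on the $\epsilon$-dependence.

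Continuity of $\tang(t)$ in the $H^{n+3}(\T)$ norm on $(0,T]$ follows verbatim from the dominated-convergence argument at the end of the proof of Theorem \ref{thm:regularity1}, now applied with the upgraded Fourier-decay estimate serving as the pointwise envelope. The main (and only) conceptual addition beyond Theorem \ref{thm:regularity1} is the observation that the smoothness hypothesis on $f_{\gamma}$ lets the bootstrap propagate indefinitely; I do not anticipate any substantive obstacle beyond the bookkeeping required to verify that neither the regular Lagrange correction nor the singular multiplier term $|\tang_x|^2 \tang$ caps the regularity gain at any given step, which is precisely what the algebra property of $H^{s}(\T)$ for $s > 1/2$ guarantees.
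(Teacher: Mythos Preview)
Your overall strategy is exactly the one the paper uses: the corollary is stated immediately after the bootstrap leading to Theorem~\ref{thm:regularity1}, and the paper simply remarks that the argument ``obviously continues iterating along these lines'' once the higher-order estimate \eqref{eq:smoothF} on $f_\gamma$ is available. So conceptually you are on target.

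There is, however, a bookkeeping slip that would stall the iteration as you have written it. With $\tang\in H^{n+2}$ you have $\gamma_{\tang}\in H^{n+3}$, so the hypothesis may be invoked at level $n+1$, not $n$: it yields $f_{\gammat}\in H^{n+1}$ and hence $\F_{\gammat}\in H^{n+2}$, one order better than your $H^{n+1}$. This matters, because the Duhamel integral gains exactly two powers of $|k|^{-1}$: forcing that decays like $|k|^{-(n+1)}$ (your claim) returns only $|\hat\tang_k|\le C|k|^{-(n+3)}$, i.e.\ no gain, whereas forcing decaying like $|k|^{-(n+2)}$ gives the required $|k|^{-(n+4)}$. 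The same issue recurs for the singular multiplier term. Using only the Banach-algebra property places $|\tang_x|^2\tang\in H^{n+1}$, hence $|\hat\mu^1_k|\le C|k|^{-(n+1)}$, again one order short. The fix, and what the paper does in the passage from \eqref{eq:h2bound} to \eqref{eq:evenbetter}, is to exploit the pointwise Fourier-decay hypothesis $|k|^{n+3}|\hat\tang_k|\le C$ via Lemma~\ref{lem:sima} (together with its easy extension to equal exponents $p=q>1$, which follows from the elementary estimate $\sum_{\ell}|\ell|^{-s}|k-\ell|^{-s}\le C|k|^{-s}$ for $s>1$): then $\tang_x$ decays like $|k|^{-(n+2)}$, hence so does $|\tang_x|^2\tang$, and the bootstrap closes. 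In short, replace ``$H^{n}$'' by ``$H^{n+1}$'' for $f_\gamma$ and trade the algebra argument for the Fourier-decay product lemma on $\mu^1$, and your induction goes through verbatim.
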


We complete our analysis of existence by showing that a-priori $H^{1}(\T)$ and distortion bounds allow for the global continuation of solutions. These a-priori bounds will eventually follow from energy descent. While providing the details of such a continuation argument is perhaps overkill, we nevertheless provide one in the appendix for the sake of being careful. As always suppose that $\tang_0 \in H^{1}(\T)$ has zero mean, unit speed and induces a bi-Lipschitz embedding. Let us once again define
$$
G(\tang,\tang_x) := \mathbf{F}_{\gamma_{\tang}} + \lambda_{\tang} + \mu_{\tang}\tang
$$
as the lower-order forcing in \eqref{eq:thmflow} and set $\mathcal{C}(\tang_0) \subset (0,\infty)$ as the possible existence intervals for mild solutions generated by this initial datum. More specifically, we put
$$
\mathcal{C}(\tang_0) := \left\{ T > 0 : \exists \tang \in C([0,T];H^{1}(\T)), \,\forall t \in [0,T] \;\; \tang(t) = S(t)[\tang_0] + \int^{t}_{0} S(t-s)G(\tang(t),\tang_{x}(s))\, \rd s \right\}.
$$
Local existence implies that $\mathcal{C}(\tang_0)$ is non-empty, so let $T_{*} := \sup\, \mathcal{C}(\tang_0)$ denote its positive supremum. We refer to $T^{*}$ as the \emph{maximal interval of existence}. With this definition in place, we have the following standard continuation result.

\begin{corollary}\label{cor:continuation}
Suppose $K(u,v)$ satisfies the hypotheses of lemma \ref{lem:boundF}, that $\tang_0 \in H^{1}(\T)$ has zero mean, unit speed and induces a bi-Lipschitz embedding. Let $T^{*}$ denote the maximal interval of existence of a mild solution to
\begin{equation*}
\tang_{t} =  \tang_{xx} + \mathbf{F}_{\gamma_{\tang}} + \lambda_{\tang} + \mu_{\tang} \tang, \qquad \tang(0) = \tang_{0} \in H^{1}(\T).
\end{equation*}
If $T^{*} < \infty$ then one of
$$
\limsup_{t \nearrow T^{*}} \, \|\tang(t)\|_{H^{1}(\T)} = + \infty \qquad \text{or} \qquad \limsup_{ t \nearrow \infty } \, \delta_{\infty}\left( \gamma_{ \tang(t)} \right) = +\infty
$$
must hold.
\end{corollary}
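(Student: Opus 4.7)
The plan is to proceed by contradiction in the standard way. Suppose that $T^{*} < \infty$ yet both $H_{*} := \sup_{t \in [0,T^{*})} \|\tang(t)\|_{H^{1}(\T)}$ and $\delta_{*} := \sup_{t \in [0,T^{*})} \delta_{\infty}(\gamma_{\tang(t)})$ are finite. Since $\tang(t)$ has mean zero and $|\tang(t)| \equiv 1$, it is non-constant on $\T$ for each $t$, so lemma \ref{lem:boundA} guarantees that $A^{-1}_{\tang(t)}$ exists pointwise in time. On any compact subinterval $[0,T^{*}-\eta]$ the $H^{1}(\T)$-continuity of $\tang(\cdot)$ supplies a local bound $\|A^{-1}_{\tang(t)}\|_{\rm op} \leq A_{\eta}$, and combining this with lemmas \ref{lem:boundF}--\ref{lem:boundmult} yields uniform $L^{1}(\T)$ control on the full lower-order forcing $G(\tang,\tang_{x}) = \mathbf{F}_{\gamma_{\tang}} + \lambda_{\tang} + \mu_{\tang}\tang$ on every such subinterval.

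The central step is to show that $\tang(t)$ is Cauchy in $H^{1}(\T)$ as $t \nearrow T^{*}$, so that a well-defined limit $\tang^{*} \in H^{1}(\T)$ exists. For $0 < t_{1} < t_{2} < T^{*}$ the mild formulation gives
$$
\tang(t_{2}) - \tang(t_{1}) = (S(t_{2}-t_{1}) - I)\tang(t_{1}) + \int_{t_{1}}^{t_{2}} S(t_{2}-s)G(\tang(s),\tang_{x}(s)) \, \rd s.
$$
For the integral term, proposition \ref{prop:linear} applied with $3/2 < \alpha < 2$ bounds its $H^{1}(\T)$ norm by $C(\alpha)|t_{2}-t_{1}|^{1-\alpha/2}\|G\|_{C([t_1,t_2];L^{1}(\T))}$, which tends to zero as $|t_2-t_1| \to 0$ once the $L^{1}(\T)$ bound on $G$ is in hand uniformly on $[T^{*}/2, T^{*})$. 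To obtain the latter and to handle the first term, I invoke theorem \ref{thm:regularity1} on $[T^{*}/2, T^{*})$: since $H_{*}$, $\delta_{*}$ and $A_{T^{*}/2}$ are all finite there, that theorem delivers a uniform $H^{3}(\T)$ bound $\|\tang(t)\|_{H^{3}(\T)} \leq C_{*}$ on this subinterval. Standard parabolic semigroup estimates then give $\|(S(h)-I)\tang(t_{1})\|_{H^{1}(\T)} \leq C h^{1/2}\|\tang(t_{1})\|_{H^{2}(\T)} \leq C C_{*} h^{1/2}$, vanishing uniformly in $t_{1}$.

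With $\tang^{*} := \lim_{t \nearrow T^{*}} \tang(t) \in H^{1}(\T)$ constructed, I would verify that $\tang^{*}$ has mean zero (passing to the limit in the linear mean-zero condition), unit speed (since the Sobolev embedding $H^{1}(\T) \hookrightarrow C(\T)$ lets one pass to the limit in $|\tang(t)|^{2}=1$), and induces a bi-Lipschitz curve with $\delta_{\infty}(\gamma_{\tang^{*}}) \leq \delta_{*}$ (lower semi-continuity of distortion under $C^{0}(\T)$ convergence of $\gamma_{\tang(t)}$, which again follows from $H^{1}(\T)$ convergence of $\tang(t)$). Passing to the limit in the mild formulation, using lemmas \ref{lem:lipF}, \ref{lem:boundA} and \ref{lem:boundmult} to transfer convergence from $\tang(t)$ to $\mathbf{F}_{\gamma_{\tang(t)}}$, $\lambda_{\tang(t)}$ and $\mu_{\tang(t)}\tang(t)$, shows that the extension $\tang(T^{*}) := \tang^{*}$ is a mild solution on $[0,T^{*}]$. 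Applying theorem \ref{thm:local} with initial datum $\tang^{*}$ now produces a mild solution on $[T^{*}, T^{*} + \tau]$ for some $\tau > 0$. Concatenating the two yields a mild solution on $[0,T^{*}+\tau]$, contradicting the maximality of $T^{*}$.

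The principal obstacle is the first term in the Cauchy decomposition, $(S(h)-I)\tang(t_{1})$: a bounded subset of $H^{1}(\T)$ is not equicontinuous under the heat semigroup, so the bound $H_{*}$ alone is insufficient. The interior $H^{3}(\T)$ regularity provided by theorem \ref{thm:regularity1} is precisely the extra input needed to quantify the modulus of continuity uniformly in $t_{1}$; the remainder of the argument is routine concatenation.
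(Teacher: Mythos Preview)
Your argument is essentially correct and reaches the same contradiction, but by a somewhat different route than the paper. The paper proceeds by compactness: it takes a sequence $T_{n}\nearrow T^{*}$, uses the assumed bounds $H_{*},\delta_{*}$ to extract a subsequence $\tang(T_{n_{k}})$ converging weakly in $H^{1}(\T)$ and strongly in $C^{\alpha}(\T)$ to some $\tang^{*}$, then invokes theorem~\ref{thm:regularity1} exactly as you do to obtain a uniform $H^{2}(\T)$ bound along the subsequence, and finally uses the compact embedding $H^{2}(\T)\hookrightarrow H^{1}(\T)$ to upgrade to strong $H^{1}(\T)$ convergence before restarting local existence from $\tang^{*}$. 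Your direct Cauchy argument via the mild formulation and the semigroup estimate $\|(S(h)-I)u\|_{H^{1}}\lesssim h^{1/2}\|u\|_{H^{2}}$ is arguably cleaner: it produces a genuine limit as $t\nearrow T^{*}$ rather than a subsequential one, and the concatenation step is then unambiguous.

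One point in your outline needs tightening. You establish $\|A^{-1}_{\tang(t)}\|_{\rm op}\leq A_{\eta}$ only on compact subintervals $[0,T^{*}-\eta]$, yet applying theorem~\ref{thm:regularity1} with $\epsilon=T^{*}/2$ on each $[0,T]$ with $T<T^{*}$ requires a bound on $\max_{[0,T]}\|A^{-1}_{\tang(\cdot)}\|_{\rm op}$ that is uniform as $T\nearrow T^{*}$; the quantity ``$A_{T^{*}/2}$'' you cite does not supply this. The simplest patch is the Borsuk--Ulam argument that appears later in the proof of theorem~\ref{thm:global}, which gives $\|A^{-1}_{\tang}\|_{\rm op}\leq 2\,\delta_{\infty}(\gamma_{\tang})^{2}$ for any mean-zero unit-speed $\tang$, so $\delta_{*}<\infty$ already controls $A_{*}$ uniformly. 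Alternatively, a short compactness argument works: if $\|A^{-1}_{\tang(t_{n})}\|_{\rm op}\to\infty$ along some $t_{n}$, the $H_{*}$ bound lets one pass to a $C^{\alpha}(\T)$-convergent subsequence whose mean-zero, unit-speed limit $\tang^{\sharp}$ would have $A_{\tang^{\sharp}}$ singular, contradicting lemma~\ref{lem:boundA}. With this in hand your proof goes through.
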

\begin{proof} See appendix, corollary \ref{cor:continuationA} \end{proof}
\noindent In other words, global existence of mild solutions follows from uniform $H^{1}(\T)$ and distortion bounds.

\subsection{Energy Descent and Long-Term Behavior}
Corollary \ref{cor:continuation} reduces the task of proving global existence to the task of proving a-priori bounds. Under the proper hypothesis on the kernel $K(u,v)$ these bounds follow from descent of the energy
$$
E(t) := E_{ {\rm b,K} }(\tang(t)) = \frac1{2} \int_{ \T} | \dot{\tang}(x,t)|^2 \, \rd x + \frac1{4} \int_{\T \times \T} K\left( |\gamma_{\tang(t)}(x) - \gamma_{\tang(t)}(y)|^2 , \dd^2(x-y) \right) \, \rd x \rd y
$$
inducing the flow, a property that we now establish. The only possible subtlety here arises from the presence of Lagrange multipliers, but in fact we can easily show that the usual energy-decay relation
$$
E^{\prime}(t) = - \| \tang_{t}(t) \|^{2}_{L^{2}(\T)}
$$
for an $L^{2}(\T)$ gradient flow remains valid even in their presence. To see this, assume $h/p$ homogeneity and either $0$-degeneracy or $m$-degeneracy of $K(u,v)$ as always. The induced curves $\gamma_{\tang(t)}$ then lie in $C^{1}( (0,T];H^{2}(\T))$ by theorem \ref{thm:regularity1}, and in particular both $\gamma(x,t)$ and $\tang(x,t)$ are continuously differentiable in both variables for positive times. Define $\eta(x,y,t)$ as $\eta(x,y,t) := K\left( |\gamma(x,t) - \gamma(y,t)|^2 , \dd^{2}(x-y) \right),$ so that if $\dd(x-y)>0$ then the relations
\begin{align*}
\eta_{t}(x,y,t) &= 2 K_{u}\left( |\gamma(x,t) - \gamma(y,t)|^2 , \dd^{2}(x-y) \right)\langle \gamma(x,t) - \gamma(y,t) , \partial_{t} \gamma(x,t) - \partial_{t} \gamma(y,t) \rangle \\
&= 2 g( \alpha(x,y) ) \dd^{-2(p+1)}(x-y) \langle \gamma(x,t) - \gamma(y,t) , \partial_{t} \gamma(x,t) - \partial_{t} \gamma(y,t) \rangle
\end{align*}
hold, where $\alpha(x,y) := \dd^{2}(x-y)/|\gamma(x,t)-\gamma(y,t)|^2$ as before. After noting that $\gamma_{t}$ is Lipschitz in space, since $\gamma_{tx} = \tang_{t}$ lies in $H^{1}(\T) \subset C^{0}(\T)$, we may argue as in lemma \ref{lem:boundF} to show that
\begin{equation}\label{eq:timediff}
|\eta_{t}(x,y,t)| \leq C( \delta_{\infty}(\gamma_{\tang(t)}),\|\tang_{t}\|_{L^{2}(\T)}) \dd^{-q}(x-y)
\end{equation}
for $q < 1$ some integrable exponent. A few simple calculations then give
\begin{align*}
\frac{\rd}{\rd t} \left( E_{K}( \gamma_{\tang(t)} ) \right) &= \frac1{4} \int_{\T \times \T} \eta_{t}(x,y,t) \, \rd y \rd x = \frac1{4} \int_{\T} \lim_{\epsilon \downarrow 0} \left( \int_{\dd(x-y)\geq \epsilon } \eta_{t}(x,y,t) \, \rd y \right) \, \rd x \\
&= \int_{\T} \lim_{\epsilon \downarrow 0} \langle \gamma_{t}(x,t) , f_{\gamma,\epsilon}(x) \rangle \, \rd x = \langle \gamma_t, f_{\gamma} \rangle_{L^{2}(\T)} = - \langle \tang_{t}(t) , \F_{\gamma_{\tang(t)}} \rangle_{L^{2}(\T)},
\end{align*}
where \eqref{eq:timediff} justifies differentiating under the integral and passing to the limit in the first and second equalities. The third {\color{black} equality} follows by symmetry of the integrand defining $f_{\gamma,\epsilon},$ the fourth equality follows from the $L^{2}(\T)$ convergence of $f_{\gamma,\epsilon}$ to $f_{\gamma}$ (c.f. lemma \ref{lem:boundF}) while the fifth equality follows from integration by parts. In a similar fashion, the Fourier-decay \eqref{eq:evenbetter} gives way more than enough regularity to justify the expression
$$
\frac{\rd}{\rd t} \left( \frac{1}{2} \int_{\T} |\dot{\tang}(x,t)|^2 \, \rd x \right) = - \langle \tang_{t}(t), \tang_{xx}(t) \rangle_{L^{2}(\T)}
$$
as well. We then simply recognize that $ \langle \tang_{t}, \lambda_{\tang} + \mu_{\tang} \tang \rangle_{L^{2}(\T)} = 0$ {\color{black} since $\tang$ has zero mean and constant speed}, then combine these calculations to find
$$
E^{\prime}(t) = - \langle \tang_{t}(t), ( \tang_{xx} + \lambda_{\tang} + \mu_{\tang}\tang)(t) \rangle_{L^{2}(\T)} - \langle \tang_{t}(t) , \F_{\gamma_{\tang(t)}}  \rangle_{L^{2}(\T)} = -\|\tang_{t}(t)\|^{2}_{L^{2}(\T)}
$$
as claimed. In particular, we may conclude that the energy identity
\begin{equation}\label{eq:enerid}
E(t) = E(0) - \int^{t}_{0} \|\tang_{t}(s)\|^{2}_{L^{2}(\T)} \, \rd s \qquad E(t) \leq E(0)
\end{equation}
holds for as long as a mild solution exists. In particular, if $K(u,v)$ satisfies the hypotheses of lemma \ref{lem:distbound} then we {\color{black} can apply \eqref{eq:log} to obtain the a-priori bounds for the bending energy and distortion{\color{black}
\begin{align*}
\frac{1}{2}\| \tang(t)\|^{2}_{\dot{H}^{1}(\T)} &\leq E(t) + 1 \leq E(0) + 1 \\
\delta_{\infty}(\gamma_{\tang(t)}) & \leq c^{-1}_{0}(p,h) \mathrm{exp} (  C_{0}(p,h)(E^{2}(0)+E^{3}(0))/2  )
\end{align*}
for $c_{0}(p,h),C_0(p,h)$ from lemma \ref{lem:distbound} fixed, postitive constants.}} Combining this observation with corollary \ref{cor:continuation} yields our main result ---

\begin{theorem}\label{thm:global}
Suppose $K(u,v)$ satisfies the hypotheses of lemmas \ref{lem:distbound}, \ref{lem:boundF}, that $\tang_0 \in H^{1}(\T)$ has mean zero, unit speed and that $\gamma_{\tang_0}$ defines a bi-Lipschitz embedding. Then
\begin{enumerate}[ \rm (a) ]
\item The initial value problem
$$
\tang_t = \tang_{xx} + \lambda_{\tang} + \F_{ \gamma_{\tang}} + \mu_{\tang} \tang \quad \text{with} \quad \tang(0) = \tang_{0}
$$
has a unique, globally existing classical solution $\tang \in C([0,\infty);H^{1}(\T))\cap C((0,\infty);H^{3}(\T)) \cap C^{1}((0,\infty);H^{1}(\T))$ with mean zero and unit speed.
\item For all $t \in [0,\infty)$ the map $\tang_0 \mapsto \tang(t)$ is Lipschitz near $\tang_0$ with respect to the $H^{1}(\T)$ topology: any two solutions $\tang(t),\sang(t)$ obey $\|\tang(t) - \sang(t)\|_{H^1(\T)} \leq C(t)\| \tang_0 - \sang_0\|_{H^{1}(\T)}$ whenever $\sang_0 \in H^{1}(\T)$ has mean zero, unit speed and $\| \tang_0 - \sang_0\|_{H^{1}(\T)}$ is sufficiently small.
\item The energy identity
$$
E(0) - E(t) = \int^{t}_{0} \|\tang_{t}(s)\|^{2}_{L^{2}(\T)} \, \rd s
$$
holds for all time.
\item The induced curves $\gamma_{ \tang(t)}$ define bi-Lipschitz embeddings, and moreover there exist finite constants $H_*,\delta_*$ so that the a-priori bounds
$$
\sup_{ t \in [0,\infty) } \, \| \tang(t) \|_{H^{1}(\T)} := H_{*} < +\infty \qquad \text{and} \qquad \sup_{ t \in [0,\infty) } \, \delta_{\infty}\big( \gamma_{\tang(t)} \big) := \delta_{*} < +\infty
$$
hold.
\item Each $\gamma_{\tang(t)}$ lies in the same ambient isotopy class as the initial curve, i.e. $\gamma_{ \tang(t)} \in \K_{\gamma_{\tang_0}}$ for all time.
\item If, in addition, the nonlocal forcing $f_{\gamma}$ obeys
$$
\|f^{(n)}_{\gamma}\|_{L^{2}(\T)} \leq C\big( \delta_{\infty}(\gamma) , \|\gamma\|_{H^{n+2}(\T)} \big)
$$
whenever $\gamma$ has unit speed, then there exist finite constants $c_{n}(\epsilon),C_{n}(\epsilon)$ so that
$$
\sup_{ t \in [\epsilon,\infty)} \, \|\tang(t)\|_{H^{n+2}(\T)} \leq c_{n}(\epsilon) \qquad \text{and} \qquad \sup_{ t \in [\epsilon,\infty)} \, \sup_{k \in \Z} \, |k|^{n+3} |\hat{\tang}_k(t)| \leq C_{n}(\epsilon)
$$
hold for $\epsilon > 0$ arbitrary.
\end{enumerate}
\end{theorem}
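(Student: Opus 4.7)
The plan is to assemble the six claims from the machinery already developed. For (c), I would note that the energy identity derived just above \eqref{eq:enerid} persists as long as the mild solution exists. For (d), I would apply Lemma \ref{lem:distbound} to $\gamma_{\tang(t)}$ using the bound $E_K(\gamma_{\tang(t)}) \le E(t) \le E(0)$ supplied by \eqref{eq:enerid}; combined with the elementary inequality $\tfrac12\|\tang(t)\|_{\dot H^1(\T)}^2 \le E(t) \le E(0)$ and the fact that $\|\tang\|_{L^2(\T)}^2 = 1$, this yields a time-independent bound for $\|\tang(t)\|_{H^1(\T)}$ and, after substituting into \eqref{eq:log}, a time-independent bound for $\delta_\infty(\gamma_{\tang(t)})$. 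Feeding these a-priori bounds into Corollary \ref{cor:continuation} forces $T^{*} = +\infty$, while the regularity assertion in (a) and the sharper decay in (f) are direct consequences of Theorem \ref{thm:regularity1} and Corollary \ref{cor:smooth}, respectively.

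For the Lipschitz assertion (b), I would revisit the contraction that closed the proof of Theorem \ref{thm:local}. Because (d) supplies uniform control of $\delta_{*}, H_{*}$ and $\|A^{-1}_{\tang(t)}\|_{\rm op}$ along the trajectory, and because each of these quantities depends continuously on the initial datum in $H^1(\T)$, a single local existence window $T_0 > 0$ works for every $\sang_0 \in H^1(\T)$ sufficiently close to $\tang_0$. The stability estimate \eqref{eq:stab}, combined with the Lipschitz bounds of Lemmas \ref{lem:lipF} and \ref{lem:boundmult}, then produces a Lipschitz bound on $[0,T_0]$. Iterating over a finite partition of $[0,t]$, with the number of steps controlled by $t/T_0$, propagates Lipschitz dependence to any finite time at the cost of a constant $C(t)$ that may grow exponentially in $t$.

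For (e), the regularity in (a) gives continuity of $\tang(t)$ in $H^1(\T)$, hence of $\gamma_{\tang(t)}$ in $H^2(\T) \subset C^1(\T)$. Setting $S := \{t \ge 0 : \gamma_{\tang(s)} \in \K_{\gamma_{\tang_0}} \text{ for every } s \in [0,t]\}$, Lemma \ref{lem:ambclass} shows that $S$ is open in $[0,\infty)$, and the $C^1(\T)$-continuity of $\gamma_{\tang(\cdot)}$ together with transitivity of ambient isotopy shows $S$ is closed; since $0 \in S$ and $[0,\infty)$ is connected, $S = [0,\infty)$.

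The principal obstacle will be (b): one must carefully track how every constant appearing in the contraction argument (the distortion, the operator norm of $A^{-1}$, the $H^1(\T)$ norm, and the Lipschitz constants from Lemmas \ref{lem:lipF} and \ref{lem:boundmult}) varies continuously in $\tang_0 \in H^1(\T)$, so that the local existence window and contraction constant are genuinely uniform over an $H^1(\T)$-neighborhood of $\tang_0$. Granting this, everything else reduces to bookkeeping on top of the lemmas, corollaries, and the energy identity already in hand.
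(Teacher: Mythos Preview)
Your outline matches the paper's proof almost exactly for (a), (c), (d), (e), and the iteration scheme for (b). There is, however, one substantive ingredient you assume without justification: a uniform-in-time bound on $A_* := \sup_{t \ge 0} \|A^{-1}_{\tang(t)}\|_{\rm op}$. You invoke this quantity explicitly in your treatment of (b) and implicitly rely on it for (f), since the constants in Corollary \ref{cor:smooth} depend on $A_*$. But part (d) only delivers $H_*$ and $\delta_*$; continuity of $\tang \mapsto A^{-1}_{\tang}$ (Lemma \ref{lem:boundA}) gives finiteness of $A_*$ on any compact time interval, yet not a time-independent bound, and ``continuous dependence on $\tang_0$'' is not the relevant issue.

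The paper fills this gap with a short geometric argument. Let $\vv(t)$ be a unit eigenvector for the minimal eigenvalue of $A_{\tang(t)}$. By the Borsuk-Ulam theorem there exist antipodal points $x$ and $y = x+\pi$ in $\T$ with $\langle \gamma_{\tang(t)}(x),\vv(t)\rangle = \langle \gamma_{\tang(t)}(y),\vv(t)\rangle$; writing $\gamma_{\tang(t)}(y)-\gamma_{\tang(t)}(x)$ as the integral of the component of $\tang$ orthogonal to $\vv(t)$ and applying Cauchy--Schwarz yields
\[
\delta_\infty^{-1}\big(\gamma_{\tang(t)}\big) \le \frac{|\gamma_{\tang(t)}(y)-\gamma_{\tang(t)}(x)|}{\pi} \le \sqrt{2\,\lambda_{\min}(A_{\tang(t)})},
\]
hence $A_* \le 2\delta_*^2$. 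Once this bound is in hand, your iteration for (b) and your appeal to Corollary \ref{cor:smooth} for (f) go through exactly as you describe, and the remainder of your proposal coincides with the paper's argument.
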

\begin{proof}
Parts {(a), (c)} and (d) simply summarize previous results. Part (e) follows immediately from lemma \ref{lem:ambclass} since the $\gamma_{\tang(t)}$ are continuous in time with respect to the $C^{1,\frac1{2}}(\T)$ topology. Part (f) follows from corollary \ref{cor:smooth} provided that the quantity
$$
A_{*} := \sup_{ t \in [0,\infty) } \, \| A^{-1}_{\tang(t)} \|_{ {\rm op} }
$$
remains bounded. Let $\vv(t) \in \R^3$ denote a unit length eigenvector corresponding to the minimal eigenvalue of $\lambda_{ {\rm min}}(A_{\tang(t)}),$ so that
$$
\lambda_{ {\rm min}}(A_{\tang(t)}) = \langle \vv(t) ,  A_{\tang(t)} \vv(t) \rangle = \fint_{\T} (1 - \langle \vv(t) , \tang(x,t) \rangle^2 ) \, \rd x.
$$
By the Borsuk-Ulam theorem, there exist antipodal points $x<y=x+\pi$ so that $\langle \gamma_{\tang(t)}(x),\vv(t) \rangle = \langle \gamma_{\tang(t)}(y) , \vv(t) \rangle,$ and so for any two such points
\begin{align*}
\gamma_{ \tang(t) }(y) - \gamma_{\tang(t)}(x) &= \gamma_{ \tang(t) }(y) - \langle\gamma_{\tang(t)}(y) , \vv(t) \rangle\vv(t) - \gamma_{\tang(t)}(x) + \langle\gamma_{\tang(t)}(x) , \vv(t) \rangle \vv(t) \\
&= \int^{y}_{x} \big[ \tang(z,t) - \langle \vv(t) , \tang(z,t) \rangle \vv(t) \big] \, \rd z \\
|\gamma_{ \tang(t) }(y) - \gamma_{\tang(t)}(x)| & \leq \int^{y}_{x} \big| \tang(z,t) - \langle \vv(t) , \tang(z,t) \rangle \vv(t) \big| \, \rd z = \int^{y}_{x} \left( 1 - \langle \vv(t) , \tang(z,t) \rangle^2 \right)^{\frac1{2}} \, \rd z \\
& \leq \pi \sqrt{ 2 \lambda_{ {\rm min}}(A_{\tang(t)}) }
\end{align*}
with the final inequality following by Cauchy-Schwarz. Thus
$$
\delta^{-1}_{\infty}\big(\gamma_{ \tang(t)} \big) \leq \frac{|\gamma_{ \tang(t) }(y) - \gamma_{\tang(t)}(x)|}{\dd(x-y)} = \frac{|\gamma_{ \tang(t) }(y) - \gamma_{\tang(t)}(x)|}{\pi} \leq \sqrt{ 2 \lambda_{ {\rm min}}(A_{\tang(t)}) },
$$
and so $\lambda_{ {\rm min}}(A_{\tang(t)}) \geq \delta^{-2}_{\infty}\big(\gamma_{ \tang(t)} \big)/2$ which gives the desired claim $A_{*} \leq 2 \delta^{2}_{*}$. Part (b) then follows from this uniform estimate and the proof of theorem \ref{thm:local}. For $\sang_0$ near $\tang_0$ the induced curve $\gamma_{\sang_0}$ induces a bi-Lipschitz embedding, and thus $\sang(t)$ exists globally with uniform bounds as in (d). For any $T>0,$ the {\color{black} contraction mapping argument preceeding} theorem \ref{thm:local} shows
$$
\|\tang - \sang\|_{C([0,T];H^{1}(\T))} \leq C\|\tang_0 - \sang_0\|_{ \dot{H}^{1}(\T)} + K T^{\frac1{5}}\|\tang - \sang\|_{C([0,T];H^{1}(\T))}
$$
for some universal constant $C\geq1$ and some constant $K>0$ that depends only on $(A_*,\delta_*,H_*)$ and the analogous quantities defined using $\sang(t)$. In particular, $K>0$ remains bounded uniformly in time. Now choose $T$ so that $T^{\frac1{5}}K = 1/2$ to find
$$
\|\tang - \sang\|_{C([0,T];H^{1}(\T))} \leq 2C\|\tang_0 - \sang_0\|_{ \dot{H}^{1}(\T)}.
$$
Applying the same argument with $\tang(T),\sang(T)$ in place of $\tang_0,\sang_0$ gives
$$
\|\tang - \sang\|_{C([T,2T];H^{1}(\T))} \leq 2C\|\tang(T) - \sang(T)\|_{ \dot{H}^{1}(\T)} \leq 4C^2\|\tang_0 - \sang_0\|_{ \dot{H}^{1}(\T)}.
$$
The desired claim
$$
\|\tang - \sang\|_{C([0,nT];H^{1}(\T))} \leq 2C\|\tang((n-1)T) - \sang((n-1)T)\|_{ \dot{H}^{1}(\T)} \leq (2C)^n\|\tang_0 - \sang_0\|_{ \dot{H}^{1}(\T)}
$$
then follows for $n \in \mathbb{N}$ arbitrary by induction.
\end{proof}
\noindent With theorem \ref{thm:global} in hand, well-known results then suffice to produce the usual, crude picture of the global behavior of solutions. Specifically, for any $\tang_0 \in H^{1}(\T)$ to which the statement of theorem \ref{thm:global} applies let $\tang(t;\tang_0)$ denote the corresponding global solution. We know the set of $H^{1}(\T)$ limit points
$$
\omega(\tang_0) := \left\{ \tang \in H^{1}(\T) : \exists t_{n} \nearrow \infty, \;\; \|\tang(t_n;\tang_0) - \tang\|_{H^{1}(\T)} \overset{n \to \infty}{\longrightarrow} 0\right\}
$$
generated by the initial datum $\tang_0$ is non-empty and consists entirely of critical points. Thus the elliptic system
$$
\tang_{xx} + G(\tang,\tang_x) = 0
$$
holds for $\tang \in \omega(\tang_0)$ any limit point. Moreover, any $\tang \in \omega(\tang_0)$ inherits the regularity of part (f) and the convergence of $\tang(t_n;\tang_0)$ to $\tang$ occurs in the $H^{k+2}(\T)$ topology, with $k$ denoting any integer $k \in \N$ so that the assumption in part (f) holds. All elements  $\tang \in \omega(\tang_0)$ induce bi-Lipschitz embeddings $\gammat$ that lie in the same ambient isotopy class as $\gamma_{\tang_0}$ as well.

Deducing further properties of $\omega(\tang_0)$ requires a more in-depth investigation of equilibria themselves. We provide a brief analysis of such critical points in the next subsection, but first pause to place theorem \ref{thm:global} in a more concrete and applicable setting. If we take
$$
K(u,v) = \frac{v^{q}}{u^{q}} \quad (2q-\text{Distortion})
$$
then $K$ satisfies the $h/p$ homogeneity hypothesis with $h(\alpha) = \alpha^{q}$ and $p = 0,$ and so $0$-degeneracy applies to this family of examples. Moreover, if $q \geq 1$ then lemma \ref{lem:distbound} applies as well, as does the hypothesis in part (f) for all $n \in \N$ due to the remarks {preceding} corollary \ref{cor:smooth}. Thus theorem \ref{thm:global} applies in its entirety for this family, and in particular all critical points it generates are smooth. If we take
$$
K(u,v) = \left( \frac1{u^j} - \frac1{v^j} \right)^{q} \quad (\text{O'Hara})
$$
instead then $h/p$ homogeneity applies with $h(\alpha) = (\alpha^{j}-1)^{q}$ and $p = jq$ for this family of examples. If $jq \geq 1$ and $q$ is sufficiently large, say $j = 1/q$ and $q \geq 5$, then $h(\alpha)$ is $\geq \hspace{-.4pc}3$-degenerate and so theorem \ref{thm:global} (a-e) applies. This regime represents the interesting instances of this family, as the $j\to 0,q \to \infty$ limit lies at the heart of the motivation for the O'Hara family --- the convergence of $E_{K}$ to $\log \delta_{\infty}$ occurs in this limit. Part (f) also applies with $n=1,$ so in particular all critical points have at least $H^{3}(\T)$ regularity. In general, critical points will have a higher degree of smoothness if $h(\alpha)$ itself does, say for $q\geq 5$ an integer. {\color{black} The regularity theory for critical points of the O'Hara family and the M\"{o}bius energy (i.e. in the absence of the bending energy) is both established and more difficult \cite{BR13,BRS}; We mention these particular examples to reveal that theorem \ref{thm:global} provides a relatively useful and general result, in the  sense that it proves broad enough to cover many of the kernels $K(u,v)$ that have generated prior interest in the literature. This is the main contribution of the theorem --- i.e. finding a relatively general and applicable hypothesis under which theorem \ref{thm:global} holds.}

\subsection{Equilibrium Analysis}
All of the corresponding energies for these examples, namely these particular instances of the $(2q-\text{Distortion})$ family and the O'Hara family, have the standard unit circle $\gamma_{ {\rm circ} }$ as their unique minimizers. Of course this is not enough, in general, to even know that $\gamma_{ {\rm circ} }$ has a non-trivial basin of attraction under the dynamics of
\begin{equation}\label{eq:dynagain}
\tang_{t} = \tang_{xx}  + \mathbf{F}_{ \gammat } + \lambda_{\tang} + \mu_{\tang} \tang := \tang_{xx} + G(\tang,\tang_x).
\end{equation}
Establishing a result along these lines requires a further analysis, and we now turn to this task. For simplicity and concreteness we will largely abandon our attempt at generality from this point forward. Instead we shall largely focus on the $2$-Distortion case $K(u,v) = v/u,$ although the techniques employed are straightforward in principle and readily generalize to other cases.

We begin by extracting the leading-order behavior of \eqref{eq:dynagain} near an equilibrium. We shall slightly abuse notation throughout this process by suppressing time dependence and by implicitly interpreting integrals in the principal value sense where necessary. Let $\tangc$ denote such an equilibrium and $\gamma_{\tangc}$ the bi-Lipschitz embedding that it induces. Decompose a solution $\tang(t)$ near $\tangc$ as $\tang = \tangc + \psi$ and let $\gammat = \gamma_{\tangc} + \Psi$ denote the corresponding decomposition of the induced curve. Define $\Delta \gamma_{\tangc}(x,y) := \gamma_{\tangc}(x) - \gamma_{\tangc}(y),$ and define $\Delta \Psi(x,y)$ and $\Delta \gammat(x,y)$ similarly. For $0 \leq \xi \leq 1$ let us write $\alpha_{\xi}(x,y)$ for the ratio
$$\alpha_{\xi}(x,y) := \frac{ \dd^{2}(x-y)}{ (1-\xi) | \Delta \gamma_{\tangc}(x,y)|^2 + \xi | \Delta \gammat(x,y)|^2 }$$ and then reserve $\ac(x,y) := \dd^{2}(x-y)/|\Delta \gamma_{\tangc}(x,y)|^2$ for the $\xi=0$ case and $\alpha_{\gammat}(x,y)$ for the $\xi=1$ case. Consider first the nonlocal forcing $\mathbf{F}_{\gammat}$ that arises as the mean zero primitive of the principal value integral $f_{\gammat}$ given by
\begin{align*}
f_{\gammat}(x) &= \int_{\T} K_{u}\big(|\gammat(x) - \gammat(y)|^2, \dd^2(x-y) \big)(\gammat(x) - \gammat(y) ) \, \rd y \\
&= g(1)L_{p}[\gammat](x) + \int_{\T} \left( g\big( \alpha_{\gammat}(x,y) \big)  - g(1)  \right) \frac{ \gammat(x) - \gammat(y) }{\dd^{2(p+1)}(x-y) } \, \rd y,
\end{align*}
with the final equality holding in the $h/p$ homogeneous case.  A straightforward but tedious calculation allows us to decompose $f_{\gammat}$ as
$$
f_{\gammat} = f_{ \gamma_{\tangc} } + \ell_{\tangc}[\Psi] + r_{\tangc}(\Psi),
$$
where we define a linear operator $\ell_{\tangc}[\Psi]$ according to
\begin{align*}
\ell_{\tangc}[\Psi](x) &:= \int_{\T} \frac{ g\big( \ac(x,y) \big) \Delta \Psi(x,y)}{\dd^{2(p+1)}(x-y)} \, \rd y - 2 \int_{\T} g^{\prime}\big( \ac(x,y) \big)\ac^2(x,y)
\frac{ \Delta \gamma_{\tangc}(x,y) \otimes \Delta \gamma_{\tangc}(x,y) }{ \dd^{2(p+2)}(x-y)  }\Delta \Psi(x,y) \, \rd y
\end{align*}
and a non-linear, quadratic remainder $r_{\tangc}(\Psi)$ according to
\begin{align}
\label{eq:bigremainder}
r_{\tangc}(\Psi)(x) &:=-2 \int_{\T} g^{\prime}\big( \ac(x,y) \big)\ac^2(x,y)
\frac{ \Delta \Psi(x,y) \otimes \Delta \Psi(x,y) }{ \dd^{2(p+2)}(x-y)  }\Delta \gamma_{\tangc}(x,y) \, \rd y \\
& - \int_{\T} |\Delta \Psi(x,y)|^2 g^{\prime}\big( \ac(x,y) \big) \ac^2(x,y) \frac{ \Delta \gammat(x,y) }{\dd^{2(p+2)}(x-y)} \, \rd y \nonumber \\
&+ \int_{\T} \left(\int^{1}_{0} (1-\xi) \bar{g}\big( \alpha_{\xi}(x,y) \big) \, \rd \xi \right)\left( \frac{2\langle \Delta \gamma_{\tangc}(x,y) , \Delta \Psi(x,y)\rangle + |\Delta \Psi(x,y)|^2}{\dd^{p+3}(x-y)}\right)^2\Delta \gammat(x,y) \, \rd y \nonumber
\end{align}
provided that we define $\bar{g}(\alpha) := 2 g^{\prime}(\alpha)\alpha^3 + g^{\pprime}(\alpha)\alpha^{4}$ in the last line. This decomposition of $f_{\gammat}$ induces a corresponding decomposition
\begin{align}\label{eq:Fdecomp}
\mathbf{F}_{\gammat} = \mathbf{F}_{\gamma_{\tangc}} + L_{\tangc}[\Psi] + \mathbf{R}_{\tangc}(\Psi)
\end{align}
by taking the mean zero primitive, with the definitions of each term {occurring} in the obvious way. We may then conclude by using this decomposition of $\mathbf{F}_{\gammat}$ to extract an analogous decomposition of the Lagrange multipliers. Let $\lambda_{\tang} = \lambda_{\tangc} + \ell_{\lambda,\tangc}[\psi] + r_{\lambda,\tangc}(\psi)$ and $\mu_{\tang} = \mu_{\tangc} + \ell_{\mu,\tangc}[\psi] + r_{\mu,\tangc}(\psi)$ denote a decomposition of the Lagrange multipliers into their constant, linear and quadratic components. The relations
$$
\lambda_{\tang} = -\fint_{\T} \mu_{\tang}\tang\, \rd x, \quad \mu_{\tang}|\tang|^2 = |\tang_x|^2 - \langle \mathbf{F}_{\gammat} + \lambda_{\tang} , \tang \rangle \quad \text{and} \quad A^{-1}_{\tang} = A^{-1}_{\tangc} + A^{-1}_{\tang}(A_{\tangc} - A_{\tang})A^{-1}_{\tangc}
$$
then combine with the decomposition \eqref{eq:Fdecomp} in a straightforward way to show
\begin{align*}
A_{\tangc}\ell_{\lambda,\tangc}[\psi] &= \fint_{\T} \left( \langle\mathbf{F}_{\gamma_{\tangc}} + \lambda_{\tangc}, \psi\rangle + \langle L_{\tangc}[\Psi] , \tangc\rangle \right) \tangc \, \rd x + 2 \fint_{\T}(\mu_{\tangc}\langle\tangc,\psi\rangle- \langle \tangc^{\prime},\psi_x\rangle) \tangc \, \rd x - \fint_{\T}\mu_{\tangc}\psi \, \rd x \\
\ell_{\mu,\tangc}[\psi] &= 2( \langle \tangc^{\prime} , \psi_x \rangle - \mu_{\tangc}\langle \tangc,\psi\rangle) - \langle\mathbf{F}_{\gamma_{\tangc}} + \lambda_{\tangc}, \psi\rangle - \langle L_{\tangc}[\Psi] + \ell_{\lambda,\tangc}[\psi] , \tangc\rangle.
\end{align*}
The precise structure of the higher-order terms $r_{\lambda,\tangc}(\psi),r_{\mu,\tangc}(\psi)$ is unenlightening, but it is easy to see that $r_{\mu,\tangc}(\psi)$ and $r_{\lambda,\tangc}(\psi)$ obey estimates similar to lemma \ref{lem:boundmult}. Specifically, $r_{\mu,\tangc}(\psi)$ decomposes into singular and regular components
\begin{align*}
r_{\mu,\tangc}(\psi) &= \frac{ |\psi_x|^2 - \bar{r}_{\mu,\tangc}(\psi)}{|\tang|^2} \qquad \text{and} \\
 \bar{r}_{\mu,\tangc}(\psi) &:= \langle \mathbf{R}_{\tangc}(\psi) + r_{\lambda,\tangc}(\psi),\tang\rangle + \langle L_{\tangc}[\psi] + \ell_{\lambda,\tangc}[\psi],\psi\rangle + \mu_{\tangc}|\psi|^2 + \ell_{\mu,\tangc}[\psi]\left( 2\langle\tangc,\psi\rangle + |\psi|^2 \right)
\end{align*}
respectively, while $r_{\lambda,\tangc}(\psi)$ is constant in space and obeys
\begin{align*}
|r_{\lambda,\tangc}(\psi)| &\leq C\left[ \|A^{-1}_{\tang}\|_{ {\rm op} }\|\psi\|_{H^{1}(\T)} \left(  | \ell_{\lambda,\tangc}[\psi]| + \|L_{\tangc}[\Psi]\|_{L^{1}(\T)} + \left( \|\mathbf{F}_{\gammat}\|_{L^{1}(\T)} + \|\tangc\|_{H^{1}(\T)} + 1 \right)\|\psi\|_{H^{1}(\T)} \right)\right. \\
&  \left. +  \|A^{-1}_{\tang}\|_{ {\rm op} }\| \mathbf{R}_{\tangc}(\psi)\|_{L^{1}(\T)} \right]\left( \min_{\T} |\tang|^2 \right)^{-1}
\end{align*}
for $C$ some modest universal constant. Arguments similar to lemma \ref{lem:boundF} show that $\mathbf{R}_{\tangc}(\psi)$ depends quadratically on $\psi$ in the $H^{1}(\T)$ sense. {\color{black} Specifically, the integrands defining $r_{\tangc}(\Psi)$ (c.f. \eqref{eq:bigremainder}) and $f_{\gamma,\epsilon}$ (c.f. lemma \ref{lem:boundF}) exhibit the same behavior along the diagonal $x=y,$ and so the proof of lemma \ref{lem:boundF} applies.} We may therefore conclude that
\begin{equation}\label{eq:linearized}
\psi_{t} = \psi_{xx} + L_{ {\tangc}}[\Psi] + \ell_{\lambda,\tangc}[\psi] + \ell_{\mu,\tangc}[\psi]\tangc + \mu_{\tangc}\psi + O\left( \|\psi \|^{2}_{H^{1}(\T)} \right)
\end{equation}
formally governs the dynamics to leading-order near an equilibrium. We may then use \eqref{eq:linearized} to at least gain a modicum of dynamical insight. We consider two cases, namely the standard circle $\tangc = \gamma_{ {\rm circ}}^{\prime}$ under the $2$-Distortion $K(u,v) = v/u$ and the double-covered circle $\tangc = \gamma_{ {\rm dc}}^{\prime}$ with $K=0$, i.e. a pure bending energy flow.

\subsubsection{The Standard Circle}
We begin by analyzing the standard circle near equilibrium for $K$ non-zero. If we set $\tangc = \gamma_{ {\rm circ}}^{\prime}$ and allow $K(u,v)$ as any bi-variate kernel then we have
\begin{align}\label{eq:lamkdef}
\mathbf{F}_{\gamma_{\tangc}} &= -\lambda_{1}(K_u) \tangc,  &&\lambda_{k}(K_u) := \int_{\T} K_{u}\left( 2(1-\cos z), z^2\right)(1-\cos kz) \, \rd z,  &&A_{\tangc} = \mathrm{diag}\left(1/2,1/2,1\right), \\
\lambda_{\tangc} &= 0 , &&|\tangc^{\prime}| = 1, && \mu_{\tangc} = 1 + \lambda_1(K_u), \nonumber
\end{align}
which readily suffice for a direct verification that $\tangc = \gamma_{ {\rm circ}}^{\prime}$ always defines an equilibrium. A special case of the general theory from \cite{von2016localization} {\color{black} (c.f. equation (14) in \cite{von2016localization})} shows that we may decompose any  mean zero $\psi \in L^{2}(\T;\R^3)$ as
\begin{equation}\label{eq:vsh}
\psi = \pt_0 \tangc + \pn_0 \normc + \sum_{ k \in \Z_0 } \left( \pn_k \normc + ik \pt_k \tangc\right) \re^{ikx} + \sum_{k \in \Z_0} \pb_k \binc  \re^{ikx} \qquad \Z_0 := \Z \setminus \{0\}, \; \pn,\pb,\pt \in \mathbb{C},
\end{equation}
with $\normc := \tangc^{\prime}$ and $\binc := \tangc \times \normc$ representing a pointwise orthonormal basis. {\color{black} Up to a change of basis on the coefficients, the decomposition \eqref{eq:vsh} is the Fourier transform.} The restrictions $\pn_1 = \pt_1, \pn_{-1} = \pt_{-1}$ hold since $\psi$ has mean zero. The decomposition \eqref{eq:vsh} induces an analogous decomposition
\begin{align}\label{eq:vsh2}
\Psi &= \pn_0 \tangc - \pt_0 \normc + \frac{\pt_{1} \normc + i\pn_{1} \tangc}{2i}\re^{ix}  +  \frac{\pt_{-1} \normc -i \pn_{-1} \tangc }{-2i}\re^{-ix} + \sum_{k \in \Z_0} (ik)^{-1}\pb_k \binc  \re^{ikx} \nonumber \\
&+ \sum_{ k \in \Z_{0,1} } (ik)^{-1}\left( \frac{k^2(\pn_k - \pt_k)}{k^2-1}\normc + ik \frac{k^2\pt_k - \pn_k}{k^2-1} \tangc \right) \re^{ikx} \qquad \Z_{0,1} := \Z \setminus \{0,1,-1\}
\end{align}
of its mean zero primitive, and following \cite{von2016localization,von2016anisotropic} we shall use the shorthand
\begin{align*}
&
\begin{bmatrix}
\pn_0 \\ \pt_0 \\ \pb_0
\end{bmatrix}
\overset{ \Psi }{\longrightarrow}
\begin{bmatrix}
0 & -1 & 0\\
1 & 0 & 0 \\
0 & 0 & 0
\end{bmatrix}
\begin{bmatrix}
\pn_0 \\ \pt_0 \\ \pb_0
\end{bmatrix}
\qquad \qquad \qquad
\begin{bmatrix}
\pn_{\pm 1} \\ \pt_{\pm 1} \\ \pb_{\pm 1}
\end{bmatrix}
\overset{ \Psi }{\longrightarrow}
\frac1{\pm i}
\begin{bmatrix}
0 & \frac1{2} & 0\\
\frac1{2} & 0 & 0 \\
0 & 0 & 1
\end{bmatrix}
\begin{bmatrix}
\pn_{\pm 1} \\ \pt_{\pm 1} \\ \pb_{\pm 1}
\end{bmatrix}\\
&
\begin{bmatrix}
\pn_{k} \\ \pt_{k} \\ \pb_{k}
\end{bmatrix}
\overset{ \Psi }{\longrightarrow}
\frac1{(ik)(k^2-1)}
\begin{bmatrix}
k^2 & -k^2 & 0\\
-1 & \;\,\,k^2 & 0 \\
0 & 0 & k^2-1
\end{bmatrix}
\begin{bmatrix}
\pn_{k} \\ \pt_{k} \\ \pb_{k}
\end{bmatrix}
\;\;\; (k^2 \neq 0, 1),
\qquad \pp_k \overset{ \Psi }{\longrightarrow} A_{k,\Psi}\pp_k
\end{align*}
to compactly represent the operation $\psi \mapsto \Psi$ in terms of its actions $\{ A_{k,\Psi} \}_{k \in \Z}$ on coefficients. Of course, the fact that the differential operators $\partial_x,\partial_{xx}$ have convenient representations
\begin{align*}
&\pp_k \overset{\partial_x}{\longrightarrow} A_{k,\partial_x} \pp_k \qquad \quad
A_{0,\partial_x} = \begin{bmatrix}
0 & 1 & 0\\
-1 & 0 & 0 \\
0 & 0 & 0
\end{bmatrix}
\qquad\;\; A_{k,\partial_x} = \begin{bmatrix}
ik & ik & 0\\
i/k & ik & 0 \\
0 & 0 & ik
\end{bmatrix} \\
&\pp_k \overset{\partial_{xx}}{\longrightarrow} A_{k,\partial_{xx}} \pp_k \qquad
A_{0,\partial_{xx}} = -\begin{bmatrix}
 1 & 0 & 0\\
0 & 1 & 0 \\
0 & 0 & 0
\end{bmatrix}
\qquad A_{k,\partial_{xx}} = -\begin{bmatrix}
1+k^2 & 2k^2 & 0\\
2 & 1+k^2 & 0 \\
0 & 0 & k^2
\end{bmatrix}
\end{align*}
follows from standard Fourier analysis as well.

The major benefit of the representation \eqref{eq:vsh} comes from the analyses in \cite{von2016localization,von2016anisotropic}, which show that the nonlocal operators in \eqref{eq:linearized} also have a convenient description of the form $\pp_k \mapsto  A_k \pp_k$ for some sequence $\{ A_k \}_{k \in \Z}$ of easily-computable matrices. Applying the analysis from \cite{von2016localization,von2016anisotropic} is aided by first defining the quantities
\begin{align}\label{eq:sigkdef}
\lambda^{\pm,\mp}_{k}(M) &:= \int_{\T} M\left( 2(1-\cos z) , z^2 \right)(1\pm\cos z)(1 \mp \cos kz) \, \rd z \nonumber \\
\sigma_{k}(M) &:= \int_{\T} M\left( 2(1-\cos z) , z^2 \right)\sin z \sin kz \, \rd z
\end{align}
for $M(u,v)$ an arbitrary bi-variate kernel. Appealing to these definitions, quoting {\color{black} theorem 3.4 of \cite{von2016localization}} and simplifying then shows that the operator $\Psi \mapsto \ell_{\tangc}[\Psi]$ has the representation
\begin{equation}
A_{k,\ell_{\tangc}} =
\begin{bmatrix}\label{eq:elldefmtx}
\left( \frac{\lambda^{+,-}_{k} + \lambda^{-,+}_{k}}{2} \right)(K_u) + \lambda^{-,+}_{k}(uK_{uu}) & k \sigma_{k}(K_u + uK_{uu}) & 0\\
k^{-1} \sigma_{k}(K_u + uK_{uu}) & \left( \frac{\lambda^{+,-}_{k} + \lambda^{-,+}_{k}}{2} \right)(K_u)+ \lambda^{+,-}_{k}(uK_{uu})& 0 \\
0 & 0 & ( \frac{\lambda^{+,-}_{k} + \lambda^{-,-}_{k}}{2})(K_u)
\end{bmatrix}
\end{equation}
for $k\neq 0,$ while for $k=0$ the action of $A_{0,\ell_{\tangc}}$ is obtained by setting $k=0$ in the diagonal entries of $A_{k,\ell_{\tangc}}$ and setting all off-diagonals to zero. {\color{black} An alternative approach to deriving \eqref{eq:elldefmtx} involves substituting the expansion \eqref{eq:vsh} and appealing to the convolution theorem.} We may then represent the composite mapping $\psi \mapsto L_{\tangc}[\Psi] \Leftrightarrow \psi \mapsto \Psi \mapsto \ell_{\tangc}[\Psi] \mapsto L_{\tangc}[\Psi]$ via $\pp_k \mapsto A_{k,\Psi}A_{k,\ell_{\tangc}}A_{k,\Psi}\pp_k$ and obtain the corresponding matrix representation $A_{k,L_{\tangc}} := A_{k,\Psi}A_{k,\ell_{\tangc}}A_{k,\Psi}$ by simple multiplication.

We finally obtain an analogous fine-grained characterization of the full operator \eqref{eq:linearized} by combining these representations in a straightforward manner. If we use \eqref{eq:vsh} to decompose $L_{\tangc}[\Psi]$ into components
$$
L_{\tangc}[\Psi] = \pn_{L_{\tangc}}\normc + \pt_{L_{\tangc}}\tangc + \pb_{L_{\tangc}}\binc,
$$
and then combine the relation $\langle \ell_{\lambda,\tangc}[\psi] , \binc \rangle = -(1 + \lambda_{1}(K_u))\pb_0\binc$ with the fact that
\begin{align*}
\langle \ell_{\lambda,\tangc}[\psi] , \normc \rangle &= (1 - \lambda_{1}(K_u))\left( \pn_{1} \re^{ix} +  \pn_{-1} \re^{-ix} \right) + \left( \pt_1 \re^{ix} + \pt_{-1} \re^{-ix} \right) \\
& - (\re^{ix}\pt_{L_{\tangc},1} + \re^{-ix}\pt_{L_{\tangc},-1} )
\end{align*}
then we easily see that \eqref{eq:linearized} reduces to
\begin{align*}
\psi_{t} &= \psi_{xx} + (1 + \lambda_{1}(K_u))\psi +  \pn_{L_{\tangc}}\normc + \pb_{L_{\tangc}}\binc + ( 2\langle \normc , \psi_x \rangle - 2\mu_{\tangc}\langle\tangc,\psi\rangle + \lambda_{1}(K_u)\pt )\tangc \\
&+\langle \ell_{\lambda,\tangc}[\psi],\normc\rangle\normc - (1+\lambda_{1}(K_u))\pb_0 \binc + O\left( \|\psi\|^{2}_{H^{1}(\T)} \right)
\end{align*}
at leading order. Recalling the  definition $A_{k,L_{\tangc}} := A_{k,\Psi}A_{k,\ell_{\tangc}}A_{k,\Psi}$ then allows us to write the effect of \eqref{eq:linearized} as the operation $\pp_k \to A_{k} \pp_k$ with
\begin{align}
A_{0} &= 0 \qquad
A_{\pm 1} = -\begin{bmatrix} 0 & 1 & 0\\
                                               0 &  1 & 0 \\
			              0 & 0 & 0 \end{bmatrix}
 \nonumber \\ \label{eq:thatone}
 A_{k} &=
\begin{bmatrix}
\lambda_{1}(K_u) - k^2 + a_k & -2k^2 + b_k & 0\\
0 & - k^2 & 0 \\
0 & 0 & (1 + \lambda_{1}(K_u)) - (k^4 + \lambda_k(K_u))/k^2
\end{bmatrix}\\ \label{eq:thisone}
a_k &:= -\left( \frac{ (k-1)^2\lambda_{k+1} + (k+1)^2 \lambda_{k-1} }{2(k^2-1)^2}\right)(K_u + uK_{uu}) + \frac{( \lambda_{k} - \lambda_{1})(uK_{uu})}{k^2-1}, \nonumber \\
\frac{b_k}{k} &:= -\left( \frac{ (k-1)^2\lambda_{k+1} - (k+1)^2 \lambda_{k-1} }{2(k^2-1)^2}\right)(K_u + uK_{uu}) \qquad \text{if} \qquad k>0, \nonumber \\
a_{-k} &= a_{k} \quad b_{-k} = b_{k} \qquad \text{if} \qquad k<0,
\end{align}
furnishing the appropriate sequence $\{ A_{k} \}_{k \in \Z}$ of matrices. The entries of (\ref{eq:thatone},\ref{eq:thisone}) follow from \eqref{eq:elldefmtx}, the definition (\ref{eq:lamkdef}) of $\lambda_{k}(M)$ for a bi-variate kernel $M(u,v)$ and a few basic trigonometric identities.

The convenient representation (\ref{eq:thatone},\ref{eq:thisone}) allows us to make a few final observations. When acting on the space of mean zero functions, the operator $L[\psi]$ represented as $\pp_k \mapsto A_{k}\pp_k$ necessarily has at least a four dimensional kernel, with
$$
\mathrm{Span}\left\{ \re^{ix}\binc, \re^{-ix}\binc , \normc ,\tangc  \right\} \subset \mathrm{ker}(L)
$$
providing a basis of eigenfunctions. The first three components of this basis obviously arise due to the family of equilibria $\{  \Theta \tangc \}_{\Theta \in SO(3)}$ obtained by appealing to rotation invariance. The last element arises due to the equilibria $\{ r \tangc \}_{r \in \R}$ obtained by appealing to scale invariance. Moreover, in the particular case that $K(u,v) = v/u$ is the kernel given by the $2$-Distortion then the remainder of the point spectrum $\sigma_{ {\rm pt}}(L)$ lies strictly to the left of the imaginary axis. Indeed, it follows easily from the triangular form of $A_k$ that
$$
\sigma_{{\rm pt} }(L)\setminus \{0\} = \left\{ - k^2 \right\}_{ k \in \mathbb{N}_{0} } \cup \left\{ \lambda_{1}(K_u) - k^2 + a_k , (1 + \lambda_{1}(K_u)) - (k^4 + \lambda_k(K_u))/k^2 \right\}_{ k \geq 2 }
$$
provided the non-trivial diagonal entries in $A_k$ are negative and bounded away from zero, and this latter fact follows relatively easily from the definitions
\begin{align*}
&\lambda_{k}(K_u) =  -\frac1{4}\int_{\T} \frac{ z^2(1-\cos kz)}{(1-\cos z)^2} \, \rd z, \qquad \lambda_{k}(uK_{uu}) = \frac1{2}\int_{\T} \frac{ z^2(1-\cos kz)}{(1-\cos z)^2}\, \rd z = 2 \lambda_{k}(K_u + uK_{uu})
\end{align*}
together with the Fourier integral identity
$$
\lambda_{k}(uK_{uu}) = \int_{\T} \frac{ 1-\cos kz}{1-\cos z} \, \rd z + \int_{\T} \frac{z^2/2-(1-\cos z)}{(1-\cos z)^2}(1-\cos kz) \, \rd z = 2\pi(|k| + \varepsilon_{k}), \quad |\varepsilon_{k}| < 1
$$
and a few simple estimates for the remainders. Thus (\ref{eq:thatone},\ref{eq:thisone}) reveals that
$$
\mathrm{Span}\left\{ \re^{ix}\binc, \re^{-ix}\binc , \normc ,\tangc  \right\} = \mathrm{ker}(L)
$$
for the $2$-Distortion, and so well-known {\color{black} approaches from dynamical systems (e.g. \cite{kapitula2013spectral} theorem 4.3.5) suffice to guarantee orbital stability of the family $\gamma_{ {\rm circ}}$ of standard circles. In other words, if $\tang_0 \in H^{1}(\T)$ has mean zero and we can find a $r_0 \in \mathbb{R}, \Theta_0 \in SO(3)$ so that $\| \tang_0 - r_{0}\Theta_{0} \tangc \|_{H^{1}(\T)}$ is sufficiently small, then there exist constants $C,\nu>0$ and unique selections $r(\tang_0) \in \mathbb{R}, \Theta(\tang_0) \in SO(3)$ so that exponential convergence to equilibrium
$$
\|r(\tang_0)\Theta(\tang_0) \tangc - \tang(t;\tang_0) \|_{H^{1}(\T)} \leq C \re^{-\nu t}
$$
holds. The key analytical points are as follows. First, the kernel of $L$ should arise from differentiable symmetries (here they arise from scaling and rotation invariance). Second, the semi-group $e^{Lt}$ should obey a bound of the form $\|\re^{Lt}\Pi f\| \leq C\mathrm{e}^{-\nu t}\|f\|$ with $\Pi f$ denoting the projection complementary to  the kernel of $L$; as the semi-group $\re^{Lt}$ corresponds to the family of matrix exponentials $\mathrm{e}^{A_k t}$, the block-diagonal decomposition \eqref{eq:thisone} suffices for proving the desired bound by reducing the problem to exponentials of ordinary, $2 \times 2$ matrices (see also section 4 of \cite{von2016localization}). Finally, the deviation of the full dynamics from the linearized system $\psi_{t} = L[\psi]$ should introduce a quadratic error (c.f. \eqref{eq:linearized}).} A similar conclusion applies for the entire $2q$-Distortion family. Moreover, the characterization (\ref{eq:thatone},\ref{eq:thisone}) remains valid for kernels $K(u,v)$ that do not necessarily admit standard circles as either local or global minimizers, and so this characterization of the dynamics \eqref{eq:dynagain} is potentially important for other applications.

\subsubsection{The Double-Covered Circle}\label{sssec:dc}
For the sake of comparison and contrast, we conclude by analyzing the double-covered circle under a pure bending energy flow. If we let $\tangc := \gamma^{\prime}_{ {\rm dc}} $ then we have that
\begin{align*}
\normc &:= \frac1{2}\tangc^{\prime},  && \tangc^{\pprime} = -4 \tangc &&A_{\tangc} = \mathrm{diag}\left(1/2,1/2,1\right), \\
\lambda_{\tangc} &= 0 , &&|\tangc^{\prime}| = 2, && \mu_{\tangc} = 4,
\end{align*}
which readily suffice for a verification that $\tangc$ defines a critical point of the bending energy. As with the unknot we shall decompose $\psi$ as
$$
\psi(x) = \pt(x) \tangc(x) + \pn(x) \normc(x) + \pb(x) \binc \qquad \qquad \pn,\pb,\pt \in \mathbb{C},
$$
with $\binc := \tangc \times \normc$ constant in space, but instead of \eqref{eq:vsh} the usual Fourier decomposition
$$
\pt = \sum_{k \in \Z} \pt_k \re^{ikx} \qquad \pn = \sum_{k \in \Z} \pn_k \re^{ikx} \qquad \pb = \sum_{k \in \Z_0} \pb_k \re^{ikx}
$$
of each component proves more convenient in this case. The relations $\pn_2 = -i \pt_2, \pn_{-2} = i \pt_{-2}$ now encode the fact that $\psi$ has mean zero. Elementary calculations then show
\begin{align*}
\psi_x &= (\pn_x + 2\pt )\normc + (\pt_x - 2 \pn)\tangc + \pb_x\binc \\
\psi_{xx} &= (\pn_{xx} - 4\pn + 4\pt_x )\normc + (\pt_{xx} - 4\pt - 4\pn_x)\tangc + \pb_{xx}\binc,
\end{align*}
which combine with the definitions of $\ell_{\lambda,\tangc},\ell_{\mu,\tangc}$ and the computation of $A_{\tangc}$ to give
\begin{align*}
\langle \ell_{\lambda,\tangc}[\psi] , \normc \rangle & = 4( \re^{2ix}\pn_{2} + \re^{-2ix} \pn_{-2}) + 4i (\pt_{-2} \re^{-2ix} - \pt_{2} \re^{2ix}), \qquad \langle \ell_{\lambda,\tangc}[\psi] , \binc \rangle = -4\pb_0=0 \\
\ell_{\mu,\tangc}[\psi] &= 4\pn_{x} - \langle \ell_{\lambda,\tangc}[\psi],\tangc\rangle.
\end{align*}
Finally, let us further decompose $\psi$
\begin{align*}
&\psi(x) = ( \pn(x;2) + \pn(x;\Z_2) )\normc(x) + ( \pt(x;2) + \pt(x;\Z_2) )\tangc(x) + \pb(x)\binc \\
&\pn(x;2) := \re^{2ix}\pn_{2} + \re^{-2ix} \pn_{-2} \quad \qquad \pt(x;2) := \re^{2ix}\pt_{2} + \re^{-2ix} \pt_{-2}\\
&\pn(x;\Z_2) := \sum_{k \in \Z_{2}} \pn_k \re^{ikx}  \quad \text{~~~~~~~~~~~~~~~~~~} \pt(x;\Z_2) := \sum_{k \in \Z_{2}} \pt_k \re^{ikx} \qquad \Z_{2} := \Z \setminus \{2,-2\}
\end{align*}
into its second mode components and their complements. The mean zero restriction $\pn_2 = -i \pt_2, \pn_{-2} = i \pt_{-2}$ then allows us to find that the full operator
\begin{align*}
\psi \mapsto \psi_{xx} + \ell_{\lambda,\tangc}[\psi] + \ell_{\mu,\tangc}[\psi]\tangc + \mu_{\tangc}\psi
\end{align*}
from \eqref{eq:linearized} has the more convenient representation
\begin{align}\label{eq:dcfinal}
\pn(\cdot;2)\normc + \pt(\cdot;2)\tangc &\mapsto \pn_{xx}(\cdot;2)\normc +  \pt_{xx}(\cdot;2)\tangc, \nonumber \\
\pn(\cdot;\Z_2)\normc + \pt(\cdot;\Z_2)\tangc &\mapsto (\pn_{xx}(\cdot;\Z_2) + 4 \pt_{x}(\cdot;\Z_2))\normc +  \pt_{xx}(\cdot;\Z_2)\tangc, \nonumber \\
\pb \binc &\mapsto (\pb_{xx} + 4 \pb)\binc
\end{align}
with respect to this decomposition.

{\color{black} That the double-covered circle is critical for the bending energy \cite{LS84}, and moreover, that the standard circle is the only stable critical point of the bending energy \cite{LS85} are known facts.} In addition, the representation \eqref{eq:dcfinal} confirms that the double-covered circle forms an unstable saddle-point of the bending energy flow. Nevertheless, it still has a quite large stable manifold. Essentially, any mean zero $\psi$ of the form
$$
\psi_{ {\rm stab} } = \left( \sum_{k \in \Z_0 } \pn_k \re^{ikx} \right)\normc + \left( \sum_{k \in \Z_0} \pt_k \re^{ikx} \right)\tangc + \left( \sum_{k \in \Z, |k| \geq 3} \pb_k \re^{ikx} \right) \binc
$$
induces an initial condition $\tang_0 = \tangc + \varepsilon \psi_{ {\rm stab} }$ that lies on the stable manifold to leading order in $\varepsilon$ and therefore, to leading order at least, lies in the basin of attraction of the double covered circle.  The standard trefoil that lies on the torus $(r - 1/2)^2 + z^2 = \veps^2$ takes the form
$$
\tang = \tangc + \veps \psi \qquad \psi = \cos 3x \tangc + \frac{3}{2} \sin 3x \, \normc - 3 \cos 3x \, \binc,
$$
and from \eqref{eq:dcfinal} this induces an initial condition $\tang_0 := \tangc + \veps \psi$ that, to leading order, lies in the basin of attraction of the double-covered circle. This observation conforms with the analysis of {\cite{GRvdM}}, which shows that the double-covered circle is the ``natural'' bending energy minimizer for the trefoil knot type. The more interesting point here is that we can find \emph{unknotted} curves on the stable manifold as well. For any $|\veps_{1}| \leq 1/4$ and any $\veps_{2}$ arbitrary consider the corresponding tangent field
$$
\tang_{ (\veps_{1},\veps_{2})}(x) = \tangc(x) - \veps_1 \sin x \, \normc(x)
+\veps_2 \cos 5x \, \binc,
$$
which in turn yields
$$
\gamma_{\tang_{ (\veps_{1},\veps_{2})} }(x)  = \gamma_{ {\rm dc} }(x) +
\begin{bmatrix}
\veps_1 (3 \cos x - \cos 3x)/6 \\
\veps_1 (2\sin^3 x)/3 \\
\veps_2 (\sin 5x)/5
\end{bmatrix}
$$
as the corresponding induced curve. If $p:\mathbb{R}^3\rightarrow \mathbb{R}^2$ denotes the standard projection map $p(x,y,z)=(x,y)$ then the curve $p(\gamma_{\tang_{ (\veps_{1},\veps_{2})} })$ is a regular projection with a single double point. Moreover, since $|\veps_1|\leq 1/4$ a simple calculation shows that the two preimages of this double point on the curve $\gamma_{\tang_{ (\veps_{1},\veps_{2})} }$ have distinct $z$-components. Thus $\gamma_{\tang_{ (\veps_{1},\veps_{2})} }$ corresponds to an embedding of the circle and $\gamma_{\tang_{ (\veps_{1},\veps_{2})} }$ has a knot diagram with a single crossing. In other words, for any $|\veps_{1}| \leq 1/4$ and any $\veps_{2}$ the curve $\gamma_{\tang_{ (\veps_{1},\veps_{2})} }$ is unknotted. Given such an initial condition $\tang_{ (\veps_{1},\veps_{2})},$ the representation \eqref{eq:dcfinal} then shows
$$
\tang_{ (\veps_{1},\veps_{2})}(t) = \tangc(x) - \veps_1 \re^{-t} \sin x \, \normc(x)
+\veps_2 \re^{-21 t} \cos 5x \, \binc
$$
to leading order, and so $\tang_{ (\veps_{1},\veps_{2})}(t)$ induces an unknotted curve $\gamma_{\tang_{(\veps_{1},\veps_{2})}(t)}$ for all finite time. To leading order at least, this construction therefore yields a family of curves $\gamma_{ \tang(t)}$ that flow under the bending energy, remain unknotted for all time and collapse onto the double-covered circle in the limit. We explore this possibility numerically in the next section.

\section{Numerical Experiments}

We conclude our study by briefly summarizing a set of numerical experiments analyzing the long-term behavior of the flow
\begin{equation}\label{eq:numerics}
\tang_{t}(t) = \tang_{xx}(t) + \mathbf{F}_{ \gamma_{\tang(t)} } + \lambda_{\tang(t)} + \mu_{\tang(t)}\tang(t) := \tang_{xx}(t) + G(\tang(t),\tang_{x}(t)).
\end{equation}
These experiments partially validate our {analytical} study of dynamics and supplement it with an illustration of global, rather than local, dynamics. For all experiments, we numerically integrate \eqref{eq:numerics} using the following relatively simple procedure. Given a time discretization $0 = t_0 < t_1 < \cdots < t_{n} < t_{n+1}$ with $dt_{n} := t_{n+1} - t_{n}$ and $\tang^{n} := \tang(t_n)$ we approximate \eqref{eq:numerics} using the second-order in time discretization
\begin{align}\label{eq:discretetime}
\frac{\tang ^{n+1} - \tang^{n} }{dt_n} &= \frac1{2}\left( \tang^{n+1}_{xx} + \tang^{n}_{xx} \right) + G(\tang^{n},\tang^{n}_{x}) + \frac{dt_n}{2 dt_{n-1}}\left( G(\tang^{n},\tang^{n}_{x}) - G(\tang^{n-1},\tang^{n-1}_{x}) \right) \\
\left(\mathrm{Id} - \frac{dt_n}{2} \partial_{xx}\right) \tang^{n+1} &= \tang^n + dt_{n}\left( \frac1{2}\tang^{n}_{xx} + G(\tang^{n},\tang^{n}_{x}) + \frac{dt_n}{2 dt_{n-1}}\left( G(\tang^{n},\tang^{n}_{x}) - G(\tang^{n-1},\tang^{n-1}_{x}) \right) \right) := \mathbf{r}^n \nonumber
\end{align}
with periodic boundary conditions. We compute $\mathbf{F}_{ \gamma_{\tang^n} }$ by analogy to the decomposition in lemmas \ref{lem:lapest},\ref{lem:boundF} --- we compute the singular component $g(1)L_{p}[\gamma_{\tang^n}]$ spectrally and the regular component $f_{\gamma_{\tang^n}} - g(1)L_{p}[\gamma_{\tang^n}]$ using quadrature. We then form $\mathbf{r}^{n}$ and solve the resulting screened Poisson equation $(\mathrm{Id} - (dt_n/2)\partial_{xx})\tang^{n+1} = \mathbf{r}^{n}$ spectrally. Finally, we use an adaptive selection of $dt_n$ at each step. {\color{black} We first select $dt_n$ to guarantee energy descent by imposing a discrete version of the energy equality \eqref{eq:enerid}, in that 
$$
E(t_{n+1}) = E(t_{n}) - \int^{t_{n+1}}_{t_{n}} \| \tang_{t}(s) \|^{2}_{L^{2}(\T)} \, \rd s \quad \rightarrow \quad E(t_{n+1}) \leq E(t_{n}) - \frac{dt_{n}}{2} \| \tang_{t}(t_{n})\|^{2}_{L^{2}(\T)}
$$
for all $dt_n$ sufficiently small; we simply choose $dt_n$ small enough to guarantee the latter inequality at all stages of the simulation. By taking $dt_n$ smaller if necessary we may also ensure that the inequality
$$
\sup_{x\neq y} \frac{ | ( \gamma_{\tang^{n+1}}(x) - \gamma_{\tang^{n}}(x) ) - ( \gamma_{\tang^{n+1}}(y) - \gamma_{\tang^{n}}(y) )|}{\vartheta(x-y)} \leq \frac1{2}\delta^{-1}_{\infty}( \gamma_{\tang^n} )
$$
holds between successive timesteps. If we linearly interpolate between $\gamma_{\tang^n}$ and $\gamma_{\tang^{n+1}}$ on $[t_n,t_{n+1}]$ via
$$
\gamma(t) = \gamma_{\tang^{n}} + \frac{(t-t_{n})}{dt_n}\left( \gamma_{\tang^{n+1}} - \gamma_{\tang^n} \right),
$$
then $\delta_{\infty}( \gamma(t) ) \leq 2\delta_{\infty}( \gamma_{\tang^n} )$ on $[t_n,t_{n+1}]$ by the triangle inequality. In particular, self-intersections do not occur.} Overall we obtain a second-order accurate algorithm.

We begin with a numerical exploration of the ``dynamic'' Smale conjecture. In other words, all unknotted initial conditions $\tang_0$ should flow under \eqref{eq:numerics} to the unit circle whenever $K(u,v)$ has properties analogous to the $2$-Distortion, namely $E_{K}(\gamma)$ should have $\gamma_{ {\rm circ}}$ as the global minimizer and $E_{ {\rm b},K}(\gamma)$ should bound the distortion from above. Figure \ref{fig:supercoil} illustrates the dynamics of \eqref{eq:numerics} for a complicated embedding of the unknot meant to exhibit properties loosely analogous to those observed in the natural packing of DNA. The spherical confinement of the initial embedding proves analogous to DNA packing in a bacteriophage, and we also mimic the ``supercoiling'' observed in packed DNA by combining a high amplitude coiling of period $\lenG/8$ and a small amplitude coiling of period $\lenG/128$ to form the initial condition. We simulate \eqref{eq:numerics} using the $12$-Distortion $K(u,v) = (v/u)^{6}$ in order to incorporate the $|\x-\y|^{-12}$ repulsive strength of a Lennard-Jones style kernel between distinct points $\x,\y$ on the curve. At $t=0$ we first scale the initial curve $\gamma_{0}$ so that we have an initial balance $E_{b}(\gamma_0) = E_{ K}(\gamma_0)$ between constituent energies. We then use \eqref{eq:discretetime} to simulate the dynamics forward in time. The dynamics essentially occur in three distinct stages. Figure \ref{fig:supercoil} displays the resulting induced curves $\gamma_{\tang(t)}$ at various time points during these stages of evolution; the color of a point along $\gamma$ indicates its distance to the center of mass of the evolving structure. During the first stage of the evolution, shown in panels (c,d) of figure \ref{fig:supercoil}, curvature dominates the dynamics as it forces a rapid dissipation of the small period coiling of the initial condition. After most oscillations have damped, a second phase occurs where both repulsion and diffusion operate non-trivially. The larger scale coils of $\gamma_{\tang(t)}$ dissipate slowly and simultaneously experience an outward radial {repulsion}, as shown in figure \ref{fig:supercoil} (d,e,f), while the few small {period coils} that remain quickly dissipate. By $t=.0011$ only large scale oscillations remain, and in the final phase (figure \ref{fig:supercoil} (g,h)) curvature once again dominates the evolution as it dissipates these remaining oscillations until the unknot eventually appears.

\begin{figure}[h]
\begin{minipage}{45em}
\centering
\begin{minipage}{10em}
\centering
\subfigure[$t=0$]{ \includegraphics[width=1.5in]{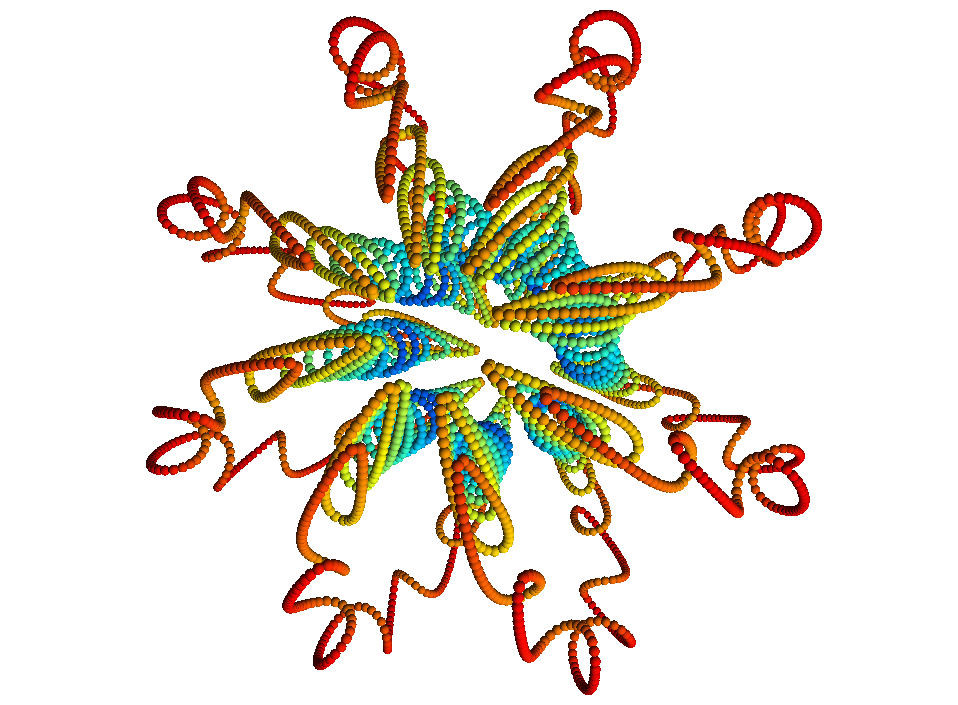} } \\ \subfigure[$t=0$]{ \includegraphics[width=1.5in]{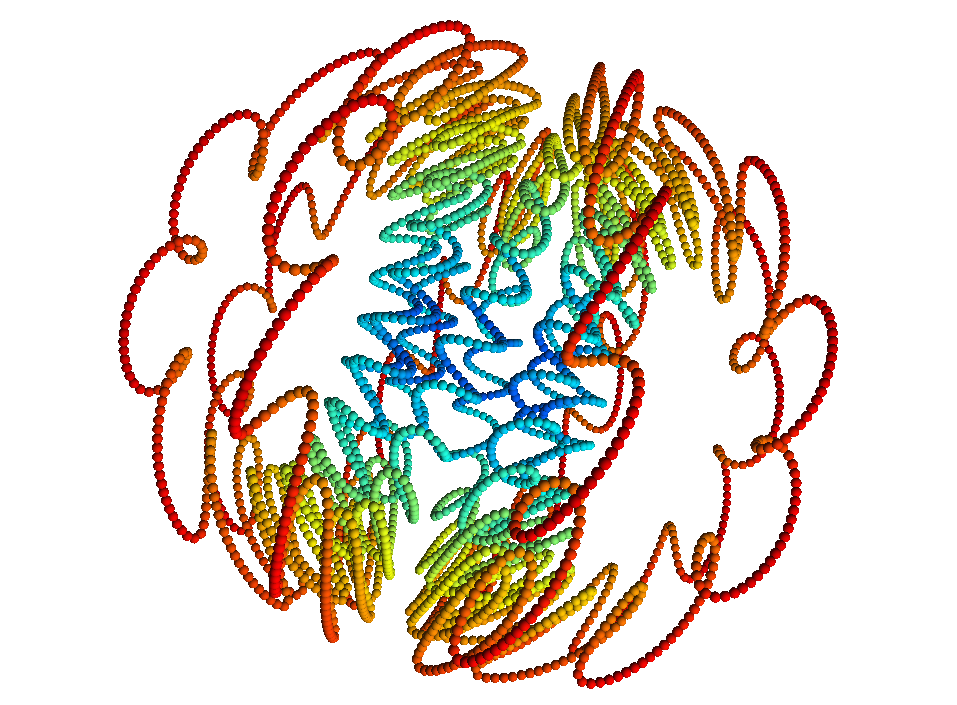} }
\end{minipage}\hspace{.1in}
\begin{minipage}{10em}
\centering
\subfigure[$t=5.435 \times 10^{-6}$]{ \includegraphics[width=1.5in]{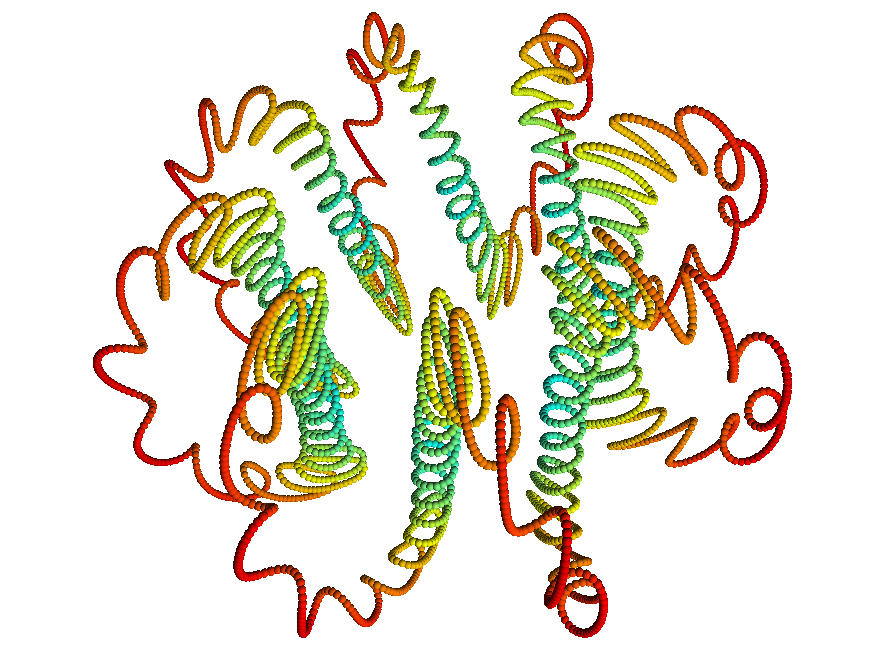} } \\ \subfigure[$t=8.9275 \times 10^{-5}$]{ \includegraphics[width=1.5in]{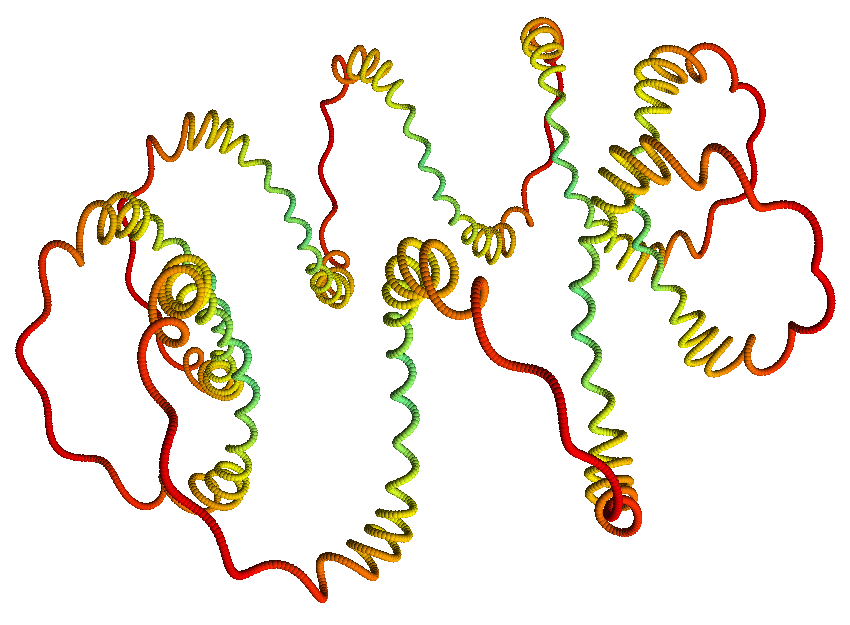} }
\end{minipage}\hspace{.25in}
\begin{minipage}{10em}
\centering
\subfigure[$t=2.7616 \times 10^{-4}$]{ \includegraphics[width=1.5in]{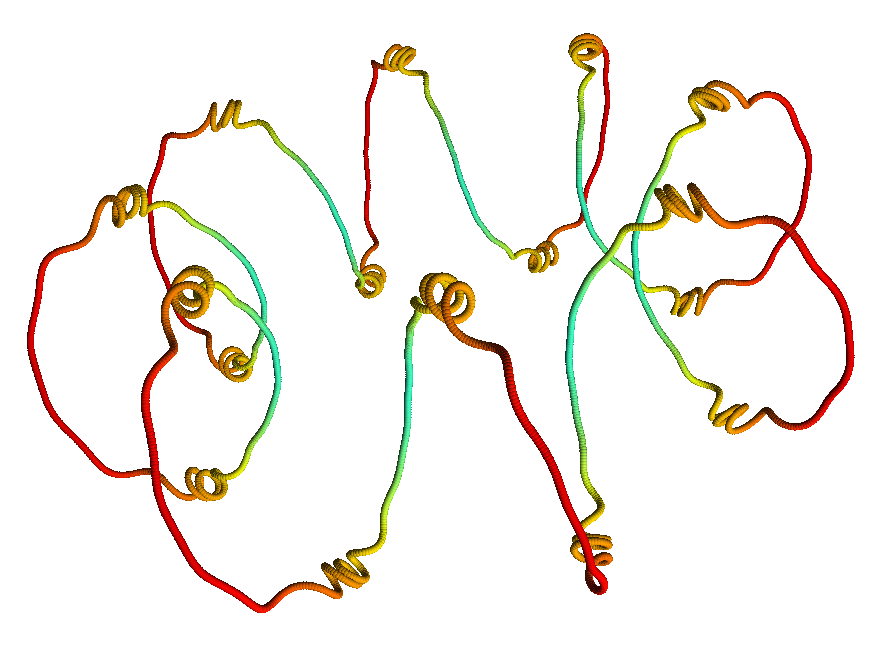} } \\ \subfigure[$t=0.0011$]{ \includegraphics[width=1.5in]{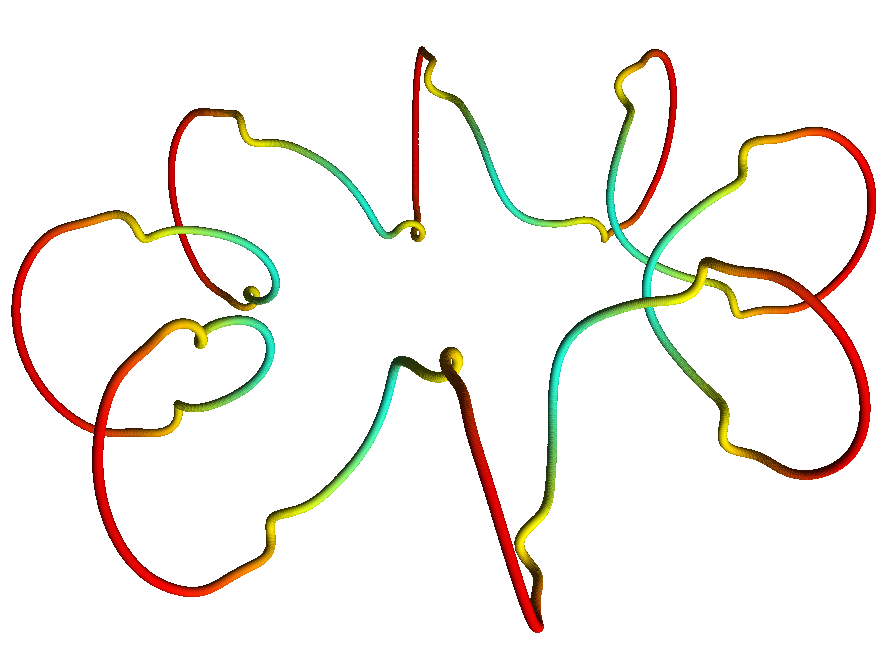} }
\end{minipage}\hspace{.1in}
\begin{minipage}{10em}
\centering
\subfigure[$t=0.0065$]{ \includegraphics[width=1.5in]{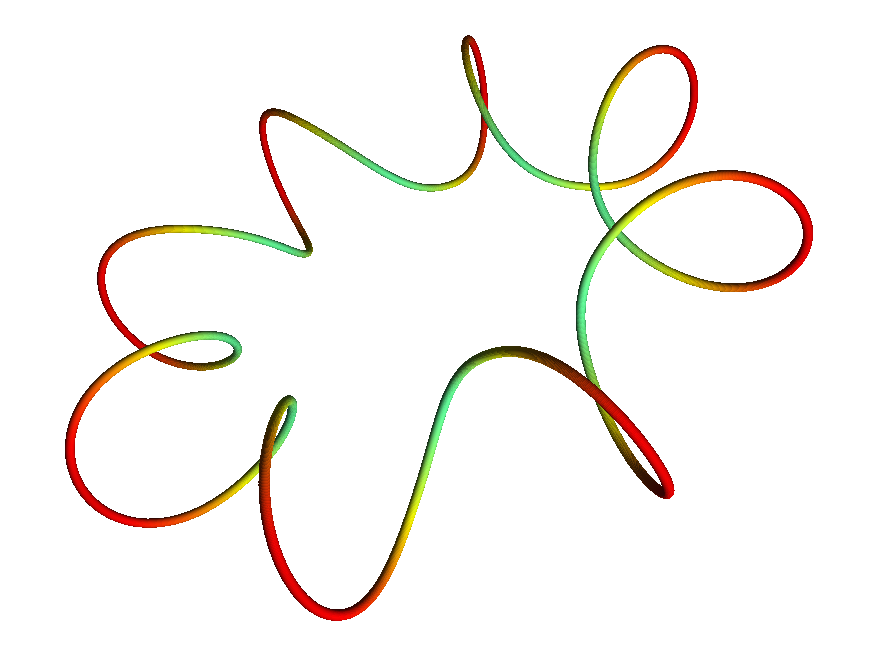} } \\ \subfigure[$t=0.35$]{ \includegraphics[width=1.5in]{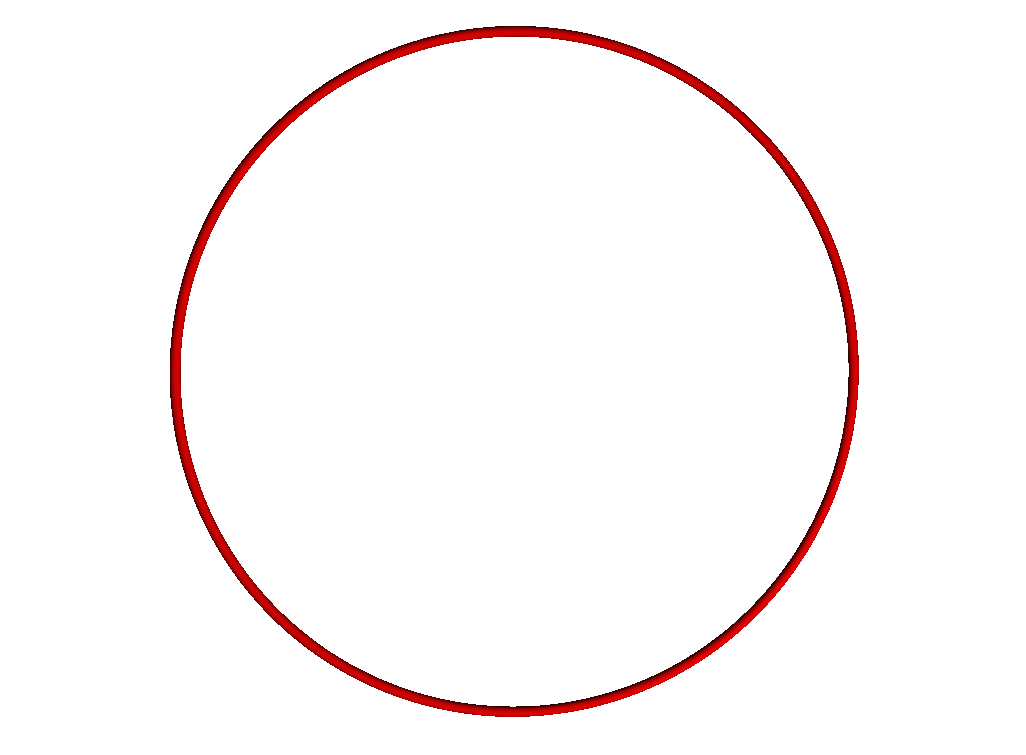} }
\end{minipage}
\end{minipage}
\caption{Unfolding of an unknotted ``supercoiled DNA'' molecule. Color at each point indicates the distance of that point to the center of mass.}
\label{fig:supercoil}
\end{figure}

\begin{figure}[h]
\centering
{
\subfigure[$t=0$]{ \includegraphics[width=2.0in]{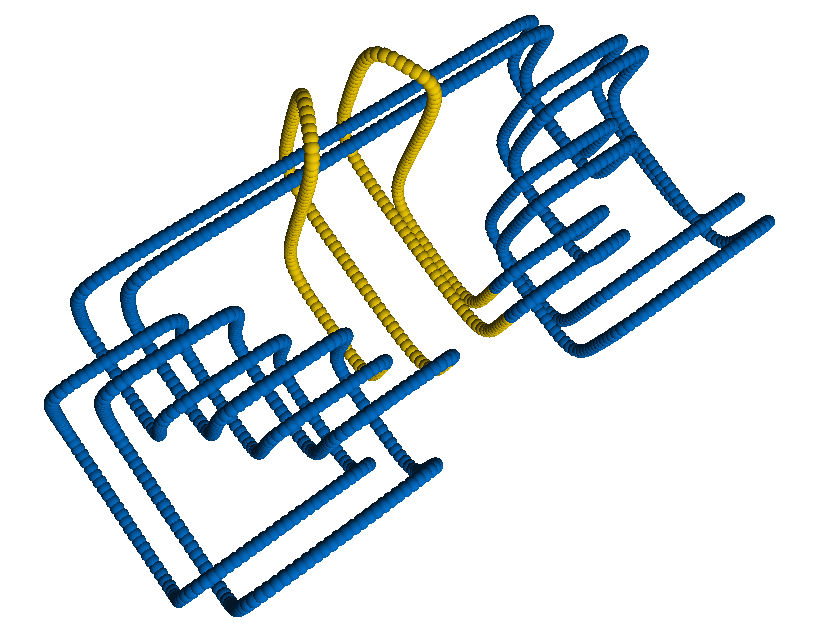} } \subfigure[$t=0.0163$]{ \includegraphics[width=2.0in]{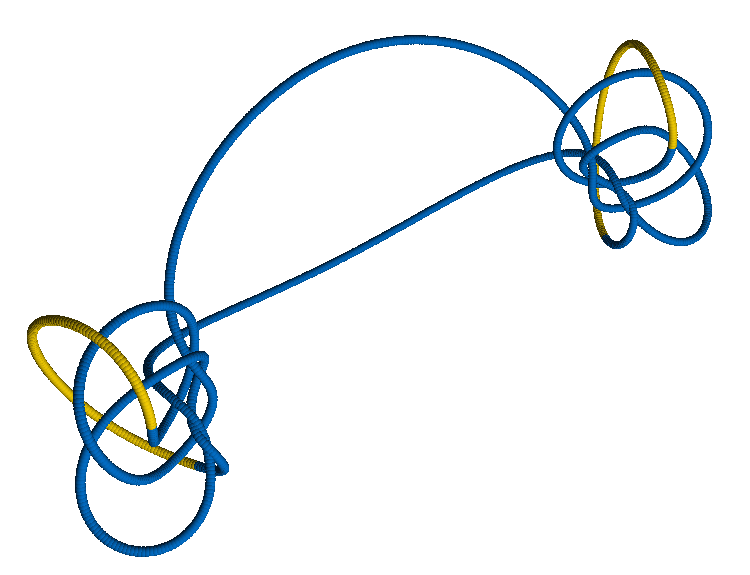} } \subfigure[$t=0.0563$ (left lobe)]{ \includegraphics[width=2.0in]{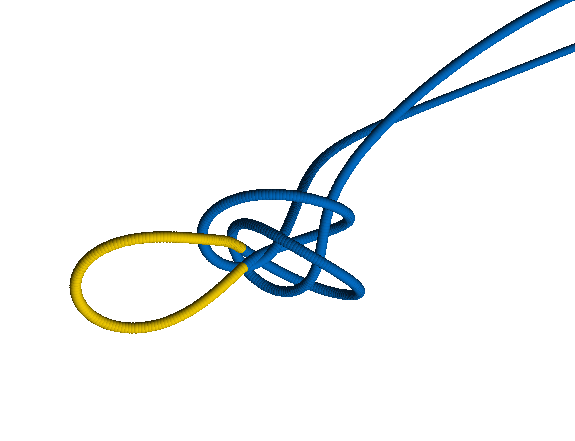} } \\
\subfigure[$t=0.1193$ (left lobe)]{ \includegraphics[width=2.0in]{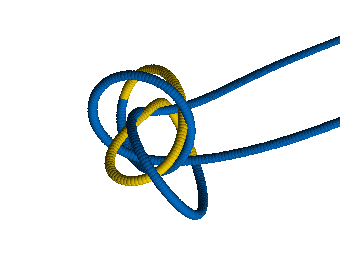} } \subfigure[$t=0.1617$ (left lobe)]{ \includegraphics[width=2.0in]{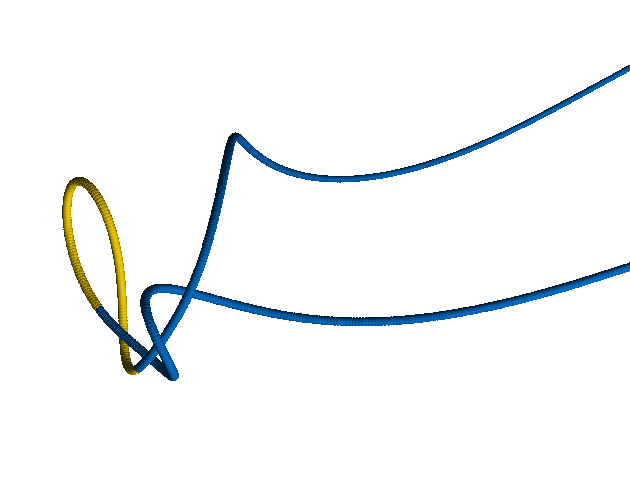} } \subfigure[$t=1$]{ \includegraphics[width=2.0in]{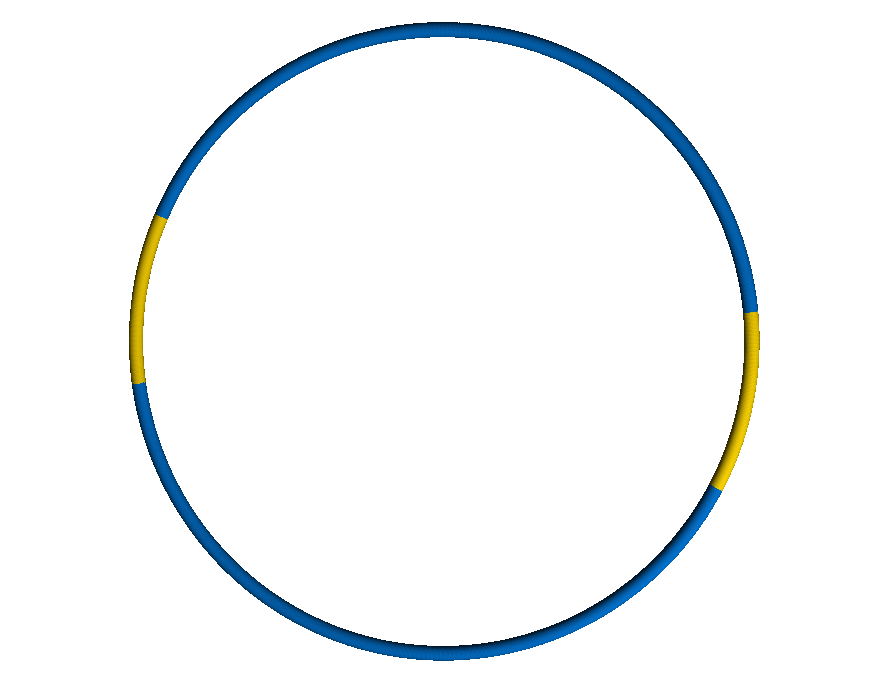} }
}
\caption{Unfolding of a ``hard'' unknot. At all times points in yellow correspond to the initial ``clasps'' highlighted in the initial condition.}
\label{fig:gordian}
\end{figure}

Figure \ref{fig:gordian} also explores the ``dynamic'' Smale conjecture, but for a pathologically difficult embedding of the unknot. It illustrates the dynamics of \eqref{eq:numerics} for  an example lying within a well-known infinite family of ``hard'' unknots constructed by taking the connected sum of two knots (in this instance, both trefoils), doubling the resulting knot by forming an index-two cable of the connected sum and finally performing surgery on the cable knot in a neighborhood of the summing sphere in order to produce an unknot. The original knot deforms to the standard unknot by first isotoping the yellow strands around the large knotted bodies on either symmetric side of the structure. Pulling along the ``tracks'' of the doubled trefoil structures then eliminates the knotted bodies, which results in a planar embedding of the unknot. The dynamics of \eqref{eq:numerics} provide an approximate version of this process. We simulate \eqref{eq:numerics} using the $2$-Distortion $K(u,v) = v/u$ for this example with an initial condition $\gamma_0$ scaled to have initial equipartition of energy $E_{b}(\gamma_0) = E_{ {\rm K}}(\gamma_0)$ as before. Panels (b,c) of figure \ref{fig:gordian} show the initial phase wherein the yellow strands isotope around the knotted bodies. This process has finished by $t=.0563$, and afterward the two yellow strands begin to pull through the trefoil structures. Panels (c,d) show the onset and midpoint of this process for the left side of the {structure}; the dynamics for the right of the structure are simply the mirror image across the initial plane of symmetry. By $t=.1617,$ shown in panel (e), the embedding differs from the standard embedding of the unknot only by trivial twisting. The remaining dynamics simply perform local rotations of arcs of the embedding to perform the final simplification of the structure into the standard unknot.

\begin{figure}[h]
\centering
{
\subfigure[$t=0$]{ \includegraphics[width=2.0in]{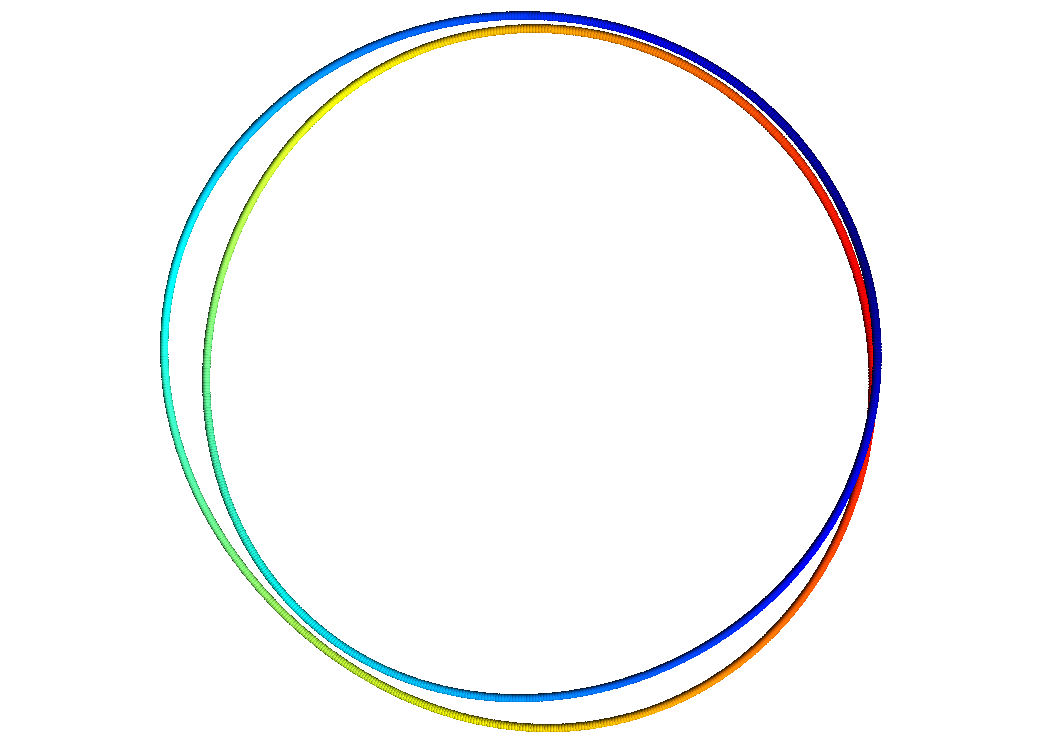} } \subfigure[$t=0.0529$]{ \includegraphics[width=2.0in]{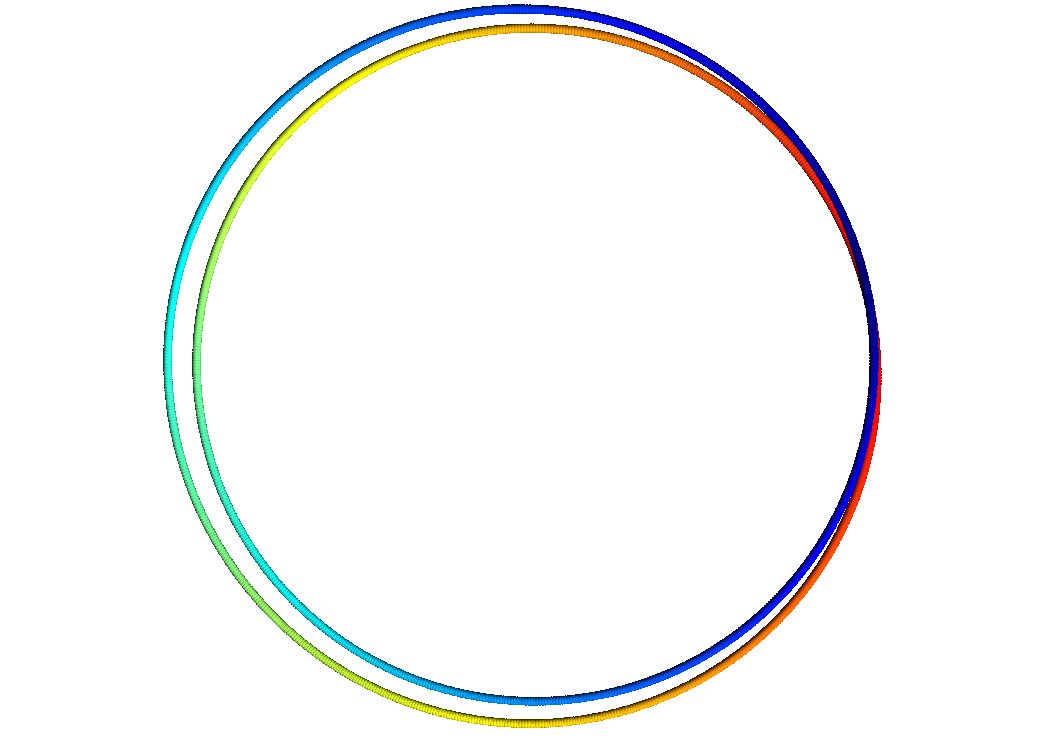} } \subfigure[$t=2.5$]{ \includegraphics[width=2.0in]{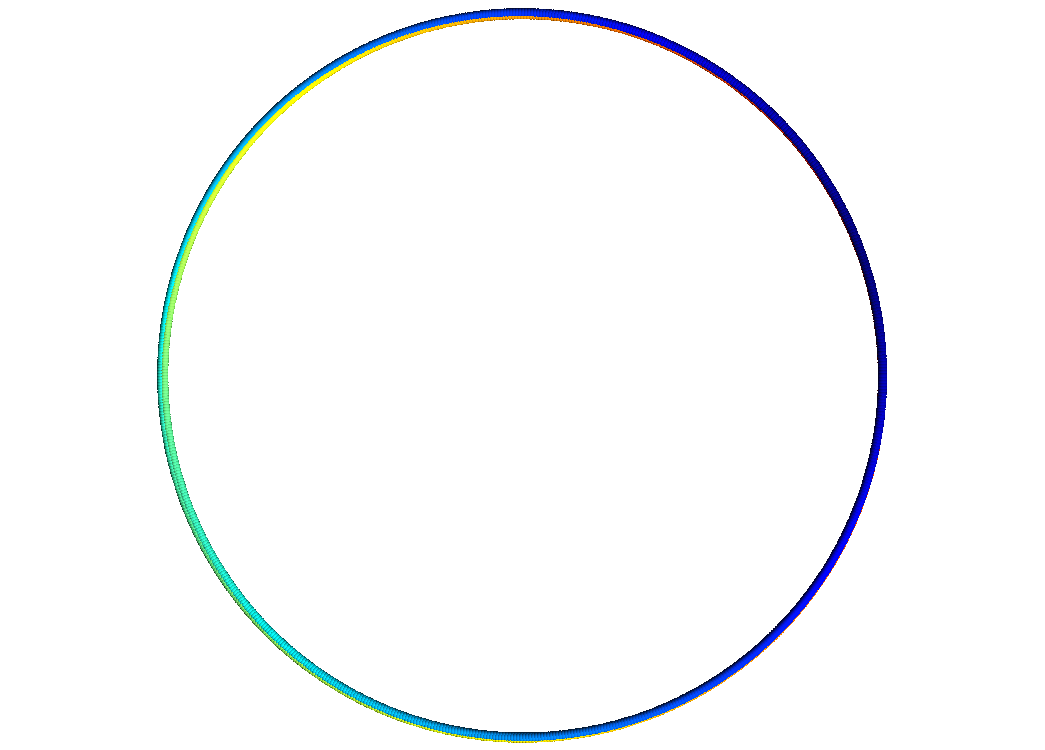} } \\
\subfigure[$t=0$]{ \includegraphics[width=2.0in]{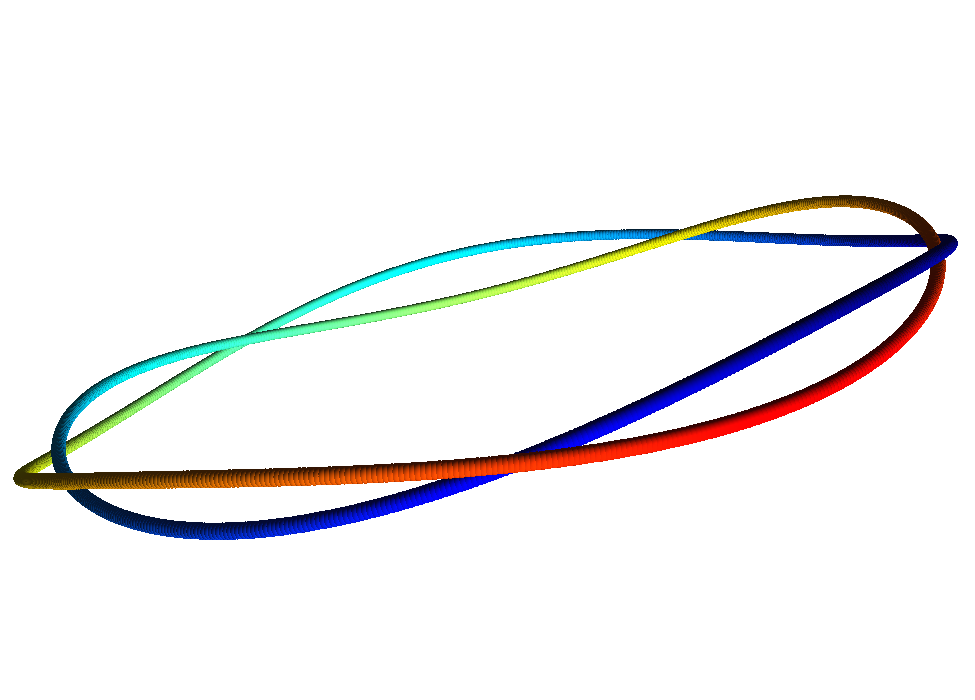} } \subfigure[$t=0.0529$]{ \includegraphics[width=2.0in]{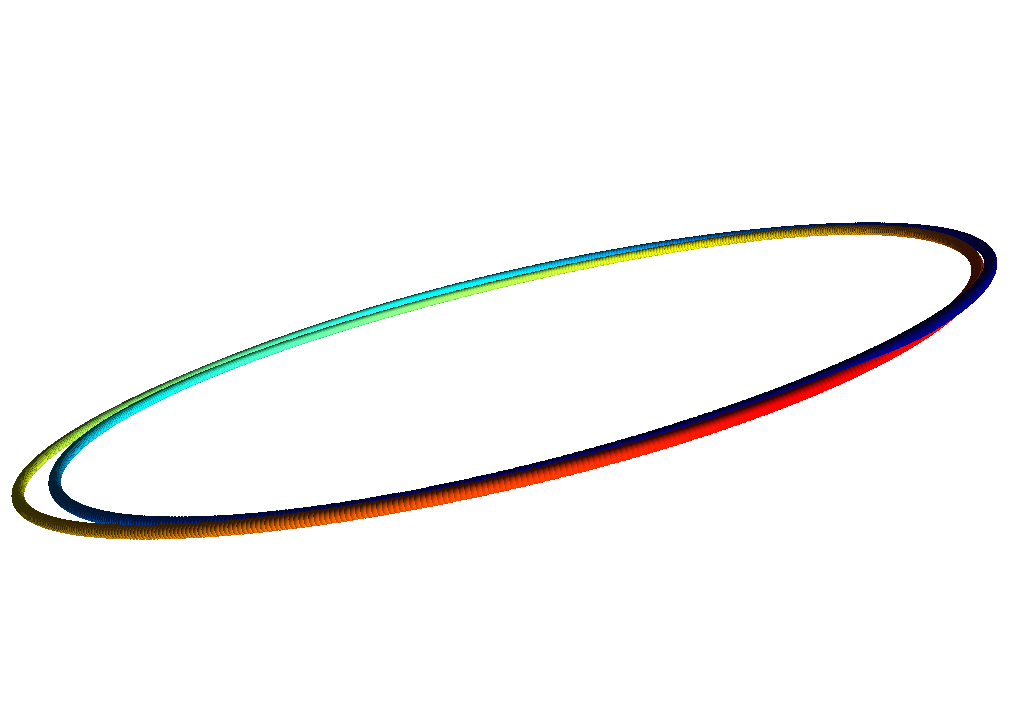} } \subfigure[$t=2.5$]{ \includegraphics[width=2.0in]{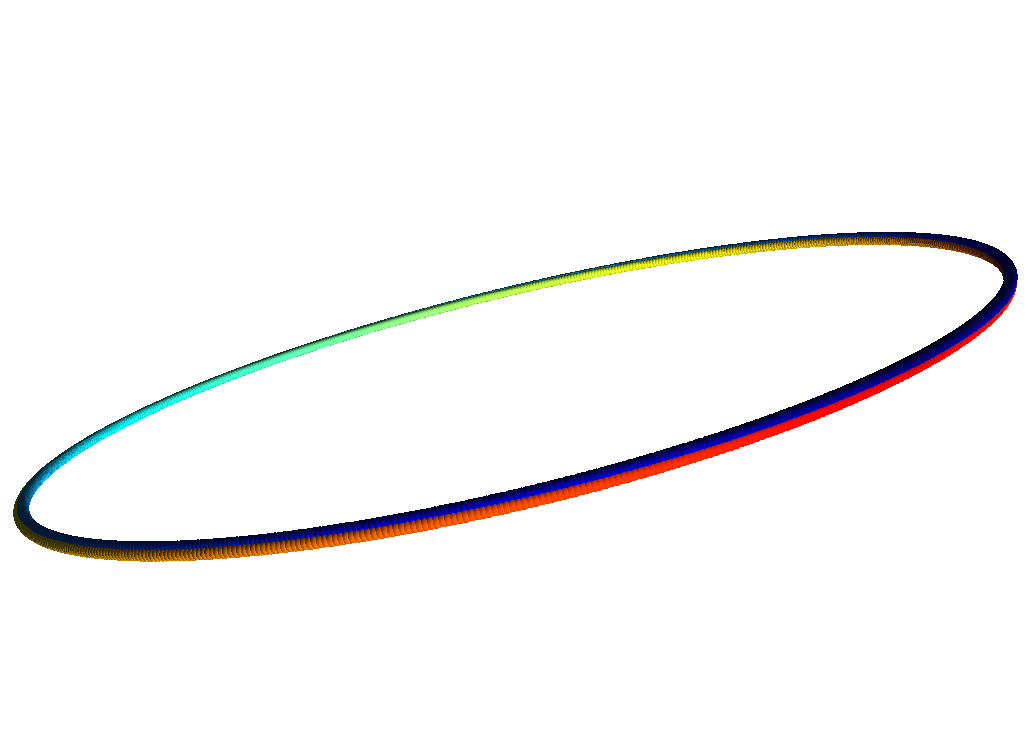} }
}
\caption{The dynamics of a pure bending energy flow that collapses an unknot onto the double-covered circle. Color indicates the path-distance along the embedding from a fixed reference point.}
\label{fig:dc}
\end{figure}

We conclude our experiments with a numerical simulation illustrating how, in the absence of the repulsive energy $E_{K}(u,v),$ the dynamics of \eqref{eq:numerics} can flow unknots into the double-covered circle. We use the unknotted initial condition
$$
\tang_{ (\veps_{1},\veps_{2})}(x) = \tangc(x) - \veps_1 \sin x \, \normc(x)
+\veps_2 \cos 5x \, \binc,
$$
from section \ref{sssec:dc} with $\veps_2 = .15$ and $\veps_1 = \veps_2/2$ and simulate a pure bending energy flow. Figure \ref{fig:dc} illustrates that the dynamics do, in fact, obey our approximation
$$
\tang_{ (\veps_{1},\veps_{2})}(t) = \tangc(x) - \veps_1 \re^{-t} \sin x \, \normc(x)
+\veps_2 \re^{-21 t} \cos 5x \, \binc
$$
to leading-order. The binormal component of $\gamma_{\tang(t)}$ (i.e. the $\e_3$-axis) vanishes exponentially quickly, as shown in panel (b,d), in comparison to the dynamics {occuring} in the $\tangc-\normc$ plane. These planar dynamics then simply equilibrate the inner and outer ``circles'' until both coincide, yielding the double-covered circle. In particular, at each stage there exists a projection of $\gamma_{\tang(t)}$ onto the  $\tangc/\normc$ plane with a single crossing, as in figure \ref{fig:dc} (b), and while their preimages are exponentially close they are, in fact, distinct. In other words $\gamma_{\tang(t)}$ remains unknotted for all time yet $\gamma_{\tang(\infty)}$ is emphatically not the global minimizer of the bending energy within the {ambient} isotopy class $\K$ of the unknot. So while the ``dynamic'' Smale conjecture plausibly holds in the presence of a repulsive energy $E_{K}(u,v),$ this example demonstrates that it is emphatically false in its absence of such a repulsive force. This example is perhaps not surprising, but it nevertheless cautions against a naive intuition about how a realization of the ``dynamic'' Smale conjecture might unfold. Specifically, the fact that an energy yields $\gamma_{ {\rm circ}}$ as its global minimizer in no way implies that a corresponding notion of ``convexity'' with respect to the unknot type must hold.

\section{Conclusion}
{\color{black}
Our present focus lies on those particular instances of the general family \eqref{eq:introenergy} that contain two main physical effects, namely curvature-driven diffusion and nonlocal repulsion. {\color{black} Combining these effects yields a strong energetic model from a combinatorial complexity point-of-view, as lemma \ref{lem:mainenergy} shows.} A weak repulsion (as weak as the $2-$Distortion) combines with a fractional diffusion ($\partial^{s}_{xx}$ for $s > \frac1{2}$) to give a rather strong (say finite-to-one) invariant. In the physically relevant case ($s = 1$) these effects lead to a set of dynamical models with well-behaved dynamics, in the sense that they exhibit desirable global existence and regularity properties for a large class of nonlocal kernels (c.f. theorem \ref{thm:global}). These dynamics themselves, in particular their doubly-nonlocal nature, appear novel and also admit reliable numerical approximation wherein self-intersections are easy to avoid. A further analysis highlights a few aspects of the dynamics themselves. In the presence of repulsion, techniques for nonlocal dynamical systems show that standard circles are orbitally stable at a local level, while at a global level numerical experiments reveal that the basin of attraction for circles contains rather complex initial embeddings. We use a similar combination of techniques to illustrate a simple but important observation, that in the absence of repulsion there exist unknotted initial configurations that remain unknotted for all time under a bending energy flow, yet the dynamics do not asymptotically converge to the globally minimal, unknotted circle. Finally, the overall methodology, and in particular the equilibrium analysis, extends readily to related settings such as systems of interacting curves (e.g. knots and links) or to nonlocal interactions involving both attraction and repulsion. Studying variants along these lines provide fertile ground for future work, as does a more in-depth study of the weighted crossing number \eqref{eq:gcn} from a purely knot-theoretic point-of-view.}

\bibliographystyle{plain}
\bibliography{bibenergy}

\section{Appendix}

\begin{lemma}{ {\rm (Lemma \ref{lem:distbound}) } }
\label{lem:distboundA}
Assume that $K(u,v)$ satisfies the $h/p$ homogeneity property
$$
K\left( \frac{v}{\alpha} , v \right) = h(\alpha) v^{-p} \quad \text{for all} \quad \alpha \geq 1
$$
for some exponent $p \geq 0$ and some function $h \in C^{1}((1,\infty) )$. Assume that the lower bound $h^{\prime}(\alpha) \geq \lambda_{h} > 0$ holds on $[2,\infty)$ as well. If $\gamma \in C^{0,1}(\T)$ has unit speed then
$$
\log\left( 1 + c_{0} \delta_{\infty}(\gamma) \right) \leq C_{0} \left( 1 + E_{K}(\gamma) \right) \dd^{-2}_{\infty}(\gamma)
$$
for $C_0(p,h),c_0(p,h) > 0$ finite constants. Moreover, if $\gamma \in H^{2}(\T)$ then
\begin{equation*}
\log\left( 1 + c_{0} \delta_{\infty}(\gamma) \right) \leq C_0 \left( 1 + E_{K}(\gamma) \right) \|\ddot{\gamma}\|^{4}_{L^{2}(\T)},
\end{equation*}
and so $\gamma$ is bi-Lipschitz if $E_{K}(\gamma)$ is finite.
\end{lemma}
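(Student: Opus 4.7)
The plan is to extract a lower bound for $E_{K}(\gamma)$ of order $\vartheta_\infty^{2}(\gamma)\log\delta_{\infty}(\gamma)$ via a co-area argument on nested neighborhoods of a distortion-realizing pair. After noting that the inequality is trivial when $\delta_\infty := \delta_\infty(\gamma)$ is bounded by an absolute constant depending on $(p,h)$, I select a distortion-realizing pair $(x_0, y_0) \in \T \times \T$ and write $\vartheta_\infty := \vartheta(x_0 - y_0)$, $d_0 := |\gamma(x_0) - \gamma(y_0)| = \vartheta_\infty/\delta_\infty$. The $h/p$ homogeneity assumption converts the energy into
\[
E_{K}(\gamma) = \tfrac{1}{4} \int_{\T \times \T} h\big(\alpha(x,y)\big)\, \vartheta(x-y)^{-2p} \, \mathrm{d}x\,\mathrm{d}y,\qquad \alpha(x,y) := \frac{\vartheta^{2}(x-y)}{|\gamma(x)-\gamma(y)|^{2}} \geq 1,
\]
with the maximum value $\alpha(x_0, y_0) = \delta_\infty^{2}$ attained at the chosen pair.

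Next, I establish a quantitative lower bound for the superlevel sets of $\alpha$ inside a neighborhood of $(x_0, y_0)$. Using the $1$-Lipschitz property of $\gamma$ (unit speed) together with the reverse triangle inequality for the metric $\vartheta$, one sees that if $|x - x_0|, |y - y_0| \leq r/2$ with $r \leq \vartheta_\infty/2$, then
\[
|\gamma(x) - \gamma(y)| \leq d_0 + r \quad \text{and} \quad \vartheta(x-y) \geq \vartheta_\infty - r,
\]
so that $\alpha(x,y) \geq \big((\vartheta_\infty - r)/(d_0 + r)\big)^{2}$. Inverting this for each threshold $T \in [1, \delta_\infty^{2}]$ gives a radius $r(T) := \vartheta_\infty(1 - \sqrt{T}/\delta_\infty)/(1 + \sqrt{T})$ such that the axis-aligned square of half-side $r(T)/2$ centered at $(x_0, y_0)$ lies inside $\{\alpha \geq T\}$. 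A routine manipulation shows $r(T) \gtrsim \vartheta_\infty/\sqrt{T}$ whenever $T \in [16, \delta_\infty^{2}/4]$, and throughout this square $\vartheta(x-y) \leq \pi$ gives the pointwise lower bound $\vartheta(x-y)^{-2p} \geq \pi^{-2p}$.

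Finally, applying the layer-cake formula together with the monotonicity hypothesis $h'(\alpha) \geq \lambda_{h}$ on $[2,\infty)$ yields (after restricting to the neighborhood $N$ around $(x_0,y_0)$)
\[
\int_{N} h(\alpha) \, \mathrm{d}x\,\mathrm{d}y \geq h(2)|N| + \lambda_{h} \int_{2}^{\delta_\infty^{2}/4} \big|\{\alpha \geq T\} \cap N\big| \, \mathrm{d}T \gtrsim h(2)|N| + \lambda_{h}\,\vartheta_\infty^{2} \int_{16}^{\delta_\infty^{2}/4} \frac{\mathrm{d}T}{T},
\]
and the final integral contributes the desired factor $\log \delta_\infty$. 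Combining with the $\vartheta^{-2p}$ lower bound and absorbing $|h(2)||N|$ and the trivial-$\delta_\infty$ case into the additive constant delivers $\vartheta_\infty^{2}(\gamma)\log(1 + c_0\delta_\infty(\gamma)) \leq C_0(1 + E_{K}(\gamma))$. The $H^{2}$ form \eqref{eq:log} is then immediate from \eqref{eq:z0lower}: for $\delta_\infty$ large the factor $(\delta_\infty-1)/(\delta_\infty+1)$ is bounded below by an absolute constant, so $\vartheta_\infty^{-2} \lesssim \|\ddot{\gamma}\|_{L^{2}(\T)}^{4}$. The main technical obstacle lies in the layer-cake step: a cruder neighborhood estimate would replace the kernel $1/T$ by a geometrically summable one and yield only an $O(1)$ lower bound on $E_K$, so extracting the logarithm demands that the superlevel estimate $r(T) \gtrsim \vartheta_\infty/\sqrt{T}$ be matched precisely against the linear growth of $h$.
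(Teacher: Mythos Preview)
Your argument is correct and follows essentially the same strategy as the paper: localize near a distortion-realizing pair, bound $\alpha(x,y)$ from below via the $1$-Lipschitz property, exploit the linear growth $h(\alpha)\geq h(2)+\lambda_h(\alpha-2)$, and extract the logarithm from the resulting integral, then invoke \eqref{eq:z0lower} for the $H^2$ statement. The only cosmetic differences are that the paper uses the weighted Cauchy--Schwarz bound $|\gamma(x)-\gamma(y)|^2\leq (2+\sigma)(\dd^2(x-x_0)+\dd^2(y-y_0))+(1+2/\sigma)d_0^2$ and then integrates $\alpha$ directly in polar coordinates, whereas you use the cruder triangle-inequality bound $|\gamma(x)-\gamma(y)|\leq d_0+r$ and recover the same logarithm through the layer-cake formula; both computations are equivalent reorderings of the same double integral.
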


\begin{proof}
Let $(x_0,y_0)$ denote a distortion-realizing pair so that
$$
\dd(x_0 - y_0) = \dd_{\infty}(\gamma),
$$
where $\dd_{\infty}(\gamma)$ denotes the maximal distance between a distortion realizing pair. For any two such points $\gamma(x_0)$ and $\gamma(y_0)$ define
$$
z_0 := \dd(x_0 - y_0) > 0, \qquad d_0 := |\gamma(x_0) - \gamma(y_0)| \qquad \text{and} \qquad \delta_{0} := z_0/d_0 = \delta_{\infty}(\gamma)
$$
and use the $1$-Lipschitz property of $\gamma$ along with the inequality $2xy \leq \sigma x^{2} + y^{2}/\sigma$ to estimate
\begin{align*}
|\gamma(x) - \gamma(y)|^{2} & \leq\left( (2 + \sigma)(\dd^{2}(x-x_0) + \dd^{2}(y-y_0) ) + \left(1 + \frac{2}{\sigma}\right) d^{2}_{0} \right)
\end{align*}
for $x,y \in \T$ any pair of arbitrary points and $\sigma > 0$ an arbitrary parameter. Define $\alpha(x,y)$ as
$$
\alpha(x,y) := \frac{ \dd^{2}(x-y) }{|\gamma(x) - \gamma(y)|^2} \geq \frac{ \dd^{2}(x-y) }{ \left( (2 + \sigma)(\dd^{2}(x-x_0) + \dd^{2}(y-y_0) ) + \left(1 + \frac{2}{\sigma}\right) d^{2}_{0} \right) },
$$
the latter inequality holding due to the previous estimate. In particular, if the pair $(x,y)$ lies in the set
$$
A_{\epsilon}(z_0) := \left\{ (x,y) \in \T \times \T : \dd(x-x_0) \leq \frac{\epsilon}{2} z_0, \;  \dd(y-y_0) \leq \frac{\epsilon}{2} z_0\right\}
$$
then $(1-\epsilon)z_0 \leq \dd(x-y) \leq (1+\epsilon)z_0$ and so
$$
\alpha(x,y) \geq \frac{ (1-\epsilon)^{2} z^2_0 }{ (1+ \frac{\sigma}{2})z^2_0 \epsilon^{2} + (1+ \frac{2}{\sigma})d^2_0} = \frac{ (1-\epsilon)^2 }{ (1+ \frac{\sigma}{2}) \epsilon^{2} + (1+ \frac{2}{\sigma})\delta^{-2}_{\infty}(\gamma) }
$$
holds as well. As $\delta_{\infty}(\gamma) \geq \pi/2,$ the lower bound
$$
\alpha(x,y) \geq  \frac{ \pi^2(1-\epsilon)^2 }{ (1+ \frac{\sigma}{2}) \pi^{2} \epsilon^{2} + 4(1+ \frac{2}{\sigma}) } \geq 2
$$
holds for all $\sigma$ sufficiently large and $\epsilon$ sufficiently small. {\color{black}For example, for $\epsilon= \frac{1}{1000}$ and $\sigma=10$,} it therefore follows that $h( \alpha(x,y) ) \geq h(2) + \lambda_{h}( \alpha(x,y) - 2)$ on $A_{\epsilon},$ and so the lower bound
\begin{align*}
&4(1+\epsilon)^{2p}z^{2p}_0 E_{K}(\gamma) \geq ( h(2) - 2 \lambda_{h} ) |A_{\epsilon}| + \lambda_{h} \int_{A_{\epsilon}(z_0)} \alpha(x,y)  \, \rd y \rd x \\
&\geq ( h(2) - 2 \lambda_{h} ) |A_{\epsilon}| + \lambda_{h}(1-\epsilon)^{2}z^{2}_{0} \int_{ \T \times \T }  \frac{ \mathbf{1}_{\{ \dd(x-x_0) \leq \frac{\epsilon}{2} z_0 \} }(x)  \mathbf{1}_{\{ \dd(y-y_0) \leq \frac{\epsilon}{2} z_0 \} }(y)  }{ \left( (2 + \sigma)(\dd^{2}(x-x_0) + \dd^{2}(y-y_0) ) + \left(1 + \frac{2}{\sigma}\right){\color{black}d}^{2}_{0} \right) } \, \rd y \rd x
\end{align*}
follows. Now make the change of variables $\xi := x - x_0$ and $\eta = y - y_0$ and use the $2\pi$-periodicity of $z \mapsto \dd(z)$ to find
\begin{align*}
4(1+\epsilon)^{2p}z^{2p}_0 E_{K}(\gamma) + (  2 \lambda_{h} - h(2) ) |A_{\epsilon}| &\geq \lambda_{h}(1-\epsilon)^{2}z^{2}_{0} \int_{ \T \times \T }  \frac{ \mathbf{1}_{ \{ |\xi| \leq \frac{\epsilon}{2} z_0 \} }(\xi) \mathbf{1}_{ \{ |\eta| \leq \frac{\epsilon}{2} z_0 \} }(\eta)  }{ \left( (2 + \sigma)( \xi^{2} + \eta^{2} ) + \left(1 + \frac{2}{\sigma}\right){\color{black}d}^{2}_{0} \right) } \, \rd \eta \rd \xi,\\
&\geq \lambda_{h}(1-\epsilon)^{2}z^{2}_{0} \int_{ \T }\int^{  \frac{\epsilon}{2}z_0 }_{0} \frac{ s \, \rd s \rd \theta }{ \left( (2 + \sigma) s^{2}  + \left(1 + \frac{2}{\sigma}\right){\color{black}d}^{2}_{0} \right) },
\end{align*}
where the last inequality follows by shifting to polar coordinates. Performing the final integration then shows
\begin{align*}
4(1+\epsilon)^{2p}z^{2p-2}_0 E_{K}(\gamma) - (h(2) - 2 \lambda_{h} ) |A_{\epsilon}|z^{-2}_{0} &\geq
{\color{black}\lambda_{h}}\frac{ \pi(1-\epsilon)^{2}  }{(2+\sigma)}  \log\left( 1 +  \frac{ \sigma \epsilon^{2}}{4}\delta^2_\infty(\gamma)  \right).
\end{align*}
The final claim also follows from this inequality by noting that {\color{black}(c.f.\eqref{eq:z0lower})}.
$$
z^2_0 \geq {\color{black}\frac{1}{16}} \left( \frac{ \delta_{\infty}(\gamma) - 1 }{\delta_{\infty}(\gamma) + 1 } \right)^{4}\| \ddot{\gamma}\|^{-4}_{L^{2}(\T)} \geq c \|\ddot{\gamma}\|^{-4}_{L^{2}(\T)}
$$
for $c>0$ a universal constant.
\end{proof}

\begin{lemma}{ {\rm (Lemma \ref{lem:mainenergy})} }
\label{lem:mainenergyA}
Assume that $\gamma \in C^{0,1}(\T)$ and that $\gamma$ has finite distortion. Assume further that $\gamma$ has constant speed. Then
\begin{equation*}
c_{p}(\gamma) \leq  C_{p}\left( \frac{2\pi}{\lenG} \right)^2  \|\gamma\|^{2}_{ \dot{H}^{\frac{3+p}{2}}(\T)} \delta^3_{\infty}(\gamma),
\end{equation*}
and if $-1 < p < 1$ then the constant
$$
C_{p} := 2\sqrt{2}(2\pi)^{p+1}\left( \int^{\infty}_{0} \frac{(1-\cos(u)) }{u^{2+p} } \, \rd u \right)^{\frac1{2}} \left( \int^{\infty}_{0}\frac{(1-\cos(u))^{2} + (\sin u - u)^2 }{u^{4+p} }  \, \rd u\right)^{\frac1{2}}
$$
is finite.
\end{lemma}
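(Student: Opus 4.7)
My plan is to bound the scalar triple product in a way that produces two square-integrable factors, each of whose squares reconstructs the $\dot{H}^{(3+p)/2}(\T)$ norm, and then apply Cauchy--Schwarz. First I would use trilinearity and alternation of the determinant: writing $\gamma(x+z) - \gamma(x) = z\bar\gamma_{xz}$ and subtracting appropriate multiples of $\dot\gamma(x)$ from each of the other two columns,
\begin{equation*}
\langle \dot\gamma(x), \dot\gamma(x+z), \gamma(x+z) - \gamma(x)\rangle = \det\big[\dot\gamma(x),\, \dot\gamma(x+z) - \dot\gamma(x),\, \gamma(x+z) - \gamma(x) - z\dot\gamma(x)\big].
\end{equation*}
Hadamard's inequality $|\det[a,b,c]| \leq |a||b||c|$ combined with constant speed $|\dot\gamma(x)| = \lenG/(2\pi)$ then gives
\begin{equation*}
|\langle \dot\gamma(x), \dot\gamma(x+z), \gamma(x+z) - \gamma(x)\rangle| \leq \frac{\lenG}{2\pi}\, |\dot\gamma(x+z) - \dot\gamma(x)|\, |\gamma(x+z) - \gamma(x) - z\dot\gamma(x)|.
\end{equation*}
Applying the distortion bound $|\gamma(x+z) - \gamma(x)| \geq (\lenG/(2\pi))\dd(z)/\delta_\infty(\gamma)$ to the cubed denominator and incorporating the weight $w_p(x,z) = (2\pi/\dd(z))^p$ (valid on $\T$ since $D_\gamma(\gamma(x),\gamma(x+z)) = (\lenG/(2\pi))\dd(z)$ for constant speed curves) yields
\begin{equation*}
c_p(\gamma) \leq (2\pi)^p \left(\frac{2\pi}{\lenG}\right)^2 \delta^3_\infty(\gamma) \iint_{\T\times\T} \frac{|\dot\gamma(x+z) - \dot\gamma(x)|\, |\gamma(x+z) - \gamma(x) - z\dot\gamma(x)|}{\dd^{3+p}(z)}\, \rd z\, \rd x.
\end{equation*}

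Next I would apply Cauchy--Schwarz on $\T \times \T$ to split the exponent of $\dd(z)$ symmetrically as $2+p$ and $4+p$, reducing the task to controlling
\begin{equation*}
J_1 := \iint \frac{|\dot\gamma(x+z) - \dot\gamma(x)|^2}{\dd^{2+p}(z)}\,\rd z\,\rd x, \qquad J_2 := \iint \frac{|\gamma(x+z) - \gamma(x) - z\dot\gamma(x)|^2}{\dd^{4+p}(z)}\,\rd z\,\rd x.
\end{equation*}
Expanding in the Fourier series $\gamma(x) = \sum_k \hat\gamma_k \re^{ikx}$ and invoking Parseval with the paper's convention $\int_\T |\gamma|^2\,\rd x = 2\pi \sum_k |\hat\gamma_k|^2$, the $x$-integrals produce Fourier symbols $|\re^{ikz} - 1|^2 = 2(1-\cos kz)$ for $J_1$ and $|\re^{ikz} - 1 - ikz|^2 = (1-\cos kz)^2 + (\sin kz - kz)^2$ for $J_2$. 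The substitution $u = |k|z$ in the inner $z$-integral extracts a factor $|k|^{3+p}$ in each case --- from the $k^2$ differentiation factor multiplied by $|k|^{1+p}$ in $J_1$, and from the intrinsic homogeneity of order $|k|^{3+p}$ in $J_2$ --- leaving the universal constants
\begin{equation*}
I_1 = \int_0^\infty \frac{1-\cos u}{u^{2+p}}\,\rd u, \qquad I_2 = \int_0^\infty \frac{(1-\cos u)^2 + (\sin u - u)^2}{u^{4+p}}\,\rd u
\end{equation*}
outside the Sobolev sum $\sum_k |k|^{3+p}|\hat\gamma_k|^2 = \|\gamma\|^2_{\dot{H}^{(3+p)/2}(\T)}$. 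A careful accounting of $2\pi$ factors yields $J_1 \leq 8\pi I_1 \|\gamma\|^2_{\dot{H}^{(3+p)/2}(\T)}$ and $J_2 \leq 4\pi I_2 \|\gamma\|^2_{\dot{H}^{(3+p)/2}(\T)}$, so that $\sqrt{J_1 J_2} \leq 4\pi \sqrt{2 I_1 I_2}\, \|\gamma\|^2_{\dot{H}^{(3+p)/2}(\T)}$ combines with the prefactor to reproduce the claimed constant $C_p = 2\sqrt{2}(2\pi)^{p+1}\sqrt{I_1 I_2}$.

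Finiteness of $I_1$ and $I_2$ in the range $-1 < p < 1$ follows from elementary estimates: near $u=0$ the expansions $1-\cos u \sim u^2/2$ and $\sin u - u \sim -u^3/6$ make both integrands of order $u^{-p}$, integrable for $p < 1$, while at infinity both decay at least as fast as $u^{-(2+p)}$, integrable for $p > -1$. The principal obstacle in the proof lies in the very first algebraic step: the naive bound $|\langle \dot\gamma(x), \dot\gamma(x+z), \gamma(x+z)-\gamma(x)\rangle| \leq |\dot\gamma(x)|\,|\dot\gamma(x+z)|\,|\gamma(x+z)-\gamma(x)|$ would yield only $\delta^2_\infty(\gamma)$ in the prefactor and force a Sobolev exponent of $\tfrac{1+p}{2}$ on the tangent rather than $\tfrac{1+p}{2}+1$. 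The symmetric extraction of both a first-order tangent difference and a second-order Taylor remainder of the chord is precisely what allows the $\delta^3_\infty$ scaling and the correct fractional regularity $\dot{H}^{(3+p)/2}(\T)$ to appear simultaneously.
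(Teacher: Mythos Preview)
Your proof is correct and follows essentially the same route as the paper: the identical triple-product rewriting via trilinearity, the same bound by $|\dot\gamma(x)|\,|\dot\gamma(x+z)-\dot\gamma(x)|\,|\gamma(x+z)-\gamma(x)-z\dot\gamma(x)|$, the same Cauchy--Schwarz split into the two integrals you call $J_1,J_2$ (the paper's $\mathcal{I}_1,\mathcal{I}_2$), and the same Parseval computation with the scaling $u=|k|z$. Your added discussion of the finiteness of $I_1,I_2$ and the closing remark on why the column-subtraction step is essential are welcome elaborations but do not alter the argument.
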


\begin{proof}
That $\gamma$ has constant speed yields $|\dot{\gamma}(x)| = \lenG/2\pi,$ and so the relations
$$
\frac{2\pi \delta_{\infty}(\gamma)}{\lenG} = \sup_{ \{(x,y) : \dd(x-y)>0\} } \, \frac{\dd(x-y)}{|\gamma(x) - \gamma(y)|} \qquad \text{and} \qquad \kappa_{\gamma}(x) = \frac{4\pi^2 |\ddot{\gamma}(x)|}{\mathrm{len}^{2}(\gamma)}
$$
furnish the distortion and pointwise curvature. Note first that the equality
$$
\langle \dot{\gamma}(x) , \dot{\gamma}(x+z) , \gamma(x+z) - \gamma(x) \rangle = \langle \dot{\gamma}(x) , \dot{\gamma}(x+z) - \dot{\gamma}(x), \gamma(x+z) - \gamma(x) - z\dot{\gamma}(x) \rangle
$$
holds by the definition of and the trilinearity of the triple product. Set $A := |\dot{\gamma}(x)| = \lenG/2\pi$ and note that the inequality
$$
| \langle \dot{\gamma}(x) , \dot{\gamma}(x+z) - \dot{\gamma}(x), \gamma(x+z) - \gamma(x) - z\dot{\gamma}(x) \rangle| \leq A |\dot{\gamma}(x+z) - \dot{\gamma}(x)| | \gamma(x+z) - \gamma(x) - z\dot{\gamma}(x) |
$$
also holds by definition of the triple product. Moreover, $w_{p}(x,z) = (2\pi/|z|)^{p}$ as $\gamma$ has constant speed. Finally $|\gamma(x+z) - \gamma(x)| \geq \lenG|z|/(2\pi \delta_{\infty}(\gamma) )$ by definition of the distortion. Thus
\begin{align*}
&\int_{\T \times \T} w_{p}(x,z) \frac{ | \langle \dot{\gamma}(x) , \dot{\gamma}(x+z) , \gamma(x+z) - \gamma(x) \rangle | }{|\gamma(x+z) - \gamma(x)|^{3}} \, \rd x \rd z \leq \\
&(2\pi)^{p}\delta^{3}_{\infty}(\gamma)\left( \frac{2\pi}{\lenG} \right)^{2}\int_{\T \times \T} \frac{|\dot{\gamma}(x+z) - \dot{\gamma}(x)| }{|z|^{1+p/2}}\frac{| \gamma(x+z) - \gamma(x) - z\dot{\gamma}(x) |}{|z|^{2+p/2}} \, \rd x \rd z.
\end{align*}
Now apply Cauchy-Schwarz to find
\begin{align*}
&\int_{\T \times \T} w_{p}(x,z) \frac{ | \langle \dot{\gamma}(x) , \dot{\gamma}(x+z) , \gamma(x+z) - \gamma(x) \rangle | }{|\gamma(x+z) - \gamma(x)|^{3}} \, \rd x \rd z \leq (2\pi)^{p}\delta^{3}_{\infty}(\gamma)\left( \frac{2\pi}{\lenG}\right)^2\mathcal{I}^{1/2}_{1} \mathcal{I}^{1/2}_{2},\\
\mathcal{I}_{1} := &\int_{\T \times \T} \frac{|\dot{\gamma}(x+z) - \dot{\gamma}(x)|^{2} }{|z|^{2+p}} \, \rd x \rd z \qquad \mathcal{I}_{2} := \int_{\T \times \T} \frac{|\gamma(x+z) - \gamma(x) - z \dot{\gamma}(x)|^{2} }{|z|^{4+p}} \, \rd x \rd z.
\end{align*}
But $\dot{\gamma} \in L^{2}(\T)$ and Parseval imply
$$
\frac1{2\pi} \mathcal{I}_{1} = \sum_{k \in \Z} |k|^{2}|\hat{\gamma}_{k}|^2 \int_{\T} \frac{ 2(1-\cos(kz)) }{|z|^{2+p} } \, \rd z = \sum_{k \in \Z} 4 |k|^{3+p}|\hat{\gamma}_{k}|^2 \int^{\pi |k|}_{0} \frac{(1-\cos(u)) }{u^{2+p} } \, \rd u,
$$
where the last equality follows from a simple change of variables. A similar argument shows that
$$
\frac1{2\pi} \mathcal{I}_{2} = \sum_{k \in \Z} 2 |k|^{3+p}|\hat{\gamma}_{k}|^2 \int^{\pi |k|}_{0} \frac{(1-\cos(u))^{2} + (\sin u - u)^2 }{u^{4+p} } \, \rd u
$$
as well. Collecting constants and extending the region of integration then yields the claim.
\end{proof}

\begin{lemma}{ {\rm (Lemma \ref{lem:lapest}) } }
\label{lem:lapestA}
If $\gamma \in H^{2}(\T)$ and $0 \leq p < \frac1{2}$ then the linear operator
\begin{equation}\label{eq:linpv}
L_{p}[\gamma] := \pv \, \int_{\T} \frac{\gamma(x)-\gamma(y)}{\dd^{2(p+1)}(x-y)} \, \rd y
\end{equation}
defines an $H^{1-2p}(\T)$ function. As a Fourier multiplier it acts as a (negative) fractional Laplacian
$$
\hat{\gamma}_k \to \lambda_{k} \hat{\gamma}_k \qquad \lambda_{k} = C_{p,k} |k|^{1+2p} \qquad 0 < C_{k,p} = O(1) \quad {\color{black}\text{as} \quad |k|\rightarrow \infty,}
$$
and in particular the operator-norm estimate
$$
\|L_{p}[\gamma]\|_{ \dot{H}^{1-2p}(\T)} \leq C_{p} \|\gamma\|_{\dot{H}^{2}(\T)}
$$
holds.
\end{lemma}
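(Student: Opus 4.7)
The plan is to compute $L_{p}[\gamma]$ as a Fourier multiplier directly from its Fourier series representation, and then read off both the asymptotic statement and the operator bound. The starting point is the change of variable $z = x - y$, under which, since $\dd(z) = |z|$ on $[-\pi,\pi]$, the principal value takes the symmetric form
$$
L_{p}[\gamma](x) = \pv \int^{\pi}_{-\pi} \frac{\gamma(x) - \gamma(x-z)}{|z|^{2(p+1)}}\, \rd z.
$$
Inserting the Fourier expansion $\gamma(x) = \sum_{k} \hat{\gamma}_{k} \re^{ikx}$ and interchanging (justified first for trigonometric polynomials and then by density, using that for $\gamma \in H^{2}(\T)$ the remainder $\gamma(x) - \gamma(x-z) + z\dot{\gamma}(x)$ is $O(|z|^{3/2})$ in a suitable sense, so the integrand is controlled after the odd cancellation) yields
$$
L_{p}[\gamma](x) = \sum_{k \in \Z} \hat{\gamma}_{k}\, \re^{ikx} \cdot \lambda_{k}, \qquad \lambda_{k} := \pv \int^{\pi}_{-\pi} \frac{1 - \re^{-ikz}}{|z|^{2(p+1)}}\, \rd z.
$$
The imaginary part of this integral vanishes by oddness of $z \mapsto \sin(kz)$ against the even density $|z|^{-2(p+1)}$ (interpreted as a principal value, which is where the $\pv$ does its work, since $p \geq 0$ means the imaginary integrand is not absolutely integrable). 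One is left with the purely real multiplier
$$
\lambda_{k} = \int^{\pi}_{-\pi} \frac{1 - \cos(kz)}{|z|^{2(p+1)}}\, \rd z,
$$
which, thanks to $1 - \cos(kz) = O(k^{2} z^{2})$ near the origin, is a convergent integral precisely when $p < \tfrac1{2}$.

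Next I would extract the scaling. The substitution $u = |k| z$ for $k \neq 0$ gives
$$
\lambda_{k} = |k|^{1+2p} \int^{\pi|k|}_{-\pi|k|} \frac{1-\cos u}{|u|^{2(p+1)}}\, \rd u := |k|^{1+2p} C_{p,k}.
$$
Under the hypothesis $0 \leq p < \tfrac1{2}$, the integrand $(1-\cos u)/|u|^{2(p+1)}$ is integrable at the origin (behaving like $|u|^{-2p}$) and at infinity (behaving like $|u|^{-2(p+1)}$), so the sequence $\{C_{p,k}\}_{k \neq 0}$ is bounded above and below by positive constants and converges as $|k| \to \infty$ to $\int_{\R} (1-\cos u)/|u|^{2(p+1)}\, \rd u$. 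This gives $\lambda_{k} = C_{p,k}|k|^{1+2p}$ with $C_{p,k} = O(1)$ as claimed, and $\lambda_{0} = 0$ trivially.

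Finally, the operator norm bound follows by Parseval: if $C_{p} := \sup_{k\neq 0} C_{p,k} < \infty$, then
$$
\|L_{p}[\gamma]\|^{2}_{\dot{H}^{1-2p}(\T)} = \sum_{k \neq 0} |k|^{2(1-2p)} \lambda_{k}^{2} |\hat{\gamma}_{k}|^{2} = \sum_{k \neq 0} C_{p,k}^{2} |k|^{4} |\hat{\gamma}_{k}|^{2} \leq C_{p}^{2} \|\gamma\|^{2}_{\dot{H}^{2}(\T)}.
$$
The main technical obstacle is the rigorous justification of swapping the principal value integral with the Fourier series: the individual terms $(1 - \re^{-ikz})/|z|^{2(p+1)}$ are not integrable near $z=0$ for $p \geq 0$, so the exchange must be carried out either by first replacing $\gamma$ by a trigonometric polynomial $\gamma_{N}$ (where the computation is term-by-term and the $\gamma'(x)$ part cancels by oddness of the principal value) and then passing to the limit using the $\dot{H}^{2}(\T) \to \dot{H}^{1-2p}(\T)$ bound just derived, or by first regularizing the singular kernel and applying dominated convergence after exploiting the $O(|z|^{3/2})$ cancellation provided by $\gamma \in H^{2}(\T)$. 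Once either route is completed, the three statements of the lemma follow at once from the formula for $\lambda_{k}$ above.
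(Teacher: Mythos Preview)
Your proposal is correct and follows essentially the same route as the paper: compute the Fourier multiplier $\lambda_k$ via the substitution $u = |k|z$, identify the $|k|^{1+2p}$ scaling with a bounded constant $C_{p,k}$, and conclude via Parseval. The only minor difference is that the paper handles the principal value by working throughout with the $\epsilon$-truncated operator $L_{\epsilon,p}[\gamma]$ (your second suggested route, kernel regularization) rather than discussing both possible justifications; otherwise the computations line up exactly.
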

\begin{proof}
Consider first the linear operator $L_{\epsilon,p}[\gamma]$ defined, for $\epsilon >0,$ by the expression
$$
\int_{\T \cap \{\dd(x-y) > \epsilon\} } \frac{\gamma(x)-\gamma(y)}{\dd^{2(p+1)}(x-y)} \, \rd y =  \int^{\pi}_{-\pi} \frac{\gamma(x)-\gamma(x+z)}{ z^{2(p+1)} } \mathbf{1}_{ \{|z| > \epsilon\} }(z) \, \rd z.
$$
The last equality follows from periodicity of $\dd(x-y)$ and a change of variables. Now expand $\gamma(x)$ in a Fourier series to conclude
$$
\int^{\pi}_{-\pi} \frac{\gamma(x)-\gamma(x+z)}{ z^{2(p+1)} } \mathbf{1}_{ \{ |z| > \epsilon\} }(z) \, \rd z = \sum_{k \in \Z} \hat{\gamma}_k \lambda_{k}(\epsilon) \mathrm{e}^{ikx}, \qquad \lambda_{k}(\epsilon) := 2 \int^{\pi}_{\epsilon} \frac{1 - \cos(kz)}{ z^{2(1+p)} } \, \rd z,
$$
with equality holding in the $L^{2}(\T)$ sense. For $|k| \geq 1,$ a change of variables then gives
\begin{align*}
\lambda_{k}(\epsilon) = 2 \int^{\pi}_{\epsilon} \frac{1 - \cos(kz)}{ z^{2(1+p)} } \, \rd z &= 2 |k|^{1+2p} \int^{k\pi}_{k\epsilon} \frac{1 - \cos(u) }{ u^{2(1+p)} } \, \rd u \\
&= 2|k|^{1+2p} \left( \int^{\infty}_{k \epsilon } \frac{1-\cos(u)}{u^{2(1+p)} } \, \rd u + |k|^{-(1+2p)} O(1) \right),
\end{align*}
where the $O(1)$ error holds uniformly in $k$ and in $\epsilon$. As $0 \leq p < \frac1{2}$ and $\gamma \in H^{2}(\T),$ the desired estimate
$$
\sum_{k \in \Z} |k|^{2(1-2p)} |\hat{\gamma}_k|^{2} \lambda^{2}_{k}(0) \leq C^{2}_{p} \sum_{k \in \Z} |k|^{4} |\hat{\gamma}_k|^{2} \leq C^{2}_{p} \|\gamma\|^{2}_{\dot{H}^{2}(\T)}
$$
then follows.
\end{proof}

\begin{lemma}{ {\rm (Lemma \ref{lem:boundF})} }
\label{lem:boundFA}
Assume that $K(u,v)$ is $h/p$ homogeneous and either $0$-degenerate or $m$-degenerate. Assume that $\gamma \in H^{2}(\T)$ is a unit speed, bi-Lipschitz embedding. Then for $0 < \epsilon < 1$ the function
$$
f_{ \gamma ,\epsilon}(x) :=  \int_{ \T \cap \{ \dd(x-y) \geq \epsilon\} } K_{u}\left( |\gamma(x) - \gamma(y)|^2, \dd^{2}(x-y) \right)(\gamma(x) - \gamma(y) ) \, \rd y
$$
has mean zero. Moreover, $f_{ \gamma ,\epsilon}$ obeys a uniform $L^{2}(\T)$ bound
$$
\| f_{ \gamma ,\epsilon} \|_{L^{2}(\T)} \leq C(\delta_{\infty}(\gamma),\|\ddot{\gamma}\|_{L^{2}(\T)})
$$
for $C(\delta_{\infty}(\gamma),\|\ddot{\gamma}\|_{L^{2}(\T)})$ an $\epsilon$-independent constant.
\end{lemma}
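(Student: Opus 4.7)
The mean zero property I would obtain by a direct symmetry argument. Applying Fubini to $\int_{\T} f_{\gamma,\epsilon}(x)\,\rd x$ and then swapping the names of the dummy variables flips the sign of the factor $\gamma(x)-\gamma(y)$ while leaving $|\gamma(x)-\gamma(y)|^2$ and $\dd^{2}(x-y)$ untouched, so the integral equals its own negative.

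For the $L^{2}(\T)$ bound, my first move would be to exploit the $h/p$ homogeneity. Differentiating the identity $K(v/\alpha,v) = h(\alpha)v^{-p}$ with respect to $\alpha$ at fixed $v$ yields $K_{u}(v/\alpha,v) = g(\alpha) v^{-(p+1)}$, so that on unit speed curves the integrand of $f_{\gamma,\epsilon}$ takes the form
$$
K_{u}\big(|\gamma(x)-\gamma(y)|^2,\dd^2(x-y)\big)\big(\gamma(x)-\gamma(y)\big) = g\big(\alpha(x,y)\big)\,\frac{\gamma(x)-\gamma(y)}{\dd^{2(p+1)}(x-y)},
$$
where $\alpha(x,y) := \dd^{2}(x-y)/|\gamma(x)-\gamma(y)|^2 \in [1,\delta_{\infty}^{2}(\gamma)]$. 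I would then decompose $g(\alpha) = g(1) + (g(\alpha)-g(1))$, which splits $f_{\gamma,\epsilon}$ into a \emph{singular} piece $g(1) L_{p,\epsilon}[\gamma]$ in the sense of lemma \ref{lem:lapest} and a \emph{regular} remainder $R_{\gamma,\epsilon}$. Under $0$-degeneracy, $p<\tfrac{1}{2}$, so lemma \ref{lem:lapest} controls the singular piece uniformly in $\epsilon$ by $\|\ddot\gamma\|_{L^{2}(\T)}$; under $m$-degeneracy, $g(1)=0$ and the singular piece vanishes outright.

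The core of the proof lies in estimating $R_{\gamma,\epsilon}$ pointwise by a universal constant times $\kappa^{*}_{\gamma}(x)$, so that \eqref{eq:xxx2} finishes the job. In the $0$-degenerate case I would use $|g(\alpha)-g(1)|\le C|\alpha-1|$, combined with $\alpha-1 \le \delta_{\infty}^{2}(\gamma)\,\delta(x,y)$, the first inequality of \eqref{eq:deltabound} $\delta(x,y)\le \dd(x-y)\kappa^{*}_{\gamma}(x)$, and the unit-speed bound $|\gamma(x)-\gamma(y)|\le\dd(x-y)$, to bound the integrand by $C(\delta_{\infty})\kappa^{*}_{\gamma}(x)\dd^{-2p}(x-y)$; integrability in $y$ holds exactly because $2p<1$. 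In the $m$-degenerate case Taylor's theorem at $\alpha=1$ gives $|g(\alpha)|\le C|\alpha-1|^{m+1}\le C(\delta_{\infty})\delta(x,y)^{m+1}$, and I would interpolate between the two estimates in \eqref{eq:deltabound} by using $\delta\le\dd\,\kappa^{*}_{\gamma}(x)$ once and $\delta\le\sqrt{2\pi}\|\ddot\gamma\|_{L^{2}(\T)}\dd^{1/2}$ for the remaining $m$ factors, yielding
$$
\delta^{m+1}(x,y) \le C\,\kappa^{*}_{\gamma}(x)\,\|\ddot\gamma\|_{L^{2}(\T)}^{m}\,\dd^{1+m/2}(x-y).
$$
The resulting integrand is then bounded by $C(\delta_{\infty},\|\ddot\gamma\|_{L^{2}(\T)})\,\kappa^{*}_{\gamma}(x)\,\dd^{m/2-2p}(x-y)$, and integrability in $y$ is equivalent to $m/2-2p>-1$, i.e. $m>4p-2$, which is precisely the $m$-degeneracy hypothesis.

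The main obstacle I anticipate is the delicate bookkeeping in the $m$-degenerate case, where the splitting of $\delta^{m+1}$ between the two bounds in \eqref{eq:deltabound} must be engineered so that one factor of $\kappa^{*}_{\gamma}(x)$ survives (supplying the $L^{2}$ control via the maximal inequality) while the accumulated power of $\dd(x-y)$ remains locally integrable. Everything else — the symmetry, the $H^{1-2p}\subset L^{2}$ control of the singular piece, and the $\epsilon$-independence of all constants — follows by routine estimation.
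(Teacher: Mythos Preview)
Your proposal is correct and follows essentially the same route as the paper's proof: the antisymmetry argument for mean zero, the homogeneity identity $K_{u}=g(\alpha)\dd^{-2(p+1)}$, the splitting $f_{\gamma,\epsilon}=g(1)L_{p,\epsilon}[\gamma]+R_{\gamma,\epsilon}$ with lemma~\ref{lem:lapest} handling the singular piece in the $0$-degenerate case, and the pointwise maximal-function bound on the remainder via \eqref{eq:deltabound}. Your interpolation $\delta^{m+1}\le C\kappa^{*}_{\gamma}(x)\|\ddot\gamma\|_{L^{2}}^{m}\dd^{1+m/2}$ in the $m$-degenerate case is exactly the device the paper uses, leading to the same integrability threshold $m>4p-2$.
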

\begin{proof}
That $f_{\gamma,\epsilon}$ has mean zero follows from {\color{black} the antisymetry of the integrand}. As $\gamma \in H^{2}(\T)$ defines a unit speed, bi-Lipschitz embedding, there exists a $\delta(x,y)$ so that
$$
|\gamma(x)-\gamma(y)|^{2} = ( 1 - \delta(x,y) )\dd^{2} (x-y) \qquad 0 \leq \delta(x,y) \leq 1 - 1/\delta^{2}_{\infty}(\gamma)
$$
for $\delta_{\infty}(\gamma)$ the distortion of the curve. As $\gamma$ has unit speed, $\delta(x,y)$ also satisfies
$$
\delta(x,y) \leq \kappa^{*}_{\gamma}(x) \dd(y-x) \qquad \text{and} \qquad \delta(x,y) \leq \sqrt{2\pi} \| \ddot{\gamma} \|_{L^{2}(\T)} \dd^{ \frac1{2} }(y-x)
$$
for $ \kappa^{*}_{\gamma}(x)$ the curvature maximal function (c.f. {\color{black}\eqref{eq:deltabound}}). Define $\alpha(x,y) := (1-\delta(x,y))^{-1},$ which obeys the bounds $1 \leq \alpha(x,y) \leq \delta^{2}_{\infty}(\gamma)$. Then by the $h/p$-homogeneity hypothesis,
$$
K_{u}\left( |\gamma(x) - \gamma(y)|^{2}, \dd^{2}(x-y) \right) = g\big( \alpha(x,y) \big) \dd^{-2(p+1) }(x-y) \qquad g(\alpha) = -\alpha^{2}h^{\prime}(\alpha).
$$
The estimate then follows from this expression by appealing to one of the degeneracy assumptions.

First consider the $0$-Degenerate case. Then $p < \frac1{2}$ by assumption, and so for any $(x,y)$ there exists $1 \leq \xi(x,y) \leq \alpha(x,y)$ so that
\begin{align*}
g\big( \alpha(x,y) \big) - g(1) = g^{\prime}\big( \xi(x,y) \big) \frac{ \delta(x,y) }{1 - \delta(x,y)}\quad \longrightarrow\quad\left| g\big( \alpha(x,y) \big) - g(1) \right| \leq C(\delta_{\infty}(\gamma))\delta(x,y)
\end{align*}
by the mean value theorem. Combining these inequalities then gives
\begin{align*}
&f_{\gamma,\epsilon}(x) = g(1)L_{p,\epsilon}[\gamma] + \tilde{f}_{\gamma,\epsilon}(x),\\
&\tilde{f}_{\gamma,\epsilon}(x) := \int_{ \T \cap \{ \dd(x-y) \geq \epsilon\} } \left[ g \big( \alpha(x,y) \big) - g(1)  \right] \frac{ \gamma(x) - \gamma(y) }{\dd^{2(p+1)}(x-y)} \, \rd y \\
&|\tilde{f}_{\gamma,\epsilon}(x)| \leq C(\delta_{\infty}(\gamma))\kappa^{*}_{\gamma}(x) \int_{ \T \cap \{ \dd(x-y) \geq \epsilon\} } \frac{ \dd^2(y-x) }{\dd^{2(p+1)}(x-y)} \, \rd y = C(\delta_{\infty}(\gamma))\kappa^{*}_{\gamma}(x)\int^{\pi}_{\epsilon} z^{-2p} \, \rd z.
\end{align*}
Thus $|\tilde{f}_{\gamma,\epsilon}(x)| \leq C(\delta_{\infty}(\gamma))\kappa^{*}_{\gamma}(x),$ and the latter lies in $L^{2}(\T)$ by the $L^{2}(\T)$ boundedness of the curvature maximal function. Combining this with the previous lemma for $L_{p,\epsilon}[\gamma]$ yields the claim in this case.

Now suppose $K(u,v)$ is degenerate with $m>0,$ and so there exists $1 \leq \xi(x,y) \leq \alpha(x,y)$ with
$$
g\big( \alpha(x,y) \big) = \frac1{(m+1)!} \frac{ \delta^{m+1}(x,y)}{ (1-\delta(x,y))^{m+1} } g^{ (m+1) }\big( \xi(x,y) \big)
$$
due to the degeneracy assumption. Thus
\begin{align*}
&f_{\gamma,\epsilon}(x) = \int_{ \T \cap \{ \dd(x-y) \geq \epsilon\} } K_{u}\left( |\gamma(x) - \gamma(y)|^2, \dd^{2}(x-y) \right)(\gamma(x) - \gamma(y) ) \, \rd y = \\
&\int_{ \T \cap \{ \dd(x-y) \geq \epsilon\} } \frac1{(m+1)!} \frac{ \delta^{m+1}(x,y)}{ (1-\delta(x,y))^{m+1} } g^{ (m+1) }\big( \xi(x,y) \big) \dd^{-2(p+1)}(x-y)(\gamma(x)-\gamma(y))\, \rd y,\\
&|f_{\gamma,\epsilon}(x)| \leq C(\delta_{\infty}(\gamma))\|\ddot{\gamma}\|^{m}_{L^{2}(\T)}\kappa^{*}_{\gamma}(x) \int^{\pi}_{\epsilon} z^{m/2-2p} \, \rd z \leq C(\delta_{\infty}(\gamma),\|\ddot{\gamma}\|_{L^{2}(\T)})\kappa^{*}_{\gamma}(x).
\end{align*}
The last inequality follows from the fact that $m/2 - 2p > -1$ by assumption. Thus $f_{\gamma,\epsilon}(x)$ is bounded in $L^{2}(\T),$ uniformly with respect to $\epsilon$ as claimed.
\end{proof}

\begin{lemma}{ {\rm (Lemma \ref{lem:lipF}) } }
\label{lem:lipFA}
Assume that $K(u,v)$ is $h/p$ homogeneous and either $0$-degenerate or $m$-degenerate. Let $\gamma,\phi \in H^{2}(\T)$ denote unit speed, bi-Lipschitz embeddings. Then for $0 < \epsilon < 1$ the uniform Lipschitz estimates
$$
\| f_{\gamma,\epsilon} - f_{\phi,\epsilon} \|_{L^{2}(\T)} \leq C(\delta_{\infty}(\gamma),\delta_{\infty}(\phi),\|\gamma\|_{H^{2}(\T)},\|\phi\|_{H^{2}(\T)} )\|\gamma - \phi\|_{H^{2}(\T)}
$$
hold, where $C(\delta_{\infty}(\gamma),\delta_{\infty}(\phi),\|\gamma\|_{H^{2}(\T)},\|\phi\|_{H^{2}(\T)} )$ denotes an $\epsilon$-independent constant.
\end{lemma}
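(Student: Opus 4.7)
My plan is to mirror the argument of Lemma \ref{lem:boundF}, with the new ingredient being a sharper estimate on the ``difference of kernels'' term that arises. Using $h/p$-homogeneity to write $K_u(|\gamma(x)-\gamma(y)|^2,\dd^{2}(x-y)) = g(\alpha_\gamma(x,y))\dd^{-2(p+1)}(x-y)$ and adding-and-subtracting $g(1)$ for each of $\gamma,\phi$, I will first decompose
\begin{align*}
f_{\gamma,\epsilon}(x) - f_{\phi,\epsilon}(x)
&= g(1)\, L_{p,\epsilon}[\gamma-\phi](x) + \int G_\gamma(x,y)\, \tfrac{(\gamma-\phi)(x)-(\gamma-\phi)(y)}{\dd^{2(p+1)}(x-y)}\, \rd y \\
&\quad + \int \bigl(G_\gamma - G_\phi\bigr)(x,y)\, \tfrac{\phi(x)-\phi(y)}{\dd^{2(p+1)}(x-y)}\, \rd y,
\end{align*}
where $G_\gamma(x,y) := g(\alpha_\gamma(x,y)) - g(1)$ and both integrals are taken over $\{\dd(x-y)\geq\epsilon\}$. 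The singular term is linear in $\gamma-\phi$, so Lemma \ref{lem:lapest} will immediately bound it uniformly in $\epsilon$ by $C_p|g(1)|\,\|\gamma-\phi\|_{\dot H^2(\T)}$.

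The first regular integral will be handled exactly as in Lemma \ref{lem:boundF}: the pointwise estimate $|G_\gamma(x,y)| \leq C(\delta_\infty(\gamma))\kappa^*_\gamma(x)\dd(x-y)$ combined with the Sobolev embedding $H^2(\T)\hookrightarrow C^1(\T)$, which furnishes $|(\gamma-\phi)(x)-(\gamma-\phi)(y)| \leq C\dd(x-y)\|\gamma-\phi\|_{H^2(\T)}$, dominates the integrand by $C\kappa^*_\gamma(x)\|\gamma-\phi\|_{H^2(\T)}\dd(x-y)^{-2p}$. Integration in $y$ together with the $L^2$-boundedness of the curvature maximal function will then close this contribution.

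The main obstacle will be the last integral. The naive estimate $|G_\gamma - G_\phi|(x,y) \leq C(\delta_\infty(\gamma),\delta_\infty(\phi))\|\gamma-\phi\|_{H^2(\T)}$, obtained from the mean value theorem applied to $g$ together with $|\alpha_\gamma - \alpha_\phi| \lesssim \|\gamma-\phi\|_{C^1}$, produces a non-integrable $|y-x|^{-(2p+1)}$ singularity near the diagonal and so is too crude. The key observation I plan to exploit is that unit speed of both $\gamma$ and $\phi$ forces the leading $(y-x)^2$-order contribution in the identity
$$|\phi(x)-\phi(y)|^2 - |\gamma(x)-\gamma(y)|^2 = \bigl\langle \eta(x)-\eta(y),\ (\phi+\gamma)(x)-(\phi+\gamma)(y) \bigr\rangle, \qquad \eta := \phi - \gamma,$$
to vanish, since its coefficient is $\langle \dot\eta(x), (\dot\phi+\dot\gamma)(x)\rangle = |\dot\phi(x)|^2 - |\dot\gamma(x)|^2 = 0$. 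Expanding $\eta(y)-\eta(x)$ and $(\phi+\gamma)(y)-(\phi+\gamma)(x)$ to second order with Taylor remainders controlled by the maximal functions $\ddot\eta^*(x)$ and $\ddot\phi^*(x)+\ddot\gamma^*(x)$, and bounding $|\dot\eta(x)| \leq C\|\gamma-\phi\|_{H^2(\T)}$ via Sobolev embedding, I will obtain the improved estimate
$$\bigl||\phi(x)-\phi(y)|^2 - |\gamma(x)-\gamma(y)|^2\bigr| \leq C\dd(x-y)^{3}\bigl[\|\gamma-\phi\|_{H^2(\T)}(\ddot\phi^*(x)+\ddot\gamma^*(x)) + \ddot\eta^*(x)\bigr].$$
Combined with the bi-Lipschitz lower bound $|\gamma(x)-\gamma(y)|^2|\phi(x)-\phi(y)|^2 \geq \delta_\infty(\gamma)^{-2}\delta_\infty(\phi)^{-2}\dd(x-y)^4$ and $|\phi(x)-\phi(y)| \leq \dd(x-y)$, this dominates the integrand by
$$C(\delta_\infty(\gamma),\delta_\infty(\phi))\bigl[\|\gamma-\phi\|_{H^2(\T)}(\ddot\phi^*(x)+\ddot\gamma^*(x)) + \ddot\eta^*(x)\bigr]\dd(x-y)^{-2p}.$$
The $y$-integral will then be uniformly bounded in $\epsilon$ by using $p<1/2$ in the $0$-degenerate case (with the $m$-degenerate case yielding sharper decay via $|g(\alpha_\gamma) - g(\alpha_\phi)| \lesssim \dd^{m+1}$), and taking $L^2(\T)$ norms in $x$ via the $L^2$-boundedness of the maximal function will produce the claimed Lipschitz bound.
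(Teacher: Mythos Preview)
Your proposal is correct and follows essentially the same route as the paper: the identical three-term decomposition into the linear $L_{p,\epsilon}[\gamma-\phi]$ piece, the $G_\gamma$-weighted difference piece, and the $(G_\gamma-G_\phi)$ piece, with the last handled via the key cubic estimate on $|\gamma(x)-\gamma(y)|^2-|\phi(x)-\phi(y)|^2$ that exploits $|\dot\gamma|^2=|\dot\phi|^2=1$. The only point where the paper is more explicit is the $m$-degenerate case, where it splits the third integral over $\{\alpha_\gamma\ge\alpha_\phi\}$ and its complement so that the intermediate point $\xi$ from the mean value theorem satisfies $|\xi-1|\le\max(\delta_\gamma,\delta_\phi)$, yielding $|g'(\xi)|\le C\delta_\gamma^m$ (resp.\ $\delta_\phi^m$) and hence the extra $\dd^{m/2}$ decay your parenthetical alludes to; your sketch should make this step and the corresponding $\dd^{m/2}$ gain in the second term explicit, since for $p\ge 1/2$ the $0$-degenerate bound $|G_\gamma|\lesssim\kappa^*_\gamma\dd$ alone is not integrable.
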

\begin{proof}
Take $\delta_{\gamma}(x,y)$ and $\delta_{\phi}(x,y)$ so that
$$
|\gamma(x) - \gamma(y)|^{2} = ( 1 - \delta_{\gamma}(x,y) )\dd^{2}(x-y) \qquad |\phi(x) - \phi(y)|^{2} = ( 1 - \delta_{\phi}(x,y) )\dd^{2}(x-y)
$$
as in the proof of the previous lemma. Define $\alpha_{\gamma}(x,y)$ and $\alpha_{\phi}(x,y)$ similarly.

Assume first that $0$-Degeneracy holds. By {defining} $\Psi^{+}(x) := \gamma(x) + \phi(x)$ and $\Psi^{-}(x) := \gamma(x) - \phi(x),$ a trivial calculation shows that
\begin{align*}
f_{\gamma,\epsilon} - f_{\phi,\epsilon} &= g(1)L_{p,\epsilon}[\Psi^{-}] + \int_{ \dd(x-y) > \epsilon } \Big( g(\alpha_{\gamma}(x,y) ) - g(1) \Big)\frac{ \Psi^{-}(x) - \Psi^{-}(y) }{\dd^{2(p+1)}(x-y) } \, \rd y  \\
&+ \int_{ \dd(x-y) > \epsilon } \Big( g( \alpha_{\gamma}(x,y) ) - g( \alpha_{\phi}(x,y) ) \Big) \frac{ \phi(x) - \phi(y) }{ \dd^{2(p+1) }(x-y) } \, \rd y := \mathrm{I} + \mathrm{II} + \mathrm{III}.
\end{align*}
The fact that $\| \mathrm{I} \|_{L^{2}(\T)} \leq C\| \gamma - \phi\|_{H^{2}(\T)}$ follows immediately from lemma \ref{lem:lapest}. As for $\mathrm{II},$ arguing as in the previous lemma shows
\begin{align*}
|\mathrm{II}| &\leq C(\delta_{\infty}(\gamma) ) \int_{ \dd(x-y) > \epsilon } \delta_{\gamma}(x,y) \frac{ |{\color{black}\Psi^{-}(x) - \Psi^{-}(y)}| }{\dd^{2(p+1)}(x-y) } \, \rd y \\
&\leq C(\delta_{\infty}(\gamma) )\| \dot{\Psi}^{-} \|_{L^{\infty}(\T)}\kappa^{*}_{\gamma}(x) \int_{ \dd(x-y) > \epsilon } \dd^{-2p}(x-y)  \, \rd y,
\end{align*}
and so $\mathrm{II}$ obeys
$$
\| \mathrm{II} \|_{L^{2}(\T)} \leq C(\delta_{\infty}(\gamma) , \|\ddot{\gamma}\|_{L^{2}(\T)} )\| \gamma - \phi\|_{H^{2}(\T)}
$$
as desired. Finally, the mean value theorem once again furnishes a $1 \leq \xi(x,y) \leq \max\{ \delta^{2}_{\infty}(\gamma), \delta^{2}_{\infty}(\phi)\}$ so that
\begin{align*}
\mathrm{III} = \int_{ \dd(x-y) > \epsilon } g^{\prime}(\xi(x,y)) \Big( \alpha_{\gamma}(x,y) - \alpha_{\phi}(x,y) \Big)\frac{ \phi(x) - \phi(y) }{\dd^{2(p+1)}(x-y) } \, \rd y.
\end{align*}
Noting that
\begin{equation}\label{eq:alphadif}
\alpha_{\gamma}(x,y) - \alpha_{\phi}(x,y) = \frac{ \delta_{\gamma}(x,y) - \delta_{\phi}(x,y) }{ (1-\delta_{\gamma}(x,y))(1-\delta_{\phi}(x,y)) }
\end{equation}
then yields the estimate
$$
|\mathrm{III}| \leq C(\delta_{\infty}(\gamma),\delta_{\infty}(\phi) )\|\dot{\phi}\|_{L^{\infty}(\T)} \int_{ \dd(x-y) > \epsilon }  |\delta_{\gamma}(x,y) - \delta_{\phi}(x,y)| \dd^{-2p-1}(x-y) \, \rd y
$$
for the third term. From the definition of $\delta_{\gamma}(x,y)$ and $\delta_{\phi}(x,y),$ it follows that
\begin{equation}\label{eq:deltadif}
|\delta_{\gamma}(x,y) - \delta_{\phi}(x,y)| = \frac{ | \langle \Psi^{-}(x) - \Psi^{-}(y) , \Psi^{+}(x) - \Psi^{+}(y) \rangle|}{\dd^{2}(x-y)}.
\end{equation}
That $\gamma$ and $\phi$ have unit speed then gives
$$
|\langle \Psi^{-}(x) - \Psi^{-}(y) , \Psi^{+}(x) - \Psi^{+}(y) \rangle| \leq \dd^{3}(x-y) \Big( \| \dot{\Psi}^{-} \|_{L^{\infty}(\T)} ( \ddot{\Psi}^{+} )^{*}(x) + \| \dot{\Psi}^{+} \|_{L^{\infty}(\T)} (\ddot{\Psi}^{-})^{*}(x)\Big),
$$
which yields the desired estimate
\begin{align*}
|\mathrm{III}| &\leq C(\delta_{\infty}(\gamma),\delta_{\infty}(\phi) )\|\dot{\phi}\|_{L^{\infty}(\T)}\Big( \| \dot{\Psi}^{-} \|_{L^{\infty}(\T)} ( \ddot{\Psi}^{+} )^{*}(x) + \| \dot{\Psi}^{+} \|_{L^{\infty}(\T)} (\ddot{\Psi}^{-})^{*}(x)\Big)\\
\|\mathrm{III}\|_{L^{2}(\T)} &\leq C(\delta_{\infty}(\gamma),\delta_{\infty}(\phi),\|\gamma\|_{H^{2}(\T)},\|\phi\|_{H^{2}(\T)} )\|\gamma - \phi\|_{H^{2}(\T)}
\end{align*}
for the third term. Combining these estimates yields the claim
$$
\| f_{\gamma,\epsilon} - f_{\phi,\epsilon} \|_{L^{2}(\T)} \leq C(\delta_{\infty}(\gamma),\delta_{\infty}(\phi),\|\gamma\|_{H^{2}(\T)},\|\phi\|_{H^{2}(\T)} )\|\gamma - \phi\|_{H^{2}(\T)}
$$
in the $0$-Degenerate case.

Assume now that $m$-Degeneracy holds. Then
\begin{align*}
f_{\gamma,\epsilon} - f_{\phi,\epsilon} &= \int_{ \dd(x-y) > \epsilon } g( \alpha_{\gamma}(x,y) ) \frac{ \Psi^{-}(x) - \Psi^{-}(y)}{\dd^{2(p+1)}(x-y)} \, \rd y \\
&+ \int_{ \dd(x-y) > \epsilon } \big( g( \alpha_{\gamma}(x,y) ) - g( \alpha_{\phi}(x,y) ) \frac{ \phi(x) - \phi(y)}{\dd^{2(p+1)}(x-y)} \, \rd y := \mathrm{I} + \mathrm{II}
\end{align*}
as before. That $\mathrm{I}$ obeys the claimed bound then follows by using the argument of the previous lemma. Now decompose $\mathrm{II}$ into two pieces,
$$
\mathrm{II} = \int_{ \{ \dd(x-y) > \epsilon\} \cap \{ \alpha_{\gamma} \geq \alpha_\phi \} } + \int_{ \{ \dd(x-y) > \epsilon\} \cap \{ \alpha_{\gamma} < \alpha_\phi \} } := \mathrm{II}_{\mathrm{A}} + \mathrm{II}_{ {\rm B}}.
$$
For $\mathrm{II}_{ {\rm A} },$ there exists a $1 \leq \alpha_{\phi}(x,y) \leq \xi(x,y) \leq \alpha_{\gamma}(x,y)$ so that
$$
\mathrm{II}_{ {\rm A} } = \int_{ \{ \dd(x-y) > \epsilon\} \cap \{ \alpha_{\gamma} \geq \alpha_\phi \} }  g^{\prime}\big( \xi(x,y) \big) \Big( \alpha_{\gamma}(x,y) - \alpha_{\phi}(x,y) \Big) \frac{ \phi(x) - \phi(y)}{\dd^{2(p+1)}(x-y)} \, \rd y.
$$
Furthermore, there exists a $1 \leq \eta(x,y) \leq \xi(x,y) \leq \alpha_{\gamma}(x,y)$ so that
\begin{align*}
g^{\prime}\big( \xi(x,y) \big) &= g^{ (m+1) } \big( \eta(x,y) \big) \frac{ ( \xi(x,y) - 1)^{m} }{m!}  \\ \left|g^{\prime}\big( \xi(x,y) \big)\right| &\leq C(\delta_{\infty}(\gamma))( \alpha_{\gamma}(x,y) - 1)^{m} \leq C(\delta_{\infty}(\gamma))\delta_{\gamma}^{m}(x,y).
\end{align*}
As a consequence, this bound furnishes the inequality
$$
|\mathrm{II}_{ {\rm A} }| \leq C( \delta_{\infty}(\gamma) ) \| \dot{\phi} \|_{L^{\infty}(\T)} \int_{  \dd(x-y) > \epsilon } \delta^{m}_{\gamma}(x,y) | \alpha_{\gamma}(x,y) - \alpha_{\phi}(x,y) | \dd^{-(2p+1)}(x-y) \, \rd y.
$$
Recalling {\color{black}\eqref{eq:deltabound} and the bounds \eqref{eq:alphadif} and \eqref{eq:deltadif}}
\begin{align*}
\delta^{m}_{\gamma}(x,y) &\leq C( \|\ddot{\gamma}\|_{L^{2}(\T)} )\dd^{ \frac{m}{2} }(x-y) \\ | \alpha_{\gamma}(x,y) - \alpha_{\phi}(x,y) | &\leq C(\delta_{\infty}(\gamma),\delta_{\infty}(\phi))\dd(y-x)
\Big( \| \dot{\Psi}^{-} \|_{L^{\infty}(\T)} ( \ddot{\Psi}^{+} )^{*}(x) + \| \dot{\Psi}^{+} \|_{L^{\infty}(\T)} (\ddot{\Psi}^{-})^{*}(x)\Big)
\end{align*}
and arguing as before then completes the estimate
$$
\|\mathrm{II}_{ {\rm A} } \|_{L^{2}(\T)} \leq C(\delta_{\infty}(\gamma),\delta_{\infty}(\phi),\|\gamma\|_{H^{2}(\T)},\|\phi\|_{H^{2}(\T)} )\|\gamma - \phi\|_{H^{2}(\T)}
$$
for this term. The estimate for $\|\mathrm{II}_{ {\rm B} }\|_{L^{2}(\T)}$ then follows by the same argument with $\phi$ in place of $\gamma$, so the claimed bound holds in the $m$-Degenerate case as well.
\end{proof}

\begin{lemma}{ {\rm (Lemma \ref{lem:boundA}) } }
\label{lem:boundAA}
Suppose that $\tang,\sang \in H^{1}(\T)$ and that
$$
\min_{x \in \T} \, |\tang(x)| \geq \frac1{C},\quad \min_{x \in \T} \, |\sang(x)| \geq \frac1{C}
$$
for $0 < C < \infty$ a positive constant. If $\tang$ is non-constant then
$$
A_{\tang} := \mathrm{Id} - \fint_{\T} \frac{ \tang \otimes \tang }{|\tang|^2} \, \rd x
$$
is non-singular. The operator norm estimate
$$
\|A_{\tang} - A_{\sang}\|_{ {\rm op} } \leq 2 C^2\|\tang + \sang\|_{L^{2}(\T)}\| \tang - \sang\|_{L^{2}(\T)}
$$
also holds, and so $A^{-1}_{\sang}$ exists and obeys
\begin{align*}
\|A^{-1}_{\sang}\|_{ {\rm op} } &\leq \frac{ \| A^{-1}_{\tang} \|_{\rm op} }{1- \| A^{-1}_{\tang} \|_{\rm op} \|A_{\tang} - A_{\sang}\|_{ {\rm op} } }\\
\| A^{-1}_{\sang} - A^{-1}_{\tang} \|_{ {\rm op} } &\leq 2 C^{2}\| A^{-1}_{{\color{black}\sang}} \|_{ {\rm op} }\| A^{-1}_{\tang} \|_{ {\rm op} } \|\tang + \sang\|_{L^{2}(\T)}\| \tang - \sang\|_{L^{2}(\T)}
\end{align*}
whenever $\|\tang - \sang\|_{L^2(\T)}$ is sufficiently small.
\end{lemma}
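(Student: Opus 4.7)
The plan is to establish the four assertions in sequence: non-singularity of $A_{\tang}$, the operator-norm estimate for $A_{\tang} - A_{\sang}$, the bound on $\|A^{-1}_{\sang}\|_{\op}$, and the Lipschitz estimate for the inverses.

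For non-singularity I would first write $A_{\tang} = \mathrm{Id} - M_{\tang}$ with $M_{\tang} := \fint_{\T} \hat{\tang} \otimes \hat{\tang} \, \rd x$ and $\hat{\tang} := \tang/|\tang|$. Since $M_{\tang}$ is an average of rank-one orthogonal projections, it is symmetric with eigenvalues in $[0,1]$, so $A_{\tang}$ is singular precisely when $M_{\tang}$ has $1$ as an eigenvalue. If $v \in \R^{3}$ is a unit eigenvector with eigenvalue $1$, then $1 = \langle v , M_{\tang} v \rangle = \fint_{\T} \langle v, \hat{\tang}(x) \rangle^{2} \, \rd x$ combined with $\langle v, \hat{\tang}(x)\rangle^{2} \leq 1$ forces $\hat{\tang}(x) \equiv \pm v$. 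The embedding $H^{1}(\T) \subset C^{0}(\T)$ together with $|\tang| \geq 1/C > 0$ makes $\hat{\tang}$ continuous, so $\tang(x) = c(x) v$ for a continuous scalar of fixed sign, which (combined with the mean-zero convention implicit in the closed-curve setting) contradicts non-constancy.

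The crucial step is the operator-norm estimate, for which I would establish the pointwise algebraic bounds $|\hat{\sang}(x) \pm \hat{\tang}(x)| \leq C|\sang(x) \pm \tang(x)|$. These follow by expanding
$$
|\hat{\sang} \pm \hat{\tang}|^{2} = 2 \pm 2 \langle \hat{\sang}, \hat{\tang} \rangle = \frac{2(|\sang||\tang| \pm \langle \sang , \tang \rangle)}{|\sang||\tang|}
$$
and recognizing the identity $2(|\sang||\tang| \pm \langle \sang, \tang \rangle) = |\sang \pm \tang|^{2} - (|\sang| - |\tang|)^{2} \leq |\sang \pm \tang|^{2}$ together with the lower bound $|\sang||\tang| \geq 1/C^{2}$. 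With these in hand, the algebraic identity $\hat{\sang} \otimes \hat{\sang} - \hat{\tang} \otimes \hat{\tang} = \tfrac{1}{2}\big[ (\hat{\sang}+\hat{\tang}) \otimes (\hat{\sang}-\hat{\tang}) + (\hat{\sang}-\hat{\tang}) \otimes (\hat{\sang}+\hat{\tang}) \big]$ yields the pointwise operator-norm bound $\|\hat{\sang}\otimes\hat{\sang} - \hat{\tang}\otimes\hat{\tang}\|_{\op} \leq |\hat{\sang} - \hat{\tang}||\hat{\sang} + \hat{\tang}|$, and Cauchy--Schwarz in $L^{2}(\T)$ then produces the claimed estimate (with constant no worse than $2C^{2}$).

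The two remaining assertions follow from routine perturbation theory. Factoring $A_{\sang} = A_{\tang}\big(\mathrm{Id} + A^{-1}_{\tang}(A_{\sang} - A_{\tang})\big)$, the Neumann series converges and yields the stated bound on $\|A^{-1}_{\sang}\|_{\op}$ whenever $\|A^{-1}_{\tang}\|_{\op}\|A_{\tang} - A_{\sang}\|_{\op} < 1$, which the operator-norm estimate guarantees for $\|\tang - \sang\|_{L^{2}(\T)}$ sufficiently small. The resolvent identity $A^{-1}_{\sang} - A^{-1}_{\tang} = A^{-1}_{\sang}(A_{\tang} - A_{\sang}) A^{-1}_{\tang}$ combined with the operator-norm estimate then furnishes the Lipschitz bound. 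The main conceptual obstacle is the non-singularity argument, which implicitly relies on the mean-zero convention of the paper to rule out the degenerate case where $\tang$ is pointwise parallel to a fixed direction.
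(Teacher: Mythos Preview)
Your argument is correct and broadly parallels the paper's. For non-singularity and for the two perturbation estimates (Neumann series and resolvent identity) your proof is essentially identical to the paper's. For the operator-norm estimate the paper proceeds differently: it tests the symmetric matrix $A_{\tang}-A_{\sang}$ on a unit vector $\vv$ and splits the resulting quadratic form as
\[
\fint_{\T} \langle \vv,\tang\rangle^{2}\Big(\tfrac{1}{|\tang|^{2}}-\tfrac{1}{|\sang|^{2}}\Big)\,\rd x \;+\; \fint_{\T} \tfrac{1}{|\sang|^{2}}\big(\langle \vv,\tang\rangle^{2}-\langle\vv,\sang\rangle^{2}\big)\,\rd x,
\]
bounding each term by $C^{2}|\tang+\sang|\,|\tang-\sang|$ pointwise before applying Cauchy--Schwarz. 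Your route via the pointwise inequalities $|\hat{\sang}\pm\hat{\tang}| \leq C\,|\sang\pm\tang|$ together with the polarization $\hat{\sang}\otimes\hat{\sang}-\hat{\tang}\otimes\hat{\tang} = \tfrac{1}{2}\big[(\hat{\sang}+\hat{\tang})\otimes(\hat{\sang}-\hat{\tang}) + (\hat{\sang}-\hat{\tang})\otimes(\hat{\sang}+\hat{\tang})\big]$ is a clean alternative that in fact yields the slightly sharper constant $C^{2}$. Your caution about the non-singularity step is well placed: the paper's proof passes directly from ``$\tang/|\tang|$ everywhere collinear with $\vv$'' to a contradiction via non-constancy alone, which as you observe is not literally sufficient without the additional structure (unit speed or mean zero) present in every application of the lemma.
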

\begin{proof}
Select $\vv \in \R^3$ with unit length so that the quadratic form $\langle \vv , A_{\tang} \vv \rangle$ realizes the minimal eigenvalue. A simple computation reveals
$$
 \lambda_{ \min }( A_{\tang} ) = \langle \vv , A_{\tang} \vv \rangle = 1 - \fint_{\T} \left< \vv , \frac{ \tang}{|\tang|} \right> ^{2} \, \rd x \geq 0
$$
by Cauchy-Schwarz. As $\tang(x)/|\tang(x)|$ varies continuously on $\T,$ equality holds if and only if $\tang(x)$ and $\vv$ are everywhere collinear. But $\tang(x)$ is non-constant, so equality cannot occur. Thus $\lambda_{ \min }( A_{\tang} ) > 0$ and so $A^{-1}_{\tang}$ exists.

Now take $\vv \in \R^3$ with unit length but otherwise arbitrary. Then
\begin{align*}
\langle \vv, (A_{\tang} - A_{\sang}) \vv \rangle &= \fint_{\T} \left( \frac{\langle \vv, \tang \rangle^2}{|\tang|^2} - \frac{\langle \vv, \sang \rangle^2 }{|\sang|^2} \right) \, \rd x \\
&= \fint_{\T} \langle \vv, \tang \rangle^2 \left( \frac1{ |\tang|^2 } - \frac1{|\sang|^2} \right) \, \rd x + \fint_{\T} \frac1{ |\sang|^2 }\big( \langle \vv, \tang \rangle^2 - \langle \vv, \sang \rangle^2 \big) \, \rd x\\
& \leq 2 C^2 \|\tang + \sang\|_{L^{2}(\T)}\|\tang - \sang\|_{L^{2}(\T)},
\end{align*}
and so the claimed operator norm estimate
$$\|A_{\tang} - A_{\sang}\|_{ {\rm op} } \leq 2C^2\|\tang + \sang\|_{L^{2}(\T)}\| \tang - \sang\|_{L^{2}(\T)}$$
follows. The simple eigenvalue inequality $|\lambda_{\min}(A_{\sang}) - \lambda_{\min}(A_{\tang})| \leq \| A_{\tang} - A_{\sang}\|_{ {\rm op}}$ yields invertibility of $A_{\sang}$ for $\| A_{\tang} - A_{\sang}\|_{ {\rm op}}$ sufficiently small, while the trivial identity $A^{-1}_{\sang} - A^{-1}_{\tang} = A^{-1}_{\sang}( A_{\tang} - A_{\sang} ) A^{-1}_{\tang}$ yields the final estimate.
\end{proof}

\begin{proposition}{ {\rm (Proposition \ref{prop:linear}) } }
\label{prop:linearA}
Suppose $f \in C([0,T];L^{1}(\T))$ for $T>0$ arbitrary. Then for any initial datum $u_0 \in H^{1}(\T)$ the linear initial value problem
\begin{equation}\label{eq:ez_lin}
u_{t} =  u_{xx} + f, \qquad u(x,0) = u_0(x)
\end{equation}
has a unique mild solution $u \in C([0,T]; H^{1}(\T) )$. For any $3/2 < \alpha < 2,$ the solution obeys the estimate
$$
\|u\|_{C([0,T];\dot{H}^{1}(\T))} \leq C(\alpha)\left( \|u_0\|_{ \dot{H}^1(\T) } + T^{1 - \frac{\alpha}{2}}\|f\|_{C([0,T];L^1(\T))}  \right)
$$
for $C(\alpha)$ a universal constant.
\end{proposition}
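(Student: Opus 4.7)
The plan is to construct the candidate solution via the Duhamel formula
\begin{equation*}
u(t) = S(t)[u_0] + \int_0^t S(t-s)[f(s)]\,\rd s,
\end{equation*}
then verify the claimed $H^1(\T)$ bound and continuity in a few short steps. Uniqueness is immediate by linearity: the difference of two mild solutions satisfies the Duhamel identity with zero data and zero forcing, which forces it to vanish. The homogeneous contribution is also harmless: $\widehat{S(t)[u_0]}_k = \re^{-k^2 t}\hat{u}_{0,k}$ together with Parseval gives $\|S(t)[u_0]\|_{\dot{H}^1} \leq \|u_0\|_{\dot{H}^1}$ since $|\re^{-k^2 t}| \leq 1$, and strong $H^1(\T)$-continuity in $t$ follows term-by-term by dominated convergence in the Fourier representation.

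The main step, and the only genuinely non-routine one, will be establishing the smoothing bound
\begin{equation*}
\|S(t)[g]\|_{\dot{H}^1(\T)} \leq C(\alpha)\, t^{-\alpha/2}\|g\|_{L^1(\T)} \qquad (t > 0,\;\; 3/2 < \alpha < 2).
\end{equation*}
I would derive this directly on the Fourier side. Using the trivial bound $|\hat{g}_k| \leq C\|g\|_{L^1(\T)}$ and Parseval,
\begin{equation*}
\|S(t)[g]\|^2_{\dot{H}^1} = \sum_{k\neq 0} k^2 \re^{-2k^2 t}|\hat{g}_k|^2 \leq C\|g\|^2_{L^1}\Bigl(\sup_{k\neq 0}|k|^{2\alpha}\re^{-2k^2 t}\Bigr)\sum_{k\neq 0}|k|^{2-2\alpha},
\end{equation*}
where elementary calculus yields $\sup_{k \geq 1}k^{2\alpha}\re^{-2k^2 t} \leq C(\alpha)t^{-\alpha}$, and the series converges precisely because $\alpha > 3/2$. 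Equivalently, one can view the same inequality as the standard semigroup smoothing $\|S(t)\|_{\dot{H}^{1-\alpha} \to \dot{H}^1} \lesssim t^{-\alpha/2}$ composed with the Sobolev dual embedding $L^1(\T) \hookrightarrow H^{-s}(\T)$ valid for $s > 1/2$. The complementary constraint $\alpha < 2$ is what will make the resulting time singularity integrable.

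Given this smoothing estimate, integrating against $f \in C([0,T];L^1(\T))$ delivers
\begin{equation*}
\Bigl\|\int_0^t S(t-s)[f(s)]\,\rd s\Bigr\|_{\dot{H}^1} \leq C(\alpha)\|f\|_{C([0,T];L^1)}\int_0^t (t-s)^{-\alpha/2}\,\rd s = \frac{C(\alpha)}{1-\alpha/2}\,t^{1-\alpha/2}\|f\|_{C([0,T];L^1)},
\end{equation*}
which is exactly the $T^{1-\alpha/2}$ contribution claimed in the statement. Continuity of the Duhamel term in $H^1(\T)$ will follow by splitting $\int_0^{t_2} - \int_0^{t_1} = \int_{t_1}^{t_2} + \int_0^{t_1}[S(t_2-s)-S(t_1-s)]$: the first piece is $O((t_2-t_1)^{1-\alpha/2})$ by the same smoothing estimate, and the second rewrites as $(S(t_2-t_1) - \mathrm{Id})\int_0^{t_1}S(t_1-s)[f(s)]\,\rd s$, which tends to zero as $t_2 \to t_1$ by strong $H^1(\T)$-continuity of $S(\cdot)$ applied to the (fixed) $H^1$ element inside the brackets. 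Thus the only real work is the smoothing bound above; once its range of validity is in hand everything else is bookkeeping, and the permissible range $3/2 < \alpha < 2$ is precisely what is dictated by convergence of the Fourier sum on one side and integrability of the time singularity on the other.
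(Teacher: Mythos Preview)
Your proof is correct and is essentially the same as the paper's: both work on the Fourier side, use $|\hat{f}_k| \leq C\|f\|_{L^1}$, exploit the semigroup smoothing $|k|^{\alpha}\re^{-k^2 t} \leq C(\alpha)t^{-\alpha/2}$, and sum $|k|^{2-2\alpha}$ (convergent for $\alpha > 3/2$) to land on the $t^{1-\alpha/2}$ bound. The only cosmetic difference is the order of operations---the paper integrates in $s$ first for each fixed mode and then sums over $k$, whereas you first sum over $k$ to isolate the pointwise-in-time smoothing bound $\|S(t)\|_{L^1 \to \dot{H}^1} \leq C(\alpha)t^{-\alpha/2}$ and then apply Minkowski in $s$; the continuity arguments are likewise the same idea organized slightly differently.
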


{\color{black}
\begin{proof}
For each $k \in \Z$ define the coefficients of the solution as
\begin{equation}\label{eq:wkcoeff}
\hat{u}_{k}(t) = \mathrm{e}^{-k^{2}t}\hat{u}_{k}(0) + \int^{t}_{0} \mathrm{e}^{-k^2(t-s)}\hat{f}_{k}(s) \, \rd s := \hat{v}_{k}(t) + \hat{z}_{k}(t),
\end{equation}
so that for $3/2 < \alpha < 2$ fixed the fact that $|\hat{f}_{k}(s)| \leq \|f\|_{C([0,T];L^{1}(\T))}$ gives
\begin{align*}
|\hat{z}_{k}(t)||k| &\leq \|f\|_{C([0,T];L^{1}(\T))}|k|^{(1-\alpha)} \int^{t}_{0} |k|^{\alpha}\mathrm{e}^{-k^{2}(t-s)}\, \rd s \leq C(\alpha)\|f\|_{C([0,T];L^{1}(\T))}|k|^{(1-\alpha)} \int^{t}_{0} (t-s)^{-\frac{\alpha}{2}} \, \rd s\\
& \leq C(\alpha)\|f\|_{C([0,T];L^{1}(\T))}|k|^{(1-\alpha)} t^{1 - \frac{\alpha}{2}}.
\end{align*}
The series $\{ |\hat{z}_{k}(t)||k| \}_{k \in \Z}$ therefore lies in $\ell^{2}(\Z)$, and so for $t \in [0,T]$ both functions
$$
z(x,t) := \sum_{k \in \Z} \hat{z}_{k}(t) \mathrm{e}^{ikx} \qquad \text{and} \qquad v(x,t) := \sum_{k \in \Z} \hat{v}_{k}(t) \mathrm{e}^{ikx}
$$
define $H^{1}(\T)$ functions with estimates
$$
\|z\|_{C([0,T];\dot{H}^{1}(\T))} \leq C(\alpha)\|f\|_{C([0,T];L^{1}(\T))}T^{1 - \frac{\alpha}{2}} \qquad \|v\|_{C([0,T];\dot{H}^{1}(\T))} \leq \|u_0\|_{ \dot{H}^{1}(\T)}.
$$
The claimed bound for $\|u\|_{C([0,T]; \dot{H}^{1}(\T))}$ therefore follows from the triangle inequality. The claim that $u \in C([0,T];H^{1}(\T))$ follows in a similar way, i.e. by first shifting the temporal origin from $t=0$ to $t = t_1$ and then applying the argument estimating $z(x,t)$ to the difference $u(t_2) - S(t_2-t_1)[u(t_1)]$ on the interval $[t_1,t_2]$. Finally, the claim that
$$
v(x,t) = \int_{\T} K(x-y,t) u_{0}(y) \, \rd y \qquad \text{and} \qquad z(x,t) = \int^{t}_{0} \int_{\T} K(x-y,t-s)f(y,s)\, \rd y \rd s
$$
for $t>0$ is standard. For $t>0$ simply define
$$
K(x-y,t) := \frac1{2\pi} \sum_{k \in \Z} \re^{-k^2 t} \re^{ik(x-y)} 
$$
and apply the Fubini-Tonelli theorem. Thus $u$ with coefficients defined by \eqref{eq:wkcoeff} yields the unique mild solution.
\end{proof}
}

\begin{lemma}{ {\rm (Lemma \ref{lem:diffF}) } }
\label{lem:diffFA}
Assume that $K(u,v)$ is $h/p$ homogeneous and degenerate. Assume that $\gamma \in H^{3}(\T)$ is a unit speed, bi-Lipschitz embedding. Then the nonlocal integral
$$
f_{ \gamma}(x) :=  \lim_{\epsilon \downarrow 0} \; \int_{ \T \cap \{\dd(x-y) \geq \epsilon\} } K_{u}\left( |\gamma(x) - \gamma(y)|^2, \dd^{2}(x-y) \right)(\gamma(x) - \gamma(y) ) \, \rd y
$$
lies in $H^{1}(\T)$ and obeys the estimate
$$
\| f^{\prime}_{\gamma} \|_{L^{2}(\T)} \leq C( \delta_{\infty}(\gamma) , \| \gamma\|_{\dot{H}^{3}(\T)} )
$$
for $C(a,b)$ a continuous function of its arguments.
\end{lemma}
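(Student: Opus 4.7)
My plan is to differentiate the truncated integral $f_{\gamma,\epsilon}(x)$ in $x$, bound the resulting derivative pointwise by an $L^{2}(\T)$ function with the required dependence, and then pass to the limit $\epsilon\downarrow 0$. After the substitution $y = x+z$ and using the $h/p$ homogeneity to write $K_{u}(|\gamma(x)-\gamma(y)|^{2},\dd^{2}(x-y)) = g(\alpha(x,y))\dd^{-2(p+1)}(x-y)$ with $\alpha(x,y)=\dd^{2}(x-y)/|\gamma(x)-\gamma(y)|^{2}$, differentiating under the integral produces two terms
$$
T_{1}(x) := -\!\int_{\T}\! g(\alpha)\frac{\dot\gamma(x+z)-\dot\gamma(x)}{\dd^{2(p+1)}(z)}\,\rd z, \qquad T_{2}(x) := -\!\int_{\T}\! g^{\prime}(\alpha)\,\alpha_{x}\frac{\gamma(x+z)-\gamma(x)}{\dd^{2(p+1)}(z)}\,\rd z,
$$
where $\alpha_{x} = -\dd^{2}(z)\eta_{x}/\eta^{2}$ and $\eta(x,z):=|\gamma(x+z)-\gamma(x)|^{2}$.

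For $T_{1}$ I would borrow the strategy of lemma \ref{lem:boundF}: in the $0$-degenerate case, decompose $g(\alpha) = g(1) + (g(\alpha)-g(1))$, extract the constant-coefficient operator $-g(1)L_{p}[\dot\gamma]$, and bound it in $L^{2}(\T)$ using lemma \ref{lem:lapest} applied to $\dot\gamma\in H^{2}(\T)$. The remaining difference piece is controlled pointwise via $|g(\alpha)-g(1)|\leq C(\delta_{\infty}(\gamma))\,|z|\,\kappa^{*}_{\gamma}(x)$ (mean value theorem combined with \eqref{eq:deltabound}) together with $|\dot\gamma(x+z)-\dot\gamma(x)|\leq 2|z|\,\kappa^{*}_{\gamma}(x)$, yielding an integrand of size $|z|^{-2p}(\kappa^{*}_{\gamma}(x))^{2}$ that is integrable for $p<1/2$. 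In the $m$-degenerate case the splitting is unnecessary, since $g(1)=0$ permits the direct pointwise estimate $|g(\alpha)|\leq C(|z|\,\kappa^{*}_{\gamma}(x))^{m+1}$, which supplies the extra powers of $|z|$ needed for integrability.

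The principal obstacle is $T_{2}$, since the naive bound $|\alpha_{x}|\leq C(\delta_{\infty}(\gamma))$ leaves an integrand of size $|z|^{-2p-1}$ that fails to be integrable at the origin. The necessary cancellation comes from the unit speed hypothesis. Starting from $\eta_{x} = 2\langle\Delta_{z}\gamma,\Delta_{z}\dot\gamma\rangle$ with $\Delta_{z}\gamma = z\dot\gamma(x) + R_{1}(x,z)$, the polarization identity together with $|\dot\gamma(x+z)| = |\dot\gamma(x)| = 1$ yields
$$
\langle\dot\gamma(x),\dot\gamma(x+z)-\dot\gamma(x)\rangle = -\tfrac{1}{2}|\Delta_{z}\dot\gamma|^{2},
$$
and hence the improved identity $\eta_{x} = -z|\Delta_{z}\dot\gamma|^{2} + 2\langle R_{1},\Delta_{z}\dot\gamma\rangle$. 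Combining this with $|\Delta_{z}\dot\gamma|\leq 2|z|\,\kappa^{*}_{\gamma}(x)$ and $|R_{1}|\leq 2|z|^{2}\,\kappa^{*}_{\gamma}(x)$ produces the bound $|\eta_{x}|\leq C|z|^{3}(\kappa^{*}_{\gamma}(x))^{2}$ and, using $\eta\geq|z|^{2}/\delta^{2}_{\infty}(\gamma)$, the crucial pointwise estimate $|\alpha_{x}|\leq C(\delta_{\infty}(\gamma))|z|(\kappa^{*}_{\gamma}(x))^{2}$. The integrand of $T_{2}$ then inherits the same $|z|^{-2p}(\kappa^{*}_{\gamma}(x))^{2}$ bound as the difference term in $T_{1}$, and integrability follows in both degeneracy regimes.

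All pointwise dominations combine to give $|T_{1}(x)|,|T_{2}(x)|\leq C(\delta_{\infty}(\gamma))(\kappa^{*}_{\gamma}(x))^{2}$ up to the contribution of $-g(1)L_{p}[\dot\gamma]$ when it is present. The unit speed assumption together with $\gamma\in H^{3}(\T)$ forces $\ddot\gamma\in H^{1}(\T)\subset L^{\infty}(\T)$ with $\|\ddot\gamma\|_{L^{\infty}(\T)}\leq C(1+\|\gamma\|_{\dot H^{3}(\T)})$ by the interpolation $\|\gamma\|_{\dot H^{2}(\T)}\leq \|\gamma\|_{\dot H^{1}(\T)}^{1/2}\|\gamma\|_{\dot H^{3}(\T)}^{1/2}$ and $\|\dot\gamma\|_{L^{2}(\T)} = 1$, so $\kappa^{*}_{\gamma}\in L^{\infty}(\T)$ with the same bound and hence $(\kappa^{*}_{\gamma})^{2}\in L^{2}(\T)$. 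Applying the same pointwise dominations to differences $f^{\prime}_{\gamma,\epsilon} - f^{\prime}_{\gamma,\delta}$ and invoking dominated convergence then shows $f^{\prime}_{\gamma,\epsilon}$ is Cauchy in $L^{2}(\T)$; since $f_{\gamma,\epsilon}\to f_{\gamma}$ in $L^{2}(\T)$ by lemma \ref{lem:boundF}, this identifies the limit as the weak derivative $f^{\prime}_{\gamma}$ and yields the claimed bound.
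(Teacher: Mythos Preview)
Your argument is correct and follows the same overall architecture as the paper's proof: split off the constant-coefficient piece $g(1)L_{p}[\dot\gamma]$ in the $0$-degenerate case and handle it via lemma~\ref{lem:lapest}, then control the remaining integrands pointwise using the crucial observation that $\eta_{x}(x,z)=2\langle\Delta_{z}\gamma,\Delta_{z}\dot\gamma\rangle$ vanishes to order $|z|^{3}$ near the diagonal.

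The one genuine difference is in how that $|z|^{3}$ decay is extracted. The paper first reduces to smooth $\gamma$ by an approximation argument (lemma~\ref{lem:lipF} plus weak compactness of derivatives), then invokes a Taylor expansion that brings in the third-derivative maximal function, giving
\[
|\eta_{x}(x,z)|\leq C|z|^{3}\bigl((\gamma^{\ppprime})^{*}(x)+\|\gamma^{\pprime}\|_{L^{\infty}(\T)}(1+\|\gamma^{\ppprime}\|_{L^{1}(\T)})\bigr),
\]
and then closes with $L^{2}$-boundedness of $(\gamma^{\ppprime})^{*}$. Your route instead exploits the exact polarization identity $\langle\dot\gamma(x),\Delta_{z}\dot\gamma\rangle=-\tfrac12|\Delta_{z}\dot\gamma|^{2}$ coming from the unit-speed constraint to obtain $|\eta_{x}|\leq C|z|^{3}(\kappa^{*}_{\gamma})^{2}$, which only uses second-derivative information. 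This lets you bypass the smooth-approximation reduction entirely and work with $\epsilon$-truncations throughout, passing to the limit by dominated convergence. The price is that your pointwise dominator involves powers of $\kappa^{*}_{\gamma}$ (up to $(\kappa^{*}_{\gamma})^{m+2}$ in the $m$-degenerate case), so you must invoke $\kappa^{*}_{\gamma}\in L^{\infty}(\T)$ rather than merely $L^{2}$; but as you note, $\gamma\in H^{3}(\T)$ gives $\ddot\gamma\in L^{\infty}(\T)$ with the right dependence, so this is harmless. Your argument is slightly more self-contained; the paper's is more in keeping with its running use of $(\gamma^{(n)})^{*}$ in the higher-regularity bootstrap of corollary~\ref{cor:smooth}.
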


\begin{proof}
Assume first that the claim holds for all $\gamma$ that are bi-Lipschitz, unit speed and smooth. Take a sequence $\gamma_n \to \gamma$ of smooth functions with unit speed converging to $\gamma$ in the $H^{3}(\T)$ norm. In particular, the convergence
$$
\delta_{\infty}( \gamma_ n ) \to \delta_{\infty}(\gamma)
$$
holds as well, as does the limit $f_{\gamma_n} \to f_{\gamma}$ in $L^{2}(\T)$ by lemma \ref{lem:lipF}. The bound
$$
\| f^{\prime}_{\gamma_n} \|_{L^{2}(\T)} \leq C( \delta_{\infty}(\gamma_n) , \| \gamma_n\|_{\dot{H}^{3}(\T)} )
$$
guarantees that $f^{\prime}_{\gamma_n} \to \tilde{f}$ weakly for some $ \tilde{f} \in L^{2}(\T)$ after passing to a subsequence if necessary. Then for $\psi$ any smooth, $2\pi$-periodic function the equality
$$
\int_{\T} \langle f^{\prime}_{\gamma_n} , \psi \rangle \, \rd x = -\int_{\T} \langle f_{\gamma_n} , \psi^{\prime} \rangle \, \rd x
$$
holds for all $n,$ which upon passing to the limit using weak convergence shows that
$$
\int_{\T} \langle \tilde{f} , \psi \rangle \, \rd x = -\int_{\T} \langle f_{\gamma} , \psi^{\prime} \rangle \, \rd x
$$
for $\psi$ any smooth function. Thus $\tilde{f}$ provides a weak derivative {\color{black}for $f_{\gamma}$}. But then
$$
\|\tilde{f}\|_{L^{2}(\T)} \leq \liminf_{ n \to \infty } \| f^{\prime}_{\gamma_n} \|_{L^{2}(\T)} \leq \liminf_{n \to \infty} C( \delta_{\infty}(\gamma_n) , \| \gamma_n\|_{\dot{H}^{3}(\T)} ) = C( \delta_{\infty}(\gamma) , \| \gamma\|_{\dot{H}^{3}(\T)} ),
$$
and so the claim holds in the general case.

It therefore suffices to prove the claim for smooth functions. Given $x,y \in \T$ define $\delta(x,y)$ according to
$$
|\gamma(x) - \gamma(y)|^2 = ( 1 - \delta(x,y) )\dd^2(x-y) \qquad 0 \leq \delta(x,y) \leq 1 - 1/\delta^2_{\infty}(\gamma)
$$
and $\alpha(x,z) := (1-\delta(x,z))^{-1}$ as well. A simple argument using Taylor's theorem shows that
$$
\delta(x,y) \leq \| \ddot{\gamma}\|_{L^{\infty}(\T)}\dd(x-y).
$$
In the $0$-degenerate case,
$$
f_{\gamma}(x) = g(1) L_{p}[\gamma](x) + \int_{\T} \left(  K_{u}\left( |\gamma(x) - \gamma(x+z)|^2, z^2 \right) - g(1){\color{black}|z|^{-2(p+1)}} \right)(\gamma(x) - \gamma(x+z) ) \, \rd z := \mathrm{I} + \mathrm{II}.
$$
{\color{black}The proof of lemma} \ref{lem:lapest} shows that
$$
\| L_{p}[\gamma] \|^{2}_{\dot{H}^{2-2p}(\T)} = \sum_{k \in \Z} |k|^{4-4p}|\hat{\gamma}_k|^2 |\lambda_{k}|^2 \leq C_{p}\sum_{k \in \Z} |k|^{6}|\hat{\gamma}_k|^2 = C_{p} \| \gamma \|^{2}_{  \dot{H}^{3}(\T)},
$$
but $p < 1/2$ and so the first term lies in $H^{1}(\T)$ as desired. For the second term, first define $\eta(x,z) := |\gamma(x) - \gamma(x+z)|^2$ and then let
$$
\xi(x,z) := \left(  K_{u}(\eta(x,z),z^2) - g(1)z^{-2(p+1)} \right)(\gamma(x) - \gamma(x+z) )
$$
denote its integrand. Note that $\langle \dot{\gamma}(x), \ddot{\gamma}(x) \rangle = 0$ implies
\begin{align}\label{eq:etabd}
\frac1{2} \eta_{x}(x,z) &= \langle \gamma(x+z) - \gamma(x) , \dot{\gamma}(x+z) - \dot{\gamma}(x) \rangle \nonumber \\
|\eta_{x}(x,z)| &\leq C |z|^3 \left( (\gamma^{\ppprime})^*(x) + \|\gamma^{\pprime}\|_{L^{\infty}(\T)}(1 + \|\gamma^{\ppprime}\|_{L^{1}(\T)})  \right)
\end{align}
for $C>0$ a universal constant. {\color{black} The fact that
\begin{align*}
\partial_{x} \xi(x,z) &= K_{uu}( \eta(x,z), z^2 ) \eta_{x}(x,z)(\gamma(x) - \gamma(x+z) )  + \left( g\big( \alpha(x,z) \big) - g(1) \right)( \dot{\gamma}(x) - \dot{\gamma}(x+z) )z^{-2(p+1)}
\end{align*}
combines with the homogeneity and $0$-degeneracy assumptions to show
\begin{align*}
\partial_{x} \xi(x,z) &= -\alpha^{2}(x,z)g^{\prime}\big( \alpha(x,z) \big)  \eta_{x}(x,z)(\gamma(x) - \gamma(x+z) )z^{-2(p+2)}\\
  &+ \left( g\big( \alpha(x,z) \big) - g(1) \right)( \dot{\gamma}(x) - \dot{\gamma}(x+z) )z^{-2(p+1)},
\end{align*}
and so we may apply the bound \eqref{eq:etabd} for $\eta_{x}(x,z)$ and the continuous embedding $L^{\infty}(\T) \subset \dot{H}^{1}(\T)$ to the first term and the argument of lemma \ref{lem:boundF} (with $\dot{\gamma}$ in place of $\gamma$) to the second term to conclude}
$$
|\partial_{x} \xi(x,z)| \leq C( \delta_{\infty}(\gamma), \|\gamma\|_{ \dot{H}^{3}(\T) } ) |z|^{-2p}( 1 + (\gamma^{\ppprime})^*(x)  ).
$$
As the maximal function $(\gamma^{\ppprime})^*(x)$ is uniformly bounded for smooth functions and the latter expression is integrable, the differentiation formula
$$
( f_{\gamma}(x) - L_{p}[\gamma](x) )^{\prime} = \int_{\T} \partial_{x}\xi(x,z) \, \rd z
$$
is therefore valid. The claimed bound
$$
\| ( f_{\gamma} - L_{p}[\gamma]  )^{\prime}\|^2_{L^{2}(\T)} \leq \int_{\T} \left( \int_{\T} |\partial_{x}\xi(x,z)| \, \rd z\right)^{2} \, \rd x \leq C( \delta_{\infty}(\gamma), \|\gamma\|_{ \dot{H}^{3}(\T) } )
$$
then follows by the $L^{2}(\T)$ boundedness of the maximal function. This completes the proof in the $0$-degenerate case.

The $m$-degenerate case follows in a similar fashion. In this instance the expression
$$
f_{\gamma}(x) = \int_{\T } K_{u}\left( |\gamma(x) - \gamma(x+z)|^2, z^2 \right)(\gamma(x) - \gamma(x+z) ) \, \rd z,
$$
is valid. Define $\eta(x,z)$ and $\xi(x,z)$ as before. Note that $g(\alpha)$ having a root of order $(m+1)$ at $\alpha = 1$ shows that $-\alpha^{2}g^{\prime}(\alpha)$ has a root of order $m$ at $\alpha = 1$ as well. Arguing as in lemma \ref{lem:boundF} yields the estimate
\begin{align*}
|\partial_{x}\xi(x,z)| &\leq C( \delta_{\infty}(\gamma) )\left(  \delta^{m}(x,z) |z||\eta_{x}(x,z)||z|^{-2(p+2)} + \delta^{m+1}(x,z)|z| \| \ddot{\gamma}\|_{L^{\infty}(\T)}|z|^{-2(p+1)}  \right) \\
& \leq C( \delta_{\infty}(\gamma), \|\gamma\|_{\dot{H}^{3}(\T)} )\left(  |z|^{m-(2p+3)} |\eta_{x}(x,z)| + |z|^{m-2p}\right)\\
& \leq C( \delta_{\infty}(\gamma), \|\gamma\|_{\dot{H}^{3}(\T)} )|z|^{m-2p}\left( 1 + (\gamma^{\ppprime})^*(x)  \right).
\end{align*}
Now recall that $m > 4p-2$ and that $m$ is a non-negative integer by hypothesis. Thus $m-2p > -1,$ and so the claimed bound then follows by arguing as in the $0$-degenerate case.
\end{proof}

\begin{corollary}{ {\rm (Corollary \ref{cor:continuation}) } }
\label{cor:continuationA}
Suppose $K(u,v)$ satisfies the hypotheses of lemma \ref{lem:boundF}, that $\tang_0 \in H^{1}(\T)$ has zero mean, unit speed and induces a bi-Lipschitz embedding. Let $T^{*}$ denote the maximal interval of existence of a mild solution to
\begin{equation*}
\tang_{t} =  \tang_{xx} + \mathbf{F}_{\gamma_{\tang}} + \lambda_{\tang} + \mu_{\tang} \tang, \qquad \tang(0) = \tang_{0} \in H^{1}(\T).
\end{equation*}
If $T^{*} < \infty$ then one of
$$
\limsup_{t \nearrow T^{*}} \, \|\tang(t)\|_{H^{1}(\T)} = + \infty \qquad \text{or} \qquad \limsup_{ t \nearrow \infty } \, \delta_{\infty}\left( \gamma_{ \tang(t)} \right) = +\infty
$$
must hold.
\end{corollary}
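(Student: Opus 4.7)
I will prove the contrapositive. Suppose $T^{*} < \infty$ and yet both quantities
$$
H_{*} := \sup_{t \in [0,T^{*})} \|\tang(t)\|_{H^{1}(\T)} \qquad \text{and} \qquad \delta_{*} := \sup_{t \in [0,T^{*})} \delta_{\infty}\big( \gamma_{\tang(t)} \big)
$$
are finite. My plan is to show $\tang(t)$ has a limit $\tang(T^{*}) \in H^{1}(\T)$ as $t \nearrow T^{*}$, that this limit itself has mean zero, unit speed and a bi-Lipschitz induced curve, and then to apply theorem \ref{thm:local} starting from $\tang(T^{*})$ in order to extend the mild solution past $T^{*}$, contradicting maximality of $T^{*}$. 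All of the necessary ingredients are already in place.

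First I would establish existence of the limit. By lemmas \ref{lem:boundF}, \ref{lem:boundA}, \ref{lem:boundmult} the nonlocal forcing $\F_{\gamma_{\tang(t)}}$ and the Lagrange multipliers $\lambda_{\tang(t)},\mu_{\tang(t)}\tang(t)$ admit uniform $L^{1}(\T)$ bounds on $[0,T^{*})$ depending only on $H_{*},\delta_{*}$ (here the lower bound $A_{*} \leq 2\delta^{2}_{*}$ on $\|A^{-1}_{\tang(t)}\|_{{\rm op}}$ follows from the Borsuk-Ulam argument in the proof of theorem \ref{thm:global}(b)). Therefore the composite forcing $G(\tang(t),\tang_x(t))$ is uniformly bounded in $C([0,T^{*});L^{1}(\T))$. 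The semigroup estimate of proposition \ref{prop:linear}, applied on any subinterval $[t_0,t_1] \subset [0,T^{*})$ with initial datum $\tang(t_0)$, then gives
$$
\| \tang(t_1) - S(t_1-t_0)[\tang(t_0)]\|_{\dot H^{1}(\T)} \leq C(\alpha)(t_1-t_0)^{1-\frac{\alpha}{2}}\|G\|_{C([0,T^{*});L^{1}(\T))}
$$
for any $3/2 < \alpha < 2$. The smoothing estimate of theorem \ref{thm:regularity1} supplies a further uniform bound $\|\tang(t)\|_{H^{3}(\T)} \leq C(\epsilon,A_{*},\delta_{*},H_{*})$ on $[\epsilon,T^{*})$ for each $\epsilon >0$, and together with the elementary inequality $|\re^{-k^2 h}-1| \leq k^{2}h$ this forces $\|S(h)\tang(t_0) - \tang(t_0)\|_{\dot{H}^{1}(\T)} \leq h \, \|\tang(t_0)\|_{\dot{H}^{3}(\T)}$. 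Combining these two estimates through the triangle inequality yields a quantitative Cauchy bound $\|\tang(t_1) - \tang(t_0)\|_{H^{1}(\T)} \to 0$ as $t_0,t_1 \nearrow T^{*}$, and so the $H^{1}(\T)$ limit $\tang(T^{*}) := \lim_{t\nearrow T^{*}} \tang(t)$ exists.

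Next I would verify the three structural properties required by theorem \ref{thm:local}. That $\tang(T^{*})$ has mean zero follows by passing to the limit in the mean-zero property of each $\tang(t)$ under the continuous embedding $H^{1}(\T) \subset C^{0}(\T)$; the same embedding together with the pointwise relation $|\tang(t,x)|^{2}\equiv 1$ forces $|\tang(T^{*},x)|^{2}\equiv 1$ as well. For the bi-Lipschitz property, the convergence $\tang(t)\to \tang(T^{*})$ in $H^{1}(\T)$ lifts to $\gamma_{\tang(t)} \to \gamma_{\tang(T^{*})}$ uniformly, and passing to the limit in $|\gamma_{\tang(t)}(x) - \gamma_{\tang(t)}(y)|\geq \dd(x-y)/\delta_{*}$ shows $\delta_{\infty}(\gamma_{\tang(T^{*})}) \leq \delta_{*} < \infty$. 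Theorem \ref{thm:local} then furnishes a mild solution $\tilde{\tang} \in C([T^{*},T^{*}+T'];H^{1}(\T))$ with $\tilde{\tang}(T^{*}) = \tang(T^{*})$, mean zero and unit speed for some $T'>0$.

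Finally I would glue the two pieces. Define $\bar{\tang}(t) := \tang(t)$ on $[0,T^{*})$ and $\bar{\tang}(t):=\tilde{\tang}(t)$ on $[T^{*},T^{*}+T']$; the $H^{1}(\T)$ continuity established above makes $\bar{\tang}$ continuous. To check the mild-solution identity on $[0,T^{*}+T']$ I would pass to the limit $t\nearrow T^{*}$ in Duhamel's formula for $\tang(t)$ (which is legal since $S(t)[\tang_0]$ is continuous in $t$ and $G$ is $L^{1}$-bounded), apply $S(t-T^{*})$ to the resulting identity for $\tang(T^{*})$, then add the local Duhamel formula for $\tilde{\tang}$ on $[T^{*},t]$ using the semigroup property of $S$. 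This shows $\bar{\tang}$ is a mild solution on $[0,T^{*}+T']$, contradicting the maximality of $T^{*}$. The single delicate point is the $H^{1}$-Cauchy estimate at the end of the first paragraph; once that is secured, everything else is routine passage to the limit.
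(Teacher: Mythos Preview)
Your proof is correct and follows the same overall contrapositive strategy as the paper --- establish that $\tang(t)$ has a good limit at $T^{*}$, apply theorem \ref{thm:local} from that limit, and glue --- but the mechanism for producing the limit is genuinely different. The paper argues by compactness: it takes a sequence $T_n \nearrow T^{*}$, extracts a weakly $H^{1}$-convergent subsequence of $\tang(T_n)$, then invokes the uniform $H^{2}$ bound from theorem \ref{thm:regularity1} to upgrade to strong $H^{1}$ convergence along that subsequence. You instead prove a direct Cauchy estimate via Duhamel and the $H^{3}$ bound, which yields convergence of the full trajectory without passing to subsequences; this is cleaner and more quantitative, at the modest price of using one more order of regularity than strictly needed. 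You also bound $\|A^{-1}_{\tang(t)}\|_{{\rm op}}$ uniformly on all of $[0,T^{*})$ via the Borsuk--Ulam inequality $\lambda_{\min}(A_{\tang}) \geq \tfrac{1}{2}\delta_{\infty}^{-2}(\gamma_{\tang})$ (this appears in the proof of part (f), not (b), of theorem \ref{thm:global}, and is a harmless forward reference since the argument is self-contained), whereas the paper only bounds $\|A^{-1}_{\tang^{n_k}}\|_{{\rm op}}$ near the limit via the continuity estimate of lemma \ref{lem:boundA}. One small point worth making explicit in your gluing step: the passage to the limit in the Duhamel formula at $t=T^{*}$ is cleanest once you observe that the Lipschitz estimates (lemmas \ref{lem:lipF}, \ref{lem:boundmult}) force $G(\tang(t),\tang_x(t)) \to G(\tang(T^{*}),\tang_x(T^{*}))$ in $L^{1}(\T)$, so that $G \in C([0,T^{*}];L^{1}(\T))$ and proposition \ref{prop:linear} applies on the closed interval.
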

\begin{proof}
Let
$$
\mathcal{C}(\tang_0) := \left\{ T > 0 : \exists \tang \in C([0,T];H^{1}(\T)), \,\forall t \in [0,T] \;\; \tang(t) = S(t)[\tang_0] + \int^{t}_{0} S(t-s)G(\tang(t),\tang_{x}(s))\, \rd s \right\}.
$$
denote the existence times of mild solutions and $T^{*}$ its positive supremum. If $0 < T_{*} < \infty$ take a monotone sequence $T_{n} \in \mathcal{C}(\tang_0)$ of times realizing this supremum, in the sense that
$$
T_1 < T_2 < T_3 < \cdots \qquad T_{n} \nearrow T^{*}.
$$
Let $\tang^{n} \in C([0;T_n];H^{1}(\T))$ denote the corresponding mild solution. Note first that $\tang^{n}(t) = \tang^{n+1}(t)$ on $[0,T_{n}]$ by uniqueness of mild solutions, and so on $[0,T^{*})$ we may consider $\tang(t)$ as a single, well-defined and $H^{1}(\T)$-valued mapping. Assume that we have a-priori $H^{1}(\T)$ and distortion bounds for the solution, by which we mean that
$$
H_{*} := \limsup_{ n \to \infty } \, \| \tang(T_n) \|_{H^{1}(\T)} \qquad \text{and} \qquad \delta_{*} := \limsup_{ n \to \infty } \, \delta_{\infty}\left( \gamma_{ \tang(T_n)} \right)
$$
are finite. By standard compactness results we may extract a subsequence $\tang^{n_k}$ of the $\tang(T_n),$ and infer that there exists $\tang^{*} \in H^{1}(\T)$ so that
\begin{align*}
&\tang^{n_k} \to \tang^{*} \quad \text{in} \quad C^{\alpha}(\T)\\
&\tang^{n_k}_x \to \tang^{*}_x \quad \text{weakly in} \quad L^{2}(\T) \\
&\|\tang^{*}\|_{H^{1}(\T)} \leq \liminf_{k \to \infty} \, \|\tang^{n_k} \|_{H^{1}(\T)}
\end{align*}
hold for $0 < \alpha < \frac1{2}$ an arbitrary exponent. Thus $\tang^{*}$ has zero mean and unit speed. We therefore infer that
$$
\|A^{-1}_{\tang^{*}}\|_{ {\rm op}} < +\infty, \qquad A_{*} := \limsup_{k \to \infty} \, \|A^{-1}_{\tang^{n_k}}\|_{ {\rm op}} < +\infty.
$$
 By theorem \ref{thm:regularity1} we may then infer that
$$
\limsup_{ k \to \infty}  \| \tang^{n_k} \|_{H^{2}(\T)}
$$
is finite as well. Appealing to standard compactness results once again then shows that, by passing to a further subsequence if necessary, we may assume
\begin{equation}\label{eq:strong}
\tang^{n_k} \to \tang^{*} \quad \text{strongly in} \quad H^{1}(\T)
\end{equation}
without loss of generality. The induced embeddings $\gamma_{\tang^{n_k}}$ therefore converge in $C^{1,\alpha}(\T),$ and so in particular
$$
\delta_{\infty}\left( \gamma_{\tang^{*}} \right) = \lim_{k \to \infty} \, \delta_{\infty}\left( \gamma_{\tang^{n_k}} \right) \leq  \limsup_{ n \to \infty } \, \delta_{\infty}\left( \gamma_{ \tang(T_n)} \right)
$$
is finite by the distortion a-priori bound. Thus $\tang_{*} \in H^{1}(\T)$ induces a bi-Lipschitz embedding, and so we may infer that
\begin{equation}\label{eq:Fcont}
G(\tang^{n_k},\tang^{n_k}_{x}) \to G(\tang^{*},\tang^{*}_{x}) \qquad \text{in} \qquad L^{1}(\T).
\end{equation}
Now apply theorem \ref{thm:local} with $\tang_*$ as initial data to find some $T_0>0$ and a $\sang \in C([0,T_0];H^{1}(\T))$ defining a mild solution. Define $\tang_{ {\rm E}} \in C([0,T_0+T^{*}];H^{1}(\T))$ piecewise by
$$
\tang_{ {\rm E}}(t) = \
\begin{cases}
\tang(t) & \text{if} \quad t \in [0,T^{*}) \\
\sigma(t-T^*) & \text{if} \quad t \in [T^*,T^{*}+T_0],
\end{cases}
$$
and note that \eqref{eq:strong} shows that $\tang_{ {\rm E}} \in C([0,T_0];H^{1}(\T))$ is, in fact, continuous. By using \eqref{eq:Fcont} it is then easy to show that $\tang_{ {\rm E}}$ defines a mild solution with $\tang_0$ as initial data. Thus $T^{*} + T_0 \in \mathcal{C}(\tang_0),$ a contradiction. 
\end{proof}

\end{document}